\newtheorem{dfn}{Definition}[section]
\newtheorem{thm}{Theorem}[section]
\newtheorem{thmo}{Theorem}[section]
\newtheorem{claim}[thmo]{Claim}
\newtheorem{lem}[thm]{Lemma}
\newtheorem{prop}[thm]{Proposition}
\newtheorem{corollary}[thm]{Corollary}
\newtheorem{conjecture}[thm]{Conjecture}
\def\QED{\hfill \rule{7pt}{7pt}}
\newcommand{\dH}{{\mathcal{H}}}
\newcommand{\dE}{{\mathcal{E}}}
\newcommand{\dA}{{\mathcal{A}}}
\newcommand{\dC}{{\mathcal{C}}}
\newcommand{\dD}{{\mathcal{D}}}
\newcommand{\dF}{{\mathcal{F}}}
\newcommand{\dG}{{\mathcal{G}}}
\newcommand{\dL}{{\mathcal{L}}}
\newcommand{\dP}{{\mathcal{P}}}
\newcommand{\dR}{{\mathcal{R}}}
\newcommand{\dM}{{\mathcal{M}}}
\def\ex{\mathrm{ex}}
\newcommand{\RNum}[1]{\uppercase\expandafter{\romannumeral #1\relax}}
\newcommand{\1}{{\uppercase\expandafter{\romannumeral1}}}
\newcommand{\2}{{\uppercase\expandafter{\romannumeral2}}}
\begin{document}
\title{Some exact results on $4$-cycles: stability and supersaturation}

\author{
Jialin He\footnote{Email: hjxhjl@mail.ustc.edu.cn.}~~~~~~~
Jie Ma\footnote{Email: jiema@ustc.edu.cn.
Partially supported by NSFC grant 11622110, National Key Research and Development Project SQ2020YFA070080, and Anhui Initiative in Quantum Information Technologies grant AHY150200.}~~~~~~~
Tianchi Yang\footnote{Email: ytc@mail.ustc.edu.cn.}
}
\date{}

\maketitle

\begin{abstract}
Extremal problems on the $4$-cycle $C_4$ played a heuristic important role in the development of extremal graph theory.
A fundamental theorem of F\"uredi states that the Tur\'an number $\ex(q^2+q+1, C_4)\leq \frac12 q(q+1)^2$ holds for every $q\geq 14$,
which matches with the classic construction of Erd\H{o}s-R{\'e}nyi-S\'os and Brown from finite geometry for prime powers $q$.

Very recently, we \cite{HMY20} obtained the first stability result on F\"uredi's theorem, by showing that
for large even $q$, every $(q^2+q+1)$-vertex $C_4$-free graph with more than $\frac12 q(q+1)^2-0.2q$ edges must be a spanning subgraph of a unique polarity graph.
Using new technical ideas in graph theory and finite geometry, we strengthen this by showing that the same conclusion remains true if the number of edges is lowered to $\frac12 q(q+1)^2-\frac12 q+o(q)$.
Among other applications, this gives an immediate improvement on the upper bound of $\ex(n,C_4)$ for infinitely many integers $n$.

A longstanding conjecture of Erd\H{o}s and Simonovits states that every $n$-vertex graph with $\ex(n,C_4)+1$ edges contains at least $(1+o(1))\sqrt{n}$ 4-cycles.
In \cite{HMY20} we proved an exact result and confirmed Erd\H{o}s-Simonovits conjecture for infinitely many integers $n$.
As the second main result of this paper,
we further characterize all extremal graphs for which achieve the $\ell${'th} least number of copies of $C_4$ for any fixed positive integer $\ell$.
This can be extended to more general settings and provides enhancements on the understanding of the supersaturation problem of $C_4$.
\end{abstract}

\section{Introduction}
Given a graph $F$, we say a graph is {\it $F$-free} if it does not contain $F$ as a subgraph.
The {\it Tur\'an number} $\ex(n,F)$ of $F$ is the maximum number of edges in an $n$-vertex $F$-free graph.
Tur\'an type and related extremal problems are the central subjects of extremal graph theory.
In this paper, we focus on extremal problems on one of the basic and perhaps most influential objects in this area -- the cycle $C_4$ of length four.
(For indistinct notations appeared below, we shall refer readers to Section \ref{sec:preli}.)

Proposed by Erd\H{o}s \cite{E38} more than 80 years ago, the study of $\ex(n,C_4)$ has a rich history.
In \cite{R58} Reiman showed a general upper bound that $\ex(n,C_4)\le\frac{n}{4}(1+\sqrt{4n-3})$.
However it is known that the equality never holds by the Friendship Theorem of Erd\H{o}s, R{\'e}nyi and S{\'o}s \cite{ERS}.
One can also deduce from the proof of Reiman that if the number of edges in an $n$-vertex $C_4$-free graph is close to $\frac12 n^{3/2}$,
then almost all vertices have roughly $\sqrt{n}$ neighbors and almost all pairs of vertices have one common neighbor.
This suggests that perhaps in principle, the neighborhoods of vertices can be regarded as lines of certain projective plane.
Indeed, using orthogonal polarity graphs constructed from finite projective planes,
Erd\H{o}s-R{\'e}nyi-S\'os \cite{ERS} and Brown \cite{Br66} proved a lower bound that
\begin{equation}\label{equ:lower}
\ex(q^2+q+1,C_4)\geq \frac12 q(q+1)^2 \mbox{~ for all prime powers } q.
\end{equation}
These two results together imply an asymptotic formula that $\ex(n,C_4)=\big(\frac12+o(1)\big) n^{3/2}$.

Determining the exact value of $\ex(n,C_4)$ in general seems to be extremely difficult and far beyond reach.
On the other hand, Erd\H{o}s conjectured  (e.g. \cite{E74}) that the orthogonal polarity graph is optimal;
that is, the inequality in \eqref{equ:lower} should be replaced with an equality for all prime powers $q$.
F\"uredi \cite{Fu83} first confirmed this for $q=2^k$ in 1983, by showing
$\ex(q^2+q+1,C_4)\leq \frac12 q(q+1)^2 \mbox{ holds for all even } q.$
In 1996, F\"uredi \cite{Fur96} proved that the same upper bound holds for all $q\geq 14$.
We summarize his results as following.

\begin{thm}[F\"uredi, \cite{Fu83,Fur96}]\label{Thm:Furedi}
If $q\notin \{1,7,9,11,13\}$, then $\ex(q^2+q+1,C_4)\leq \frac{1}{2}q(q+1)^2$.
Hence for all prime powers $q\geq 14$, $\ex(q^2+q+1,C_4)=\frac{1}{2}q(q+1)^2$.
\end{thm}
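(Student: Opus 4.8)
The plan is to prove the bound $\ex(q^2+q+1,C_4)\le\frac12 q(q+1)^2$ by contradiction; together with the construction in \eqref{equ:lower} this gives the stated equality for prime powers $q\ge14$. Let $G$ be a $C_4$-free graph on $n=q^2+q+1$ vertices and suppose $e(G)\ge\frac12 q(q+1)^2+1$. For a vertex $x$ write $\delta(x)=q+1-d(x)$ for its \emph{deficiency}. Then $\sum_x\delta(x)=(q+1)n-2e(G)\le q-1$, and since $q(q+1)^2$ and $2e(G)$ are even, $\sum_x\delta(x)\equiv q+1\pmod2$. Because $G$ is $C_4$-free, any two vertices have at most one common neighbour, so $\sum_x\binom{d(x)}{2}\le\binom n2$. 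Using the identity $\binom{q+1}{2}=\frac{n-1}{2}$ (equivalently $n\binom{q+1}{2}=\binom n2$) and expanding $\binom{d(x)}{2}$ in powers of $\delta(x)$, this rewrites as
\[
\sum_x\delta(x)^2\;\le\;(2q+1)\sum_x\delta(x)\;\le\;(2q+1)(q-1),
\]
so the degree sequence is very close to $(q+1)$-regular: at most $q-1$ vertices are \emph{deficient} (degree $\le q$), the total surplus $\sum_{d(x)>q+1}(d(x)-q-1)$ is $O(q)$, and no degree exceeds $q+1+O(\sqrt q)$. A further saving comes from the observation that for each $v$ one has $\sum_{u\sim v}(d(u)-1)=2e(G[N(v)])+|N_2(v)|\le 2\lfloor d(v)/2\rfloor+(n-1-d(v))$ --- here $G[N(v)]$ is a matching, since two neighbours of a vertex $u\in N(v)$ lying in $N(v)$ would close a $C_4$ through $v$ and $u$, and each vertex at distance $2$ from $v$ has exactly one neighbour in $N(v)$ --- which loses an extra $1$ exactly when $d(v)$ is odd; summing over $v$ upgrades the display to $\sum_x\delta(x)^2+o(G)\le(2q+1)\sum_x\delta(x)$, where $o(G)$ counts the odd-degree vertices.

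Next comes the case $\Delta(G)\le q+1$, so every $\delta(x)\ge0$. The at most $q-1$ deficient vertices, each of degree $\le q$, together touch at most $(q-1)q$ of the at least $n-(q-1)=q^2+2$ vertices of degree exactly $q+1$; hence there is a vertex $v$ of degree $q+1$ all of whose neighbours have degree $q+1$. For $u\in N(v)$ the sets $N(u)\setminus\{v\}$ are pairwise disjoint --- a shared vertex other than $v$ would close a $C_4$ through $v$ --- and each has size $q$, so they partition $V\setminus\{v\}$. In particular each $u\in N(v)$ lies in exactly one of them, i.e.\ has exactly one neighbour inside $N(v)$, so $G[N(v)]$ is a perfect matching on its $q+1$ vertices. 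For $q$ even this is impossible and the case is finished; for $q$ odd no contradiction arises yet.

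The remaining and hardest case is $\Delta(G)\ge q+2$. Choosing $v$ with $d(v)=\Delta(G)$, disjointness of the sets $N(u)\setminus\{v\}$ gives $\sum_{u\sim v}(d(u)-1)\le n-1$, hence $\sum_{u\sim v}\delta(u)\ge q\bigl(\Delta(G)-q-1\bigr)\ge q$: the neighbourhood of any high-degree vertex already absorbs a large amount of deficiency, and charging this across the surplus vertices against the budget $\sum_x\delta(x)\le q-1$ is meant to force $\Delta(G)\le q+1$, reducing to the previous case. The \textbf{main obstacle} is precisely this step, and the odd-$q$ part of the previous paragraph: the crude counts above leave a multiplicative gap of roughly $2$, so closing them requires genuine finite geometry. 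By the first paragraph $G$ is almost $(q+1)$-regular, and the partition phenomenon forces the neighbourhoods of its degree-$(q+1)$ vertices to behave like the lines of a projective plane of order $q$ through the corresponding points; a stability analysis of this near-linear space should pin $G$ down as essentially an orthogonal polarity graph, leaving no room for the extra edge, since accommodating it would put some vertex on $q+2$ ``lines'' and thus create a $C_4$. Such an argument succeeds only when $q$ is large enough for the ``defect'' region to be negligible beside the extracted structure, which is why the statement excludes $q\in\{1,7,9,11,13\}$; the remaining small values $q\in\{2,3,4,5,6,8,10,12\}$ are dispatched by the simpler analysis available for even $q$ together with a direct, essentially finite, check for $q=3,5$. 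With \eqref{equ:lower} this yields $\ex(q^2+q+1,C_4)=\frac12 q(q+1)^2$ for every prime power $q\ge14$.
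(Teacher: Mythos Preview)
The paper does not prove Theorem~\ref{Thm:Furedi}; it is quoted from F\"uredi \cite{Fu83,Fur96} as a known result, so there is no in-paper proof to compare against and your proposal must stand on its own.

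It does not. You give a complete argument only for the case $q$ even and $\Delta(G)\le q+1$: the observation that $G[N(v)]$ would have to be a perfect matching on the odd number $q+1$ of vertices is exactly F\"uredi's 1983 proof \cite{Fu83} and is fine. For odd $q$ with $\Delta(G)\le q+1$ you offer nothing beyond ``a stability analysis \dots\ should pin $G$ down''. For $\Delta(G)\ge q+2$ you derive the local bound $\sum_{u\sim v}\delta(u)\ge q$ and then explicitly concede that ``the crude counts above leave a multiplicative gap of roughly $2$'' and that you cannot close it; the intended charging against $\sum_x\delta(x)\le q-1$ fails because $\delta$ is signed and you have not bounded the total surplus independently. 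Relatedly, the claims ``at most $q-1$ vertices are deficient'', ``the total surplus is $O(q)$'' and ``no degree exceeds $q+1+O(\sqrt q)$'' do not follow from $\sum_x\delta(x)^2\le(2q+1)\sum_x\delta(x)\le(2q+1)(q-1)$ once negative $\delta$'s are allowed; that inequality by itself gives only $\max_x|\delta(x)|=O(q)$. In short, you have correctly isolated the two hard subcases, but the phrases ``should pin $G$ down'' and ``is meant to force'' occur exactly where the proof was supposed to be.
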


\noindent
F\"uredi also proved that extremal graphs for $q\geq q_0$ must be orthogonal polarity graphs of order $q$ (unpublished, see \cite{Fur07}).
More recently, Firke, Kosek, Nash and Williford \cite{FKNW} proved $\ex(q^2+q,C_4)\leq \frac{1}{2}q(q+1)^2-q$ for all even integers $q$,
which implies that $\ex(q^2+q,C_4)=\frac{1}{2}q(q+1)^2-q$ for all $q=2^k$.
We remark that these results on $C_4$ belong to the category of rather rare exact results
for the notorious degenerate extremal graph problems (for a comprehensive survey, see F\"uredi-Simonovits \cite{FS}).
Very recently, we \cite{HMY20} proved a stability result on the Tur\'an number of 4-cycles,
which gives a structural description for those $C_4$-free graphs whose number of edges is close to the extremal graph.

\begin{thm}[\cite{HMY20}, Theorem 1.2]\label{Thm:stability 0}
Let $q\ge 10^9$ be even and $G$ be a $C_4$-free graph on $q^2+q+1$ vertices with at least $\frac12 q(q+1)^2-0.2q+1$ edges.
Then $G$ is a subgraph of a unique polarity graph of order $q$.
\end{thm}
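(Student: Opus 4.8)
The plan is to first distill a rigid local structure from $G$ by refining F\"uredi's double-counting, then to reconstruct from $G$ a projective plane of order $q$ together with an orthogonal polarity of it, and finally to verify that $G$ embeds into the associated polarity graph and that this polarity graph is forced. Write $n=q^2+q+1$. The only facts about a polarity graph of order $q$ we need are structural: it has exactly $q^2$ vertices of degree $q+1$, exactly $q+1$ vertices of degree $q$ (its absolute points, which for even $q$ are collinear), and $\tfrac12 q(q+1)^2$ edges. So the hypothesis says precisely that $e(G)$ falls short of a polarity graph's edge count by fewer than $0.2q$; equivalently, $G$ is at worst a polarity graph with fewer than $0.2q$ edges removed, and the target is really a very strong rigidity statement.

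For the local analysis I would use the two standard consequences of $C_4$-freeness. First, every pair of vertices has at most one common neighbor, so $\sum_{v}\binom{d(v)}{2}\le\binom n2$; via the exact identity $\binom n2=n\binom{q+1}{2}$ this says that the deficiency $D:=\binom n2-\sum_v\binom{d(v)}{2}$ — the number of pairs of vertices with no common neighbor — would have to vanish if every degree equalled $q+1$, and since $2e(G)$ is within $O(q)$ of $q(q+1)^2$, Jensen's inequality gives $D=O(q^2)$ and forces the degree sequence to concentrate tightly about $q+1$. Second, $N(v)$ spans a matching for every $v$ (a path of length two in $N(v)$ closes with $v$ to a $C_4$), so $\sum_{u\sim v}d(u)\le n-1+d(v)$; summing this through $\sum_v d(v)^2=\sum_u\sum_{v\sim u}d(v)$ gives $\sum_v d(v)^2\le n(n-1)+2e(G)$, whose near-tightness forces $G$ to have diameter essentially two and $O(q^2)$ triangles. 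Pushing this bookkeeping — this is where the budget $0.2q$ is spent, and it is essentially F\"uredi's argument made quantitative — I expect: $\Delta(G)\le q+1$, $\sum_v(q+1-d(v))=O(q)$ (so all but $O(q)$ vertices have degree $q+1$ and there are $q+1+O(1)$ vertices of degree $q$), and the $O(q^2)$ pairs with no common neighbor are borne essentially by the $q+1$ low-degree vertices, just as in a polarity graph.

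The reconstruction comes next. For each vertex $v$ with $d(v)=q+1$ set $L_v:=N(v)$, and for each $v$ with $d(v)=q$ set $L_v:=N(v)\cup\{v\}$; each $L_v$ then has exactly $q+1$ elements and no two of them coincide (twins would create a $C_4$). Imitating a polarity graph, these $L_v$ ought to be the lines of a projective plane $\Pi$ of order $q$, with $v\mapsto L_v$ a polarity whose absolute points are exactly the degree-$q$ vertices. By $C_4$-freeness any two candidate lines meet in at most one point (save for $O(q)$ exceptional pairs coming from edges at low-degree vertices), and by the local analysis all but $O(q^2)$ pairs of points lie on at least one common candidate line; hence the $L_v$ form a partial linear space on $n$ points with $n-O(q)$ lines of size $q+1$ that is $o(1)$-close to a projective plane. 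The decisive step is to upgrade this to a genuine projective plane of order $q$: one shows such a nearly-complete partial linear space admits a unique completion to a projective plane (a rigidity statement provable by an augmentation argument, repeatedly using that a point missing from an almost-full pencil of lines forces the absent incidences), and that the self-duality $v\mapsto L_v$ extends to an orthogonal polarity $\pi$ — here the hypothesis that $q$ is even, so that the absolute locus is a line rather than a conic, is used to isolate and control it. Let $G_\pi$ be the resulting polarity graph.

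Finally one checks $G\subseteq G_\pi$ exactly. Every edge of $G$ between two good vertices is an edge of $G_\pi$ by construction; and any remaining vertex has degree at most $q+1$ and a neighborhood that must lie inside the corresponding line of $\Pi$, since a deviation would either create a $C_4$ or contradict the already-reconstructed part of $G_\pi$; thus $G$ contributes no edge outside $G_\pi$. Uniqueness of $G_\pi$ follows because the reconstruction recovers $\Pi$ and $\pi$ from $G$ alone. I expect the rigidity step to be the main obstacle — turning an $o(1)$-approximate partial linear space into an honest projective plane carrying an honest orthogonal polarity, while keeping the $O(q)$ exceptional vertices and the even-characteristic geometry of the absolute points under control; by comparison the degree analysis, though it is where the numerology behind $0.2q$ must be respected, is relatively routine.
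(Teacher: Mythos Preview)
Your outline has the right architecture --- reconstruct a projective plane from neighbourhoods, exhibit a polarity, check containment --- and this matches the paper's proof of its stronger Theorem~\ref{Thm:stability} (which subsumes this cited result). But two of your ``expected'' steps are genuine gaps. First, $\Delta(G)\le q+1$ does \emph{not} fall out of the bookkeeping: the double-counting only yields $\Delta\le q+2$, and one cannot exclude degree-$(q+2)$ vertices directly. The paper instead bounds $|S_{q+2}|=O(\sqrt q)$ through a chain of structural claims (Section~\ref{sec:Delta=q+1}: two such vertices must share a neighbour, a common neighbour of three of them has degree $\le q/2$, etc.), deletes one edge at each such vertex to get $G'$ with $\Delta(G')=q+1$, proves the theorem for $G'$, and only \emph{then} deduces via Lemma~\ref{Lem:polarity+plus} that no edges needed deleting. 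Second, and more seriously, your ``decisive step'' is not an elementary augmentation. The paper does not try to complete an approximate linear space; it first extracts a \emph{genuinely} $1$-intersecting $(q+1)$-uniform family $\dR=\{N(x):x\in A\}$ of size $q^2-O(q)$ (where $A$ excludes degree-$(q+1)$ vertices with many low-degree neighbours --- this restriction is essential for $1$-intersection), enlarges it to at least $q^2$ lines by a delicate analysis of which sets $N[u]$ with $u\in S_q$ can be added, and then invokes Metsch's embedding theorem (Theorem~\ref{Thm:Embedding}) together with Dow's uniqueness theorem (Theorem~\ref{Thm:Embedding unique}) --- substantial external results from finite geometry. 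Your blanket assignment $L_v:=N[v]$ for every degree-$q$ vertex will not produce a $1$-intersecting family in general.

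Two smaller corrections. The polarity obtained need not be orthogonal: by \eqref{equ:a(pi)} a polarity graph has $q+1+m\sqrt q$ absolute points for some integer $m\ge 0$, and the theorem only asserts containment in \emph{some} polarity graph, so your opening description of the target object is too restrictive. And the mechanism by which the paper certifies that $v\mapsto L_v$ is a polarity is not a direct extension argument: one shows the incidence matrix of the completed plane is symmetric on at least $q^2-q+3$ rows (those indexed by $A\setminus K$ for a small exceptional set $K$) and then applies F\"uredi's symmetry lemma (Lemma~\ref{Lem:Furedi-symmetry}) to force symmetry everywhere. This is a key ingredient you have not identified.
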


The first main result of this paper extend the above result as follows.

\begin{thm}\label{Thm:stability}
Let $q$ be even and $G$ be a $C_4$-free graph on $q^2+q+1$ vertices with at least $\frac12 q(q+1)^2-\frac{1}{2}q+o(q)$ edges.
Then there exists a unique polarity graph of order $q$, which contains $G$ as a subgraph.
\end{thm}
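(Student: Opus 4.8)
\noindent\emph{Proof idea.}
By Theorem~\ref{Thm:Furedi} we may assume $q$ is large and even and write $e(G)=\tfrac12 q(q+1)^2-t$ with $0\le t\le \tfrac12 q+o(q)$; in view of Theorem~\ref{Thm:stability 0} the genuinely new range is $t=\Theta(q)$, though the argument should run uniformly in $t$. Put $d(v)=q+1+x_v$. Since $G$ is $C_4$-free, $\sum_v\binom{d(v)}{2}$ counts the pairs of vertices having a common neighbour, hence is at most $\binom{q^2+q+1}{2}$; combined with $\sum_v x_v=-(q+1)-2t$ this yields the basic degree estimate
\[
\sum_v\binom{x_v+1}{2}\ \le\ q(q+1)+2qt\ \le\ 2q^2+o(q^2),
\]
in which $\binom{x+1}{2}$ is a nonnegative integer, vanishing exactly for $x\in\{0,-1\}$ and equal to $\binom{|x|}{2}$ for $x\le -1$. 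The plan has four steps: (i) upgrade this to an almost exact description of the degree sequence; (ii) read off from $G$ a near-linear space on $V(G)$; (iii) complete it, using finite geometry, to a projective plane of order $q$ carrying a polarity; (iv) conclude that $G\subseteq P$ for the resulting polarity graph $P$, and that $P$ is unique.

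\emph{Step (i): the degree sequence.} The displayed inequality alone does not yet rule out, for instance, $\Theta(q^2)$ vertices of degree $q+2$ or a single vertex of degree close to $\tfrac43 q$, so sharper and partly local arguments are needed. A useful device: if $v$ has degree $q+1+s$ then $C_4$-freeness forces the neighbourhoods (outside $N(v)\cup\{v\}$) of the vertices of $N(v)$ to be pairwise disjoint, whence $\sum_{u\in N(v)}d(u)\le (q+1)^2+s$; the resulting degree deficit, pushed through $\sum_w\binom{d(w)}{2}\le\binom{q^2+q+1}{2}$ and the known upper bounds for $C_4$-free graphs on roughly $q^2$ vertices, bounds $s$, and a global form of this --- run simultaneously over all vertices of degree $>q+1$ and coupled with the deficient vertices --- should force $\Delta(G)\le q+1$. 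The core of the stability analysis is to pin down the deficient vertices, which genuinely occur near extremal graphs: one aims to show that, outside a set of $o(q)$ exceptional vertices, every deficient vertex has degree exactly $q$, that these degree-$q$ vertices form a pairwise non-adjacent set $L$ with $|L|=(1+o(1))q$, that every remaining vertex has degree exactly $q+1$, and that all but $O(q^2)$ pairs of vertices have a (necessarily unique) common neighbour. This is the main obstacle: the surplus over the threshold of Theorem~\ref{Thm:stability 0} is only $\Theta(q)$, so every estimate must be carried with its sharp leading constant --- a single lossy step breaks the argument --- and this is presumably where the new graph-theoretic input of the paper lies.

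\emph{Steps (ii)--(iii): recovering the plane and the polarity.} Declare the lines to be the sets $N(v)$ for the $(1+o(1))q^2$ vertices $v$ of degree $q+1$, together with $N(v)\cup\{v\}$ for the degree-$q$ vertices, discarding the $o(q)$ exceptions. By Step (i) this is a near-linear space on $q^2+q+1$ points: almost every line has exactly $q+1$ points, almost every point lies on exactly $q+1$ lines, and all but $O(q^2)$ pairs of points are collinear; moreover the missing collinearities are not arbitrary but are organised around the at most $t$ missing edges, each responsible for only a structured $O(q)$-block of pairs. One then invokes, strengthening it if necessary, a reconstruction result from finite geometry: a near-linear space with these parameters and this controlled amount and type of noise, on $q^2+q+1$ points, embeds in a unique projective plane $\Pi$ of order $q$ --- so such a $\Pi$ must exist. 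The map $\sigma\colon v\mapsto N(v)$ (with $v$ adjoined when $d(v)=q$) is then a bijection onto the lines of $\Pi$, and since adjacency in $G$ is symmetric it satisfies $r\in\sigma(p)\Leftrightarrow p\in\sigma(r)$, i.e.\ $\sigma$ is a polarity of $\Pi$; its absolute points are exactly the vertices of $L$, and because $q$ is even a polarity has precisely $q+1$ absolute points, all lying on a line, which forces $|L|=q+1$ and identifies $L$ as that line.

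\emph{Step (iv): conclusion and uniqueness.} Let $P$ be the polarity graph of $(\Pi,\sigma)$. By construction $N_G(v)\subseteq\sigma(v)\setminus\{v\}=N_P(v)$ for every $v$, so $G\subseteq P$. For uniqueness, if $G\subseteq P'$ for another polarity graph $P'$ of order $q$, then $e(G)$ is within $O(q)$ of $e(P')$, so $G$ already coincides with $P'$ except on $o(q)$ of its lines; running the reconstruction of Steps (ii)--(iii) on $G$ must then return $P'$ up to that bounded ambiguity, and since a projective plane is determined by all but $o(q)$ of its lines one gets $P'=P$. Besides Step (i), the other delicate point is the finite-geometry reconstruction, which has to absorb $\Theta(q^2)$ missing adjacencies rather than the handful a purely local patching could handle.
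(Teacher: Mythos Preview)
Your outline has the right architecture---reduce the maximum degree, read off lines from neighborhoods, complete to a projective plane, extract the polarity from the symmetry of adjacency---but two of the steps hide the real work, and as written they do not go through.

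On Step~(i): you cannot force $\Delta(G)\le q+1$ directly from the variance inequality you display; that bound is compatible with $\Theta(\sqrt q)$ vertices of degree $q+2$. The paper only proves $\Delta\le q+2$ from such counting, then shows $|S_{q+2}|=O(\sqrt q)$ through a delicate argument (common neighbors of three degree-$(q+2)$ vertices have degree $\le q/2$; there are at most seven vertices of degree $\le q/2$; a weighting argument then caps $|S_{q+2}|$). It \emph{deletes} $O(\sqrt q)$ edges to reach $\Delta=q+1$, runs the main argument on the smaller graph, and only \emph{a posteriori} recovers the deleted edges: any edge outside the polarity graph $H$ creates $q-1$ otherwise edge-disjoint $C_4$'s in $H\cup\{e\}$, which would already push $e(G)$ below the threshold.

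On Steps~(ii)--(iii): the ``reconstruction result from finite geometry'' you invoke, one that completes a near-linear space with $\Theta(q^2)$ missing collinearities to a projective plane, is not an available theorem. What exists (Metsch, Theorem~\ref{Thm:Embedding}) requires a genuinely $1$-intersecting $(q+1)$-hypergraph with more than $q^2-\tfrac{\sqrt 5-1}{2}q$ edges. Now the sets $N(v)$ over \emph{all} degree-$(q+1)$ vertices are not $1$-intersecting: if $v$ has many neighbors of degree $\le q$, then $N(v)$ can be disjoint from some $N(w)$. The paper therefore restricts to $\dR=\{N(x):x\in A\}$, where $A$ consists of degree-$(q+1)$ vertices with fewer than $\delta q$ neighbors in $S$, and proves $|\dR|\ge q^2-\epsilon q-O(1)$. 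For $\epsilon$ close to $1$ this is still short of the Metsch threshold, and closing that gap is the genuine content of the paper: one must manufacture further lines of the form $N[u]$ or $N(u)\cup\{v^\star\}$ for suitable $u\in S_q$ (``type-\1'' and ``type-\2'' lines), run a long case analysis on which of these can be adjoined while preserving $1$-intersection (Claims~\ref{Cla:non-extendable-V2}--\ref{Cla:half-of-heavy}), and assemble a selection of at least $q^2$ lines. Your phrase ``strengthening it if necessary'' is doing all the work here; there is no soft completion lemma absorbing the slack, and your outline supplies no mechanism to reach the $1$-intersecting threshold.
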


In its proof we use many new technical ideas, some of which are from finite geometry.
We would like to note that the current method cannot break the ``$-\frac{1}{2}q$'' barrier due to the limit for bounding the maximum degree.
Before we discuss the applications of this result, we also like to point out that this stability result holds in a strong sense, namely, it only {\it adds} edges when turning graphs into desired configurations.
On the other hand, there are infinitely many examples showing that the same conclusion can not hold if the number of edges is lowered to $\frac12 q(q+1)^2-q+1$.
For $q=2^k$, there exists an orthogonal polarity graph $H$ of order $q$ with $\frac12 q(q+1)^2$ edges:
Choose non-adjacent $u, v\in V(H)$ with $d_H(u)=q+1$ and $d_H(v)=q$.
Let $G$ be obtained from $H$ by deleting all edges incident to $v$ and then adding a new edge $uv$.
Clearly $G$ is $C_4$-free and has $\frac12 q(q+1)^2-q+1$ edges.
However, $G$ can not be contained in any polarity graph $H'$ of order $q$,
as $d_G(u)=q+2$ and $\Delta(H')=q+1$.\footnote{In fact the number of edges can not be lowered to $\frac12 q(q+1)^2-q+2$ (this will be more involved to justify).}
For more discussion on this stability result, we direct readers to the concluding remarks.

As a first application of Theorem \ref{Thm:stability}, we can derive the following result on $\ex(n,C_4)$.
Let $\lambda(q)$ be the maximum number of edges in a polarity graph of order $q$ (if no such graphs exist, set $\lambda(q)=0$).

\begin{corollary}\label{coro:Turan1}
Let $q$ be even. If $\lambda(q)\geq \frac12 q(q+1)^2-\frac{1}{2}q+o(q)$, then $\ex(q^2+q+1,C_4)=\lambda(q)$,
where the equality holds only for polarity graphs of order $q$ with $\lambda(q)$ edges;
otherwise, $\ex(q^2+q+1,C_4)< \frac12 q(q+1)^2-\frac{1}{2}q+o(q)$.
In particular, $\ex(q^2+q+1,C_4)\leq \max \left\{\lambda(q), \frac12 q(q+1)^2-\frac12 q+o(q)\right\}$.
\end{corollary}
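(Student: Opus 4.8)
The plan is to read the corollary off from the stability theorem (Theorem~\ref{Thm:stability}) together with the trivial lower bound $\ex(q^2+q+1,C_4)\ge\lambda(q)$. First I would fix once and for all the error function $f(q)=o(q)$ implicit in the statement of Theorem~\ref{Thm:stability}, and abbreviate the threshold $t=t(q):=\frac12 q(q+1)^2-\frac12 q+f(q)$, so that Theorem~\ref{Thm:stability} reads: every $C_4$-free graph on $q^2+q+1$ vertices with at least $t$ edges is a subgraph of a (unique) polarity graph of order $q$. The same $f$ is then used in both the hypothesis and the conclusion of the corollary. I would also recall the standard fact (see Section~\ref{sec:preli}) that a polarity graph of order $q$ lives on $q^2+q+1$ vertices, is itself $C_4$-free, and has at most $\lambda(q)$ edges by definition; hence $\ex(q^2+q+1,C_4)\ge\lambda(q)$ in all cases.

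Next I would split into the two cases in the statement. Suppose first $\lambda(q)\ge t$. Pick a polarity graph $H$ of order $q$ with $e(H)=\lambda(q)$; then $\ex(q^2+q+1,C_4)\ge\lambda(q)\ge t$, so any extremal $C_4$-free graph $G$ on $q^2+q+1$ vertices has $e(G)=\ex(q^2+q+1,C_4)\ge t$. By Theorem~\ref{Thm:stability}, $G$ is a subgraph of some polarity graph $H'$ of order $q$, whence $e(G)\le e(H')\le\lambda(q)$. Combining with $e(G)\ge\lambda(q)$ forces $e(G)=\lambda(q)$ and $G=H'$; that is, $\ex(q^2+q+1,C_4)=\lambda(q)$, and the extremal graphs are precisely the polarity graphs of order $q$ with exactly $\lambda(q)$ edges (such graphs are indeed extremal, being $C_4$-free with $\lambda(q)$ edges).

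Then suppose $\lambda(q)<t$, and assume for contradiction that $\ex(q^2+q+1,C_4)\ge t$. An extremal graph $G$ again has at least $t$ edges, so Theorem~\ref{Thm:stability} puts $G$ inside a polarity graph $H'$ of order $q$, and $\ex(q^2+q+1,C_4)=e(G)\le e(H')\le\lambda(q)<t$, a contradiction; hence $\ex(q^2+q+1,C_4)<t$ in this case. Finally, the displayed bound $\ex(q^2+q+1,C_4)\le\max\{\lambda(q),t\}$ follows by combining the two cases (the value $\lambda(q)$ in the first, a strict bound below $t$ in the second).

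I do not expect a genuine obstacle here: essentially all the content is packaged in Theorem~\ref{Thm:stability}, and even its uniqueness clause is not needed — only the containment $G\subseteq H'$. The one point that requires care is purely bookkeeping: the ``$o(q)$'' appearing in the hypothesis and in the conclusion of the corollary must be read as the \emph{same} function $f$ as in Theorem~\ref{Thm:stability}, since no quantitative sharpening of $f$ is produced by this argument and none is claimed.
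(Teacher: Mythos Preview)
Your proof is correct and follows essentially the same approach as the paper: apply Theorem~\ref{Thm:stability} to an extremal graph $G$ in each of the two cases, using the chain $\lambda(q)\le e(G)\le e(H')\le\lambda(q)$ in the first and deriving a contradiction in the second. Your explicit remark about fixing the same $o(q)$ function throughout is a useful clarification that the paper leaves implicit.
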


\noindent This corollary provides an improvement of F\"uredi's theorem for even integers $q$;
to be precise, it improves the upper bound by $\frac{1}{2}q-o(q)$ for at least infinite many even $q$ (we shall explain why later).
This and some other inference on $\ex(n,C_4)$ will be under further discussion in Section \ref{sec:Turan}.

A closely related extremal problem to Tur\'an numbers is ``the problem of supersaturated graphs" (quoted from \cite{ES84}).
It studies the following function: for a given graph $F$ and for integers $n, t\geq 1$,
\begin{equation*}\label{equ:h}
h_F(n,t)=\min\{\# F(G): |V(G)|=n, |E(G)|=\ex(n,F)+t\},
\end{equation*}
where $\# F(G)$ denotes the number of distinct copies of $F$ in a graph $G$.
A notable example is the study of the triangle $K_3$, started by Rademacher who proved that $h_{K_3}(n,1)=\lfloor n/2\rfloor$ in 1941.

Returning to our focus, throughout this paper we write $$h(n,t)=h_{C_4}(n,t) \mbox{ ~and ~} h(n)=h(n,1).$$
Analogously as Rademacher's result on the triangle, Erd\H{o}s and Simonovits conjectured that any $n$-vertex graph with $\ex(n,C_4)+1$ edges should contain many copies of $C_4$.
This problem repeatedly appeared in many papers of Erd\H{o}s.
A weak version (see \cite{Chung}, Conjecture 42) asserted that $h(n)\geq 2$ for large $n$, and another form (e.g. in \cite{E88,E90}) stated that $h(n)\geq c\sqrt{n}$ for some constant $c>0$.
The strongest version of this conjecture is the following.

\begin{conjecture}[Erd\H{o}s and Simonovits \cite{ES84}]\label{Conj:ES}
For integers $n$, $h(n)\geq \big(1+o(1)\big)\sqrt{n}$.
\end{conjecture}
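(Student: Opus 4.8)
The plan is to confirm Conjecture~\ref{Conj:ES} for the infinite family $n=q^2+q+1$ with $q$ an even prime power, where everything is anchored by the polarity construction. For such $q$ every orthogonal polarity graph $H$ of order $q$ has exactly $\lambda(q)=\frac12 q(q+1)^2$ edges, with $q+1$ vertices of degree $q$ (the absolute points, which lie on a common line) and all remaining vertices of degree $q+1$; by Corollary~\ref{coro:Turan1} (whose hypothesis holds since $\lambda(q)=\frac12 q(q+1)^2\ge \frac12 q(q+1)^2-\frac12 q+o(q)$) we have $\ex(n,C_4)=\lambda(q)=|E(H)|$, and the only extremal graphs are these $H$. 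Since $\sqrt n=(1+o(1))q$, it suffices to show that every $n$-vertex graph $G$ with $|E(H)|+1$ edges contains at least $(1-o(1))q$ copies of $C_4$.

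The engine would be a supersaturation estimate inside the polarity graph. Because $H$ is $C_4$-free, any two vertices have at most one common neighbour, and for a non-edge $e=xy$ the number of $4$-cycles in $H+e$ equals $N_e:=|\{(z,w):z\in N_H(x),\ w\in N_H(y),\ zw\in E(H),\ z\ne w\}|$ (the map $(z,w)\mapsto$ the $C_4$ on $\{x,y,z,w\}$ is a bijection). Fixing $z\in N_H(x)$, the unique candidate $w$ is the pole of the line $zy$; accounting for the few exceptional $z$ (those related to $x$ or $y$ along the line of absolute points) yields $N_e\ge q-1$ for every non-edge, with equality precisely when $e$ has an absolute endpoint and $N_e=q$ otherwise. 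I would then state this robustly: if there is a polarity graph $H$ of order $q$ with $|E(G')\setminus E(H)|\ge 1$ and $|E(H)\setminus E(G')|=r$, then $\#C_4(G')\ge q-O(r)$, since deleting a single edge of $H$ destroys at most two of the $C_4$'s through a fixed added edge $e=xy$ (such a copy uses the deleted edge either as the edge of the $C_4$ opposite $e$, giving at most two, or as an edge at $x$ or at $y$, and uniqueness of codegrees in $H$ then leaves at most one).

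Now I would argue by contradiction: suppose $G$ has $|E(H)|+1$ edges but fewer than $(1-\delta)q$ copies of $C_4$ for a fixed $\delta>0$. Deleting one edge from each copy of $C_4$ produces a set $D$ with $|D|\le\#C_4(G)<(1-\delta)q$, with $G-D$ being $C_4$-free and $|E(G-D)|\ge\frac12 q(q+1)^2-(1-\delta)q+1$. If one can arrange $|D|=o(q)$, then $|E(G-D)|\ge\frac12 q(q+1)^2-\frac12 q+o(q)$, so Theorem~\ref{Thm:stability} supplies a polarity graph $H$ of order $q$ containing $G-D$; since $|E(G)|>\ex(n,C_4)$ we have $G\not\subseteq H$, so $G$ has an edge outside $E(H)$, and the robust estimate (with $r\le|D|-1=o(q)$) forces $\#C_4(G)\ge q-o(q)$, a contradiction. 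Thus the whole problem reduces to showing that the $4$-cycles of $G$ can be covered by $o(q)$ edges -- equivalently, that $G$ contains a polarity graph of order $q$ with all but $o(q)$ of its edges present.

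The hard part will be exactly this last reduction, and it is the place where the ``$-\frac12 q$'' barrier of Theorem~\ref{Thm:stability} is felt. A crude bound comes for free: combining $\#C_4(G)\ge q-O(|D|)$ from the robust estimate with $|D|\le\#C_4(G)$ already gives $\#C_4(G)\ge(\tfrac15-o(1))q$, so $4$-cycles are abundant and one may delete edges greedily by ``popularity''. Either some edge always lies in $\Omega(q)$ copies of $C_4$, in which case $O(1)$ deletions finish the job, or one reaches a subgraph in which $4$-cycles persist while every edge lies in only $o(q)$ of them; ruling out this spread-out regime requires a genuine bound on $\Delta(G)$ and a hands-on analysis of near-extremal $C_4$-free graphs whose edge count is as low as $\frac12 q(q+1)^2-\frac12 q+o(q)$ (exactly the configurations the current method can still control). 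I expect this maximum-degree control to be the principal obstacle. Once it is in place the outline above closes; moreover, tracking $N_e$ exactly ($q-1$ when the added edge meets an absolute point, $q$ otherwise) pins down $h(n)$ itself, and iterating the same local analysis around the extremal configuration should identify the graphs attaining the $\ell$-th least number of copies of $C_4$.
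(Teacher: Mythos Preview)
The statement is a conjecture and the paper does not prove it in general; what the paper (via \cite{HMY20}) establishes is the special case $n=q^2+q+1$ for $q=2^k$ (Corollary~\ref{coro:+1}, deduced from Theorem~\ref{Thm:supersaturation-t=1} and its refinement Theorem~\ref{Thm:supersaturation-t=1 general}). Your proposal targets this same special case and shares the paper's overall architecture: pass to a $C_4$-free subgraph, invoke stability to embed it in a polarity graph $H$, then count via Lemma~\ref{Lem:polarity+plus}. (Incidentally, that lemma already says the $C_4$'s through a single added edge are pairwise edge-disjoint in $H$, so your ``at most two'' is in fact ``at most one''; your robust estimate $\#C_4(G)\ge (q-1)-r$ is correct and is exactly how the paper argues at the end of Section~\ref{sec:supert=1}.)

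The genuine gap is the step you yourself flag as ``the hard part'': showing that the $4$-cycles of $G$ can be covered by $o(q)$ edges. Your dichotomy (either some edge lies in $\Omega(q)$ copies, or the $C_4$'s are spread out) does not rule out the spread-out regime, and a direct application of stability yields only $h(n)\ge(\tfrac12-o(1))q$ --- the paper says exactly this right after stating Conjecture~\ref{Conj:ES}. Closing the gap between $\tfrac12 q$ and $(1-o(1))q$ is the actual content of Section~\ref{sec:supert=1}: assuming $\#C_4<tq$ with $t=q^{1/8}/30$, one first proves $\Delta(G)\le q+2+t$ by a second-moment count, then shows $\sum_k(k+1)|S_{q+2+k}|\le 2t+2$ (Claim~\ref{Clm:A}), isolates a bounded exceptional set $W$ of at most $2t+3$ vertices, and via an analysis of opposite pairs (Claim~\ref{Clm:B}) produces an edge set $E^*$ with $|E^*|\le 45t^2=O(q^{1/4})$ such that $G-E^*$ has at most $0.1q$ copies of $C_4$. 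Only then does stability apply. This structural control --- in particular the maximum-degree bound and the concentration of $C_4$'s on a tiny edge set --- is precisely what you anticipate needing but do not supply, and it is not recoverable from the greedy deletion you sketch.
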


\noindent As indicated in \cite{ES84}, if true, this bound will be sharp for infinitely many integers $n$.
We remark that a direct application of Theorem \ref{Thm:stability} already can show that $h(q^2+q+1)\geq \big(\frac12-o(1)\big)q$ for $q=2^k$.

We proved the following supersaturation result on $C_4$ in \cite{HMY20} using the stability result therein.

\begin{thm}[\cite{HMY20}, Theorem 1.3]\label{Thm:supersaturation-t=1}
Let $q\ge 10^{12}$ be even and let $G$ be a graph on $q^2+q+1$ vertices with $\frac{1}{2}q(q+1)^2+1$ edges.
Then either $G$ contains at least $2q-3$ copies of $C_4$,
or $G$ is obtained from an orthogonal polarity graph of order $q$ by adding a new edge.
In the latter case, $G$ contains $q-1, q$ or $q+1$ copies of $C_4$.
\end{thm}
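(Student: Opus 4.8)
The plan is to bootstrap from the stability result (Theorem~\ref{Thm:stability}) together with the case $t=1$ of the supersaturation theorem (Theorem~\ref{Thm:supersaturation-t=1}), and run a careful local counting argument. Let $G$ be a $(q^2+q+1)$-vertex graph with $\frac12 q(q+1)^2+1$ edges, and suppose $G$ has fewer than $2q-3$ copies of $C_4$. First I would delete a cleverly chosen single edge $e$ from $G$ so that the resulting graph $G-e$ has $\frac12 q(q+1)^2$ edges and is either $C_4$-free or has very few $C_4$'s; more precisely, since each $C_4$ uses at most one ``extra'' edge relative to an extremal configuration, a pigeonhole/averaging argument over the $C_4$'s should produce an edge lying in all but $O(1)$ of them, or else force a different structural picture. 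The point is to get $G$ within $O(q)$ edges of the Turán bound while controlling its deviation from a polarity graph, so that Theorem~\ref{Thm:stability} applies to a slight modification. Then $G-e$ — or a subgraph obtained by removing a bounded number of further edges — sits inside a unique orthogonal polarity graph $H$ of order $q$.

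Next I would pin down exactly how $G$ relates to $H$. Since $e(G)=e(H)+1$ and $G$ differs from a subgraph of $H$ by the one edge $e$ plus whatever was removed, a short count of missing versus extra edges forces $G$ to be obtained from $H$ by deleting no edges and adding exactly one edge $uv\notin E(H)$; this is the ``latter case'' of the statement, and then I would verify via the structure of polarity graphs that adding $uv$ creates exactly $d_H(u)+d_H(v)-|\{\text{common neighbours}\}|-(\text{adjustments})$ new $C_4$'s, which for a polarity graph of order $q$ works out to $q-1$, $q$, or $q+1$ depending on whether $u,v$ are absolute points and whether they have a common neighbour. The three values arise precisely from the cases $\{d_H(u),d_H(v)\}=\{q,q\},\{q,q+1\},\{q+1,q+1\}$ modulo the common-neighbour correction, all of which are standard facts about orthogonal polarity graphs that I would record as a lemma. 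This also immediately gives that in the latter case $G$ has at most $q+1 < 2q-3$ copies of $C_4$ (for $q$ large), so the dichotomy is genuine: if $G$ is \emph{not} of the polarity-plus-edge form, it must have at least $2q-3$ copies.

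To establish that ``at least $2q-3$'' bound when $G$ is not of the special form, I would argue by contradiction: assuming $\#C_4(G)\le 2q-4$, the deletion-and-stability step above still goes through (the slack is only used to find an edge in many $C_4$'s and to absorb $O(1)$ exceptional edges), so $G$ \emph{is} within $O(1)$ edge-modifications of a polarity graph; then a finer analysis of which modifications are possible — each deleted edge of $H$ can kill at most $O(q)$ of the $C_4$'s created by added edges, and the arithmetic of $e(G)=e(H)+1$ is rigid — forces the modification to be exactly ``add one edge, delete none,'' contradicting the assumption that $G$ is not of that form. The main obstacle I expect is the first step: extracting from the hypothesis ``few $C_4$'s'' a single edge whose removal lands us in the regime $\frac12 q(q+1)^2-\frac12 q+o(q)$ where Theorem~\ref{Thm:stability} bites, while simultaneously keeping the edge count and the number of edge-modifications tightly controlled. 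This requires a delicate supersaturation-type counting (bounding $C_4$'s from below in terms of codegrees, à la Reiman/Kővári–Sós–Turán, and then showing that beating $2q-3$ forces near-extremal codegree distribution), and threading that estimate so the error terms stay below the ``$-\frac12 q$'' threshold is the technically sharpest part of the proof.
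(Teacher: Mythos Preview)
Your overall architecture—delete a few edges to get a $C_4$-free subgraph, apply the stability theorem to embed it in a polarity graph $H$, then account for $E(G)\setminus E(H)$ and count the $C_4$'s created—is exactly the strategy used in \cite{HMY20} (and in Section~\ref{sec:supert=1} of this paper for the generalization). The last part of your sketch, reading off the values $q-1,q,q+1$ from the degree pattern of the added edge, is precisely Lemma~\ref{Lem:polarity+plus}, and the orthogonality of $H$ follows automatically once $|E(G)\setminus E(H)|=O(1)$, since then $e(H)\ge \frac12 q(q+1)^2-O(1)$ forces $m_\pi=0$ in Proposition~\ref{prop:G(pi)}.

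There are, however, two genuine problems. First, your opening sentence proposes to bootstrap from Theorem~\ref{Thm:supersaturation-t=1} itself, which is circular; presumably this is a slip, since you do not actually invoke it later. Second, and more seriously, your mechanism for the crucial first step does not work. A ``pigeonhole/averaging argument over the $C_4$'s'' cannot produce a single edge lying in all but $O(1)$ of them: with up to $2q-4$ four-cycles and $\sim \frac12 q^3$ edges, averaging tells you only that most edges lie in no $C_4$, not that some edge lies in many. And the fallback of greedily removing one edge per $C_4$ deletes up to $2q-4$ edges, landing at $e\ge \frac12 q(q+1)^2-2q+5$, which is \emph{below} the threshold of Theorem~\ref{Thm:stability} (and far below that of Theorem~\ref{Thm:stability 0}), so stability does not apply.

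What the actual proof does instead is a substantial structural analysis \emph{before} any edge deletion: one first bounds $\Delta(G)\le q+2$, then controls the sizes of the degree classes $S_{q+2}$ and $S$ via the deficiency function $f$, then shows that for any opposite pair $\{u,v\}$ away from a bounded exceptional set $W$, either $c(u)$ is large or $v$ has many neighbours $x$ with $c(x)\ge 1$. This forces all the $C_4$'s to concentrate on a set $X$ of $O(1)$ vertices with $c(v)>0.8q$, and only then can one extract a set $E^*$ of $O(1)$ edges whose removal leaves a graph with at most $0.1q$ copies of $C_4$, after which stability and the endgame counting go through. You correctly flag the first step as the main obstacle, but the Reiman/K\H{o}v\'ari--S\'os--Tur\'an codegree bound you gesture at is not sharp enough on its own; the argument really does require the vertex-by-vertex analysis of $c(v)$ and the opposite-pair machinery.
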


As a corollary, this confirmed Conjecture \ref{Conj:ES} for an infinite sequence of integers $n$ as follows. 

\begin{corollary}[\cite{HMY20}]\label{coro:+1}
Let $q=2^k$ for $k\geq 40$. Then $h(q^2+q+1)=q-1$, where the graph achieves this equality
if and only if it is obtained from an orthogonal polarity graph of order $q$ by adding a new edge between (any) two vertices of degree $q$.
\end{corollary}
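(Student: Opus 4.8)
The plan is to deduce the corollary from Theorem~\ref{Thm:supersaturation-t=1} together with a finite-geometry count of the $C_4$'s created by a single added edge. First, since $q=2^k$ with $k\ge 40$ we have $q\ge 2^{40}>10^{12}$, so $q$ is even and large enough that Theorem~\ref{Thm:Furedi} together with \eqref{equ:lower} gives $\ex(q^2+q+1,C_4)=\frac12 q(q+1)^2$, and Theorem~\ref{Thm:supersaturation-t=1} applies. Thus every graph $G$ on $q^2+q+1$ vertices with exactly $\ex(q^2+q+1,C_4)+1=\frac12 q(q+1)^2+1$ edges either has $\#C_4(G)\ge 2q-3$, or is obtained from an orthogonal polarity graph of order $q$ by adding one edge, in which case $\#C_4(G)\in\{q-1,q,q+1\}$. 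As $q\ge 3$ forces $2q-3>q-1$, in either case $\#C_4(G)\ge q-1$, so $h(q^2+q+1)\ge q-1$; moreover, again using $q-1<2q-3$, any $G$ achieving the value $q-1$ must be of the form $H+uv$ with $H$ an orthogonal polarity graph of order $q$ and $uv\notin E(H)$.

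It then remains to determine, for such $H$ and a non-edge $uv$, exactly how many new copies of $C_4$ the edge $uv$ creates. Every new $C_4$ has the form $u,a,b,v$ with $a\in N_H(u)$, $b\in N_H(v)$ and $ab\in E(H)$; since $H$ is $C_4$-free and $u\not\sim v$ the four vertices are automatically distinct, and this sets up a bijection between the new $C_4$'s and such ordered pairs $(a,b)$. Hence $\#C_4(H+uv)=\sum_{a\in N_H(u)}|N_H(a)\cap N_H(v)|$. Writing $\pi$ for the polarity defining $H$, using that $\pi(a)$ and $\pi(v)$ always meet in a single point of the underlying projective plane, and that this point coincides with $a$ (resp.\ $v$) exactly when $a$ (resp.\ $v$) is an absolute point adjacent to the other, one checks that $|N_H(a)\cap N_H(v)|=1$ for every $a\in N_H(u)$ other than the unique common neighbour $z$ of $u$ and $v$ (these $a$ being non-adjacent to $v$), while the contribution of $a=z$ is $1$ if both $z$ and $v$ are non-absolute and $0$ otherwise. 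Since exactly the $q+1$ absolute points have degree $q$ and every other vertex has degree $q+1$, this gives
\[
\#C_4(H+uv)=\bigl(d_H(u)-1\bigr)+\varepsilon ,
\]
with $\varepsilon\in\{0,1\}$ and $\varepsilon=0$ whenever $v$ is absolute, and symmetrically with $u$ and $v$ exchanged. Reading off the cases: $\#C_4(H+uv)=q-1$ precisely when both $u$ and $v$ are absolute; it equals $q$ when exactly one of them is; and it lies in $\{q,q+1\}$ when neither is. Finally, two distinct absolute points of $H$ are never adjacent (else $\pi(u)\cap\pi(v)$ would contain both $u$ and $v$), so such non-edges exist, every one of them realizes the value $q-1$, and these are the only graphs with $\ex(q^2+q+1,C_4)+1$ edges and $q-1$ copies of $C_4$. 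This proves $h(q^2+q+1)=q-1$ together with the stated characterization.

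The main obstacle is the middle step: carrying out the count $\#C_4(H+uv)=\sum_{a\in N_H(u)}|N_H(a)\cap N_H(v)|$ correctly, which requires pinning down exactly when two vertices of an orthogonal polarity graph have $0$ or $1$ common neighbours -- this depends both on adjacency and on whether an endpoint is absolute -- and using that for even $q$ the $q+1$ absolute points are collinear. It is this even-characteristic structure that makes the minimum value $q-1$ attainable (rather than something larger), the remaining part of the argument being the short reduction to Theorem~\ref{Thm:supersaturation-t=1} given above.
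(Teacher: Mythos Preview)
Your argument is correct and follows the same overall route as the paper: reduce via Theorem~\ref{Thm:supersaturation-t=1} to the case $G=H+uv$ with $H$ an orthogonal polarity graph, and then determine exactly when the added edge creates $q-1$ copies of $C_4$. The paper packages this last step as Lemma~\ref{Lem:polarity+plus} (the number of $C_4$'s in $H\cup\{uv\}$ is $q-1$ if and only if both $u,v$ have degree $q$) together with Lemma~\ref{lem:Baer} (any two degree-$q$ vertices are nonadjacent), whereas you reprove both facts from scratch via the projective-plane incidence $\pi(a)\cap\pi(v)$. Your computation is correct, including the case split on whether the unique common neighbour $z$ or the endpoint $v$ is absolute; note only that the collinearity of absolute points for even $q$ (which you mention in the closing paragraph) is not actually used anywhere in your argument---the attainability of $q-1$ needs only that an orthogonal polarity graph has at least two absolute points and that absolute points are pairwise nonadjacent.
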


The second main result of this paper is to further characterize all extremal graphs for which achieve the $\ell${'th} least number of copies of $C_4$ for any fixed integer $\ell\geq 1$.

\begin{thm}\label{Thm:super-t=1-classify}
Let $q\gg \ell$ and $q$ be even.
Let $G$ be a graph on $q^2+q+1$ vertices with $\frac{1}{2}q(q+1)^2+1$ edges.
Then either $G$ has at least $(\ell+1)q-(\ell+1)^2$ copies of $C_4$,
or there exist some $s\in \{1,2,...,\ell\}$ and an orthogonal polarity graph $H$ of order $q$ such that $G$ can be obtained from $H$ by deleting or adding $2s-1$ edges.
In the latter case, the number of copies of $C_4$ in $G$ is between $sq-s^2$ and $sq+s^2$.
\end{thm}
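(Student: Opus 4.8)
The plan is to bootstrap from Theorem~\ref{Thm:supersaturation-t=1} and Theorem~\ref{Thm:stability} via an inductive/iterative stripping argument on the number of edge modifications needed to reach an orthogonal polarity graph. Let $G$ be a graph on $n=q^2+q+1$ vertices with $\frac12 q(q+1)^2+1$ edges, and suppose $G$ has fewer than $(\ell+1)q-(\ell+1)^2$ copies of $C_4$. First I would run the following cleaning step: as long as $G$ contains a $C_4$, pick an edge $e$ lying in the fewest copies of $C_4$ and delete it; repeat. Because $G$ has few $C_4$'s to begin with, after at most a bounded (in terms of $\ell$) number of deletions we reach a $C_4$-free graph $G'$ on $n$ vertices. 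The key quantitative point is to control how few edges we can afford to lose: if at each step the chosen edge lies in at least, say, $\frac{q}{2}$ copies of $C_4$, then after two deletions we would have destroyed roughly $q$ copies, and iterating shows that the total number of $C_4$'s forces the number of deletions to be small. More precisely, one shows by a careful local count (using that in a near-extremal graph most vertices have degree $\approx q$, so a typical edge sits in $\approx q$ many $C_4$'s) that to destroy all $C_4$'s one must delete at least $\lceil t/2 \rceil \cdot (q - O(\ell))$ copies when $t$ edges are removed, where the factor $1/2$ comes from the fact that deleting an edge can simultaneously kill $C_4$'s "sharing" that edge; hence the number of deletions is $O(\ell)$, and in fact at most $\ell$ after accounting for the signs.

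Next, having reached a $C_4$-free graph $G'$ with $|E(G')| \ge \frac12 q(q+1)^2 + 1 - (\text{number of deletions})$, I would invoke Theorem~\ref{Thm:stability}: as long as the number of deletions is at most $\frac12 q - o(q)$ — which is guaranteed since it is $O(\ell) \ll q$ — $G'$ is a spanning subgraph of a unique orthogonal polarity graph $H$ of order $q$. So $G$ is obtained from $H$ by first deleting some set $D$ of edges (to get a subgraph of $H$ that may still miss some edges of $H$, namely $G' \subseteq H$) and then, reversing the cleaning, adding back the $t$ deleted edges; writing $a = |E(H)| - |E(G')|$ for the edges of $H$ absent from $G'$ and $t$ for the edges we stripped off (which are the edges of $G$ not in $H$), one has the edge-count identity $|E(G)| = |E(H)| - a + t$, i.e. $\frac12 q(q+1)^2 + 1 = \frac12 q(q+1)^2 - a + t$, so $t = a+1$ and the total number of modifications $a + t = 2a+1 = 2s-1$ with $s = a+1 \in \{1,2,\dots,\ell\}$ (the upper bound $s \le \ell$ coming from the $C_4$-count hypothesis as discussed above, the lower bound $s\ge 1$ since $G$ is not itself a subgraph of $H$ when $t\ge 1$, which holds because $|E(G)| > |E(H)| \ge \lambda(q)$ forces at least one non-$H$ edge).

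It then remains to pin down the range of $\#C_4(G)$ in the "latter case". Here I would argue directly: $G = H - D + A$ where $|D| = a = s-1$, $|A| = t = s$, the edges of $D$ lie in $H$ and the edges of $A$ do not. Every copy of $C_4$ in $G$ must use at least one edge of $A$ (since $H$, being a polarity graph, is $C_4$-free, and removing edges only destroys $C_4$'s), and on the other hand each edge of $A$, being a non-edge of $H$ joining two vertices whose $H$-neighbourhoods meet in at most... — here one uses the defining property that in a polarity graph any two vertices have at most one (in fact, for non-absolute pairs exactly one) common neighbour, so adding a single edge $uv$ to $H$ creates exactly $|N_H(u)\cap N_H(v)|$-many... no: adding $uv$ creates $C_4$'s of the form $u v x y$? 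Let me instead count $C_4$'s through an added edge $uv$: these correspond to paths of length $3$ from $u$ to $v$ in $H$, whose number is governed by the second-neighbourhood structure and is $q \pm O(1)$ per added edge for a typical pair, while deletions from $D$ remove a bounded number each. Summing the $s$ added edges' contributions (each $\approx q$) and subtracting the $O(s)$ interactions and deletions, and being careful about $C_4$'s that use two or more of the added edges (of which there are only $O(s^2)$), one obtains $\#C_4(G) \in [sq - s^2,\, sq + s^2]$ after optimizing the error terms, matching the stated window. The main obstacle, I expect, is the first step: establishing the sharp "at least $(q-O(\ell))$ new $C_4$'s destroyed per pair of deletions" bound robustly enough that it survives through $\ell$ rounds of stripping without the error accumulating past $q$ — this requires the same kind of delicate degree-sequence and common-neighbour bookkeeping that underlies Theorems~\ref{Thm:stability} and~\ref{Thm:supersaturation-t=1}, now tracked uniformly over $O(\ell)$ perturbations, together with care in handling absolute points of the polarity graph (the vertices of degree $q$ versus $q+1$), which is exactly where the constants $(\ell+1)^2$ and $\pm s^2$ come from.
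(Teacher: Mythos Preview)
Your overall architecture matches the paper's: strip $G$ down to a $C_4$-free graph $G'$ close to $G$ in edge count, apply Theorem~\ref{Thm:stability} to get $G'\subseteq H$ for a polarity graph $H$ (which the edge count forces to be orthogonal), and then read off $s=|E(G)\setminus E(H)|$, $|E(H)\setminus E(G)|=s-1$, and the $C_4$-count window. The paper in fact proves the stronger quantitative version (Theorem~\ref{Thm:supersaturation-t=1 general}) with $t=q^{1/8}/30$ in place of $\ell$, from which Theorem~\ref{Thm:super-t=1-classify} follows immediately once one notes that $sq-s^2$ is increasing in $s$ for $s\le q/2$.

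The genuine gap is in your reduction step. Your greedy stripping hinges on the claim that ``in a near-extremal graph most vertices have degree $\approx q$, so a typical edge sits in $\approx q$ many $C_4$'s.'' This is false: in a graph close to a polarity graph, almost every edge lies in \emph{zero} copies of $C_4$; only the handful of ``extra'' (non-$H$) edges lie in $\approx q$ copies each. A priori you do not know which edges these are, and nothing you wrote rules out a scenario where the $\ell q$ or so copies of $C_4$ are spread so that every edge lies in $O(1)$ of them, which would force $\Omega(\ell q)$ greedy deletions rather than $O(\ell)$. The paper does not strip greedily; instead it runs a direct structural analysis (adapted from the proof of Theorem~\ref{Thm:supersaturation-t=1}): bound $\Delta(G)\le q+2+t$, control the deficiency and the sizes $|S_{q+2+k}|$, isolate a small set $X$ of vertices with $c(v)>0.8q$, and then exhibit an explicit edge set $E^*$ with $|E^*|\le 45t^2$ whose removal leaves at most $0.1q$ copies of $C_4$. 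That analysis is exactly where the work lies, and your proposal does not supply a substitute for it.

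Your endgame count is in the right spirit but also loose. For the upper bound $sq+s^2$ the paper splits the $C_4$'s in $G$ according to how many edges of $E(G)\setminus E(H)$ they use: those using exactly one such edge number at most $s(q+1)$ by Lemma~\ref{Lem:polarity+plus}, and for those using two or more one assigns to each a ``feasible'' pair of non-$H$ edges and checks each pair is used at most twice, giving at most $2\binom{s}{2}$. Your sketch (``paths of length $3$ from $u$ to $v$ in $H$'', ``$O(s^2)$ interactions'') is heading there but would need this bookkeeping to land on $sq+s^2$ rather than a weaker $sq+O(s^2)$.
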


\noindent This also indicates that the numbers of copies of $C_4$ among all such graphs are distributed sporadically (concentrated around $sq$ for small integers $s$).

For the general supersaturation problem of $C_4$, the function $h(n,t)$ is known to be $\Theta(t^4/n^4)$ when $t=\Omega(n^{3/2})$ for all $n$ (e.g. see \cite{ES84}).
We show in the following result that for an infinite sequence of integers $n$,
one can say rather accurately about the function  and in particular, one can determine the order of its magnitude for {\it every} positive integer $t$.

\begin{thm}\label{Thm:supersaturation}
The following statements hold for large $q=2^k$.
{\bf (A)} For every $1\leq t< q^{1/8}/30$, $h(q^2+q+1,t)=t(q-1)$,
where the equality holds for graphs $G$ if and only if $G$ is obtained from an orthogonal polarity graph of order $q$ by adding a matching of size $t$ among vertices of degree $q$.\\
{\bf (B)} For every $t\geq 1$, $h(q^2+q+1,t)\geq \big(\frac12+o(1)\big)tq$ and $h(q^2+q+1,t)=\Theta(tq+t^4/q^8)$.
\end{thm}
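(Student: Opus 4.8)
The plan is to deduce part (A) from Theorem \ref{Thm:super-t=1-classify} together with a careful analysis of the few ``near-polarity'' configurations, and to deduce part (B) by combining part (A) for small $t$ with the known $\Theta(t^4/n^4)$ behaviour for large $t$ and a direct supersaturation-type counting argument in the intermediate range. For part (A), fix $1\le t< q^{1/8}/30$ and let $G$ be an $(q^2+q+1)$-vertex graph with $\frac12 q(q+1)^2+t$ edges. First I would run an iterative deletion argument: repeatedly delete an edge lying in the most copies of $C_4$. Using Theorem \ref{Thm:stability} (or its quantitative guts) to control how many $C_4$'s disappear at each step, together with the observation that once we are down to $\frac12 q(q+1)^2+1$ edges we are in the situation of Theorem \ref{Thm:super-t=1-classify}, one bootstraps a lower bound $h(q^2+q+1,t)\ge t(q-1)$ of the right order; here the hypothesis $t< q^{1/8}/30$ is exactly what keeps all error terms ($o(q)$ slacks, the $(\ell+1)^2$ corrections, the degree-bound limitations) negligible compared with $q$. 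For the matching upper bound and the characterization of equality, I would start from an orthogonal polarity graph $H$ of order $q$ (which exists since $q=2^k$), pick any matching $M$ of size $t$ among the $q$ vertices of degree $q$, and verify by a short computation that adding $M$ creates exactly $t(q-1)$ copies of $C_4$ (each new edge $uv$ lies in $q-1$ four-cycles, and distinct new edges share no $C_4$ because their endpoints have degree $q$ and the polarity graph has girth $5$ on the relevant part). The converse — that \emph{only} such graphs attain $t(q-1)$ — is where I expect to lean hardest on Theorem \ref{Thm:super-t=1-classify}: one shows any extremal $G$ must, after removing an optimal set of $t$ edges, become (a subgraph of) a polarity graph, then argues the removed edges form a matching among degree-$q$ vertices by ruling out the other ``$2s-1$ edge'' perturbations, each of which is shown to cost strictly more than $t(q-1)$ copies of $C_4$.

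For part (B), the lower bound $h(q^2+q+1,t)\ge(\frac12+o(1))tq$ should follow from the same iterative-deletion scheme as in (A) but now invoking only Theorem \ref{Thm:stability}: each deletion step, as long as the current edge count exceeds $\frac12 q(q+1)^2-\frac12 q+o(q)$, must destroy at least $(\frac12-o(1))q$ copies of $C_4$ (otherwise the graph would already be a subgraph of a polarity graph and could not have that many edges), so $t$ steps destroy at least $(\frac12-o(1))tq$ of them in total. For the two-sided estimate $h(q^2+q+1,t)=\Theta(tq+t^4/q^8)$: the lower bound $\Omega(tq)$ is what we just proved, and the lower bound $\Omega(t^4/q^8)$ is the classical bound (valid in the regime $t=\Omega((q^2)^{3/2})=\Omega(q^3)$, i.e. $t^4/q^8=\Omega(t)$, so the two terms trade dominance around $t\asymp q^{8/3}$), quoted from \cite{ES84}; for the matching upper bound one exhibits, for each $t$, a graph with $\frac12 q(q+1)^2+t$ edges and only $O(tq+t^4/q^8)$ copies of $C_4$ — for small $t$ the matching-extension construction from (A) gives $O(tq)$, and for larger $t$ one adds a near-extremal $C_4$-free-like dense piece (or uses a random/algebraic construction) contributing $O(t^4/q^8)$, taking the better of the two. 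Writing $n=q^2+q+1$ so that $tq\asymp tn^{1/2}$ and $t^4/q^8\asymp t^4/n^4$, this matches the asserted form.

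The main obstacle, I expect, is the converse characterization in part (A): translating ``$G$ minus $t$ edges lies in a polarity graph'' into ``the $t$ deleted edges form a matching among degree-$q$ vertices.'' The difficulty is that Theorem \ref{Thm:super-t=1-classify} only controls what happens after removing \emph{one} surplus edge (reducing to $\frac12q(q+1)^2+1$ edges), so one must iterate it $t$ times while ensuring the polarity graph $H$ found at each stage is the \emph{same} one — this requires a uniqueness/consistency argument, presumably again from Theorem \ref{Thm:stability}'s uniqueness clause — and then classify which size-$(2s-1)$ perturbations can stack up to give a globally optimal $G$. Bounding the number of $C_4$'s lost when two perturbations ``interact'' (share a vertex, or create a $C_4$ between them) is the delicate bookkeeping; the saving grace is that $t< q^{1/8}/30$ makes any such interaction cost a superlinear-in-$q$-per-unit penalty that the budget $t(q-1)$ simply cannot afford, so interactions are forbidden and the deleted/added edges are forced into a matching on degree-$q$ vertices. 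A secondary technical point is making the iterative deletion in part (B) robust down to the $-\frac12 q+o(q)$ edge threshold rather than the weaker $-0.2q$ threshold of Theorem \ref{Thm:stability 0}; this is precisely why the improved stability result, Theorem \ref{Thm:stability}, is needed here.
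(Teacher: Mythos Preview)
Your plan for part (A) is in the right spirit---reduce to the $t=1$ case and then control the distance to a polarity graph---and with the quantitative version of the $t=1$ result (Theorem~\ref{Thm:supersaturation-t=1 general} rather than the weaker Theorem~\ref{Thm:super-t=1-classify}, which is only stated for \emph{fixed} $\ell$) it can be made to work along the lines of Theorem~\ref{Thm:supersaturation-general-t}. The paper's reduction is more direct than your iterative deletion: it simply passes to \emph{any} spanning subgraph $G'$ with $\frac12 q(q+1)^2+1$ edges, applies Theorem~\ref{Thm:supersaturation-t=1 general} to locate an orthogonal polarity graph $H$ with $|E(G')\setminus E(H)|=s$, and then bounds $|E(G)\setminus E(H)|$ directly. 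The equality analysis then comes cheaply from Lemma~\ref{Lem:polarity+plus} (each added edge creates at least $q-1$ copies, with equality only between degree-$q$ vertices) plus Proposition~\ref{prop:even-q-w} (forcing shared endpoints to cost extra), with no need to track ``interactions'' across an iteration.

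The genuine gap is in your lower bound for part (B). You propose to iterate edge-deletion and invoke Theorem~\ref{Thm:stability} at each step, arguing that if the edge in the most $C_4$'s lies in fewer than $(\tfrac12-o(1))q$ of them then ``the graph would already be a subgraph of a polarity graph''. But Theorem~\ref{Thm:stability} applies only to \emph{$C_4$-free} graphs; during your deletion process the graph still contains $C_4$'s, so the stability theorem says nothing, and there is no mechanism forcing the greedily-chosen edge to lie in $\Omega(q)$ copies. The paper obtains the $(\tfrac12+o(1))tq$ lower bound by a completely different, self-contained route (Theorem~\ref{Thm:supersaturation-general-general-t}): a direct double-counting of $2$-paths and uncovered pairs, together with degree-distribution control (bounding $|S_{q+2}|$ and $\sum_k k|S_{q+2+k}|$ via Claim~\ref{Claim:c(v)}-type estimates), with no appeal to stability whatsoever. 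For the matching upper bound in (B) the paper also does not split into cases on $t$: a single random construction (Lemma~\ref{Lem:random-polarity}), adding each non-edge of an orthogonal polarity graph independently with probability $\alpha\asymp t/q^4$, gives $O(tq+t^4/q^8)$ copies of $C_4$ uniformly.
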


\noindent This follows by Theorems \ref{Thm:supersaturation-general-t} and \ref{Thm:supersaturation-general-general-t} which are stated in some more general settings.
We refer readers to Section \ref{sec:Super-general} for their precise statements.
As for general $n$, one also can determine the order of the magnitude of $h(n,t)$ when $t=\Omega(n^{3/2-\epsilon})$ for some $\epsilon\geq 0.2375$.

\begin{prop}\label{prop:1.2625}
Let $n$ be sufficiently large. If $t\geq 3n^{1.2625}$, then $h(n,t)=\Theta(t\sqrt{n}+t^4/n^4)$.
\end{prop}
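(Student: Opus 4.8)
The plan is to prove matching upper and lower bounds on $h(n,t)$ in the stated range $t\geq 3n^{1.2625}$, showing both are $\Theta(t\sqrt{n}+t^4/n^4)$. Since $t\geq 3n^{1.2625}\geq 3n^{9/8}$ dominates $n$, in this regime the dominant term in $t\sqrt n + t^4/n^4$ is governed by whether $t$ is below or above $\Theta(n^{3/2})$: for $t=O(n^{3/2})$ the bound is $\Theta(t\sqrt n)$, while for $t=\Omega(n^{3/2})$ it is $\Theta(t^4/n^4)$; a single expression $t\sqrt n + t^4/n^4$ captures both cases, so it suffices to prove $h(n,t)=\Theta(t\sqrt n + t^4/n^4)$ uniformly. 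For the \emph{lower} bound I would argue as follows. Let $G$ be an $n$-vertex graph with $\ex(n,C_4)+t$ edges; since $\ex(n,C_4)=(\tfrac12+o(1))n^{3/2}\le \tfrac12 n(1+\sqrt{4n-3})/2 +o(n^{3/2})$ by Reiman, write $e(G)=\tfrac12 n^{3/2}+\Delta$ with $\Delta\geq t-o(n^{3/2})\geq \tfrac12 t$ (using $t\gg n^{3/2-\epsilon}$... actually using $t\ge 3n^{1.2625}$ which is \emph{not} necessarily $\gg n^{3/2}$, so be careful — split into the two ranges $t\le n^{3/2}$ and $t\ge n^{3/2}$ and handle each). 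The standard convexity/double-counting argument (counting cherries $P_3$) gives $\#C_4(G)\ge \binom{n}{2}\binom{\bar d/(n-1)\cdot\text{(avg codegree)}}{2}$-type bounds; more precisely, if $\sum_v\binom{d_v}{2}=\binom n2+m$ then $\#C_4(G)\ge \binom{\binom n2 + m}{\binom n2}$-style Kruskal–Katona / convexity estimate yields $\#C_4(G)\ge m^2/\binom n2 \cdot (1-o(1))$ once $m=\Omega(n^{3/2})$, and $m\ge c\,t\sqrt n$ when $e(G)=\tfrac12n^{3/2}+t$, giving the $t\sqrt n$ term; the $t^4/n^4$ term comes from the regime $t\ge n^{3/2}$ where $e(G)\ge c\,n$ forces $m\ge e(G)^2/n$ and hence $\#C_4(G)\ge c\, e(G)^4/n^4\ge c\,t^4/n^4$.

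For the matching \emph{upper} bound I would exhibit, for each $n$ and each $t$ in range, a $C_4$-free-plus-$t$ construction with $O(t\sqrt n + t^4/n^4)$ copies of $C_4$. When $t$ is not too large compared to $n^{3/2}$, take a near-extremal $C_4$-free graph on a subset of vertices (e.g. a polarity graph of order $q$ for the largest prime power $q$ with $q^2+q+1\le n$, padded with isolated vertices) and add a carefully chosen set of $t$ edges, each contributing $O(\sqrt n)$ new $C_4$'s; since polarity graphs have codegrees $\le 1$, adding one edge between vertices $u,v$ creates at most $d(u)+d(v)=O(\sqrt n)$ copies of $C_4$ through that edge, plus $O(1)$ from interactions between the new edges as long as the $t$ added edges are spread out — this needs $t=O(n^{3/2})$ so that we can keep the added edges from piling up codegree. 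When $t\ge n^{3/2}$, the extremal construction is instead a dense quasirandom-type graph (or a blow-up) with $\Theta(n)$ average degree where $\#C_4 = \Theta(t^4/n^4)$, matching the lower bound; one can take a random graph $G(n,p)$ with $p=\Theta(t/n^2)$, which has $(\tfrac12+o(1))n^{3/2}+t$ edges in expectation (since $pn^2/2 \gg n^{3/2}$) and $\#C_4=\Theta(p^4 n^4)=\Theta(t^4/n^4)$ w.h.p., then delete/add $o(t)$ edges to hit the exact edge count.

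The role of the specific exponent $1.2625$ (and the constant $3$) is to make the error terms in Reiman's bound and in the convexity estimates negligible: we need $t$ large enough that $\ex(n,C_4)=\tfrac12 n^{3/2}+O(t)$ with the $O(t)$ not swamping the main term $m=\Theta(t\sqrt n)$, and we need the Kruskal–Katona-type lower bound on $\#C_4$ in terms of $\sum\binom{d_v}{2}$ to be tight up to constants, which requires $\sum\binom{d_v}2 - \binom n2 = \Omega(n^{3/2+\delta})$ for a fixed $\delta>0$; chasing the best-known error term $o(n^{3/2})$ in $\ex(n,C_4)$ versus $\tfrac12 n^{3/2}$ (which is really of size $O(n^{1+o(1)})$ from number-theoretic gaps between consecutive prime powers, the gap being $O(n^{0.525})$ by Baker–Harman–Pintz) forces $1.2625 = 1 + 0.2625 = 1 + \tfrac{0.525}{2}$. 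I expect the main obstacle to be the upper-bound construction in the intermediate range $n^{3/2-\epsilon}\le t\le n^{3/2}$: one must interpolate between the ``sparse polarity graph plus a few edges'' picture and the ``dense random graph'' picture, controlling the number of $C_4$'s through pairs of added edges, and the cleanest route is probably to use the known $h(n,t)=\Theta(t^4/n^4)$ result for $t=\Omega(n^{3/2})$ cited from \cite{ES84} as a black box for the upper end and give an explicit polarity-graph-based construction only for $t\le n^{3/2}$, then observe that in the overlap both formulas agree up to constants.
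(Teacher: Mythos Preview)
Your overall strategy matches the paper's, and your identification of $1.2625 = 1 + 0.525/2$ via Baker--Harman--Pintz is exactly right. But two points deserve comment.

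\textbf{Lower bound.} You reach for Kruskal--Katona and write $\#C_4 \ge m^2/\binom{n}{2}$ where $m = \sum_v \binom{d_v}{2} - \binom{n}{2}$; this is both more than you need and, as stated, the wrong order of magnitude (it would give $t^2/n$ rather than $t\sqrt n$ when $m \asymp t\sqrt n$). The paper uses the elementary bound
\[
2\,\#C_4 \;\ge\; \sum_{\{u,v\}} \binom{d(u,v)}{2} \;\ge\; \sum_{\{u,v\}}\bigl(d(u,v)-1\bigr) \;=\; |P_2| - \binom{n}{2},
\]
together with Jensen on $\sum_v \binom{d_v}{2}$ to get $|P_2| - \binom{n}{2} \ge 2s\sqrt n$ where $s \ge t/2$. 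No Kruskal--Katona is involved; the argument is two lines.

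\textbf{Upper bound.} Here your approach genuinely diverges from the paper's, and the divergence is where you yourself flag the obstacle. You propose two separate constructions (polarity graph plus a deterministic sprinkling of $t$ edges for $t \le n^{3/2}$; a random graph $G(n,p)$ for $t \ge n^{3/2}$) and worry about the intermediate range. The deterministic sprinkling does \emph{not} obviously work once $t$ exceeds $n/2$: you cannot add $t$ edges as a matching, and once added edges share endpoints, the number of $C_4$'s using two new edges can blow up. The paper sidesteps this entirely with a single construction (Lemma~\ref{Lem:random-polarity}): take an orthogonal polarity graph of order $p$ for a prime $p$ near $\sqrt n$, and add each non-edge independently with probability $\alpha = 4t/(p^3(p+1))$. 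A first-moment calculation on the four types of $C_4$ (classified by how many new edges they use) gives $E[\#C_4] = O(tp + t^4/p^8)$ uniformly in $t$, and a Chernoff bound ensures at least $t$ edges are added with positive probability. This random-on-polarity construction is what makes the upper bound go through across the full range without case-splitting, and it is precisely the missing ingredient in your proposal.
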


The organization of this paper is as follows.
Section \ref{sec:preli} consists of preliminaries, where we give notations and collect some results.
In Section \ref{sec:intro-stability}, we outline the proof of Theorem \ref{Thm:stability}.
The full proof of Theorem \ref{Thm:stability} will be divided and completed in Sections \ref{sec:Delta=q+1}, \ref{sec:1-inters} and \ref{sec:polarity}.
In Section \ref{sec:Turan}, we prove Corollary \ref{coro:Turan1} and discuss other consequences on $\ex(n,C_4)$.
In Section \ref{sec:supert=1}, we prove Theorem \ref{Thm:super-t=1-classify}.
In Section \ref{sec:Super-general}, we prove Theorem \ref{Thm:supersaturation} and Proposition \ref{prop:1.2625} for the supersaturation problem of $C_4$.
In Section \ref{sec:concluding}, we discuss several problems in relation to the results here.
We would like to remark that though our results often are stated with parity condition,
many arguments in the proofs in fact work without any parity constraints.

\section{Preliminaries}\label{sec:preli}

\subsection{General notations}
We follow the notations of F\"uredi (e.g. \cite{Fur88}).
A {\it hypergraph} $\dH$ is an ordered pair $(V,\dE)$, where $V$ is a finite set consisting of {\it vertices} and $\dE$ is a collection of subsets (called {\it edges}) of $V$.
We use $e(\dH)$ to denote the number of edges in $\dH$.
For $x\in V$, the {\it degree} $d_{\dH}(x)$ of $x$ denotes the number of edges of $\dH$ containing $x$.
The {\it maximum degree} of $\dH$ is denoted by $\Delta(\dH)=\max\{d_\dH(x):x\in V\}$.
We say $\dH$ is {\it $k$-regular} if all vertices have degree $k$ and {\it $k$-uniform} if all edges have $k$ vertices.
A $k$-uniform hypergraph is also called a {\it $k$-graph} (and a {\it graph} if $k=2$).
We say $\dH$ is \emph{$1$-intersecting} if any two distinct edges of $\dH$ have exactly one common vertex.
The \emph{incidence matrix} of a hypergraph $\dH=(V,\dE)$ is an $|\dE|\times |V|$ matrix $\mathcal{M}(\dH)$
such that $\mathcal{M}(E,x)=1$ if $x\in E\in \dE$ and $0$ otherwise.

Let $G=(V, \dE)$ be a graph. Let $x\in V$ and $A\subseteq V$.
The {\it neighborhood} $N_G(x)$ of $x$ is the set of vertices $y\in V$ with $xy\in \dE$,
while the {\it closed neighborhood} $N_G[x]$ is defined by $N_G(x)\cup \{x\}$.
Let $N_A(x)=N_G(x)\cap A$.
Define $N_G(A)$ to be the set of vertices $u\in V\backslash A$ adjacent to some vertex in $A$ and $G[A]$ to be the subgraph of $G$ induced on $A$.
For a path $P$, its {\it length} $|P|$ denotes the number of edges it contains.
We say $P$ is a $k$-path if $|P|=k$.
For disjoint sets $A, B\subseteq V$, $e(A,B)$ denotes the number of edges $ab$ in $G$ with $a\in A$ and $b\in B$.
A set of edges is called {\it independent} if their endpoints are pairwise-disjoint.
For $u,v\in V$, we let $d_G(u,v)=|N_G(u)\cap N_G(v)|$.
We call $\{u,v\}$ an {\it uncovered} pair if $d_G(u,v)=0$ and a {\it covered} pair otherwise.
Let $UP$ be the set of uncovered pairs of $G$ and let $P_2$ be the set of all 2-paths in $G$.
The \emph{adjacency matrix} $\mathcal{A}(G)$ of $G$ is a $|V|\times |V|$ symmetric matrix such that $\mathcal{A}(x,y)=1$ if $xy\in \dE$ and $0$ otherwise.

Throughout this paper, the notation $\binom{x}{2}$ means the function $x(x-1)/2$ for all reals $x$.
For any positive integer $k$, we write $[k]$ as the set $\{1,2,...,k\}$.
For all above notations, we often drop the subscripts when they are clear from context.

\subsection{Projective planes}
A {\it finite projective plane of order $q$}, denoted by $PG(2,q)$, is a $(q+1)$-uniform $(q+1)$-regular $1$-intersecting hypergraph $\dH=(P,\dL)$ with $|P|=q^2+q+1$,
where $P$ consists of {\it points} and $\dL$ consists of {\it lines}.
It also follows that $|\dL|=q^2+q+1$ and any two points are contained in a unique line.
The existence of $PG(2,q)$ is well known for all prime powers $q$.
On the other hand, a major conjecture in this field asserts that the order $q$ of $PG(2,q)$ must be a prime power (known for $q\leq 11$ and still open for $q=12$).

A substantial body of our proofs will be involved with projective planes and 1-intersecting hypergraphs.
In preparation we now collect some related results.
The first two will play important roles for the constructive nature in our stability result (Theorem \ref{Thm:stability}).

\begin{thm}[\cite{Met91}]\label{Thm:Embedding}
Let $q\geq 3900$ and $\dH$ be a 1-intersecting $(q+1)$-hypergraph with $q^2+q+1$ vertices and more than $q^2-\frac{\sqrt{5}-1}{2}q+17\sqrt{q/5}$ edges.
Then $\dH$ can be embedded into a projective plane of order $q$.
\end{thm}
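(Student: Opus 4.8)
The plan is to show that the ``missing edges'' of $\dH$ can be reconstructed and that adding them back yields a projective plane of order $q$. Write $r(x)=d_\dH(x)$ and call $k(x)=q+1-r(x)$ the \emph{deficiency} of $x$. Since the $r(x)$ edges through $x$ pairwise intersect only in $x$ and each contains $q$ further vertices, these vertices are distinct; hence $r(x)\le q+1$ (so $k(x)\ge0$ and $\Delta(\dH)\le q+1$), and $x$ shares an edge with exactly $r(x)q$ other vertices, i.e.\ $x$ forms an uncovered pair with exactly $q\,k(x)$ vertices, all of positive deficiency. As $\dH$ is $(q+1)$-uniform, $\sum_x r(x)=(q+1)e(\dH)$, so the total deficiency is $\sum_x k(x)=(q+1)m$ with $m:=q^2+q+1-e(\dH)$, and the hypothesis gives $m<\tfrac{\sqrt5+1}{2}\,q+1-17\sqrt{q/5}$; in other words fewer than $\varphi q$ edges are missing, $\varphi$ the golden ratio. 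Let $U$ be the graph on the set $D$ of positive-deficiency vertices whose edges are the uncovered pairs, so $d_U(x)=q\,k(x)$ for $x\in D$. Note that in any projective plane $\Pi\supseteq\dH$ the uncovered pairs would be exactly $\bigsqcup_{\ell}\binom{\ell}{2}$ over the missing lines $\ell$, an edge-disjoint union since two lines of $\Pi$ meet in one point. So the target is to prove, \emph{without assuming $\Pi$}, that $E(U)$ decomposes into cliques of size $q+1$ meeting pairwise in at most one vertex, and then to add these ``missing lines'' back to $\dH$.

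The core of the argument is precisely this structural statement about $U$, which is a sharp instance of the Bruck--Metsch completion philosophy. I would work with a maximal set $S$ of pairwise-uncovered vertices and count edges of $\dH$ relative to it: each edge meets $S$ in at most one vertex, so there are exactly $\sum_{x\in S}r(x)=|S|(q+1)-\sum_{x\in S}k(x)$ edges meeting $S$, and an edge of $\dH$ avoiding $S$ is forced, together with the edges through $S$, to fill out the rest of the structure; weighing this against the deficiency budget $\sum_x k(x)=(q+1)m$ should force $|S|=q+1$ and force $S$ to behave like a line. A ``short'' such set ($|S|\le q$), or a longer ``missing cycle,'' would require strictly more than $\varphi q$ missing edges --- this is exactly why the coefficient $\tfrac{\sqrt5-1}{2}=1/\varphi$ and the error term $17\sqrt{q/5}$ appear: they mark the threshold below which a non-completable sub-configuration can survive. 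Having located one missing line $\ell$, add it to $\dH$: the result is still $1$-intersecting with all degrees $\le q+1$ and strictly smaller total deficiency, so the argument runs inductively, and this peeling also resolves the vertices of deficiency $\ge2$ (where several missing lines overlap) and the degenerate case of a vertex on no edge, which reduces to a net-completion problem on its would-be pencil.

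Let $\dL'$ be the family of missing lines thus produced; it then remains to check that $\dH'=(V,\dE\cup\dL')$ is a projective plane of order $q$. By construction each $\ell'\in\dL'$ has exactly $q+1$ vertices; each $\ell'\in\dL'$ meets each edge $\ell\in\dE$ in exactly one vertex (at most one, since a vertex of $\ell$ lying on two vertices of $\ell'$ would cover an uncovered pair; at least one, by counting the edges of $\dH$ through the $q+1$ vertices of $\ell'$ against the $q^2+q$ vertices off $\ell$); two members of $\dL'$ meet in exactly one vertex by the edge-disjointness of the decomposition together with the same kind of count; and every vertex of $\dH'$ has degree exactly $q+1$, because each $x\in D$ lies on exactly $k(x)$ missing lines, its $q\,k(x)$ uncovered partners splitting into $k(x)$ groups of size $q$ around $x$. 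Hence $\dH'$ is a $(q+1)$-uniform, $(q+1)$-regular, $1$-intersecting hypergraph on $q^2+q+1$ vertices, i.e.\ a $PG(2,q)$ containing $\dH$, as required.

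The main obstacle is the structural claim of the second paragraph: proving that the uncovered-pair graph genuinely splits into $K_{q+1}$'s. Degree and edge counts alone are consistent with many spurious configurations, and ruling them out needs the extremal input that a short missing line or a ``missing near-pencil'' is incompatible with $e(\dH)>q^2-\tfrac{\sqrt5-1}{2}q+17\sqrt{q/5}$. The bookkeeping is delicate because deficiency-$\ge2$ vertices make the reconstructed lines overlap, so the inductive peeling must be carried out with a carefully monotone potential; this is also where the hypothesis $q\ge3900$ is needed.
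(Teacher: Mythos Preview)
The paper does not prove this theorem at all: it is quoted verbatim from Metsch's monograph \cite{Met91} and used as a black box in Section~\ref{sec:polarity}. So there is no ``paper's own proof'' to compare against.

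As for your proposal on its own terms: what you have written is a plausible outline of the Bruck--Metsch completion strategy, and the setup (deficiencies $k(x)$, the uncovered-pair graph $U$, the target decomposition into $(q{+}1)$-cliques, the verification that $\dH'$ is a projective plane once such a decomposition exists) is correct. But you yourself identify the gap and do not close it: the entire weight of the theorem lies in the structural claim that $E(U)$ decomposes into $K_{q+1}$'s meeting pairwise in at most one vertex, and your second paragraph only says what one ``should'' do and why the threshold $\tfrac{\sqrt5-1}{2}q$ is the natural one, without carrying out any of the extremal analysis. The sentence ``weighing this against the deficiency budget \ldots\ should force $|S|=q+1$'' is not an argument; the actual proof in \cite{Met91} requires substantial work (analysis of short lines, bounds on the number of points of each deficiency, a careful induction on the number of missing lines) that is nowhere present here. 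Likewise the ``inductive peeling'' is asserted but not shown to terminate or to preserve the hypothesis needed to find the next missing line.

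In short: this is a reasonable summary of the shape of Metsch's argument, but it is a sketch, not a proof. Since the paper treats the result as a citation, that is all that is required here; if you intend to supply an independent proof you would need to actually execute the extremal step you flag as ``the main obstacle.''
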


\begin{thm}[\cite{D83}]\label{Thm:Embedding unique}
Let $\dH$ be a 1-intersecting $(q+1)$-hypergraph with $q^2+q+1$ vertices and more than $q^2-q+1$ edges.
If $\dH$ can be embedded into a projective plane of order $q$, then this projective plane and the embedding both are unique.
\end{thm}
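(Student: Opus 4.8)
We may assume $\dH=(V,\dE)$ does embed into some projective plane of order $q$. Since an embedding sends every $(q+1)$-element edge bijectively onto a line, and $|V(\dH)|=q^2+q+1$ is exactly the number of points, any embedding identifies $V(\dH)$ with the point set of a projective plane $\Pi$ of order $q$ so that $\dH$ is obtained from $\Pi$ by deleting the set $D$ of lines that are not edges of $\dH$; here $|D|=(q^2+q+1)-e(\dH)<2q$ by hypothesis. The plan is to show that $D$ is \emph{determined by $\dH$ alone}: precisely, that $D$ equals the set of \emph{transversals} of $\dH$, where a transversal is a $(q+1)$-subset $T\subseteq V(\dH)$ with $|T\cap E|=1$ for every edge $E$. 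Granting this, whenever $\dH$ embeds into a projective plane of order $q$ that plane must, after the identification above, have line set $\dE\cup\{\text{transversals of }\dH\}$, an incidence structure depending only on $\dH$; this gives uniqueness of the plane and, up to an automorphism of it, of the embedded copy of $\dH$.

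One inclusion is immediate: a line $M\in D$ is a line of $\Pi$ different from every edge, so $|M\cap E|=1$ for all $E\in\dE$ and $M$ is a transversal; conversely a transversal cannot be an edge, since it would meet itself in $q+1>1$ points. So the real work is to prove that every transversal $T$ is a line of $\Pi$. Suppose not. A line through two points of $T$ cannot be an edge (a transversal meets each edge once), so the $q+1$ points of $T$ are not collinear; by the de Bruijn--Erd\H{o}s theorem they span at least $q+1$ lines of $\Pi$, each of which is therefore missing, so $D_T:=\{L\in D:|L\cap T|\ge 2\}$ satisfies $|D_T|\ge q+1$.

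To locate the remaining missing lines I would exploit a counting identity forced by transversality. For a point $w\notin T$, summing $|L\cap T|$ over the $q+1$ lines $L$ through $w$ counts each point of $T$ once, giving total $q+1$; the $q+1-s_w$ surviving lines through $w$ each contribute exactly $1$, where $s_w$ is the number of missing lines through $w$, so $\sum_{L\in D,\ w\in L}|L\cap T|=s_w$. Now pick $M\in D_T$ with $t:=|M\cap T|$ maximum; since $T$ is not a line, $t\le q$, hence $M\setminus T\ne\emptyset$ and $t\ge 2$. For each $w\in M\setminus T$ the line $M$ alone contributes $t$ to the sum $s_w$, so the other $s_w-1$ missing lines through $w$ contribute only $s_w-t$ in total, whence at least $(s_w-1)-(s_w-t)=t-1$ of them are disjoint from $T$. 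Missing lines through two distinct points of $M$ other than $M$ itself are distinct, so $D$ contains at least $(q+1-t)(t-1)\ge q-1$ lines disjoint from $T$, the minimum over $2\le t\le q$ being attained at the endpoints. These lines are disjoint from $T$, hence from $D_T$, so $|D|\ge (q+1)+(q-1)=2q$, contradicting $|D|<2q$. Thus $T$ is a line of $\Pi$, which finishes the characterization of $D$ and hence the theorem.

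I expect the last paragraph to be the crux. The estimate $(q+1-t)(t-1)\ge q-1$ on the number of ``extra'' missing lines depends on choosing $M$ to meet $T$ maximally and on the transversality identity $\sum_{L\in D,\ w\in L}|L\cap T|=s_w$, and it is calibrated exactly so that, together with $|D_T|\ge q+1$ from de Bruijn--Erd\H{o}s, it reaches the threshold $2q$. This is also the only place where the hypothesis $e(\dH)>q^2-q+1$ (equivalently $|D|<2q$) is used, which is why the bound in the statement is sharp.
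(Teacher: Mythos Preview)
The paper does not prove this theorem; it is quoted from Dow \cite{D83} and used as a black box. So there is no in-paper proof to compare against. Your argument is correct and is essentially the standard one: characterise the missing lines $D$ intrinsically as the $(q+1)$-element transversals of $\dH$, so that the completed plane is forced to have line set $\dE\cup\{\text{transversals of }\dH\}$. The two ingredients---de Bruijn--Erd\H{o}s giving $|D_T|\ge q+1$ for a non-line transversal $T$, and the identity $\sum_{L\in D,\,w\in L}|L\cap T|=s_w$ yielding at least $(q+1-t)(t-1)\ge q-1$ further missing lines disjoint from $T$---combine exactly to reach the threshold $|D|\ge 2q$, which is where the hypothesis $e(\dH)>q^2-q+1$ bites. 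All the small checks (e.g.\ that $s_w\ge t$ so the subtraction is legitimate, and that the $T$-disjoint missing lines through distinct points of $M\setminus T$ are distinct because the unique line through two such points is $M$ itself) go through.

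One minor point: the clause ``up to an automorphism of it, of the embedded copy of $\dH$'' in your first paragraph is unnecessary and slightly weaker than what you actually prove. Since $|V(\dH)|=q^2+q+1$, any embedding is a bijection on points, and once you have shown that the line set of the ambient plane equals $\dE\cup\{\text{transversals of }\dH\}$ on the \emph{fixed} vertex set $V$, the plane and the (identity) embedding are determined outright, not merely up to automorphism. You can simply drop that qualifier.
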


The following celebrated Bruck-Ryser theorem \cite{BR} gives a sufficient condition for the non-existence of projective planes of given order.

\begin{thm}[\cite{BR}]\label{Thm:BR}
If $q\equiv 1 \mbox{ or } 2\mod 4$ is an integer which cannot be expressed as a sum of two square numbers,
then there exist no projective planes of order $q$.
\end{thm}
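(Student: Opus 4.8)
The plan is to run the classical Bruck--Ryser argument: assume for contradiction that a projective plane $\dH=(P,\dL)$ of order $q$ exists, and deduce that $q$ must be a sum of two integer squares, contradicting the hypothesis. Write $N=q^2+q+1=|P|=|\dL|$ and let $M=\mathcal{M}(\dH)$ be the $N\times N$ incidence matrix. Since every line has $q+1$ points and any two distinct lines meet in exactly one point, $MM^{T}=qI_N+J_N$, where $J_N$ is the all-ones matrix. Hence, for every $\mathbf{x}\in\mathbb{Q}^N$, putting $z_i:=(M^{T}\mathbf{x})_i$ (an integral linear form in $x_1,\dots,x_N$) gives the polynomial identity
$$q\,(x_1^2+\cdots+x_N^2)+(x_1+\cdots+x_N)^2=z_1^2+\cdots+z_N^2.$$
This is the only point at which the existence of the plane is used; everything after is elementary arithmetic of rational quadratic forms.

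First I would record the parity fact: if $q\equiv 1$ or $2\pmod 4$, then $N=q^2+q+1\equiv 3\pmod 4$, so $N+1\equiv 0\pmod 4$. Introduce an extra free variable $x_{N+1}$ and rewrite the identity as
$$q\,(x_1^2+\cdots+x_{N+1}^2)+(x_1+\cdots+x_N)^2=z_1^2+\cdots+z_N^2+q\,x_{N+1}^2.$$
By Lagrange's four-square theorem write $q=b_1^2+b_2^2+b_3^2+b_4^2$; Euler's four-square identity then expresses $q(a_1^2+a_2^2+a_3^2+a_4^2)$ as a sum of four squares of rational linear forms in $a_1,\dots,a_4$, the change of variables having determinant $q^2\neq 0$ and hence invertible over $\mathbb{Q}$. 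Applying this to each of the $(N+1)/4$ consecutive blocks of four among $x_1,\dots,x_{N+1}$ produces an invertible rational substitution $\mathbf{x}\leftrightarrow\mathbf{y}$ with $q(x_1^2+\cdots+x_{N+1}^2)=y_1^2+\cdots+y_{N+1}^2$. Taking $y_1,\dots,y_{N+1}$ as the free variables, the identity becomes
$$y_1^2+\cdots+y_{N+1}^2+w^2=z_1^2+\cdots+z_N^2+q\,x_{N+1}^2,$$
where $w:=x_1+\cdots+x_N$ and $z_1,\dots,z_N,x_{N+1}$ are now rational linear forms in $y_1,\dots,y_{N+1}$.

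The core step is a variable-elimination descent. Writing $z_1=c_1y_1+\beta$ with $\beta$ a rational linear form in $y_2,\dots,y_{N+1}$, one may specialize $y_1$ to a rational linear form in $y_2,\dots,y_{N+1}$ so that $z_1=\pm y_1$: take $y_1=\beta/(1-c_1)$ if $c_1\neq 1$, and $y_1=-\beta/2$ if $c_1=1$. After this substitution $y_1^2$ and $z_1^2$ become identical polynomials and cancel, leaving an identity in $y_2,\dots,y_{N+1}$ of the same shape with one fewer square on each side. Iterating $N$ times (cancelling $y_i$ against $z_i$) leaves
$$y_{N+1}^2+w^2=q\,x_{N+1}^2$$
with $w=\mu y_{N+1}$ and $x_{N+1}=\nu y_{N+1}$ for some $\mu,\nu\in\mathbb{Q}$; since the left-hand side is positive we have $\nu\neq 0$, and comparing coefficients of $y_{N+1}^2$ gives $1+\mu^2=q\nu^2$, i.e. $q=(1/\nu)^2+(\mu/\nu)^2$ is a sum of two squares of rationals. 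A positive integer that is a sum of two rational squares is already a sum of two integer squares (the integers with this property are precisely those in whose factorization every prime $\equiv 3\pmod 4$ occurs to an even power, a condition unaffected by clearing a square denominator), so $q$ is a sum of two integer squares --- contradicting the hypothesis. Hence no projective plane of order $q$ exists.

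I expect the delicate points to be: (i) checking that each descent step really preserves a genuine polynomial identity in the remaining free variables and is always performable --- this needs the tiny case split $c_1=1$ versus $c_1\neq 1$ and attention to the fact that $w,z_i,x_{N+1}$ stay well-defined linear forms at every stage; and (ii) stating the ``rational squares imply integer squares'' fact cleanly. A more conceptual alternative is to observe that the first identity exhibits a $\mathbb{Q}$-equivalence between the quadratic form with matrix $qI_N+J_N$ (whose determinant $(q+1)^2q^{q^2+q}$ is a perfect square, since $q^2+q$ is even) and the rank-$N$ unit form, and then to invoke the Hasse--Minkowski theorem: matching the Hasse--Witt invariants place by place, via the Hilbert-symbol product formula, yields the same arithmetic constraint on $q$. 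I would present the elementary descent as the main proof and relegate this to a remark.
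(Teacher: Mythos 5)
The paper does not prove this statement at all: it is quoted verbatim as the classical Bruck--Ryser theorem from the cited reference [BR], so there is no internal proof to compare against. Your proposal is, in substance, the standard Bruck--Ryser argument and it is sound: the incidence identity $MM^{T}=qI_N+J_N$ giving $q\sum x_i^2+(\sum x_i)^2=\sum z_i^2$, the parity observation $N=q^2+q+1\equiv 3\pmod 4$ so that $N+1$ is divisible by $4$, the Lagrange--Euler four-square blocking with an invertible rational change of variables, the variable-by-variable descent with the $c_1=1$ versus $c_1\neq 1$ case split, and the final reduction $1+\mu^2=q\nu^2$ combined with the fact that a positive integer which is a sum of two rational squares is a sum of two integer squares, are all correctly in place. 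The only caveat is bookkeeping, which you already flag: at each descent step the eliminated variable must be replaced by a rational linear form in the surviving variables so that the identity remains a polynomial identity, and at the end $w$ and $x_{N+1}$ are automatically multiples of $y_{N+1}$; written out carefully this is exactly the published proof of Bruck and Ryser, so your argument would serve as a correct self-contained justification of the cited theorem.
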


We also need a useful lemma proved by F\"uredi (i.e., Lemma 3.7 in \cite{Fur88}).

\begin{lem}[\cite{Fur88}]\label{Lem:Furedi-symmetry}
Let $M=(m_{ij})$ be the incidence matrix of a projective plane of order $q$.
Suppose that $m_{ij}=m_{ji}$ whenever $1\leq i\leq q^2-q+3$ or $1\leq j\leq q^2-q+3$.
Then the whole matrix $M$ is symmetric.
\end{lem}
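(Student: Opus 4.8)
The plan is to exploit the block structure of $M$ relative to the partition of the common index set $\{1,\dots,q^{2}+q+1\}$ into $S=\{1,\dots,q^{2}-q+3\}$ and $T=\{q^{2}-q+4,\dots,q^{2}+q+1\}$, where $|T|=2q-2$. Writing $M=\begin{pmatrix}A&B\\ C&E\end{pmatrix}$ with $A$ the $S\times S$ block and $E$ the $T\times T$ block, the hypothesis says precisely that $A=A^{\top}$ and $C=B^{\top}$. Hence $M-M^{\top}=\begin{pmatrix}0&0\\0&D\end{pmatrix}$ with $D:=E-E^{\top}$ skew-symmetric, and the whole content of the lemma is that this $(2q-2)\times(2q-2)$ matrix $D$ vanishes.

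First I would record the standard identities for an incidence matrix of a projective plane of order $q$: $MM^{\top}=M^{\top}M=qI+J$ and $M\mathbf 1=M^{\top}\mathbf 1=(q+1)\mathbf 1$, whence $\det(M)^{2}=\det(qI+J)=(q+1)^{2}q^{q^{2}+q}\ne 0$, so $M$ is invertible. Two consequences are key. Since $M(M-M^{\top})=M^{2}-(qI+J)=(M-M^{\top})M$, the difference $M-M^{\top}$ commutes with $M$, so by invertibility $M=M^{\top}$ is \emph{equivalent} to $M^{2}=qI+J$. And comparing blocks: $MM^{\top}=M^{\top}M$ forces $BD=0$ and $EE^{\top}=E^{\top}E$; the $T$-part of $M\mathbf 1=M^{\top}\mathbf 1$ forces $D\mathbf 1=0$; and the $(T,T)$-block of $MM^{\top}=qI+J$ gives $B^{\top}B+EE^{\top}=qI+J$ on $\mathbb R^{T}$. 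In particular, since $D$ is skew-symmetric, its row space coincides with its column space, lies in $\ker B$, is orthogonal to $\mathbf 1$, and $\operatorname{rank}D$ is even; it would therefore suffice to prove $\operatorname{rank}(B)\ge 2q-3$ (forcing $\dim\ker B\le1$, hence $D=0$).

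The remaining, geometric, step is to rule out a nonzero such $D$ inside $PG(2,q)$. A nonzero $D$ is a nonempty family of ``asymmetric incidences'' $p_{j}\in\ell_{i}$, $p_{i}\notin\ell_{j}$ with $i,j\in T$; by $D\mathbf 1=0$ this family is a balanced oriented graph on the $2q-2$ indices of $T$, hence contains a directed cycle. Here one also uses the indexing constraint obtained by summing $m_{i\nu}=m_{\nu i}$ over $\nu\in S$ for fixed $i\in T$: the number of $S$-indexed points on $\ell_{i}$ equals the number of $S$-indexed lines through $p_{i}$, so the $T$-indexed points on $\ell_{i}$ and the $T$-indexed lines through $p_{i}$ have the same count $q+1-a_{i}$. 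Chasing an incident point or line around the directed cycle and invoking that each line carries $q+1$ points and each pencil $q+1$ lines, one shows the large-index lines through (respectively points on) the vertices of the cycle must occupy more than $2q-2$ distinct slots, contradicting $|T|=2q-2$; alternatively, the same count can be run through $BD=0$ together with $B^{\top}B+EE^{\top}=qI+J$ directly.

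I expect the geometric step to be the main obstacle. The linear algebra alone is not enough: every trace, rank, or spectral identity derivable from $MM^{\top}=M^{\top}M=qI+J$ and $\det M\ne0$ is automatically consistent with a nonzero $D$. One genuinely has to combine the \emph{sparsity} of $D$ (support of size $\le 2q-2$) with finite-geometry counting, and the delicate point is that the threshold $q^{2}-q+3$, i.e. $|T|=2(q-1)$, should be exactly the largest free-block size for which these pencil/line counts still close — which is presumably why the statement carries that precise constant.
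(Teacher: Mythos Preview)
The paper does not prove this lemma; it quotes it verbatim as Lemma~3.7 of F\"uredi's 1988 preprint \cite{Fur88} and uses it as a black box in Section~\ref{sec:polarity}. There is therefore no in-paper argument to compare against, and I can only assess your proposal on its own.

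Your linear-algebraic reduction is correct. With $M-M^{\top}=\bigl(\begin{smallmatrix}0&0\\0&D\end{smallmatrix}\bigr)$ and $D=E-E^{\top}$, the identities $MM^{\top}=M^{\top}M=qI+J$ and $M\mathbf 1=M^{\top}\mathbf 1$ do give $BD=0$, $D\mathbf 1=0$, $EE^{\top}=E^{\top}E$, and hence the column space of $D$ sits inside $\ker B\cap\mathbf 1^{\perp}$ with even rank; so $\operatorname{rank}B\ge 2q-3$ would indeed force $D=0$. That reduction is clean and worth keeping.

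The gap is precisely where you say it is, and it is a real one. You propose two ways to finish and complete neither. For the rank route, you would need $\ker B\subseteq\operatorname{span}\{\mathbf 1\}$, equivalently that $EE^{\top}$ has no eigenvalue $q$ on $\mathbf 1^{\perp}$ (from $B^{\top}B=qI+J-EE^{\top}$); nothing you have written bounds the spectrum of $EE^{\top}$, and the normal-matrix identities alone cannot, since they are all consistent with $D\ne 0$. For the ``geometric'' route, the paragraph is a sketch, not a proof: the constraint you extract by summing $m_{i\nu}=m_{\nu i}$ over $\nu\in S$ is exactly $D\mathbf 1=0$ again (out-degree $=$ in-degree in the asymmetry digraph), so it adds nothing; and the sentence ``chasing an incident point or line around the directed cycle \dots\ forces more than $2q-2$ distinct slots'' does not specify what is being counted, why the objects produced are distinct, or how the projective-plane axioms yield the overcount. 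As written, this step is an intention rather than an argument, and the lemma remains unproved. If you want to push the combinatorial side through, you will need to use $BD=0$ pointwise (for each $S$-line the signed count of asymmetries among its $T$-points vanishes) together with the fact that two points determine a unique line; the bound $|T|=2q-2$ has to enter quantitatively, and so far it does not.
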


The coming lemma has been used in literatures (e.g. \cite{D83}),
which will serve as a handy tool for finding a large 1-intersecting hypergraph in the proof of Theorem \ref{Thm:stability}.
For completion, we give a proof.

\begin{lem}[e.g. \cite{D83}]\label{lem:enlarge R}
Let $\dH$ be a 1-intersecting $(q+1)$-hypergraph on vertex set $V$ with $|V|=q^2+q+1$.
Suppose that $\dF$ is a $(q+1)$-uniform hypergraph on the same vertex set $V$ such that $\dF\cap\dH=\emptyset$ and for any edge $f\in \dF$,
there exist $q$ edges $h_1,..., h_q\in \dH$ satisfying that $f\cup h_1\cup...\cup h_q=V$ and $|f\cap h_1\cap...\cap h_q|=1$.
Then $\dH\cup\dF$ is also $1$-intersecting.
\end{lem}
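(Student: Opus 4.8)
The plan is to verify that $\dH \cup \dF$ is $1$-intersecting by checking, for every pair of distinct edges, that they meet in exactly one vertex. Since $\dH$ is already $1$-intersecting by hypothesis, there are only two remaining cases: (i) a pair $f \in \dF$ and $h \in \dH$, and (ii) a pair $f, f' \in \dF$ of distinct edges. The key structural input is the covering condition: each $f \in \dF$ comes with edges $h_1, \dots, h_q \in \dH$ with $f \cup h_1 \cup \dots \cup h_q = V$ and $|f \cap h_1 \cap \dots \cap h_q| = 1$; call the unique vertex in this common intersection the \emph{center} of $f$ and denote it $x_f$.

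\textbf{Case (i): $f \in \dF$, $h \in \dH$.} First I would observe that $f$ cannot be disjoint from $h$: if $f \cap h = \emptyset$, then $h \subseteq V \setminus f = \bigcup_{i=1}^q (h_i \setminus f)$, and each set $h_i \setminus f$ has size $|h_i| - |h_i \cap f|$. Since $\dH$ is $1$-intersecting and $h$ is distinct from each $h_i$ (as $f$ meets every $h_i$ but not $h$), we have $|h \cap h_i| = 1$ for all $i$, so $h$ is covered by $q$ sets each contributing at most one of its vertices, forcing $|h| \le q$, contradicting $|h| = q+1$. Hence $|f \cap h| \ge 1$. For the upper bound, suppose $|f \cap h| \ge 2$, say $u, v \in f \cap h$ with $u \ne v$. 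At most one of $u, v$ is the center $x_f$; say $v \ne x_f$. Then $v \in f$ but $v$ is not in $\bigcap_i h_i$, so there is some index $j$ with $v \notin h_j$; yet $v \in f \subseteq$... more carefully: $v$ must lie in some $h_i$ since the $h_i$ cover $V$, and $v \in f \cap h_i$ gives $|f \cap h_i| \ge 1$; combined with $x_f \in f \cap h_i$ and $x_f \ne v$ this would give $|f \cap h_i| \ge 2$. Then $h$ and $h_i$ both contain $v$; if they also both contain another common vertex we contradict $1$-intersection of $\dH$ (when $h \ne h_i$) — I expect the clean way is a double-counting argument: the $q$ edges $h_i$ together with $f$ partition-cover $V$ with total size $(q+1)^2$ over $q^2+q+1$ vertices, and the overlaps are tightly constrained, so any edge of $\dH$ or $\dF$ meeting $f$ twice is forced to coincide with one of the $h_i$, which is impossible for $h \notin \{h_1,\dots,h_q\}$ and absurd anyway. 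I would package this as: the number of incidences between $V$ and $\{f, h_1, \dots, h_q\}$ is exactly $q^2+q+1 + q$ (each vertex covered once, except $x_f$ covered $q+1$ times), which pins down all pairwise intersections among $f, h_1, \dots, h_q$ to be exactly $\{x_f\}$, and then $|f \cap h| \ge 2$ with $h \ne h_i$ creates a vertex covered too many times or a violation of $1$-intersection in $\dH$.

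\textbf{Case (ii): $f, f' \in \dF$ distinct.} By Case (i), $f'$ meets each of $h_1, \dots, h_q$ (the covering edges of $f$) in exactly one vertex; since these $h_i$ together with $f$ cover $V$ and $f'$ has $q+1$ vertices, $f'$ distributes its $q+1$ vertices among $f, h_1, \dots, h_q$. If $f \cap f' = \emptyset$, then all $q+1$ vertices of $f'$ lie in $\bigcup h_i$, but $f'$ meets each $h_i$ once and the $h_i$ pairwise meet only in $x_f$, so $f'$ would need $q+1$ distinct vertices spread over $q$ edges with the stated overlap pattern — a counting contradiction (at most $q$ vertices available outside $x_f$ unless $x_f \in f'$, and even accounting for $x_f$ one checks $|f'| \le q+1$ is saturated only if... ) — I expect $f \cap f' = \emptyset$ to be ruled out cleanly this way. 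For the upper bound $|f \cap f'| \le 1$: if $u, v \in f \cap f'$ distinct, at most one equals $x_f$, and the other, say $v$, lies in some $h_j$; then $f' \cap h_j \supseteq \{v\}$, and combined with $v \in f$ we get a vertex shared by $f, f', h_j$; using that $h_j$ is also the analogous covering edge and invoking Case (i) bounds for $f'$ versus $h_j$ together with the center structure forces $u \in h_j$ too or a contradiction with $1$-intersection. The cleanest route is again incidence counting: $f' $ and the $q+1$ edges $f, h_1, \dots, h_q$ live on $q^2+q+1$ vertices; $|f'| + \sum |f| + \sum|h_i| = (q+1)^2 + (q+1)$, so the total overlap is exactly $q$, all concentrated at $x_f$ from the $f, h_i$ side, leaving $f'$ to meet each of $f, h_1, \dots, h_q$ in exactly one vertex, in particular $|f \cap f'| = 1$.

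\textbf{Main obstacle.} The crux is making the "extra vertex can't fit" double-counting rigorous and uniform across both cases — precisely, showing that the covering hypotheses $f \cup h_1 \cup \dots \cup h_q = V$ and $|f \cap \bigcap h_i| = 1$ force every $h_i$ to contain $x_f$ and the $h_i$ to pairwise intersect \emph{only} in $x_f$, so that $\{f, h_1, \dots, h_q\}$ behaves like a "pencil of lines through $x_f$" and leaves no room for any other edge to meet $f$ twice. Once that structural picture is established, Cases (i) and (ii) both reduce to the observation that any $(q+1)$-set meeting this pencil twice would have to be one of the pencil lines, which $\dF \cap \dH = \emptyset$ and distinctness forbid. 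I would also double-check the edge case where $x_{f}$ itself lies in the other edge, to confirm the intersection is still exactly the singleton $\{x_f\}$ rather than larger.
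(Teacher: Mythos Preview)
Your overall plan is sound, and the structural fact you flag as the ``main obstacle'' is indeed the key: from $|h_i \cap h_j| = 1$ (since $\dH$ is $1$-intersecting) together with $x_f \in \bigcap_i h_i$, one gets $h_i \cap h_j = \{x_f\}$ for $i\ne j$; then a straight size count on $V \setminus \{x_f\}$ (total size $q^2+q$ on the left, $q + q\cdot q$ on the right) forces $f \setminus \{x_f\}, h_1 \setminus \{x_f\}, \dots, h_q \setminus \{x_f\}$ to partition $V \setminus \{x_f\}$, exactly the pencil picture you want. With this in hand, both of your cases go through. One small slip in your Case~(i) sketch: the $h_i$ alone do \emph{not} cover $V$, so ``$v$ must lie in some $h_i$'' fails for $v \in f \setminus \{x_f\}$; but once you have the partition, the correct argument for $|f\cap h|\le 1$ is immediate by splitting on whether $x_f \in h$ (if $x_f\in h$ then $h\cap h_i=\{x_f\}$ for all $i$, forcing $h=f$; if $x_f\notin h$ then $|h\cap(f\setminus\{x_f\})|\ge 2$ leaves at most $q-1$ vertices of $h$ for the $q$ blocks $h_i\setminus\{x_f\}$, so some $|h\cap h_j|=0$).

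The paper's proof differs in one organizational point worth noting: rather than handling Case~(ii) directly, it observes that it suffices to prove $\dH \cup \{f\}$ is $1$-intersecting for each single $f \in \dF$, and then iterates --- at each stage the enlarged hypergraph $\dG$ is $1$-intersecting and still contains the witnessing edges $h_1, \dots, h_q \in \dH\subseteq\dG$, so the single-edge argument applies verbatim with $\dG$ in place of $\dH$. This collapses your Case~(ii) into Case~(i) for free. Your direct Case~(ii) argument (use Case~(i) to get $|f' \cap h_i| = 1$ for each $i$, then count $f'$ against the partition: if $x_f\in f'$ one is forced to $f'=f$, otherwise $|f'\cap f|=(q+1)-q=1$) is correct and only marginally longer; the iteration trick simply buys a cleaner reduction.
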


\begin{proof}
We first point out that to show this, it suffices to prove that for any $f\in \dF$, $\dH\cup \{f\}$ is $1$-intersecting.
This is because if we initially set $\dG=\dH$ and repeatedly operate by applying this for one edge $f\in \dF$ at a time and updating $\dG$ by $\dG\cup \{f\}$,
then in the end we would conclude that $\dG=\dH\cup\dF$ is $1$-intersecting.
Note that this indeed is valid as the conditions in the statement also hold for $\dG$ (instead of $\dH$) at any given time.

For the above desired statement, suppose on the contrary that there exist $f\in \dF$ and $h\in \dH$ such that $|h\cap f|=0$ or $|h\cap f|\geq 2$.
We know that there are $h_1,..., h_q\in \dH$ and $u\in V$ such that $f\cup h_1\cup...\cup h_q=V$ and $f\cap h_1\cap...\cap h_q=\{u\}$.
By the size of $V$, we also see that $f\backslash \{u\}, h_1\backslash \{u\},..., h_q\backslash \{u\}$ must form a partition of $V\backslash \{u\}$.
It is then clear that $h\notin \{f, h_1,\cdots, h_q\}$.
If $|h\cap f|=0$, then there must exist some $i\in [q]$ such that $|h\cap h_i|\geq 2$, a contradiction to that $\dH$ is $1$-intersecting.
Hence we may assume $|h\cap f|\geq 2$.
Suppose $u\in h$. Then $u\in h\cap h_i$ for all $i\in [q]$; since $\dH$ is 1-interesting, we conclude that $h\cap (h_1\cup ...\cup h_q)=\{u\}$ and thus $h=f$, a contradiction.
Now suppose $u\notin h$. Then $|h\cap (f\backslash \{u\})|\geq 2$ and thus there exists some $j\in [q]$ such that $|h\cap (h_j\backslash \{u\})|=0$,
which also shows that $|h\cap h_j|=0$, a contradiction.
We have completed the proof now.
\end{proof}

\subsection{Polarity graphs}

A \emph{polarity} $\pi$ of a projective plane $\dH=(P,\dL)$ is a bijection $\pi: P\cup\dL\to P\cup\dL$ such that
\begin{itemize}
\item $\pi^2$ is the identity function with $\pi: P\leftrightarrow \dL$, and
\item for any pair $(x,L)\in P\times\dL$, if $x\in L$ then $\pi(L)\in \pi(x)$.
\end{itemize}
For a projective plane $\dH=(P,\dL)$ of order $q$, where $P=\left\{x_i\right\}$ and $\mathcal{L}=\left\{L_i\right\}$,
consider a function $\phi: P\leftrightarrow \dL$ which maps $x_i\leftrightarrow L_{\sigma(i)}$ for some permutation $\sigma$ on $[q^2+q+1]$.
Let $\dM(\phi)$ be the incidence matrix of $\dH$, where the rows are listed in the order of $x_i$'s and the columns are listed in the order of $L_{\sigma(i)}$'s as $i$ increases.
It is worth pointing out that
\begin{equation}\label{equ:polairty}
\mbox{the function $\phi$ is a polarity $\Longleftrightarrow$ the incidence matrix $\dM(\phi)$ is symmetric.}
\end{equation}

Now let $\pi$ be a polarity of a projective plane $\dH=(P,\dL)$ of order $q$.
The \emph{polarity graph} $G(\pi)$ (of order $q$) is a simple graph on the vertex set $P$ such that $xy\in E(G(\pi))$ if and only if $x\in \pi(y)$.
A point $x\in P$ is called {\it absolute} (with respect to $\pi$) if $x\in \pi(x)$.
Let $a(\pi)$ denote the number of absolute points.
In \cite{Ba46}, Baer proved that
\begin{equation}\label{equ:a(pi)}
\mbox{there exists some integer $m_\pi\geq 0$ such that $a(\pi)=q+1+m_\pi\cdot\sqrt{q}$.}
\end{equation}
A polarity $\pi$ and its polarity graph $G(\pi)$ are called {\it orthogonal}, if $a(\pi)=q+1$ (i.e., $m_\pi=0$).
It is known that for any prime power $q$, there always exists an orthogonal polarity graph of order $q$.

Combining the above facts, it is easy to derive the following for polarity graphs.
\begin{prop}\label{prop:G(pi)}
Let $\pi$ be a polarity of order $q$.
Then the polarity graph $G(\pi)$ is a $C_4$-free graph on $q^2+q+1$ vertices with exactly $\frac12q(q+1)^2-\frac{m_\pi}2 \sqrt{q}$ edges such that every vertex has degree $q$ or $q+1$.
\end{prop}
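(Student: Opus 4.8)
The plan is to verify the three asserted properties of $G(\pi)$ — being $C_4$-free, having $q^2+q+1$ vertices, and the stated edge count with all degrees $q$ or $q+1$ — by unwinding the definition of the polarity graph and using the combinatorial axioms of $PG(2,q)$ together with Baer's formula \eqref{equ:a(pi)}. The vertex count is immediate since $V(G(\pi))=P$ and $|P|=q^2+q+1$ by definition of a projective plane of order $q$. For the degree count, I would fix a point $x\in P$ and note that $N_{G(\pi)}(x)=\{y\in P: x\in\pi(y)\}$; since $\pi$ is a bijection $P\to\dL$, this set is in bijection with $\{L\in\dL : x\in L\}\setminus\{\pi(x)\text{ if }x\in\pi(x)\}$, more precisely $d_{G(\pi)}(x)$ equals the number of lines through $x$, namely $q+1$, minus a correction of $1$ exactly when $x$ is absolute (because the edge ``$x$ to itself'' is not allowed in a simple graph, i.e. the line $\pi(x)$ through $x$ contributes a loop rather than an edge). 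Hence $d_{G(\pi)}(x)=q+1$ if $x$ is non-absolute and $d_{G(\pi)}(x)=q$ if $x$ is absolute, which in particular gives that every degree is $q$ or $q+1$.

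Next I would count edges by $\sum_{x\in P} d_{G(\pi)}(x) = 2e(G(\pi))$. Writing $a=a(\pi)$ for the number of absolute points, the degree sum is $(q+1)(q^2+q+1) - a$: every vertex contributes $q+1$, and each absolute vertex loses $1$. By Baer's theorem \eqref{equ:a(pi)}, $a = q+1+m_\pi\sqrt q$, so $2e(G(\pi)) = (q+1)(q^2+q+1) - (q+1) - m_\pi\sqrt q = (q+1)(q^2+q) - m_\pi\sqrt q = q(q+1)^2 - m_\pi\sqrt q$, giving $e(G(\pi)) = \tfrac12 q(q+1)^2 - \tfrac{m_\pi}{2}\sqrt q$ as claimed. (One should note in passing that $m_\pi\sqrt q$ has the right parity for this to be an integer, which follows from the edge count being an integer, or directly from the structure.)

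Finally, for $C_4$-freeness: suppose $x_1 x_2 x_3 x_4 x_1$ is a $4$-cycle in $G(\pi)$ with the $x_i$ distinct. The edges $x_1x_2$ and $x_3x_2$ mean $x_1,x_3\in\pi(x_2)$; the edges $x_1x_4$ and $x_3x_4$ mean $x_1,x_3\in\pi(x_4)$. Thus $\pi(x_2)$ and $\pi(x_4)$ are two lines each containing both $x_1$ and $x_3$. Since any two distinct points of $PG(2,q)$ lie on a unique common line, we must have $\pi(x_2)=\pi(x_4)$, and since $\pi$ is injective, $x_2=x_4$, a contradiction. (If instead $x_1=x_3$ the ``cycle'' is degenerate and not a $C_4$.) Hence $G(\pi)$ contains no $C_4$.

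The argument is entirely routine; the only point requiring a little care is the bookkeeping of the loop/absolute-point correction in the degree count — making sure the ``$-1$'' is applied precisely to absolute points and nowhere else — and the invocation of Baer's formula to convert the count of absolute points into the clean closed form; I expect that to be the single place where a reader might want the details spelled out.
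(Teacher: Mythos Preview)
Your proof is correct and is precisely the routine verification the paper has in mind: the paper does not give an explicit proof of this proposition, merely noting that it ``is easy to derive'' from the definition of the polarity graph, the incidence axioms of $PG(2,q)$, and Baer's formula \eqref{equ:a(pi)}. Your argument spells out exactly these steps, with the degree/edge count handled via the absolute-point correction and $C_4$-freeness via the unique-line-through-two-points axiom, so there is nothing to add.
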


The following lemma on polarity graphs is well-known (see Baer \cite{Ba45} for a proof).
\begin{lem}\label{lem:Baer}
Any two vertices of degree $q$ in a polarity graph of order $q$ are nonadjacent.
\end{lem}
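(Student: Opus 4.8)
The final statement is Lemma~\ref{lem:Baer}: \emph{any two vertices of degree $q$ in a polarity graph of order $q$ are nonadjacent.}

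\medskip

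\noindent\textbf{Proof proposal.}
The plan is to exploit the correspondence between the polarity graph $G(\pi)$ and the underlying projective plane $\dH=(P,\dL)$, together with the characterization of vertex degrees by absolute points. First I would recall the basic dictionary: for a point $x\in P$, its neighborhood in $G(\pi)$ is essentially the line $\pi(x)$, read back through $\pi$; more precisely $N_{G(\pi)}(x)=\{y: y\in\pi(x)\}\setminus\{x\}$ if $x$ is absolute and $\{y:y\in\pi(x)\}$ if $x$ is not. Since $|\pi(x)|=q+1$, this already shows $d_{G(\pi)}(x)\in\{q,q+1\}$, and crucially $d_{G(\pi)}(x)=q$ \emph{if and only if} $x$ is an absolute point (i.e.\ $x\in\pi(x)$), because the only way to lose a neighbor is for $x$ itself to lie on the line $\pi(x)$. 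So the statement is equivalent to: no line of $\dH$ contains two absolute points of $\pi$.

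\medskip

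\noindent The key step is therefore the following claim about polarities: if $u,v$ are two distinct absolute points, then the line $uv$ joining them is \emph{not} absolute-point-rich; in fact the line $L=\overline{uv}$ satisfies $\pi(L)\in L$ is impossible in a way that forces a contradiction. I would argue as follows. Suppose $u\ne v$ are absolute, so $u\in\pi(u)$ and $v\in\pi(v)$, and suppose for contradiction that $uv$ is an edge of $G(\pi)$, i.e.\ $u\in\pi(v)$ (equivalently $v\in\pi(u)$, by the symmetry in the definition of a polarity). Then $u$ lies on both lines $\pi(u)$ and $\pi(v)$, so $\pi(u)\cap\pi(v)=\{u\}$ (two distinct lines meet in a unique point). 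Dually, applying $\pi$ to the incidences $u\in\pi(u)$ and $u\in\pi(v)$ and using the defining property of a polarity ($x\in L\Rightarrow \pi(L)\in\pi(x)$), we get $\pi(\pi(u))=u\in\pi(u)$ (no new info) and $\pi(\pi(v))=v\in\pi(u)$; so $v\in\pi(u)$, consistent with the edge. Now consider the point $w=\pi(L)$ where $L=\overline{uv}$ is the unique line through $u$ and $v$. Since $u\in L$ we get $\pi(L)\in\pi(u)$, i.e.\ $w\in\pi(u)$; since $v\in L$ we get $w\in\pi(v)$. Hence $w\in\pi(u)\cap\pi(v)=\{u\}$, so $w=u$, i.e.\ $\pi(L)=u$ and thus $\pi(u)=L=\overline{uv}$. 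But then $v\in L=\pi(u)$, and since $v$ is absolute, $v\in\pi(v)$; and $u\in L=\pi(u)$ with $u$ absolute. So $L$ contains both absolute points $u,v$ and $L=\pi(u)=\pi(v)$, forcing $u=v$, a contradiction. This is the heart of the matter; it is a short incidence-geometry computation, and I expect this to be the only real obstacle — essentially keeping straight which objects are points and which are lines under $\pi$.

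\medskip

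\noindent With the claim in hand the lemma follows immediately: if $u,v$ both have degree $q$ in $G(\pi)$ then, by the degree characterization above, both are absolute points of $\pi$; by the claim they cannot be joined by an edge; hence $u$ and $v$ are nonadjacent. I would also double-check the degenerate points: a vertex can have degree $q$ only via being absolute (there is no other mechanism to drop below $q+1$), and the identity $a(\pi)=q+1+m_\pi\sqrt q\ge q+1$ from \eqref{equ:a(pi)} guarantees there really are such vertices, but that is not needed for the nonadjacency conclusion itself. Alternatively, one can phrase the whole argument matrix-theoretically: degree-$q$ vertices correspond to the $1$'s on the diagonal of the symmetric incidence matrix $\dM$ realizing $\pi$ (cf.\ \eqref{equ:polairty}), and an edge between two such vertices $i,j$ would mean $\dM_{ii}=\dM_{jj}=\dM_{ij}=1$, i.e.\ rows $i$ and $j$ share the coordinates $i$ and $j$; but two rows of the incidence matrix of $PG(2,q)$ agree in exactly one coordinate, giving the contradiction. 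I would present the incidence-geometry version as the main proof and perhaps remark on the matrix version.
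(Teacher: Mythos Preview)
Your proof is correct, and since the paper does not actually give its own argument (it only cites Baer \cite{Ba45}), there is nothing to compare against from the paper's side. Both of your approaches work: the incidence-geometry computation and the matrix formulation each yield the result.

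One remark on economy: in your incidence-geometry argument you take a detour through $L=\overline{uv}$ and $w=\pi(L)$ that is not needed. Once you have observed that $u\in\pi(u)\cap\pi(v)$ and hence $\pi(u)\cap\pi(v)=\{u\}$, you are already done: by exactly the same reasoning $v\in\pi(v)$ (absolute) and $v\in\pi(u)$ (adjacency), so $v\in\pi(u)\cap\pi(v)=\{u\}$, forcing $v=u$. Your longer route reaches the same contradiction, and the step ``$L=\pi(v)$'' (which you do not fully spell out) holds because $\pi(v)$ contains both $u$ and $v$ and hence equals $\overline{uv}$; but all of this is redundant. The matrix version you sketch at the end --- two distinct rows of the incidence matrix of $PG(2,q)$ share exactly one $1$-entry, while $\dM_{ii}=\dM_{jj}=\dM_{ij}=\dM_{ji}=1$ would give two --- is the cleanest phrasing and would make a fine primary proof.
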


The next lemma will be frequently used in the forthcoming proofs (see Lemma 2.4 in \cite{HMY20}).
\begin{lem}[\cite{HMY20}]\label{Lem:polarity+plus}
Let $G$ be a polarity graph of order $q$ with $uv\notin E(G)$.
Then $G\cup \{uv\}$ contains either $q-1, q$ or $q+1$ four-cycles,
any two of which share $uv$ as the unique common edge.
Moreover, $G\cup \{uv\}$ contains $q-1$ four-cycles if and only if both $u,v$ have degree $q$ in $G$.
\end{lem}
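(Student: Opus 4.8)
The plan is to count the copies of $C_4$ in $G\cup\{uv\}$ by reducing to a count of paths of length $3$ between $u$ and $v$, and then to evaluate that count using the projective--plane description of the polarity graph. Since $G$ is $C_4$-free by Proposition~\ref{prop:G(pi)}, any $C_4$ in $G\cup\{uv\}$ must use the edge $uv$ (otherwise it would lie inside $G$); deleting $uv$ from such a $4$-cycle leaves a $3$-path $u,x,y,v$ in $G$, and conversely every such path together with $uv$ is a $4$-cycle of $G\cup\{uv\}$. This is a bijection, so the number of copies of $C_4$ equals the number of $3$-paths from $u$ to $v$ in $G$. The ``unique common edge'' claim comes out along the way: if two distinct such $4$-cycles, coming from $3$-paths $u,x,y,v$ and $u,x',y',v$, shared an edge $e\neq uv$, then $u\in e$ gives $e=ux=ux'$, so $x=x'$ and $y,y'$ are two common neighbours of $x$ and $v$; the case $v\in e$ is symmetric; and $u,v\notin e$ forces $\{x,y\}=\{x',y'\}$, where the option $x=y',\ y=x'$ makes $x$ and $y$ two common neighbours of $u$ and $v$. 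Each case contradicts the $C_4$-freeness of $G$, so the two $4$-cycles coincide.

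To carry out the count, note that for a fixed $x\in N_G(u)$ the number of $y$ completing a $3$-path $u,x,y,v$ is exactly $d_G(x,v)=|N_G(x)\cap N_G(v)|$, since the distinctness of the four vertices is automatic once $uv\notin E(G)$. Hence the number of copies of $C_4$ equals $\sum_{x\in N_G(u)} d_G(x,v)$. Now write $G=G(\pi)$ for a polarity $\pi$ of a projective plane $(P,\dL)$ of order $q$. For distinct points $p,p'$ one has $N_G(p)\cap N_G(p')=\bigl(\pi(p)\cap\pi(p')\bigr)\setminus\{p,p'\}$, and $\pi(p)\cap\pi(p')$ is a single point, which can belong to $\{p,p'\}$ only when $p\sim p'$. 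In particular, every nonadjacent pair of vertices of $G$ has exactly one common neighbour and every adjacent pair has at most one. Applying this to $u$ and $v$: they have a unique common neighbour $w_0$, and $w_0\notin\{u,v\}$ because $uv\notin E(G)$. For every $x\in N_G(u)\setminus\{w_0\}$ the vertex $x$ is distinct from and nonadjacent to $v$, so $d_G(x,v)=1$; and $w_0\sim v$, so $d_G(w_0,v)\in\{0,1\}$. Therefore the number of copies of $C_4$ equals $d_G(u)-1+d_G(w_0,v)$, which, as $d_G(u)\in\{q,q+1\}$ by Proposition~\ref{prop:G(pi)}, is one of $q-1$, $q$, $q+1$.

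For the characterization, this count equals $q-1$ precisely when $d_G(u)=q$ and $d_G(w_0,v)=0$. Using Proposition~\ref{prop:G(pi)}, together with the standard fact that the degree-$q$ vertices of $G(\pi)$ are exactly its absolute points, the first condition says $u$ is absolute. For the adjacent pair $w_0,v$ we have $d_G(w_0,v)=0$ iff the unique point $\pi(w_0)\cap\pi(v)$ lies in $\{w_0,v\}$, i.e.\ iff $w_0$ or $v$ is absolute. If $u$ is absolute, then its neighbour $w_0$ is not absolute by Lemma~\ref{lem:Baer}, so $d_G(w_0,v)=0$ forces $v$ to be absolute, i.e.\ $d_G(v)=q$. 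Conversely, if $d_G(u)=d_G(v)=q$ then $u,v$ are absolute and, since $v\sim w_0$, we get $d_G(w_0,v)=0$, hence the count is $q-1$. This gives the lemma.

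The only place that requires care is the bookkeeping of absolute points: identifying the degree-$q$ vertices with the absolute points, and invoking Lemma~\ref{lem:Baer} to exclude two adjacent absolute points. Everything else is a direct consequence of the incidence structure of $PG(2,q)$ once it is spelled out, so I do not anticipate a serious obstacle --- the argument is essentially a careful translation followed by a short case analysis.
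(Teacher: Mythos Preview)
Your proof is correct. The paper does not supply its own proof of this lemma; it is quoted from \cite{HMY20} (stated there as Lemma~2.4), so there is no in-paper argument to compare against. Your approach---reducing to a count of $3$-paths from $u$ to $v$, evaluating $\sum_{x\in N_G(u)} d_G(x,v)$ via the projective-plane identity $N_G(p)\cap N_G(p')=(\pi(p)\cap\pi(p'))\setminus\{p,p'\}$, and handling the $q-1$ case through absolute points and Lemma~\ref{lem:Baer}---is clean and complete. One small remark: in the characterization you could alternatively exploit the symmetry between $u$ and $v$ (the same counting gives $d_G(v)-1+d_G(w_0,u)$, forcing $d_G(v)=q$ directly), which avoids the appeal to Lemma~\ref{lem:Baer}; but your route via Baer is equally valid.
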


We also need a property on orthogonal polarity graphs of even order $q$ from \cite{Fur88}.

\begin{prop}\label{prop:even-q-w}
Let $q$ be even and $G$ be an orthogonal polarity graph of order $q$.
Then there exists a (unique) vertex $w$ of degree $q+1$ such that $N(w)$ consists of all vertices of degree $q$ in $G$.

\end{prop}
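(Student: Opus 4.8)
The plan is to extract this property from the polarity structure using a counting/parity argument on absolute points. Let $\pi$ be an orthogonal polarity of $PG(2,q)$ with $G = G(\pi)$, so by \eqref{equ:a(pi)} there are exactly $q+1$ absolute points, and by Proposition~\ref{prop:G(pi)} every vertex has degree $q$ or $q+1$. The key local observation is the relation between the degree of a vertex $x$ in $G$ and whether $x$ is absolute: in the polarity graph, $x$ is adjacent to $y$ iff $x \in \pi(y)$, equivalently (using $\pi^2 = \mathrm{id}$ and the polarity condition) iff $y \in \pi(x)$; thus the neighbourhood $N_G(x)$ is exactly the set of points lying on the line $\pi(x)$, \emph{except} that $x$ itself is counted iff $x \in \pi(x)$, i.e. iff $x$ is absolute. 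Since $\pi(x)$ is a line with $q+1$ points, this gives $d_G(x) = q+1$ if $x$ is not absolute, and $d_G(x) = q$ if $x$ is absolute (we delete the loop at $x$). Hence the vertices of degree $q$ in $G$ are precisely the $q+1$ absolute points of $\pi$.

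Next I would use the classical fact (for even $q$) that the absolute points of any polarity of $PG(2,q)$ are collinear — they all lie on a common line, the \emph{absolute line} (or \emph{Fano line}) of $\pi$. This is where the parity hypothesis is essential; it is a standard result in finite geometry that for even order the $q+1$ absolute points form a line, whereas for odd $q$ they form a conic. Granting this, let $\ell$ be the line containing all absolute points and set $w = \pi(\ell)$, a point of $PG(2,q)$. I claim $w$ is the desired vertex. First, $N_G(w)$ is the set of points on the line $\pi(w) = \pi(\pi(\ell)) = \ell$ (minus $w$ itself if $w$ is absolute), so $N_G(w) \supseteq \{\text{absolute points}\} \setminus \{w\}$; since there are exactly $q+1$ absolute points all on $\ell$, and $|\ell| = q+1$, the absolute points are \emph{all} the points of $\ell$. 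It remains to check $w$ is not itself absolute: if $w \in \pi(w) = \ell$ then $w$ is an absolute point on $\ell$, but every point of $\ell$ is absolute, and then $d_G(w) = q$ forces $w \in N_G(w)$ — impossible since $G$ is a simple graph. Actually the cleaner route: if $w$ were absolute then $w \in \ell$, so $N_G(w) = \ell \setminus \{w\}$ has size $q$; but $\ell \setminus \{w\}$ then consists of $q$ absolute points, i.e. $q$ vertices of degree $q$, none adjacent to each other by Lemma~\ref{lem:Baer}, which is consistent — so I need the counting: there are exactly $q+1$ absolute points and they are exactly the points of $\ell$, hence $w \in \ell$ would make $w$ absolute and $\deg_G(w)=q$, but then $N_G(w) = \ell\setminus\{w\}$ are all of degree $q$, contradicting that $w$ (degree $q$) is adjacent only to... hmm, this needs care. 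Let me instead argue: the number of degree-$q$ vertices is $q+1 = |\ell|$ and they lie on $\ell$, so they are exactly the points of $\ell$; then $w = \pi(\ell)$ has $N_G(w) \supseteq \ell \setminus \{w\}$, and since $|N_G(w)| \le q+1 < q+2$, we get $w \notin \ell$ (else $|N_G(w)| = |\ell \setminus \{w\}| = q$ but also $w$ would be in $\ell$ hence a degree-$q$ vertex, and $N_G(w) = \ell \setminus \{w\}$; this is not yet a contradiction, so) — the real point is $w \notin \ell$ because $w$ degree $q$ would put $w$ among the points of $\ell$, but $\ell$ has only $q+1$ points and they are all accounted for as the degree-$q$ vertices, and if $w \in \ell$ then $N_G(w) = \ell \setminus \{w\}$ consists entirely of degree-$q$ vertices, none of which is adjacent to $w$ (another degree-$q$ vertex) by Lemma~\ref{lem:Baer} — contradiction with $N_G(w) \neq \emptyset$. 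Therefore $w \notin \ell$, $w$ is not absolute, $d_G(w) = q+1$, and $N_G(w) = \ell = \{\text{all degree-}q\text{ vertices}\}$, as desired. Uniqueness follows since $w$ is the unique degree-$(q+1)$ vertex all of whose neighbours have degree $q$: any such vertex $w'$ has $\pi(w')$ meeting all $q+1$ absolute points, but a line meeting $q+1$ collinear points must be $\ell$ itself (two points determine a line, and $q+1 \ge 2$), so $\pi(w') = \ell$ and $w' = \pi(\ell) = w$.

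The main obstacle is justifying the finite-geometry input that for even $q$ the $q+1$ absolute points of $\pi$ are collinear. If one does not wish to cite this, one can reprove it inside the polarity graph: the degree-$q$ vertices form an independent set of size $q+1$ by Lemma~\ref{lem:Baer}, and a $C_4$-free, $q$-or-$(q+1)$-regular graph on $q^2+q+1$ vertices forces strong incidence structure — in particular one can count $2$-paths to show each degree-$q$ vertex has all its $q$ neighbours of degree $q+1$, and then a Fisher-type / incidence-matrix argument (as in Lemma~\ref{Lem:Furedi-symmetry} territory) pins down that these $q+1$ vertices have a common "dual" point $w$. Since the statement is attributed to \cite{Fur88}, the intended proof surely just quotes the collinearity of absolute points in even characteristic together with the degree dictionary above; I would present that short argument and cite \cite{Fur88} for the collinearity fact.
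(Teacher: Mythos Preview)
The paper does not prove this proposition; it simply attributes it to \cite{Fur88}. Your approach is correct and is exactly the standard one: identify the degree-$q$ vertices with the absolute points of $\pi$, invoke the classical fact that for even $q$ the $q+1$ absolute points of an orthogonal polarity are collinear on a line $\ell$, and set $w=\pi(\ell)$. Your eventual argument that $w\notin\ell$ via Lemma~\ref{lem:Baer} is the right one (and the cleanest): if $w\in\ell$ then $w$ is absolute, hence has degree $q$, and $N_G(w)=\ell\setminus\{w\}$ consists entirely of degree-$q$ vertices, contradicting that no two degree-$q$ vertices are adjacent. The uniqueness argument is also correct.

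The only issue is presentational: the middle of your write-up wanders through two false starts before landing on the clean contradiction from Lemma~\ref{lem:Baer}. In a final version, cut straight to that argument. Also, your last paragraph correctly anticipates that the intended ``proof'' in this paper is just a citation; the collinearity of absolute points in even order is the one external input, and you are right that it is the content of the reference to \cite{Fur88}.
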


\subsection{$C_4$-free graphs}\label{subsec:C4-free}
We now give out notations arising from $C_4$-free graphs,
and along the way we also establish some statements for future use.

Throughout this subsection, let $G=(V,\dE)$ be a $C_4$-free graph on $n=q^2+q+1$ vertices.
For a vertex $v$, let $d_0(v)=|\{u\in V: \{u,v\}\in UP\}|$.

\begin{prop}\label{Prop:UP}
$|UP|=\frac{1}{2}\sum_{v\in V} d_0(v)$ and $|P_2|+|UP|=\binom{n}{2}$.
\end{prop}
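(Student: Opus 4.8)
This proposition is elementary, so the proof plan is short. The plan is to establish both identities by double counting, using only the basic definitions from the preliminaries.

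For the first identity $|UP|=\frac12\sum_{v\in V}d_0(v)$, I would count the set of ordered pairs $(u,v)$ with $\{u,v\}\in UP$. On one hand, summing over the first coordinate gives $\sum_{v\in V}d_0(v)$, since $d_0(v)$ is by definition the number of $u$ with $\{u,v\}$ an uncovered pair. On the other hand, each unordered uncovered pair is counted exactly twice (once as $(u,v)$ and once as $(v,u)$), so the total is $2|UP|$. Equating the two counts yields the claim; there is nothing subtle here beyond noting that the relation ``$\{u,v\}$ is uncovered'' is symmetric and involves distinct vertices.

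For the second identity $|P_2|+|UP|=\binom{n}{2}$, the key point is that in a $C_4$-free graph every unordered pair $\{u,v\}$ of distinct vertices satisfies $d_G(u,v)=|N_G(u)\cap N_G(v)|\le 1$: if two vertices had two common neighbors, those four vertices would span a $C_4$. Hence each pair $\{u,v\}$ is either uncovered ($d_G(u,v)=0$) or has a unique common neighbor ($d_G(u,v)=1$), and these cases are exhaustive and mutually exclusive. A pair with a unique common neighbor $w$ corresponds bijectively to the $2$-path $u$-$w$-$v$, so the number of covered pairs equals $|P_2|$ (each $2$-path has a well-defined pair of endpoints, and distinct $2$-paths on the same endpoint pair cannot both occur by $C_4$-freeness). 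Summing the two cases over all $\binom{n}{2}$ pairs gives $|P_2|+|UP|=\binom{n}{2}$.

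There is essentially no main obstacle: both parts are immediate applications of double counting once one observes that $C_4$-freeness forces every pair of vertices to have at most one common neighbor. The only care needed is to phrase the bijection between covered pairs and $2$-paths correctly, i.e.\ to note that a $2$-path is determined by its central vertex together with its (unordered) pair of endpoints, and that $C_4$-freeness guarantees this correspondence is a bijection rather than merely a surjection onto covered pairs.
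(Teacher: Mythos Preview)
Your proposal is correct and matches the paper's own argument essentially verbatim: the first identity is immediate from the definition (the paper just says ``follows by the definition''), and for the second the paper likewise uses that $C_4$-freeness forces each covered pair to correspond to a unique $2$-path, giving $|P_2|=\binom{n}{2}-|UP|$.
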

\begin{proof}
The first equation follows by the definition.
Since $G$ is $C_4$-free, each 2-path corresponds to a unique covered pair.
Thus we have $|P_2|=\binom{n}{2}-|UP|$.
\end{proof}

We now introduce an important notation for our proofs.
For any $v\in V$, the \emph{deficiency} $f(v)$ of $v$ is defined by
$$f(v):=\max\{q+1-d(v),0\}.$$
The deficiency of a subset $A\subseteq V$ is $f(A)=\sum_{v\in A}f(v).$
Let $S_i=\{v\in V: d(v)=i\}$.

\begin{prop}\label{Prop:deficiency}
We have $f(N(v))\geq q$ for each $v\in S_{q+2}$, and if $\Delta(G)\leq q+2$ then $f(V)=(q+1)n-2e(G)+|S_{q+2}|$.
\end{prop}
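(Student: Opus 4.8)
The plan is to prove the two assertions separately; both are essentially double-counting arguments exploiting the $C_4$-freeness of $G$. For the first assertion, fix $v\in S_{q+2}$, so $d(v)=q+2$. The idea is to estimate $f(N(v))$ by counting, for each neighbour $u\in N(v)$, how many vertices in $V\setminus N[v]$ are \emph{not} covered by $u$, i.e. lie in no $2$-path through $u$ together with some vertex of $N(v)$. First I would observe that since $G$ is $C_4$-free, every $u\in N(v)$ has at most one neighbour in $N(v)\setminus\{u\}$ (two such neighbours would give a $C_4$ with $v$), so $u$ sends at least $d(u)-2$ edges into $V\setminus N[v]$; conversely, any two distinct $u,u'\in N(v)$ have no common neighbour outside $N[v]$ (again $C_4$-freeness, as $v$ is already a common neighbour). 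Hence the sets $N(u)\cap(V\setminus N[v])$, $u\in N(v)$, are pairwise disjoint subsets of $V\setminus N[v]$, which has size $n-(q+3)=q^2-2$. Summing $\sum_{u\in N(v)}(d(u)-2)\le q^2-2$ (roughly — the exact bookkeeping needs care about edges inside $N(v)$, of which there are at most $\lfloor (q+2)/2\rfloor$ since each vertex has $\le 1$ such neighbour), and rearranging $\sum_{u\in N(v)} d(u)$ against $(q+1)(q+2)$ yields $f(N(v))=\sum_{u\in N(v)}\max\{q+1-d(u),0\}\ge \sum_{u\in N(v)}(q+1-d(u))\ge q$. I expect the main subtlety here to be handling the edges induced inside $N(v)$ and making sure the ``$+1$'' slack in ``$q+1$'' versus the ``$-2$'' from internal edges balances out exactly to give the clean bound $q$ rather than something weaker.

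For the second assertion, assume $\Delta(G)\le q+2$. Then for every $v\in V$ we have $d(v)\le q+2$, so
\[
q+1-d(v)=f(v)-[\,d(v)=q+2\,],
\]
where $[\,\cdot\,]$ is the indicator (if $d(v)\le q+1$ then $q+1-d(v)=f(v)\ge 0$ and the indicator is $0$; if $d(v)=q+2$ then $q+1-d(v)=-1$ while $f(v)=0$). Summing over all $v\in V$ gives
\[
(q+1)n-\sum_{v\in V}d(v)=f(V)-|S_{q+2}|,
\]
and since $\sum_{v\in V}d(v)=2e(G)$ this rearranges to $f(V)=(q+1)n-2e(G)+|S_{q+2}|$, as claimed. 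This half is a one-line identity once the pointwise relation is written down; the only thing to be careful about is that $\Delta(G)\le q+2$ is exactly what licenses replacing $q+1-d(v)$ by $f(v)$ minus the $S_{q+2}$-indicator with no other correction terms.

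Overall the harder part is the first assertion: the second is a formal manipulation, but the first requires the disjointness argument for the neighbourhoods-outside-$N[v]$ and a slightly delicate accounting of edges inside $N(v)$ to land on the exact constant $q$. I would present the first assertion's proof as a clean inequality chain: bound the out-degrees, invoke disjointness to get $\sum_{u\in N(v)}|N(u)\setminus N[v]|\le |V\setminus N[v]|=q^2-2$, convert this to a lower bound on $\sum_{u\in N(v)}(q+1-d(u))$, and finally drop to $f(N(v))$ using $f(u)\ge q+1-d(u)$.
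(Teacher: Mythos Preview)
Your proposal is correct and uses the same disjointness idea as the paper, but you make the first assertion harder than necessary. You restrict to $V\setminus N[v]$ and then worry about accounting for edges inside $N(v)$; the paper instead observes directly that the sets $N(v_i)\setminus\{v\}$ (for $v_i\in N(v)$) are pairwise disjoint in $V\setminus\{v\}$---this is exactly the statement that two neighbours of $v$ have no second common neighbour---giving $\sum_{v_i\in N(v)}(d(v_i)-1)\le n-1$ in one line. Rearranging yields $f(N(v))\ge\sum_{v_i\in N(v)}(q+1-d(v_i))\ge (q+2)q-(n-1)=q$ with no internal-edge bookkeeping at all. Your route does close (the $\lfloor (q+2)/2\rfloor$ matching bound inside $N(v)$ suffices), but the ``delicate accounting'' you flagged is entirely avoidable by removing only $v$ rather than all of $N[v]$ from the neighbourhoods. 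Your proof of the second assertion is exactly what the paper does.
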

\begin{proof}
Consider $v\in S_{q+2}$ with $N(v)=\{v_1,...,v_{q+2}\}$.
Since $G$ is $C_4$-free, $N(v_i)\backslash \{v\}$ are pairwise-disjoint,
which implies that $\sum_{v_i\in N(v)}(d(v_i)-1)\leq n-1$.
Thus we get
\begin{equation*}\label{Equ:f(Nv)}
f(N(v))\ge \sum_{v_i\in N(v)}(q+1-d(v_i))\ge (q+2)q-(n-1)=q.
\end{equation*}
Since $\Delta(G)\leq q+2$, it is straightforward to see that
$f(V)=\sum_{v\in V}\max\{q+1-d(v),0\}=(q+1)n-2e(G)+|S_{q+2}|.$
\end{proof}

Let $S=\{v\in V: d(v)\leq q\}$. We have the following lemma (see Corollary 5.2 in \cite{Fur88}).

\begin{lem}\label{Lem:neigbor-of-q-vertex}
If $q$ is even and $\Delta(G)=q+1$, then any vertex in $S_{q+1}$ has a neighbor in $S$ and moreover, $|S|\geq q+1$.
\end{lem}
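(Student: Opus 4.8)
The plan is to prove the two assertions in turn, using only that $G$ is $C_4$-free on $n=q^2+q+1$ vertices with $\Delta(G)=q+1$ (so that $V=S\cup S_{q+1}$ is a partition), together with the hypothesis that $q$ is even.

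For the first assertion, fix $v\in S_{q+1}$ and write $N(v)=\{v_1,\dots,v_{q+1}\}$. Since $G$ is $C_4$-free, no vertex $w\neq v$ is adjacent to two of the $v_i$'s (else $v\,v_i\,w\,v_j$ is a $4$-cycle), so the sets $N(v_i)\setminus\{v\}$, $i\in[q+1]$, are pairwise disjoint subsets of $V\setminus\{v\}$; hence $\sum_{i=1}^{q+1}(d(v_i)-1)\le n-1=q^2+q$. I would argue by contradiction: if no $v_i$ lies in $S$, then $d(v_i)\ge q+1$, and since $\Delta(G)=q+1$ in fact $d(v_i)=q+1$ for every $i$, so the displayed inequality is an equality and $\{\,N(v_i)\setminus\{v\}:i\in[q+1]\,\}$ is a partition of $V\setminus\{v\}$. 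For each $i$ let $j(i)$ be the unique index with $v_i\in N(v_{j(i)})$; then $j(i)\neq i$ (no loops), and since $v_i\sim v_{j(i)}$ forces $v_{j(i)}\in N(v_i)\setminus\{v\}$, the part of the partition containing $v_{j(i)}$ must be $N(v_i)\setminus\{v\}$, i.e. $j(j(i))=i$. Thus $j$ is a fixed-point-free involution of the $(q+1)$-element set $[q+1]$, which is impossible because $q+1$ is odd. Hence some $v_i$ has degree at most $q$, i.e. $N(v)\cap S\neq\emptyset$. (An alternative route to the same contradiction: each $v_i$ has at most one neighbour among $v_1,\dots,v_{q+1}$, so the edges inside $N(v)$ form a matching of size at most $\lfloor(q+1)/2\rfloor=q/2$; counting each $d(v_i)$ through $v$, through the internal edges, and through $V\setminus N[v]$ gives $\sum_i d(v_i)\le(q+1)+q+(q^2-1)=q^2+2q<(q+1)^2$, contradicting $d(v_i)=q+1$ for all $i$.)

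The second assertion then follows by a short double count. Every vertex of $S_{q+1}$ has at least one neighbour in $S$, so $e(S,S_{q+1})=\sum_{v\in S_{q+1}}|N(v)\cap S|\ge|S_{q+1}|$; on the other hand $e(S,S_{q+1})=\sum_{u\in S}|N(u)\cap S_{q+1}|\le\sum_{u\in S}d(u)\le q\,|S|$ since every $u\in S$ has $d(u)\le q$. Hence $|S_{q+1}|=n-|S|\le q\,|S|$, which rearranges to $|S|\ge n/(q+1)=q+\tfrac{1}{q+1}>q$, so $|S|\ge q+1$ as $|S|$ is an integer.

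I expect the only delicate point to be setting up the parity step in the first assertion so that the assumption ``$v$ has no neighbour in $S$'' is squeezed into an exact equality, from which one extracts either a fixed-point-free involution on an odd-size set or an off-by-one numerical contradiction; this is precisely where the evenness of $q$ enters. Everything else — the $C_4$-freeness giving disjoint neighbourhoods, and the final double count — is routine, so the write-up should be short.
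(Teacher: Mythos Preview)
Your proof is correct and follows the standard argument; the paper does not give its own proof but cites F\"uredi \cite{Fur88}, and your matching/parity step (that $G[N(v)]$ is a matching on the odd-size set $N(v)$, forcing an isolated vertex which must lie in $S$) is exactly the mechanism used there and echoed later in the paper (e.g.\ Claim~\ref{Cla:V1-structure}). The double count for $|S|\ge q+1$ is likewise the intended route.
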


\subsection{Others}
We need the following estimation on the distribution of prime numbers given in \cite{BHP}.

\begin{thm}[\cite{BHP}]\label{Thm:prime exists}
For sufficiently large $x>0$, the interval $[x-x^{0.525},x]$ contains prime numbers.
\end{thm}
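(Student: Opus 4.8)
The plan is to prove the stronger assertion that $\psi(x)-\psi(x-y)\gg y/\log x$ for $y=x^{\theta}$ with $\theta=0.525$ and all large $x$, which forces a prime in $(x-y,x]$. The engine is Harman's lower-bound sieve. Set $\mathcal{A}=\{n:\ x-y<n\le x\}$ and, for squarefree $d$, $\mathcal{A}_d=\{n\in\mathcal{A}:\ d\mid n\}$, with expected size $|\mathcal{A}_d|\approx y/d$; writing $z=(2x)^{1/2}$, the primes in $\mathcal{A}$ are counted, up to $O(x^{1/2})$ prime powers, by the sifting function $S(\mathcal{A},z)$. First I would record two pieces of arithmetic information about $\mathcal{A}$: a \emph{Type I} estimate of the shape $\sum_{d\le D}\bigl|\,|\mathcal{A}_d|-y/d\,\bigr|\ll y(\log x)^{-A}$ for $D$ as large as the method allows, and a \emph{Type II} (bilinear) estimate controlling $\sum_{m\sim M}\sum_{n\sim N}a_m b_n\bigl(\mathbf{1}_{mn\in\mathcal{A}}-\text{expected}\bigr)$ for suitable coefficient sequences and ranges of $M,N$.

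Second, using Buchstab's identity repeatedly,
\[
S(\mathcal{A},z)=S(\mathcal{A},z_0)-\sum_{z_0\le p<z}S(\mathcal{A}_p,p),
\]
and re-expanding the terms $S(\mathcal{A}_p,p)$, one decomposes $S(\mathcal{A},z)$ into: (i) ``main'' terms whose analogue with $\mathcal{A}$ replaced by an interval of integers can be evaluated asymptotically because the associated weighted sum is of Type I or Type II; and (ii) terms discarded because they occur with a favourable sign and $S(\mathcal{A}_d,w)\ge 0$. Carrying out the same decomposition on the model set (the integers in an interval of length $y$) produces the expected main term $c\,y/\log x$ with $c$ the usual sieve constant. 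Harman's role-reversal device --- performing a further Buchstab expansion in a different variable whenever a region $(M,N)$ is not directly accessible --- enlarges the collection of regions that can be handled, at the cost of a Buchstab-function factor each time. The arithmetic goal is that the total mass of the discarded regions is strictly less than the main term, and it is precisely at this point that the exponent $0.525$ enters.

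Third, the analytic heart is establishing the Type I and Type II inputs over ranges wide enough to close the dissection. For Type I, beyond the trivial range $D\le y^{1-\epsilon}$, I would use exponential-sum and large-sieve arguments (the Bombieri--Iwaniec--Huxley method) to handle the equidistribution of $\mathcal{A}$ in arithmetic progressions. For Type II, Perron's formula reduces the bilinear sums to mean values of Dirichlet polynomials $\sum_{n\sim N}b_n n^{-s}$ on $\Re s=1/2$; these are bounded via fourth-moment estimates, the Halász--Montgomery inequality, and --- crucially --- zero-density estimates $N(\sigma,T)\ll T^{c(\sigma)(1-\sigma)+\epsilon}$ for $\zeta(s)$, the strongest of which in the relevant ranges come from Watt's mean-value theorem for $\int_0^T|\zeta(\tfrac12+it)|^2|F(\tfrac12+it)|^2\,dt$ and from the Deshouillers--Iwaniec bounds for averages of Kloosterman sums. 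Assembling these yields the admissible $(M,N)$ regions for the bilinear sums.

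The main obstacle is the tight quantitative interplay between the combinatorics and the analysis: each Buchstab step widens the collection of accessible $(M,N)$ regions but multiplies the discarded mass by a Buchstab-function factor $\omega(\cdot)$, so the iteration cannot be run indefinitely, and whether the dissection closes with a positive final bound at exponent $\theta=0.525$ comes down to verifying a complicated system of integral inequalities for the Buchstab function --- a verification that must be performed by a careful, and partly numerical, optimisation of where to cut and which variable to expand next. Lowering $\theta$ further would require either wider Type II ranges (hence stronger zero-density or Kloosterman-sum input) or a more efficient sieve dissection; with the inputs listed above, $0.525$ is the barrier, and even reaching it demands their full strength.
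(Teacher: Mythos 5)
This statement is not proved in the paper at all: it is Theorem \ref{Thm:prime exists}, imported verbatim from Baker--Harman--Pintz \cite{BHP} and used purely as a black box (in Proposition \ref{prop:lower-Turan}). So there is no internal proof to compare against; the only honest options are to cite \cite{BHP}, as the authors do, or to reproduce its proof, which runs to dozens of pages of analytic number theory.

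Your proposal correctly identifies the strategy of the cited work --- Harman's alternative sieve with Type I/Type II arithmetic information on the interval $(x-y,x]$, repeated Buchstab decompositions with role reversals, discarding positively-signed unreachable regions, and analytic inputs from Watt's mean-value theorem, Deshouillers--Iwaniec Kloosterman-sum bounds and zero-density estimates --- but as it stands it is a roadmap, not a proof. Every quantitatively decisive step is asserted rather than established: the admissible Type I range $D$, the admissible Type II rectangles $(M,N)$, and above all the verification that the discarded Buchstab mass stays strictly below the main term when $\theta=0.525$. That last verification is the entire content of the exponent $0.525$; at the level of detail you give, the identical outline would ``prove'' the statement with $0.525$ replaced by any nearby exponent, which shows that the argument carries no probative force yet. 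To turn this into a proof you would have to state and prove the Type I/Type II lemmas with explicit exponents (this is where Watt and Deshouillers--Iwaniec are actually used, via mean values of Dirichlet polynomials), write down the full dissection with its Buchstab-function integrals, and carry out the numerical optimisation showing the final lower bound is positive --- i.e.\ essentially reconstruct \cite{BHP}. Within the present paper the correct treatment is the citation, not a proof sketch.
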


At the end of this section, we give an easy-to-use lemma, which is often adopted in replace of standard Cauchy-Schwarz inequalities (see Lemma 2.6 in \cite{HMY20}).

\begin{lem}[\cite{HMY20}]\label{Lem:CS-inquality}
Let $a_1,...,a_m$ be nonnegative integers satisfying $\sum_{i=1}^{m}a_i\ge km+r$,
where $m,k,r$ are integers with $m,k>0$ and $r\geq -m$. Then we have
$\sum_{i=1}^{m}\binom{a_i}{2}\ge m\binom{k}{2}+rk.$
\end{lem}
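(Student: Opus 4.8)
The plan is to prove Lemma~\ref{Lem:CS-inquality} via a discrete tangent-line (supporting-line) inequality for the convex function $\binom{x}{2}$, taken at the integer point $x=k$, rather than through a direct Cauchy--Schwarz estimate. The first step I would carry out is to verify the pointwise bound
\[
\binom{a}{2}\ge\binom{k}{2}+k(a-k)\qquad\text{for every integer }a,
\]
where $k$ is the given positive integer. To see this, expand the left-hand side minus $\binom{k}{2}$ using $\binom{x}{2}=\tfrac12 x(x-1)$, obtaining $\binom{a}{2}-\binom{k}{2}=\tfrac12(a-k)(a+k-1)$; subtracting the linear term $k(a-k)$ then collapses this expression to $\tfrac12(a-k)(a-k-1)=\tfrac12\,t(t-1)$, where $t:=a-k$. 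Since $t$ is an integer, $t(t-1)$ is a product of two consecutive integers and hence nonnegative (if $t\le 0$ both factors are $\le 0$; if $t\ge 1$ both are $\ge 0$), which gives the claimed inequality. This is precisely the place where integrality of the $a_i$ enters; the bound is false for, e.g., $a-k=\tfrac12$.

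The second step is to sum the pointwise bound over $i=1,\dots,m$ and feed in the hypothesis. Summing yields $\sum_{i=1}^{m}\binom{a_i}{2}\ge m\binom{k}{2}+k\big(\sum_{i=1}^{m}a_i-km\big)$. Because $k>0$ and $\sum_{i=1}^{m}a_i\ge km+r$, i.e.\ $\sum_{i=1}^{m}a_i-km\ge r$, the quantity $k\big(\sum_{i=1}^{m}a_i-km\big)$ is at least $kr$, and we conclude $\sum_{i=1}^{m}\binom{a_i}{2}\ge m\binom{k}{2}+rk$, as desired.

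There is essentially no obstacle here: the argument is elementary and short, and the only subtle point is the use of integrality in the one-variable estimate. It is worth noting which hypotheses are actually needed along this route: only that $k$ is a positive integer, that the $a_i$ are integers, and the lower bound on $\sum a_i$; nonnegativity of the $a_i$ and the condition $r\ge -m$ play no role. (They would matter for the alternative ``smoothing'' proof, which reduces to the case $\sum a_i=km+r$ by decrementing some positive $a_i$, then equalizes the $a_i$ to $\lfloor(km+r)/m\rfloor$ or $\lceil(km+r)/m\rceil$ — there one must ensure $km+r\ge 0$, which is exactly guaranteed by $r\ge -m$ and $k\ge 1$.) I would nonetheless keep the hypotheses as stated for consistency with the way the lemma is invoked later, and merely remark that the proof uses less.
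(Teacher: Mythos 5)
Your proof is correct: the chord inequality $\binom{a}{2}\ge\binom{k}{2}+k(a-k)$ for integers $a$ is verified exactly as you say (the difference collapses to $\tfrac12(a-k)(a-k-1)\ge 0$), and summing and using $k>0$ together with $\sum_i a_i-km\ge r$ gives the stated bound. Note that the paper itself gives no proof of this lemma — it is quoted from \cite{HMY20} (Lemma 2.6 there) — so there is no in-text argument to compare against; but your route is the natural one, replacing the "standard Cauchy--Schwarz/Jensen" estimate alluded to in the surrounding text (which would use the smooth tangent of slope $k-\tfrac12$ and lose the integer-rounding gain) by the supporting chord through the consecutive integer points $k$ and $k+1$, whose slope is $\binom{k+1}{2}-\binom{k}{2}=k$. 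This is exactly where integrality is used, as your counterexample $a-k=\tfrac12$ shows, and it is what makes the conclusion sharper than the real-variable convexity bound. Your observation that nonnegativity of the $a_i$ and the condition $r\ge -m$ are not needed for this argument (the pointwise bound holds for all integers $a$, and $k>0$ suffices to multiply the hypothesis by $k$) is also accurate; those hypotheses are only relevant for the alternative smoothing/equalization proof, as you remark, and keeping them in the statement for consistency with how the lemma is invoked is sensible.
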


\section{Proof outline of Theorem \ref{Thm:stability}}\label{sec:intro-stability}
In this section we discuss the proof of Theorem \ref{Thm:stability}.
For convenience, we restate Theorem \ref{Thm:stability} in the following thorough version.

\begin{thm}\label{Thm:stability-2}
For any $c\in (0,1)$, there exists some $q_c$ such that the following holds for even integers $q\geq q_c$.
If $G$ is a $C_4$-free graph on $q^2+q+1$ vertices with at least $\frac12 q(q+1)^2-\frac{c}{2}q$ edges,
then there exists a unique polarity graph of order $q$ containing $G$ as a subgraph.
\end{thm}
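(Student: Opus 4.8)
The plan is to start from the near-extremal $C_4$-free graph $G$ and gradually upgrade its neighborhood structure into that of a projective plane. The first and most delicate step is to pin down the maximum degree: I would show that $\Delta(G)\le q+1$. Since $e(G)\ge \frac12 q(q+1)^2-\frac{c}{2}q$, by Proposition~\ref{Prop:deficiency} (applied once we know $\Delta(G)\le q+2$) the total deficiency $f(V)$ is small, of order $q$. If some vertex $v$ has $d(v)=q+2$, then by Proposition~\ref{Prop:deficiency} we have $f(N(v))\ge q$, which already eats up essentially the entire deficiency budget; a careful accounting — bounding how many vertices of degree $q+2$ can coexist, and how their neighborhoods overlap, using $C_4$-freeness — should force a contradiction with $c<1$. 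This is exactly the ``$-\tfrac12 q$ barrier'' the authors flag: a single degree-$(q{+}2)$ vertex costs $q$ in deficiency, so one cannot afford it once the edge count is above $\frac12 q(q+1)^2 - \frac12 q$. I expect this degree-reduction step to be the main obstacle, requiring the sharpest of the counting arguments and likely several auxiliary lemmas about the local structure around high-degree vertices.

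Once $\Delta(G)\le q+1$, the graph is ``almost regular of degree $q+1$'' with small total deficiency. The next step is to build a large $1$-intersecting $(q+1)$-uniform hypergraph out of the closed neighborhoods. For a vertex $v$ with $d(v)=q+1$, the set $N[v]$ has $q+2$ vertices, so it is not directly an edge of a $(q+1)$-graph; instead, for each such $v$ I would associate the neighborhood $N(v)$ (or $N[v]$ with $v$ itself removed in a controlled way) and show that for most pairs $u,v$ of degree-$(q+1)$ vertices, $|N(v)\cap N(w)|$ behaves like the line-intersection pattern of a projective plane — any two ``lines'' meet in exactly one point because $G$ is $C_4$-free (two common neighbors would be a $C_4$), and the uncovered-pair count $|UP|=\frac12\sum_v d_0(v)$ from Proposition~\ref{Prop:UP} is small, so almost all pairs are covered exactly once. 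Collecting these neighborhoods yields a $1$-intersecting $(q+1)$-hypergraph $\dH_0$ on the $q^2+q+1$ vertices with more than $q^2-\frac{\sqrt5-1}{2}q+17\sqrt{q/5}$ edges (the deficiency bound controls how many vertices fail to contribute a clean edge).

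With such a large $1$-intersecting hypergraph in hand, Theorem~\ref{Thm:Embedding} of Metsch embeds $\dH_0$ into a projective plane $\Pi$ of order $q$, and Theorem~\ref{Thm:Embedding unique} guarantees both $\Pi$ and the embedding are unique. The remaining work is to promote this into a \emph{polarity} graph: I would use Lemma~\ref{lem:enlarge R} to enlarge $\dH_0$ to the full line set of $\Pi$ while keeping track of which line corresponds to which vertex, thereby obtaining a bijection $\phi$ between points and lines. The symmetry criterion \eqref{equ:polairty} says $\phi$ is a polarity iff its incidence matrix is symmetric, and here F\"uredi's Lemma~\ref{Lem:Furedi-symmetry} is the key leverage: it suffices to check symmetry of the incidence matrix on the first $q^2-q+3$ rows/columns, which follows because adjacency in $G$ (hence in the derived incidence structure) is symmetric, $G$ has few deficient vertices, and $q$ is even (the parity enters through Lemma~\ref{Lem:neigbor-of-q-vertex} and Proposition~\ref{prop:even-q-w}, which control the degree-$q$ vertices so that the ``diagonal'' of the matrix is forced). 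Finally, since the embedding is already contained in the polarity graph $G(\pi)$ and the polarity graph has exactly $q^2+q+1$ vertices each of degree $q$ or $q+1$, while $G\subseteq G(\pi)$, uniqueness of $\pi$ follows from uniqueness of the embedding. I would carry out the steps in this order: (1) $\Delta(G)\le q+1$; (2) extract a large $1$-intersecting hypergraph; (3) embed and extend to get a polarity; (4) verify $G$ sits inside the resulting polarity graph and that it is unique — with step (1) being where essentially all the new difficulty lies.
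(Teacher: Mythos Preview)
Your overall architecture (reduce the max degree, build a large $1$-intersecting $(q{+}1)$-hypergraph, embed via Metsch, symmetrize via F\"uredi's lemma to get a polarity) matches the paper's. The substantive gap is in Step~(1).

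You assert you will prove $\Delta(G)\le q+1$ directly, with the heuristic that a single vertex $v$ of degree $q+2$ forces $f(N(v))\ge q$ and this ``eats up the deficiency budget''. But the budget is $f(V)=(q{+}1)n-2e(G)+|S_{q+2}|\le (1+c)q+1+|S_{q+2}|$, so one (or even $O(\sqrt{q})$) degree-$(q{+}2)$ vertex is entirely consistent with the edge count; no contradiction follows from deficiency alone. The paper does \emph{not} prove $\Delta(G)\le q+1$ a priori. Instead it shows $\Delta(G)\le q+2$, then through a chain of structural claims (any two $S_{q+2}$-vertices share a common neighbor; a common neighbor of three such vertices has degree $\le q/2$; at most seven vertices have degree $\le q/2$) obtains $|S_{q+2}|\le 22\sqrt{q}$. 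It then \emph{deletes} one edge at each $S_{q+2}$-vertex to get $G'$ with $\Delta(G')=q+1$ and $e(G')\ge \tfrac12 q(q+1)^2-\tfrac{\epsilon}{2}q$ for some $\epsilon\in(c,1)$, applies the restricted Theorem~\ref{Thm:stability-max-degree} to $G'$, and only \emph{afterwards} argues that each deleted edge must also lie in the polarity graph $H$: if some deleted $e\notin E(H)$, then by Lemma~\ref{Lem:polarity+plus} the graph $H\cup\{e\}$ has $\ge q-1$ copies of $C_4$ that are edge-disjoint apart from $e$, so the $C_4$-free graph $G'\cup\{e\}\subseteq G$ would be missing $\ge q-1$ edges of $H$, contradicting $e(G')\ge e(H)-\tfrac{\epsilon}{2}q$. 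This ``delete, embed, then undo via Lemma~\ref{Lem:polarity+plus}'' maneuver is the missing idea in your outline.

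A secondary point: in Step~(2) you claim the neighborhood hypergraph already exceeds Metsch's threshold $q^2-\tfrac{\sqrt5-1}{2}q+17\sqrt{q/5}$. For $\epsilon$ close to $1$ the basic family $\dR=\{N(x):x\in A\}$ has only about $q^2-\epsilon q$ edges, which can fall below that threshold. The paper spends all of Section~\ref{subsec:5.2} manufacturing additional lines (the families $\dL$, $\dF_r$, $\dF_h$ built from ``property~1'' and ``property~2'' vertices, an auxiliary bipartite graph on good lines, etc.) to push the count up to $q^2$; this is where most of the technical work lies, and your sketch does not account for it.
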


We now give a outline of the proof of Theorem \ref{Thm:stability-2}.
In a nutshell, it stems from the work of F\"uredi \cite{Fu83,Fur88,Fur96}.
Given a $C_4$-free graph $G$ on $q^2+q+1$ vertices with many edges, our goal is to construct a polarity graph of order $q$ containing $G$ as a subgraph.
This is achieved in the following three steps.

\begin{itemize}[leftmargin=14mm]
\item [Step 1.] We show that it suffices to consider for $\Delta(G)=q+1$.
\item [Step 2.] Let $\dR$ be the family of all subsets $N_G(x)$ where $x\in V(G)$ has degree $q+1$ and ``almost" all neighbors of $x$ have degree $q+1$.
Then we show that there exists a projective plane $\dH$ of order $q$ defined on $V(G)$ with $\dR\subseteq \dH$.
\item [Step 3.] We show that there exists a polarity $\pi$ of the above projective plane $\dH$ such that its polarity graph $G(\pi)$ contains $G$ as a subgraph.
\end{itemize}

To say more, Step 1 will be handled in Section \ref{sec:Delta=q+1},
where we reduce Theorem \ref{Thm:stability-2} to the following statement (with restriction $\Delta(G)=q+1$).

\begin{thm}\label{Thm:stability-max-degree}
For any $\epsilon\in (0,1)$, there exists some $q_\epsilon$ such that the following holds for even integers $q\geq q_\epsilon$.
If $G$ is a $C_4$-free graph on $q^2+q+1$ vertices with maximum degree $q+1$ and at least $\frac12 q(q+1)^2-\frac{\epsilon}{2}q$ edges,
then there exists a unique polarity graph of order $q$ containing $G$ as a subgraph.
\end{thm}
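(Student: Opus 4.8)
The plan is to follow the three-step skeleton of F\"uredi and the outline above, so the task reduces to proving Theorem \ref{Thm:stability-max-degree}: given a $C_4$-free graph $G$ on $n=q^2+q+1$ vertices with $\Delta(G)=q+1$ and $e(G)\geq \frac12 q(q+1)^2-\frac{\epsilon}{2}q$, I must produce a (unique) polarity graph of order $q$ containing $G$. The starting point is a counting estimate on deficiency. By Proposition \ref{Prop:deficiency}, since $\Delta(G)\le q+1$ we have $|S_{q+2}|=0$ and $f(V)=(q+1)n-2e(G)\le \epsilon q$, so only $O(q)$ ``deficient units'' are missing from $G$ being $(q+1)$-regular. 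The aim is to build the candidate family $\dR$ of Step~2 out of the neighborhoods $N_G(x)$ for vertices $x$ that themselves have degree $q+1$ and all but a bounded number of whose neighbors also have degree $q+1$; call this set of \emph{good} vertices $X$. A standard argument using $C_4$-freeness (two neighborhoods $N_G(x)$, $N_G(y)$ of adjacent degree-$(q+1)$ vertices meet in exactly one point, and of nonadjacent good vertices meet in at most one point, and by a double-counting/deficiency argument in fact exactly one) shows $\dR$ is a $1$-intersecting $(q+1)$-uniform hypergraph on $V(G)$. Since $f(V)\le \epsilon q$, the number of bad vertices is $O(\epsilon q)=o(q)$, so $|\dR|\geq n-o(q)=q^2+q+1-o(q)$, in particular $|\dR|>q^2-\frac{\sqrt5-1}{2}q+17\sqrt{q/5}$ for $q$ large. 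Theorem \ref{Thm:Embedding} then embeds $\dR$ into a projective plane $\dH$ of order $q$, and Theorem \ref{Thm:Embedding unique} gives uniqueness of $\dH$ and of the embedding since $|\dR|>q^2-q+1$.

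With $\dH$ in hand, the heart of Step~2 is to enlarge $\dR$ to a $1$-intersecting hypergraph that already contains the neighborhood of \emph{every} vertex, so that $\dH$ ``sees'' all of $G$. For each remaining vertex $v$ (good or bad) one wants to show $N_G[v]$ — or the completion of $N_G(v)$ to a line of $\dH$ — extends $\dR$ while keeping $1$-intersecting. Here I would invoke Lemma \ref{lem:enlarge R}: to add an edge $f$ (the line of $\dH$ through the points of $N_G(v)$, or $N_G[v]$ itself when $v$ is absolute-like) I need $q$ edges $h_1,\dots,h_q$ of the current hypergraph with $f\cup h_1\cup\dots\cup h_q=V$ and $f\cap h_1\cap\dots\cap h_q$ a single point. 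These $h_i$ should be the neighborhoods $N_G(u)$ for $u$ ranging over $f$ minus one vertex; the union condition amounts to saying that through (almost) every point there is a good line, which again follows from $f(V)=o(q)$ by a counting argument. One repeatedly applies Lemma \ref{lem:enlarge R} until the enlarged family $\dR^\ast$ contains a line through every point in the ``direction'' of every vertex; combined with $\dR^\ast\subseteq\dH$ and $|\dR^\ast|\geq n-o(q)$, one concludes $\dR^\ast$ together with $\dH$ determines, for each $v\in V(G)$, a unique line $L_v\in\dH$ with $N_G(v)\subseteq L_v$. The map $v\mapsto L_v$ is the candidate polarity.

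Step~3 is to verify $v\mapsto L_v$ is a polarity of $\dH$, i.e.\ that the corresponding incidence matrix is symmetric (using the equivalence \eqref{equ:polairty}), and then that the polarity graph $G(\pi)$ contains $G$. Symmetry is where Lemma \ref{Lem:Furedi-symmetry} does the work: it suffices to check $m_{ij}=m_{ji}$ for all $i,j$ with $i\le q^2-q+3$ (equivalently, to check symmetry on all but $2q-2$ rows/columns), and for good vertices the relation $u\in N_G(v)\iff v\in N_G(u)\iff u\in L_v\iff v\in L_u$ gives exactly this; the exceptional rows are absorbed into the $q^2-q+3$ slack of the lemma because there are only $o(q)\ll q$ bad vertices. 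Once $\pi$ is a polarity, $G(\pi)$ is $C_4$-free on $n$ vertices with all degrees $q$ or $q+1$ (Proposition \ref{prop:G(pi)}), and $xy\in E(G)$ implies $x\in N_G(y)\subseteq L_y=\pi(y)$, hence $xy\in E(G(\pi))$; so $G\subseteq G(\pi)$. Uniqueness of the polarity graph follows from uniqueness of $\dH$ and of the embedding plus the fact that each $L_v$ was forced; and the parity hypothesis ($q$ even) together with Lemma \ref{lem:Baer}, Lemma \ref{Lem:neigbor-of-q-vertex} and Proposition \ref{prop:even-q-w} is used to control the low-degree vertices throughout (in particular to guarantee $|S|\geq q+1$ and the structure of degree-$q$ vertices, which is what pins the constant and forbids pushing below $-\frac12 q$).

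The main obstacle, I expect, is Step~2 in its enlargement phase: showing that $\dR$ can be grown so that it contains (the completion of) $N_G(v)$ for \emph{every} vertex $v$, including the bad vertices and the low-degree vertices in $S$. For good vertices this is essentially automatic, but a bad vertex $v$ may have several neighbors of degree $q$, so $N_G(v)$ need not even lie on a common line of $\dH$ a priori — one has to rule out that the missing edges at $v$ ``spread'' across two lines of $\dH$. Verifying the hypotheses of Lemma \ref{lem:enlarge R} for such $v$ (the full-cover condition $f\cup h_1\cup\dots\cup h_q=V$) requires the deficiency bound $f(V)\le\epsilon q$ to be exploited quite tightly, and this is precisely the place where the ``$-\frac12 q$'' threshold is forced and cannot be improved by the present method; I anticipate this is where the bulk of the new technical ideas (and the finite-geometry input) must go. A secondary but nontrivial point is making the symmetry check in Step~3 completely rigorous when a bad vertex $u$ lies on the line $L_v$ of a good vertex $v$ without $v\in N_G(u)$: one must show such ``asymmetries'' are few enough and located consistently enough to fit inside the $2q-2$ exceptional indices allowed by Lemma \ref{Lem:Furedi-symmetry}.
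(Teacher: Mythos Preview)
Your proposal contains a fundamental arithmetic error at the outset that invalidates the entire strategy. You compute $f(V)=(q+1)n-2e(G)\le \epsilon q$, but $(q+1)(q^2+q+1)-q(q+1)^2=q+1$, so in fact
\[
f(V)=(q+1)n-2e(G)\le q+1+\epsilon q,
\]
which is $\Theta(q)$, not $o(q)$. This is not a cosmetic slip: by Lemma~\ref{Lem:neigbor-of-q-vertex} one always has $|S|\ge q+1$, so there are at least $q+1$ vertices of degree $\le q$, and hence $|S_{q+1}|\le q^2$. Consequently $|\dR|\le q^2$, and your claimed bound $|\dR|\ge n-o(q)=q^2+q+1-o(q)$ is simply impossible. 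The number of ``bad'' vertices is $\Theta(q)$, not $o(q)$, and they do \emph{not} fit into the $2q-2$ exceptional indices of Lemma~\ref{Lem:Furedi-symmetry} for free.

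This is exactly why the paper's argument is so much more elaborate than your sketch suggests. Getting from $|\dR|\ge q^2-\epsilon q-O(1)$ up to a $1$-intersecting family of size $\ge q^2$ (needed to invoke Theorem~\ref{Thm:Embedding} for all $\epsilon\in(0,1)$, since the Metsch threshold is about $q^2-0.618q$) is the content of the long Subsection~5.2: one introduces vertices of ``property~2'', type-\1\ and type-\2\ lines, extendable versus non-extendable vertices, an auxiliary bipartite graph $\dG$ on good lines, and a delicate analysis of its rich and heavy components, culminating in Claims~\ref{Cla:size-large-component} and~\ref{Cla:half-of-heavy}. Your one-line appeal to Lemma~\ref{lem:enlarge R} with ``the union condition amounts to saying that through (almost) every point there is a good line, which again follows from $f(V)=o(q)$'' collapses precisely because $f(V)$ is not $o(q)$.

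There is a second genuine gap in your Step~3. You assume that for every vertex $v$ one can find a line $L_v\in\dH$ with $N_G(v)\subseteq L_v$. The paper shows this is \emph{false} in general: there are non-feasible vertices whose neighborhood is not contained in any line of $\dH$. Section~\ref{sec:polarity} instead proves that every vertex is at least \emph{near-feasible}, meaning $N_G(v)\setminus K_v\subseteq L_v$ for some $|K_v|=O(\sqrt{q})$, and then uses Lemma~\ref{Lem:polarity+plus} at the very end to argue that the residual ``problematic'' edges $E_0=E(G)\setminus E(H)$ must in fact be empty (since each such edge would force $q-1$ missing edges in $H$, more than the $\epsilon q/2+O(\sqrt{q})$ available). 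Your sketch omits both the near-feasibility analysis and this final cancellation step.
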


\noindent Our reduction shows that $\epsilon=c+o(1)$ holds for Theorem \ref{Thm:stability-max-degree} $\Longrightarrow$ $c$ holds for Theorem \ref{Thm:stability-2}.

For Theorem \ref{Thm:stability-max-degree}, we will divide its proofs into Sections \ref{sec:1-inters} and \ref{sec:polarity}.
In Section \ref{sec:1-inters}, we complete Step 2 by establishing Lemma \ref{Lem:finding-1-intersecting},
which asserts that there exists a 1-intersecting $(q+1)$-hypergraph $\dH$ containing $\dR$ and at least $q^2$ lines.
This indeed is enough to accomplish Step 2 as we can apply Theorem \ref{Thm:Embedding} to enlarge $\dH$ into a projective plane of order $q$ containing $\dR$.
The proof of Lemma \ref{Lem:finding-1-intersecting} is involved,
where the main technical difficulty lies in accurate analysis on the intricate relations between neighborhoods of vertices of degree $q$ or $q+1$.
Finally, we finish Step 3 in Section \ref{sec:polarity} and thus the proof of Theorem \ref{Thm:stability-max-degree}.
The arguments of this step will heavily rely on the properties of the family $\dR$.

\section{Reducing to $\Delta=q+1$}\label{sec:Delta=q+1}

In this section, as outlined earlier, we present a proof which reduces Theorem \ref{Thm:stability-2} to Theorem \ref{Thm:stability-max-degree}.

\medskip

\noindent {\bf Proof of Theorem \ref{Thm:stability-2} (Assuming Theorem \ref{Thm:stability-max-degree}).}
For any $c\in (0,1)$, we define $\epsilon$ to be any real in $(c,1)$ and choose $q_c$
such that $c+\frac{50}{\sqrt{q_c}}\leq \epsilon<1$ and $q_c\geq \max\big\{q_\epsilon, \frac{2500}{(\epsilon-c)^2}\big\}$, where $q_\epsilon$ is from Theorem \ref{Thm:stability-max-degree}.
Let $q\geq q_c$ be an even integer and let $G$ be a $C_4$-free graph on $n=q^2+q+1$ vertices such that
$$e(G)\geq \frac12 q(q+1)^2-\frac{c}2 q.$$
We will show that there exists a unique polarity graph of order $q$ containing $G$ as a subgraph.

Let $\Delta$ denote the maximum degree of $G$.
By the lower bound on $e(G)$, it is easy to see that $\Delta\geq q+1$.
If $\Delta=q+1$, since $e(G)\geq \frac12 q(q+1)^2-\frac{c}{2}q\geq \frac12 q(q+1)^2-\frac{\epsilon}{2}q$ and $q\geq q_c\geq q_\epsilon$,
our goal is accomplished by Theorem \ref{Thm:stability-max-degree}.
So we may assume that $q+2\leq\Delta\leq q^2+q$.

Let $V(G)=\{v_1,...,v_n\}$. We now process by showing a sequence of claims.

\begin{claim}\label{Cla:Delta=q+2}
$\Delta=q+2$.
\end{claim}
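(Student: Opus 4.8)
The plan is to show $\Delta = q+2$ by ruling out every value in the range $q+3 \le \Delta \le q^2+q$ via a counting argument on deficiencies, exploiting that $G$ has very nearly the extremal number of edges. First I would set up the basic edge-counting identity. Write $n = q^2+q+1$ and recall that $2e(G) = \sum_{v} d(v)$. Since $G$ is $C_4$-free, the number of $2$-paths $|P_2| = \sum_v \binom{d(v)}{2}$ is at most $\binom{n}{2}$ by Proposition \ref{Prop:UP}. The strategy is: if $G$ has a vertex $z$ of very high degree $\Delta$, then the neighborhoods of its neighbors are forced to be pairwise (almost) disjoint, which ``uses up'' a large portion of the available $2$-paths and forces many other vertices to have low degree; this in turn drives $e(G)$ below the hypothesized lower bound $\frac12 q(q+1)^2 - \frac c2 q$, a contradiction.

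Concretely, suppose $\Delta = d(z) \ge q+3$ with $N(z) = \{z_1,\dots,z_\Delta\}$. Because $G$ is $C_4$-free, the sets $N(z_i)\setminus\{z\}$ are pairwise disjoint, so $\sum_i (d(z_i) - 1) \le n - 1$, hence $\sum_{i} d(z_i) \le n - 1 + \Delta$. This means the total degree mass sitting on $N[z]$ is at most roughly $n + 2\Delta$, whereas if all those $\Delta+1$ vertices had degree $q+1$ it would be about $(q+1)(\Delta+1)$; the deficit is on the order of $(q-1)\Delta - n \approx (q-3)\Delta$ or more when $\Delta$ is large. I would phrase this via the deficiency function $f$ of Section \ref{subsec:C4-free}: essentially $f(N[z]) \gtrsim (q-1)\Delta - n$. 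On the other hand, since $e(G)$ is so close to $\frac12 q(q+1)^2$, the total deficiency $f(V)$ is small — of order $O(q)$ plus a correction for vertices of degree above $q+1$. Balancing ``large local deficiency forced near $z$'' against ``tiny global deficiency budget'' should immediately eliminate $\Delta \gtrsim$ (a small multiple of) $q$, and with a little more care the full range down to $q+3$. For the borderline small cases ($\Delta \in \{q+3,\dots, Cq\}$ for a constant, or even just $q+3$), I would likely need the sharper Cauchy–Schwarz-type estimate Lemma \ref{Lem:CS-inquality} applied to $\sum_v \binom{d(v)}{2} \le \binom n2$: writing $\sum_v d(v) = 2e(G) \ge (q+1)n - cq - (q+1) + (\text{something})$, the lemma gives a lower bound on $\sum_v \binom{d(v)}{2}$ that, once one vertex is pushed up to degree $\Delta \ge q+3$, exceeds $\binom n2$ unless compensated by many vertices of degree $\le q$, and tracking that compensation again contradicts the edge count.

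The main obstacle I anticipate is the near-threshold regime: when $\Delta$ is only slightly above $q+2$, say $q+3$ or a bounded multiple of $q$, the crude inequality $\sum_i(d(z_i)-1) \le n-1$ loses too much, because it does not account for how the neighborhoods of the $z_i$'s interact with the rest of the graph, nor for the second-order term $|S_{q+2}|$ appearing in Proposition \ref{Prop:deficiency}. Handling this cleanly will require combining (i) the local disjointness forced around $z$, (ii) Proposition \ref{Prop:deficiency}'s identity $f(V) = (q+1)n - 2e(G) + |S_{q+2}|$ once one knows $\Delta \le q+2$ — but here we are trying to \emph{prove} $\Delta\le q+2$, so instead one works with the one-sided bound $f(V) \le (q+1)n - 2e(G)$ and the fact that each vertex of degree $> q+1$ contributes its excess, which must be ``paid for'' by deficiency elsewhere — and (iii) the constraint $|P_2| \le \binom n2$. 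I expect the write-up to first dispose of $\Delta \ge 2q$ (or similar) by the easy deficiency bound, then treat $q+3 \le \Delta \le 2q$ by a more careful simultaneous accounting; the choice of $q_c$ with $q_c \ge 2500/(\epsilon-c)^2$ in the surrounding proof suggests the constants are tuned precisely so that these inequalities close with room to spare, and I would mirror that bookkeeping. After $\Delta = q+2$ is established, Claim \ref{Cla:Delta=q+2} is done and the subsequent claims presumably show $S_{q+2} = \emptyset$, reducing everything to the $\Delta = q+1$ case handled by Theorem \ref{Thm:stability-max-degree}.
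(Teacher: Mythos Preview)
Your proposal has a genuine gap. First, the inequality $f(V)\le (q+1)n-2e(G)$ that you invoke is backwards: since $f(v)=\max\{q+1-d(v),0\}\ge q+1-d(v)$, one always has $f(V)\ge (q+1)n-2e(G)$, with equality only if $\Delta\le q+1$. So the deficiency budget is not small a priori once high-degree vertices are allowed, and your ``large local deficiency vs.\ tiny global budget'' comparison does not close. Second, your fallback of applying Lemma~\ref{Lem:CS-inquality} to $\sum_v\binom{d(v)}{2}\le\binom{n}{2}$ while separating out one vertex of degree $\Delta$ is too weak in the near-threshold regime you yourself flagged: carrying out that computation shows the resulting inequality $\binom{\Delta}{2}\le (q+1)\bigl(\Delta+(c+\tfrac12)q\bigr)$ is comfortably satisfied for all $\Delta$ up to roughly $3q$, so in particular it does \emph{not} rule out $\Delta=q+3$.

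The idea you are missing, and which the paper uses, is to localize the $2$-path count to pairs \emph{outside} $N(v_1)$ rather than all pairs. Concretely, let $T$ be the number of $2$-paths with both endpoints in $V\setminus N(v_1)$. On one hand $T\le\binom{n-\Delta}{2}$; on the other hand $T\ge\sum_{i\ge 2}\binom{|N(v_i)\setminus N(v_1)|}{2}$ and, crucially, $C_4$-freeness gives $|N(v_i)\setminus N(v_1)|\ge d(v_i)-1$ for every $i\ge 2$. Summing and applying Lemma~\ref{Lem:CS-inquality} with $m=n-1$ now produces a quadratic $g(\Delta)$ which one checks is negative at both endpoints $\Delta=q+3$ and $\Delta=q^2+q$, hence on the whole interval---a single unified argument with no case split. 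The gain over your global count is twofold: the target drops from $\binom{n}{2}$ to $\binom{n-\Delta}{2}$, and the ``$-1$'' in $d(v_i)-1$ encodes the disjointness you noticed but did not fully exploit.
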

\begin{proof}
Suppose that $d(v_1)=\Delta \geq q+3$.
We now estimate the number $T$ of 2-paths in $G$ with none of its endpoints in $N(v_1)$.
Since any two vertices have at most one common neighbor and any two vertices in $N(v_i)$ are contained in a 2-path, we have
$$\binom{q^2+q+1-\Delta}{2}=\binom{n-\Delta}{2}\geq T\geq \sum_{i=2}^{n}\binom{|N(v_i)\setminus N(v_1)|}{2}.$$
Since $G$ is $C_4$-free, we see $|N(v_i)\setminus N(v_1)|=d(v_i)-d(v_i,v_1)\geq d(v_i)-1$ for $2\leq i\le n$.
As $q\geq q_c$ being sufficiently large, we have that
\begin{equation*}
\sum_{i=2}^{n}|N(v_i)\setminus N(v_1)|\ge 2e(G)-\Delta-(n-1)\geq (q^2+q)(q-1)+(q^2+2-\Delta).
\end{equation*}
As $q^2+2-\Delta\geq -(q^2+q)$, using Lemma \ref{Lem:CS-inquality} (with $m=n-1=q^2+q$), we have
$$\binom{q^2+q+1-\Delta}{2}\ge\sum_{i=2}^{n}\binom{|N(v_i)\setminus N(v_1)|}{2}\ge (q^2+q)\binom{q-1}{2}+(q-1)(q^2+2-\Delta).$$
After simplification, this is equivalent to that $g(\Delta):=\Delta^2-(2q^2+3)\Delta+(2q^3+5q^2-5q+4)\geq 0$, where $q+3\leq \Delta\leq q^2+q$.
It can be verified that $g(q+3)$ and $g(q^2+q)$ both are negative.
Since $g(\Delta)$ is quadratic, this shows that $g(\Delta)<0$ for all choices of $\Delta$, a contradiction.
This proves the claim.
\end{proof}

\begin{claim}\label{Cla:q+2 have common neighbor}
Any two vertices of degree $q+2$ have one common neighbor.
\end{claim}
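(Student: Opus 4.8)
We have just shown $\Delta(G)=q+2$, and we know $e(G)\ge \frac12 q(q+1)^2-\frac c2 q$, i.e.\ the deficiency $f(V)$ is small. The plan is to argue by contradiction: suppose $u,v$ are two vertices of degree $q+2$ with $d_G(u,v)=0$. The idea is that $u$ and $v$ then force too much structure on the rest of the graph, overrunning the small deficiency budget. First I would invoke Proposition \ref{Prop:deficiency}: since $u,v\in S_{q+2}$, we have $f(N(u))\ge q$ and $f(N(v))\ge q$. If $N(u)$ and $N(v)$ were disjoint (which they are, by assumption), these deficiencies would add up, giving $f(V)\ge f(N(u))+f(N(v))\ge 2q$, and this should already clash with the upper bound on $f(V)$ coming from $\Delta\le q+2$ — namely $f(V)=(q+1)n-2e(G)+|S_{q+2}|$. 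So the real work is to control $|S_{q+2}|$.

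The key step is therefore to bound $|S_{q+2}|$ from above. Here I would count $2$-paths (covered pairs) more carefully, in the spirit of the proof of Claim \ref{Cla:Delta=q+2}. Each vertex $w$ of degree $q+2$ contributes $\binom{q+2}{2}$ covered pairs among its neighbors, but since $G$ is $C_4$-free these contributions are "almost disjoint": a pair $\{a,b\}$ is covered by at most one common neighbor. Writing $|P_2|=\sum_w \binom{d(w)}{2}$ and using $|P_2|=\binom n2-|UP|\le\binom n2$ from Proposition \ref{Prop:UP}, together with the degree sum $\sum_w d(w)=2e(G)\ge q(q+1)^2-cq$, an application of Lemma \ref{Lem:CS-inquality} (with $m=n$, $k=q+1$) gives $\sum_w\binom{d(w)}{2}\ge n\binom{q+1}{2}-cq(q+1)/2 + (\text{gain from }S_{q+2})$. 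More precisely, vertices of degree $q+2$ each exceed the "baseline" degree $q+1$ by one, so they push $\sum\binom{d(w)}{2}$ up by roughly $|S_{q+2}|\cdot q$ beyond what Lemma \ref{Lem:CS-inquality} guarantees with all degrees equal; comparing against $\binom n2$ then forces $|S_{q+2}|=O(1)$, in fact $|S_{q+2}|\le$ a small constant (certainly $|S_{q+2}|<q$ for $q$ large). Combined with the identity $f(V)=(q+1)n-2e(G)+|S_{q+2}|\le \frac c2 q\cdot 2 + |S_{q+2}| < 2q$, this contradicts $f(V)\ge f(N(u))+f(N(v))\ge 2q$ derived above, completing the proof.

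The main obstacle I anticipate is getting the bound on $|S_{q+2}|$ tight enough: a crude count shows $|S_{q+2}|$ is bounded, but to beat the "$2q$" threshold one needs $|S_{q+2}|$ genuinely small (say $o(q)$, ideally a constant), which requires being careful about how much slack the Cauchy--Schwarz step (Lemma \ref{Lem:CS-inquality}) leaves and about the exact coefficient with which each degree-$(q+2)$ vertex inflates $\sum\binom{d(w)}{2}$. A secondary subtlety is that the deficiencies $f(N(u))$ and $f(N(v))$ only add cleanly when $N(u)\cap N(v)=\emptyset$, which is exactly the contrary hypothesis $d_G(u,v)=0$, so the argument uses that hypothesis in two places and one must make sure no double counting of deficiency occurs with $u,v$ themselves (note $u\notin N(v)$ and $v\notin N(u)$ since $u,v$ nonadjacent, so $u,v$ lie outside $N(u)\cup N(v)$ and there is no interference). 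If the constant-size bound on $|S_{q+2}|$ turns out to be awkward to establish directly, an alternative is to first show, using Lemma \ref{Lem:neigbor-of-q-vertex}-type local arguments, that most neighbors of $u$ and of $v$ have degree exactly $q$, and then the deficiency accounting becomes even more favorable.
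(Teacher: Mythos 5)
Your deficiency-based strategy is attractive, but it has a genuine gap at precisely the point you flag: bounding $|S_{q+2}|$. You correctly observe that the disjointness hypothesis gives $f(V)\ge f(N(u))+f(N(v))\ge 2q$ via Proposition \ref{Prop:deficiency}, and combining this with $f(V)=(q+1)n-2e(G)+|S_{q+2}|\le (1+c)q+1+|S_{q+2}|$ you would need $|S_{q+2}|<(1-c)q-1$ for a contradiction. The problem is that the $2$-path count together with Lemma \ref{Lem:CS-inquality} does \emph{not} yield $|S_{q+2}|=O(1)$ or even $O(q)$. Write $a=|S_{q+2}|$ and $b_i=|S_{q+1-i}|$ for $i\ge1$. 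A direct computation using the identity $\binom n2=n\binom{q+1}{2}$ gives
\[\sum_w\binom{d(w)}{2}-\binom n2 \;=\; a(q+1)-\sum_{i\ge1}b_i\,\frac{i(2q+1-i)}{2},\]
so the $C_4$-free bound $\sum_w\binom{d(w)}{2}\le\binom n2$ is \emph{equivalent} to $a(q+1)\le\sum_{i\ge1}b_i\,\tfrac{i(2q+1-i)}{2}\le q\sum_{i\ge1}ib_i=qf(V)\le q\big((1+c)q+1+a\big)$, i.e.\ $a\le q\big((1+c)q+1\big)=O(q^2)$, which is vacuous. Intuitively, a vertex of degree $q+2$ inflates $\sum\binom{d(w)}{2}$ by about $q$, but the slack in $|P_2|\le\binom n2$ (namely $|UP|$, which Lemma \ref{Lem:CS-inquality} only caps at $O(q^2)$) can swallow a lot of this inflation. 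So the global $2$-path count cannot certify $|S_{q+2}|<(1-c)q$ at this stage of the argument (the paper does eventually prove $|S_{q+2}|\le 22\sqrt q$, but only in a \emph{later} claim that itself uses the common-neighbor claim you are trying to prove). Your fallback via Lemma \ref{Lem:neigbor-of-q-vertex} also does not apply, since that lemma requires $\Delta=q+1$.

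The paper's actual proof avoids any bound on $|S_{q+2}|$ altogether: it exploits the disjointness hypothesis $N(v_1)\cap N(v_2)=\emptyset$ \emph{inside} the $2$-path count, restricting attention to $2$-paths with both endpoints outside $N(v_1)\cup N(v_2)$. The number of available pairs then drops to $\binom{n-2(q+2)}{2}$ rather than $\binom n2$, and each vertex $v_i$ ($i\ge3$) contributes at least $\binom{d(v_i)-2}{2}$ such paths; Jensen's inequality then produces a contradiction with $q$ large and $c<1$, with no intermediate structural claims needed. This is the same spirit (Cauchy--Schwarz/Jensen on $2$-paths) but localized to $V\setminus(N(v_1)\cup N(v_2))$, and that localization is exactly what makes the disjointness assumption bite. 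If you want to salvage your deficiency approach, you would need an independent argument showing $|S_{q+2}|=o(q)$ using only $\Delta=q+2$, which seems at least as hard as the claim itself.
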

\begin{proof}
Suppose for a contradiction that there exist $v_1,v_2\in S_{q+2}$ such that $N(v_1)\cap N(v_2)=\emptyset$.
Then for $3\le i\le n$ we have
$|N(v_i)\setminus (N(v_1)\cup N(v_2))|=|N(v_i)|-|N(v_i)\cap N(v_1)|-|N(v_i)\cap N(v_2)|\ge d(v_i)-2.$
Similarly as above, we estimate the number of 2-paths with none of its endpoints in $N(v_1)\cup N(v_2)$.
Using Jensen's inequality, we get that
\begin{equation*}
\begin{split}
\binom{n-2(q+2)}{2}
&\ge\sum_{i=3}^{n}\binom{|N(v_i)\setminus (N(v_1)\cup N(v_2))|}{2}
\ge(n-2)\binom{\frac{\sum_{i=3}^{n}(d(v_i)-2)}{n-2}}{2}\\
&= (n-2)\binom{\frac{2e(G)-2(q+2)-2(n-2)}{n-2}}{2}\geq (q^2+q-1)\binom{\frac{q^3-3q-cq-2}{q^2+q-1}}{2}
\end{split}
\end{equation*}
which is equivalent to
$$(q^2+q-1)(q^2-q-3)(q^2-q-4)\ge(q^3-3q-cq-2)(q^3-q^2-4q-cq-1).$$
After further simplification, we can derive that
$$2(c-1)q^4+(3-c)q^3+(11-7c-c^2)q^2-3(2+c)q-14\geq 0.$$
Since $0<c<1$, this inequality can not hold for large $q$, a contradiction.
\end{proof}

\begin{claim}\label{Cla:degree of common neighbor is small}
Any common neighbor of three vertices of degree $q+2$ has degree at most $q/2$.
\end{claim}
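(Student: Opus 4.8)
The plan is to bound the degree of a common neighbor $u$ of three vertices $v_1,v_2,v_3 \in S_{q+2}$ by a counting argument analogous to those in Claims \ref{Cla:Delta=q+2} and \ref{Cla:q+2 have common neighbor}. Write $W = N(v_1)\cup N(v_2)\cup N(v_3)$. Since $G$ is $C_4$-free, each pair $v_i,v_j$ has exactly one common neighbor; moreover $u$ lies in $N(v_1)\cap N(v_2)\cap N(v_3)$, so these three common neighbors might all equal $u$ or be distinct, but in any case $|W| \geq 3(q+2) - 3 = 3q+3$ (the inclusion-exclusion correction is at most one per pair). For any vertex $x \notin \{v_1,v_2,v_3\}$ we have $|N(x)\setminus W| \geq d(x) - 3$, because $x$ has at most one neighbor in each $N(v_i)$. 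I would then count $2$-paths with both endpoints outside $W$: on one hand there are at most $\binom{n-|W|}{2}\leq \binom{n-3q-3}{2}$ such paths since $G$ is $C_4$-free; on the other hand, summing over centers $x \notin \{v_1,v_2,v_3\}$ gives at least $\sum \binom{|N(x)\setminus W|}{2}$.

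The point of involving $u$ is to show that $W$ cannot absorb too many edges, which forces a relatively small $n - |W|$ and hence a contradiction unless $d(u)$ is small. The key observation is that the three edges $uv_1, uv_2, uv_3$ together with the neighborhoods of the $v_i$ account for $W$, and the $2$-paths centered at vertices of $W$ are ``used up'' efficiently. More concretely, I expect the cleanest route is: every vertex $y \in N(u)\setminus\{v_1,v_2,v_3\}$ has the property that $y$ has at most one neighbor in each $N(v_i)$ and $y \notin N(v_i)$ would create no $C_4$ with $u$, but if $y \in N(v_i)$ then $u,y$ lie in a common $2$-path with $v_i$, so in fact $N(u)\setminus W$ has size at least $d(u) - 3$. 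The real leverage comes from counting $2$-paths through $u$ whose other endpoint lies outside $W$: there are $\binom{d(u)-3}{2}$ such $2$-paths (roughly), and these must be among the $\binom{n-|W|}{2}$ paths counted above, but they also compete with the $\binom{d(x)-3}{2}$ paths from all other $x$. Plugging $|W|\geq 3q+3$, $n = q^2+q+1$, the edge bound $2e(G) \geq q(q+1)^2 - cq$, and applying Lemma \ref{Lem:CS-inquality} (with $m = n-3$), one gets an inequality in $d(u)$; if $d(u) > q/2$ the surplus of $2$-paths forced through the high-degree vertices overshoots $\binom{n-3q-3}{2}$, a contradiction.

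Concretely I would set up: $\sum_{x \neq v_1,v_2,v_3}|N(x)\setminus W| \geq 2e(G) - 3(q+2) - 3(n-3) - (\text{correction for } u)$, isolating the $u$-term to write $\sum_{x}|N(x)\setminus W| \geq (n-3)(q-1) + (\text{slack})$ where the slack is roughly $q^2 - O(q) - O(cq)$, but crucially $|N(u)\setminus W| \geq d(u) - 3$ so when we apply Lemma \ref{Lem:CS-inquality} the term $\binom{d(u)-3}{2}$ appears separately and contributes an extra $\binom{d(u)-3}{2} - \binom{q-1}{2}$ beyond the baseline. Comparing with $\binom{n-3q-3}{2} = \binom{q^2-2q-2}{2}$ and simplifying (the baseline $(n-3)\binom{q-1}{2}$ nearly matches $\binom{n-3q-3}{2}$ up to lower-order terms governed by $|W| \approx 3q$), the dominant constraint becomes $\binom{d(u)-3}{2} \lesssim \frac{1}{2}(3q)^2 + O(q) = \frac{9}{2}q^2 + O(q)$; this only gives $d(u) \lesssim 3q$, which is too weak. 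So I expect the naive three-vertex count alone is insufficient, and the actual argument must also exploit Proposition \ref{Prop:deficiency} (each $v_i \in S_{q+2}$ satisfies $f(N(v_i)) \geq q$, so the neighbors of the $v_i$ have substantial deficiency), combined with the fact that a vertex of large degree in $W$ would push $\sum_x |N(x)\setminus W|$ far beyond what $\binom{n-|W|}{2}$ permits. The main obstacle, therefore, is extracting enough from the overlap structure of $N(v_1), N(v_2), N(v_3)$ and their deficiencies to sharpen the bound from $3q$ down to $q/2$ — most likely by counting, for the three vertices separately, the $2$-paths $v_i$–$u$–$z$ for $z \in N(u)$ and noting each such $z$ with $z \notin N(v_j)$ for $j\neq i$ contributes to distinct pairs, while Proposition \ref{Prop:deficiency} caps how many such $z$ can be high-degree. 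I would carry out the inclusion-exclusion carefully, feed the deficiency bound $f(N(v_1)\cup N(v_2)\cup N(v_3)) \geq $ (something like $3q$ minus overlaps) into the edge count over $W$, and then the resulting polynomial inequality in $d(u)$ should fail for $d(u) > q/2$ when $q$ is large and $c < 1$.
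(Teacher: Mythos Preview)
Your framework is correct — counting $2$-paths with both endpoints outside $W=N(v_1)\cup N(v_2)\cup N(v_3)$, bounding above by $\binom{n-|W|}{2}$, and below via Lemma~\ref{Lem:CS-inquality} plus the isolated term $\binom{d(u)-3}{2}$ from $u$-centered paths — but you have correctly diagnosed that this alone is too weak, and you have \emph{not} found the missing ingredient. The fix is not deficiency.

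The key observation you are missing is this: for every $y\in N(u)\setminus\{v_1,v_2,v_3\}$, the vertex $u$ already lies in $N(y)\cap N(v_i)$ for each $i$, and since $G$ is $C_4$-free this intersection has size exactly one. Hence $N(y)\cap W=\{u\}$, so $|N(y)\setminus W|\ge d(y)-1$, not merely $d(y)-3$. Splitting the centers into $A=N(u)\setminus\{v_1,v_2,v_3\}$ (where the bound is $d(y)-1$) and $B=V\setminus N[u]$ (where the bound is $d(y)-3$), one gets
\[
\sum_{y\in A}(d(y)-1)+\sum_{y\in B}(d(y)-3)\;\ge\;(q^2+q-3)(q-2)-\Big(\tfrac{q}{2}+9\Big)
\]
once $d(u)>q/2$ and $c<1$ are used. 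Applying Lemma~\ref{Lem:CS-inquality} with $m=q^2+q-3$, $k=q-2$, and then adding $\binom{d(u)-3}{2}\ge\binom{q/2-3}{2}$ for the $u$-centered paths, the comparison with $\binom{n-3q-4}{2}$ simplifies to $q^2-50q+72\le 0$, which fails for large $q$.

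The gain of $2$ per vertex over the $|A|=d(u)-3>q/2-3$ neighbors of $u$ contributes an additional $\approx 2\cdot\tfrac{q}{2}\cdot(q-2)\approx q^2$ to the lower bound after Lemma~\ref{Lem:CS-inquality}, and this is precisely the order of magnitude by which your uniform $d(x)-3$ estimate falls short. Your speculation about Proposition~\ref{Prop:deficiency} is a red herring here; the repair is purely the local observation about $N(y)\cap W$ for $y\in N(u)$.
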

\begin{proof}
Suppose on the contrary that $v_1,v_2,v_3\in S_{q+2}$ have a common neighbor $v_4$ such that $d=d(v_4)>q/2$.
Let $A= N(v_4)\backslash\{v_1,v_2,v_3\}$, $B=V\backslash N[v_4]$ and $C=N(v_1)\cup N(v_2)\cup N(v_3)$.
So $|A|=d-3$ and $|B|=q^2+q-d$.
By similar discussion as before, we have that $|N(v)\setminus C|\geq d(v)-1$ for $v\in A$ and $|N(u)\setminus C|\geq d(u)-3$ for $u\in B\cup \{v_4\}$.
Therefore,
\begin{align*}
&\sum_{v\in A}(d(v)-1)+\sum_{v\in B}(d(v)-3)=\big(2e(G)-3(q+2)-d\big)-|A|-3|B|\\
\geq &(q(q+1)^2-cq)-3q^2-6q+d-3\geq (q^2+q-3)(q-2)-(q/2+9).
\end{align*}
Note that $-(q/2+9)>-(q^2+q-3)$.
Using Lemma \ref{Lem:CS-inquality}, if we estimate the number of 2-paths $T$ with none of its endpoints in $C$, then we can derive that
\begin{equation*}
\begin{split}
\binom{n-3(q+2)+2}{2}
&\geq T\geq \binom{d-3}{2}+\sum_{v\in A}\binom{d(v)-1}{2}+\sum_{v\in B}\binom{d(v)-3}{2}\\
&\geq\binom{q/2-3}{2}+(q^2+q-3)\binom{q-2}{2}-(q-2)(q/2+9).
\end{split}
\end{equation*}
This inequality is equivalent to $q^2-50q+72\leq 0$, which contradicts that $q$ is large.
\end{proof}

\begin{claim}\label{Cla:small degree vertex}
There are at most 7 vertices of degree at most $q/2$.
\end{claim}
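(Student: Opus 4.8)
The plan is to bound the number of vertices of degree at most $q/2$ by a counting argument entirely analogous to Claims \ref{Cla:degree of common neighbor is small} and \ref{Cla:q+2 have common neighbor}. Suppose for contradiction there are at least $8$ vertices $w_1,\dots,w_8$ with $d(w_j)\le q/2$. The idea is that a $C_4$-free graph with very close to the extremal number of edges cannot afford to have too many ``deficient'' vertices, because each low-degree vertex forces the total degree (equivalently $2e(G)$) down by about $q/2$, and $8$ of them would push $e(G)$ below $\frac12 q(q+1)^2-\frac c2 q$. More precisely, I would compute $f(V)$ or directly $2e(G)=\sum_v d(v)$: since $\Delta=q+2$ (Claim \ref{Cla:Delta=q+2}) every vertex has degree at most $q+2$, so $2e(G)\le (q+2)(n-8)+8\cdot(q/2)$ if those $8$ vertices had degree exactly $q/2$; this already gives $e(G)\le \frac12(q+2)n - \frac12\cdot 8\cdot(q/2+2)$, and one checks $\frac12(q+2)n - 2q - 8 < \frac12 q(q+1)^2 - \frac c2 q$ for large $q$ since the gap between $\frac12(q+2)n$ and $\frac12 q(q+1)^2$ is only about $\frac12 n \approx \frac12 q^2$...

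So a pure degree-sum bound is too weak by a factor of $q$. The correct approach must use the $C_4$-free structure, as in the previous claims: estimate $2$-paths avoiding the union of neighborhoods of the low-degree vertices. I would let $W=\{w_1,\dots,w_8\}$ (or a suitable subset), set $C=\bigcup_{j} N(w_j)$, so $|C|\le \sum_j d(w_j)\le 8\cdot(q/2)=4q$, and note that for every vertex $v$, $|N(v)\setminus C|\ge d(v)-8$ since $G$ is $C_4$-free and $v$ has at most one common neighbor with each $w_j$. Then the number $T$ of $2$-paths with both endpoints outside $C$ satisfies $\binom{n-|C|}{2}\ge T\ge \sum_{v}\binom{|N(v)\setminus C|}{2}\ge \sum_v\binom{d(v)-8}{2}$. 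Using $\sum_v(d(v)-8) = 2e(G)-8n \ge q(q+1)^2 - cq - 8n$ and Lemma \ref{Lem:CS-inquality} with $m=n$, $k$ roughly $q-8$, this yields a quadratic inequality in $q$ which, for $c<1$ and large $q$, fails — the left side is at most $\binom{n-4q}{2}$ while the right side forces it to be essentially $\binom{n}{2}$ minus a lower-order term, a contradiction. The constant $7$ versus $8$ presumably comes from optimizing: with $7$ vertices the slack in $|C|\le 7q/2$ and the $d(v)-7$ bound is just barely consistent, while $8$ breaks it.

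I would be careful about one subtlety that makes this slightly different from Claim \ref{Cla:degree of common neighbor is small}: there we used that the special vertex $v_4$ itself had large degree to gain an extra $\binom{d-3}{2}$ term, whereas here the low-degree vertices contribute almost nothing to $T$ and should simply be discarded from the count (or bounded trivially). So the cleanest version iterates or directly contradicts: assume $|S_{\le q/2}|\ge 8$ where $S_{\le q/2}=\{v:d(v)\le q/2\}$, pick exactly $8$ of them, and run the $2$-path count above. One should double-check the case analysis on whether the $w_j$ could have very small degree (which only helps, as it makes $|C|$ smaller) and whether $n-|C|$ could be negative (it cannot, since $|C|\le 4q \ll n$).

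The main obstacle I anticipate is getting the numerology exactly right so that the threshold is $7$ and not some other constant, and in particular making sure the bound $|N(v)\setminus C|\ge d(v)-8$ (rather than $d(v)-|W|$ with a smaller $|W|$) together with $|C|\le 4q$ still produces a genuine contradiction for all $c\in(0,1)$ — the inequality $2(c-1)q^4 + O(q^3)\ge 0$ type conclusion has to come out with the right sign. Since $c-1<0$ this is exactly the mechanism that worked in Claim \ref{Cla:q+2 have common neighbor}, so I expect the same leading term $2(c-1)q^4$ to appear and kill the inequality; the work is purely in verifying the lower-order coefficients don't rescue it, which is routine algebra best left to the formal proof.
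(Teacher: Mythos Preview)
Your proposed $2$-path count does not produce a contradiction. With $C=\bigcup_{j=1}^8 N(w_j)$ and $|C|\le 4q$, the inequality you hope to violate is $\binom{n-|C|}{2}\ge \sum_{v\notin W}\binom{|N(v)\setminus C|}{2}$. Carrying out the arithmetic (Lemma~\ref{Lem:CS-inquality} with $m=n-8$, $k=q-7$) gives a lower bound for the right-hand side of $\frac12 q^4-7q^3+O(q^2)$, while the left-hand side with $|C|=4q$ is $\binom{q^2-3q+1}{2}=\frac12 q^4-3q^3+O(q^2)$. The difference is $+4q^3+O(q^2)>0$, so the inequality holds comfortably; the anticipated leading term $2(c-1)q^4$ never appears. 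The heuristic reason is that subtracting $8$ from every degree costs roughly $8(q+1)n\approx 8q^3$ in $\sum\binom{d(v)}{2}$, while shrinking the target set by $|C|\le 4q$ vertices saves only about $4q\cdot n\approx 4q^3$ in $\binom{n-|C|}{2}$ --- a net loss of $4q^3$. (Your parenthetical that small $|C|$ ``only helps'' points the wrong way: smaller $|C|$ makes the left side \emph{larger}.) The technique of Claims~\ref{Cla:Delta=q+2}--\ref{Cla:degree of common neighbor is small} succeeds precisely because there one removes neighborhoods of \emph{high}-degree vertices, so the per-vertex cost on the right ($1$, $2$, or $3$) is small relative to the gain $|C|\approx q$, $2q$, $3q$ on the left; for low-degree vertices this trade-off reverses.

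The paper's argument does not remove any neighborhoods. It exploits low degree through uncovered pairs: for $v\in S_{q+1-k}$ one has $d_0(v)\ge(k-1)(q+1)$ since $\Delta=q+2$, so eight vertices of degree at most $q/2$ (hence $k-1\ge q/2$) force $|UP|\ge \tfrac12\cdot 8\cdot \tfrac{q}{2}(q+1)=2q(q+1)$. Meanwhile Lemma~\ref{Lem:CS-inquality} applied to $\sum_v d(v)\ge (q+1)n-(q+cq+1)$ yields $|P_2|=\sum_v\binom{d(v)}{2}\ge \binom{n}{2}-(q+1)(q+cq+1)$. Since $|P_2|+|UP|=\binom{n}{2}$ (Proposition~\ref{Prop:UP}), combining gives $2q(q+1)\le(q+1)((1+c)q+1)$, i.e.\ $(1-c)q\le 1$, contradicting $c<1$ for large $q$. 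The threshold $8$ is exactly what makes $8\cdot\tfrac12\cdot\tfrac12=2$ exceed $1+c$ for every $c\in(0,1)$.
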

\begin{proof}
Suppose on the contrary that there are at least 8 vertices of degree at most $q/2$.
Recall the notations in Subsection \ref{subsec:C4-free}.
For any $v\in S_{q+1-k}$ for $k\in [q]$, we have
\begin{align*}
d_0(v)&=(n-1)-\sum_{u\in N(v)}(d(u)-1)=(q^2+2q+1-k)-\sum_{u\in N(v)}d(u)\\
&\ge(q^2+2q+1-k)-(q+2)(q+1-k)=(k-1)(q+1),
\end{align*}
where the inequality holds as $\Delta=q+2$.
By Proposition \ref{Prop:UP}, we have
\begin{equation*}\label{Equ:UP}
\binom{n}{2}-|P_2|=|UP|=\frac{1}{2}\sum_{v\in V}d_0(v)\ge\frac{q+1}{2}\sum\limits_{k=1}^{q}(k-1)|S_{q+1-k}|\ge\frac{q+1}{2}\cdot \frac{q}{2}\cdot 8=2q^2+2q.
\end{equation*}
We also have $|P_2|=\sum_{v\in V}\binom{d(v)}{2}$ and $\sum_{v\in V}d(v)\geq q(q+1)^2-cq=(q^2+q+1)(q+1)-(q+cq+1)$.
Since $-(q+cq+1)>-(q^2+q+1)$, by Lemma \ref{Lem:CS-inquality} we get
\begin{align*}
|P_2|\geq (q^2+q+1)\binom{q+1}{2}-(q+1)(q+cq+1)=\binom{n}{2}-(q+1)(q+cq+1).
\end{align*}
Combining with the above two inequalities, we derive that
$$ 2q^2+2q-(q+1)(q+cq+1)=(q+1)((1-c)q-1)\leq 0.$$
Again, this contradicts the fact that $q$ is large and thus completes the proof of Claim \ref{Cla:small degree vertex}.
\end{proof}

\begin{claim}
$|S_{q+2}|\le 22\sqrt{q}$.
\end{claim}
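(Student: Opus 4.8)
The plan is to exploit the deficiency estimates together with the structural Claims already established in this section. Put $m:=|S_{q+2}|$ and, for a vertex $u$, $a_u:=|N(u)\cap S_{q+2}|$. Since $\Delta(G)=q+2$ by Claim~\ref{Cla:Delta=q+2}, Proposition~\ref{Prop:deficiency} applies: combined with the hypothesis $e(G)\ge \frac12 q(q+1)^2-\frac{c}{2}q$ it gives $f(V)=(q+1)n-2e(G)+m\le (1+c)q+1+m$, and it also gives $f(N(v))\ge q$ for every $v\in S_{q+2}$.

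The heart of the matter, and the place where $\sqrt q$ enters, is an upper bound on $a_u$ for any vertex $u$ with $a_u\ge 3$. Such a $u$ is a common neighbour of three vertices of $S_{q+2}$, so $d(u)\le q/2$ by Claim~\ref{Cla:degree of common neighbor is small}. Write $N(u)\cap S_{q+2}=\{v_1,\dots,v_{a_u}\}$. As $G$ is $C_4$-free and every $v_i$ is adjacent to $u$, the vertex $u$ is the unique common neighbour of each pair $v_i,v_j$, hence the sets $N(v_i)\setminus\{u\}$ are pairwise disjoint. Since $f(N(v_i))\ge q$ and $f(u)=q+1-d(u)$, additivity of $f$ yields $f(N(v_i)\setminus\{u\})=f(N(v_i))-f(u)\ge d(u)-1$ for every $i$; summing over the disjoint sets $N(v_i)\setminus\{u\}$ and bounding the deficiency of their union by $f(V)$ gives $a_u\bigl(d(u)-1\bigr)\le f(V)$. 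Because $a_u\le d(u)$, this forces $a_u^2-a_u\le f(V)\le (1+c)q+1+m$, so $a_u\le \sqrt{(1+c)q+1+m}+1$.

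It then remains to double count $\sum_{v\in S_{q+2}}f(N(v))=\sum_u f(u)\,a_u$, which is at least $mq$. Splitting the right-hand sum according to whether $a_u\le 2$ or $a_u\ge 3$, the ``light'' part contributes at most $2\sum_u f(u)=2f(V)$, while the ``heavy'' part ranges over at most $7$ vertices, since each such $u$ has $d(u)\le q/2$ (Claim~\ref{Cla:degree of common neighbor is small}) and there are at most $7$ vertices of degree $\le q/2$ (Claim~\ref{Cla:small degree vertex}); each heavy $u$ contributes $f(u)\,a_u\le (q+1)\bigl(\sqrt{(1+c)q+1+m}+1\bigr)$. Hence $mq\le 2f(V)+7(q+1)\bigl(\sqrt{(1+c)q+1+m}+1\bigr)\le 2\bigl((1+c)q+1+m\bigr)+7(q+1)\bigl(\sqrt{(1+c)q+1+m}+1\bigr)$, and using $c<1$ a short case check — treating $m\le 2q$ (where the right-hand side is below $22q\sqrt q$ once $q$ is large) and $m>2q$ (which the same inequality rules out) — gives $m\le 22\sqrt q$ for all large $q$.

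The delicate step is clearly the bound on $a_u$. A purely combinatorial argument — every pair in $S_{q+2}$ has a unique common neighbour, so $\binom m2=\sum_u\binom{a_u}{2}\le n+7\binom{q/2}{2}$ — only yields $m=O(q)$, which is far too weak for the claim. The point that must be used is that the at most $7$ heavy common neighbours are costly in deficiency: a heavy vertex $u$ with $a_u=k$ spawns $k$ pairwise disjoint neighbourhoods each of which must absorb at least $d(u)-1\ge k-1$ units of deficiency, and since the total deficiency budget is only $f(V)=O(q)$ we get $k^2=O(q)$, i.e.\ no heavy vertex is adjacent to more than $O(\sqrt q)$ members of $S_{q+2}$.
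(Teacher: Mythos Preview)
Your argument is correct and follows essentially the same strategy as the paper: the deficiency double-count $\sum_{v\in S_{q+2}} f(N(v))=\sum_u f(u)\,a_u\ge mq$, combined with the disjoint-neighbourhood bound $a_u(a_u-1)\le f(V)$ for the at most seven ``heavy'' vertices, is exactly the paper's mechanism. The only organisational difference is that the paper runs the estimate in two passes --- first a crude bound $|S_{q+2}|\le 4q$ to control $f(V)$, then the $\sqrt q$-bound on $t=|N(v)\cap S_{q+2}|$, then a second pass of the weight argument --- whereas you keep $m$ as a free variable in the bound $a_u\le\sqrt{(1+c)q+1+m}+1$ and solve the resulting inequality for $m$ at the end; your route is marginally cleaner and in fact yields a slightly better constant than $22$.
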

\begin{proof}
Let $S'=\{v: d(v)\le q/2\}$ and $S_{q+2}'=S_{q+2}\backslash N(S')$.
By Claim \ref{Cla:small degree vertex} we have $|S'|\le 7$,
so $|S_{q+2}\cap N(S')|\leq 7q/2$ and $|S_{q+2}'|\ge |S_{q+2}|-7q/2$.
Recall $S=\{v: d(v)\le q\}$.

We now define a weight function $w$ on the edges $xy$ with $x\in S_{q+2}'$ and $y\in S$ by assigning $w(xy)$ to be the deficiency $f(y)$.
We consider the total weight $W$ of these edges.
On the one hand, by Proposition \ref{Prop:deficiency}, every $S_{q+2}'$ vertex contributes at least $q$ and
$$f(S)=f(V)=(q+1)n-2e(G)+|S_{q+2}|\leq q+cq+1+|S_{q+2}|.$$
On the other hand, by Claim \ref{Cla:degree of common neighbor is small},
every vertex in $S$ is adjacent to at most two vertices in $S_{q+2}'$ and thus contributes at most twice of its deficiency.
Putting these together, we have
\begin{equation}\label{Equ:fV-2}
q(|S_{q+2}|-7q/2)\leq q|S_{q+2}'|\leq W\leq 2f(S)\leq 2(q+cq+1+|S_{q+2}|),
\end{equation}
from which we derive that $|S_{q+2}|\leq \frac{3.5q^2+2(1+c)q+2}{q-2}\leq 4q$.
For any $v\in S'$, let $N(v)\cap S_{q+2}=\{u_1,...,u_t\}$.
Since $N(u_i)\backslash \{v\}$ are disjoint for all $i\in [t]$, by Proposition \ref{Prop:deficiency} it follows that
\begin{align*}
f(V)\geq f(v)+\sum_{i=1}^{t}f(N(u_i)\backslash \{v\})\geq f(v)+t(q-f(v))=(d(v)-1)(t-1)+q.
\end{align*}
Since $d(v)\geq t$ and $|S_{q+2}|\le 4q$, by \eqref{Equ:fV-2} we have
$$(t-1)^2+q\leq f(V)\leq q+cq+1+|S_{q+2}|\le 6q+1,$$ implying that $|N(v)\cap S_{q+2}|=t\leq \sqrt{5q+1}+1\leq 3\sqrt{q}$ for any $v\in S'$.

We improve the estimation of $|S_{q+2}\cap N(S')|\leq 21\sqrt{q}$ and can run the above procedure again.
Since $|S_{q+2}'|\geq |S_{q+2}|-21\sqrt{q}$, we have
$$q(|S_{q+2}|-21\sqrt{q})\le q|S_{q+2}'|\leq W\leq 2f(S)\leq 2(q+cq+1+|S_{q+2}|).$$
This shows that $|S_{q+2}|\leq \frac{21q\sqrt{q}+2(1+c)q+2}{q-2}\leq 22\sqrt{q}$.
\end{proof}

We are ready to complete the proof.
Since $|S_{q+2}|\le 22\sqrt{q}$, we can delete at most $22\sqrt{q}$ edges from $G$ to get a subgraph $G'$ with maximum degree $q+1$.
By the choice of $\epsilon$, we have
\begin{equation}\label{equ:e(G')}
e(G')\geq e(G)-22\sqrt{q}\geq \frac{1}{2}q(q+1)^2-\frac{c}{2}q-22\sqrt{q}\geq \frac{1}{2}q(q+1)^2-\frac{\epsilon}{2}q.
\end{equation}
By Theorem \ref{Thm:stability-max-degree},
there exists a unique polarity graph $H$ containing $G'$ as a subgraph.
Let $e_1,...,e_t$ be the edges deleted from $G$, where $t\leq 22\sqrt{q}$.
We may assume that $t\geq 1$, as otherwise $G=G'$ is a subgraph of $H$.
So we have $e_1=xy\notin E(H)$.
By Lemma \ref{Lem:polarity+plus}, $H\cup \{e_1\}$ contains at least $q-1$ copies of $C_4$,
all of which contain $e_1$ and are edge-disjoint otherwise.

Consider $G'\cup \{e_1\}$, which is a subgraph of $G$ and thus is $C_4$-free.
So any of these $C_4$'s found in $H\cup \{e_1\}$ must have an edge not in $G'\cup \{e_1\}$, which are pairwise-distinct.
This shows that $e(G')\leq e(H)-(q-1)\leq \frac{1}{2}q(q+1)^2-(q-1)$,
which contradicts \eqref{equ:e(G')}.
This finishes the proof of Theorem \ref{Thm:stability-2}.
\QED

\bigskip

We point out that the reduction in this section works for {\it all} integers $q$, not necessarily for even $q$.

\section{Finding a large 1-intersecting hypergraph}\label{sec:1-inters}
We prove Theorem \ref{Thm:stability-max-degree} in the following two sections.
The goal of this section is to construct a 1-intersecting $(q+1)$-hypergraph,
which represents the $C_4$-free graph considered in Theorem \ref{Thm:stability-max-degree} (see Lemma \ref{Lem:finding-1-intersecting} below).

Let $\epsilon$ and $G$ be from Theorem \ref{Thm:stability-max-degree} throughout this section.
Namely, $\epsilon\in (0,1)$ is a fixed constant and $G=(V,\dE)$ is a $C_4$-free graph on $n=q^2+q+1$ vertices with maximum degree $q+1$ and
\begin{equation}\label{equ:e(G)+1}
e(G)\geq \frac{1}{2}q(q+1)^2-\frac{\epsilon}{2}q,
\end{equation}
where $q$ is an even integer.
Moreover, we choose $q$ and $\delta=\delta(\epsilon)$ such that
\begin{equation}\label{equ:choices-of-q&delta}
\frac{1}{q}\ll \delta\ll 1-\epsilon.
\end{equation}

The following notations will play essential roles in the proofs of the coming two sections.

\begin{dfn}\label{dfn:def-A-B-R}
Let $B=\{x\in V: |N(x)\cap S|\geq \delta q\}$ and $A=S_{q+1}\backslash B$.
Let $\dR=\{N(x):x\in A\}$. We call any subset in $V$ of size $q+1$ a {\it line}.
\end{dfn}

Now we are able to state the main result of this section.

\begin{lem}\label{Lem:finding-1-intersecting}
There is a 1-intersecting $(q+1)$-hypergraph on the vertex set $V$, which contains $\dR$ and at least $q^2$ lines.
\end{lem}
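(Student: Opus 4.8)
\textbf{Proof proposal for Lemma \ref{Lem:finding-1-intersecting}.}
The plan is to build the desired $1$-intersecting hypergraph by starting from $\dR$ and repeatedly enlarging it, using Lemma \ref{lem:enlarge R} as the workhorse for each enlargement step. First I would verify that $\dR$ itself is $1$-intersecting: for two members $N(x), N(y)$ with $x,y\in A\subseteq S_{q+1}$, the graph $G$ being $C_4$-free forces $d_G(x,y)\le 1$, and a counting argument (each $x\in A$ has all but $o(q)$ of its $\binom{q+1}{2}$ neighbor-pairs ``used up'' as the unique common neighbors of genuinely distinct pairs, because $x\notin B$ means few neighbors of $x$ lie in $S$) shows $d_G(x,y)\ge 1$ as well, so $|N(x)\cap N(y)|=1$. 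The key quantitative input is that $|A|$ is large, i.e. $|A|\ge q^2+q+1-o(q)$; this follows by combining \eqref{equ:e(G)+1} with $\Delta(G)=q+1$ to bound $f(V)=(q+1)n-2e(G)\le \epsilon q$, hence $|S|=|V\setminus S_{q+1}|$ is $O(q)$ (each vertex of $S$ contributes at least $1$ to $f(V)$, so $|S|\le \epsilon q$), and then bounding $|B|$: double-counting edges between $B$ and $S$ gives $\delta q\,|B|\le \sum_{v\in S} d(v)\le (q+1)|S|\le (q+1)\epsilon q$, so $|B|=O(q/\delta)=o(q)$ by \eqref{equ:choices-of-q&delta}. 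Thus $|A|\ge n-|S|-|B|\ge q^2+q+1-o(q)$, and in particular $|\dR|\ge q^2+q+1-o(q)$.

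Next I would grow $\dR$ to a $1$-intersecting family $\dH$ with at least $q^2$ lines. With $|\dR|$ already at least $q^2$, one option is simply to take $\dH=\dR$; but to be safe (and because later steps in Section \ref{sec:polarity} want a clean structure) the natural route is: set $\dH_0=\dR$, and as long as the current family $\dH_i$ has fewer than, say, $q^2+q+1$ lines but fails to be ``saturated'', find a $(q+1)$-set $f\subseteq V$ meeting every existing line in exactly one point and covering $V$ together with $q$ existing lines through a common point, then invoke Lemma \ref{lem:enlarge R} to add $f$. Concretely, for a point $u\in V$ that lies on fewer than $q+1$ lines of $\dH_i$, the lines of $\dH_i$ through $u$ partition a subset of $V\setminus\{u\}$ into blocks of size $q$; if they cover all of $V\setminus\{u\}$ except for one leftover block of exactly $q$ points, adjoining $\{u\}$ to that block yields a legal new line $f$, and $f\cup(\text{the lines through }u)=V$ with intersection $\{u\}$, so Lemma \ref{lem:enlarge R} applies and $\dH_i\cup\{f\}$ stays $1$-intersecting. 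Iterating drives the line count up to $q^2+q+1$ (a full projective plane) — but we only need $q^2$, which is reached long before, so the iteration terminates successfully; alternatively one applies Theorem \ref{Thm:Embedding} directly once $|\dH_i|>q^2-\frac{\sqrt5-1}{2}q+17\sqrt{q/5}$, which $|\dR|$ already satisfies, to embed into a projective plane and then restrict to $q^2$ of its lines containing $\dR$.

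The main obstacle — and where I expect the real work of Section \ref{sec:1-inters} to lie — is proving that $\dR$ is genuinely $1$-intersecting, i.e. the \emph{lower} bound $|N(x)\cap N(y)|\ge 1$ for all $x,y\in A$. Showing $\le 1$ is immediate from $C_4$-freeness, but showing every pair of $A$-neighborhoods actually meets requires controlling the ``uncovered pairs'': one must show that for $x,y\in A$ with $d_G(x,y)=0$, too many $2$-paths or uncovered pairs are created, contradicting \eqref{equ:e(G)+1} via Proposition \ref{Prop:UP} and Lemma \ref{Lem:CS-inquality}. The delicate point is that this has to work for \emph{all} pairs in $A$ simultaneously, and the definition of $A$ (excluding vertices with $\ge\delta q$ neighbors in $S$) is precisely calibrated so that the neighborhood of an $A$-vertex behaves almost like a line of a projective plane — almost all of its neighbor-pairs have their common neighbor in $A$ too. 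Carrying out this ``almost-line'' analysis, keeping track of the $o(q)$ error terms through the interplay of $S$, $B$, and $S_{q+1}$, and ensuring the bounds close under \eqref{equ:choices-of-q&delta}, is the technical heart; the enlargement step afterwards is comparatively routine given Lemmas \ref{lem:enlarge R} and Theorem \ref{Thm:Embedding}.
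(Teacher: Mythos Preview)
Your proposal rests on a miscomputation that breaks the entire strategy. You claim $f(V)=(q+1)n-2e(G)\le \epsilon q$, but in fact
\[
(q+1)(q^2+q+1)-\bigl(q(q+1)^2-\epsilon q\bigr)=q+1+\epsilon q,
\]
so $f(V)\le (1+\epsilon)q+1$, not $\epsilon q$. Consequently $|S|$ is of order $q$, not $\epsilon q$; indeed Lemma \ref{Lem:neigbor-of-q-vertex} gives $|S|\ge q+1$, so $|S_{q+1}|\le q^2$ \emph{always}. Hence $|A|\le |S_{q+1}|\le q^2$, and the paper's bounds (Claim \ref{Cla:|A|-|B|}) only give $|A|\ge q^2-\epsilon q-2/\delta$. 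Your assertion that $|\dR|\ge q^2+q+1-o(q)$ is therefore false: $|\dR|$ can genuinely fall short of $q^2$ by about $\epsilon q$ lines. (Your bound $|B|=O(q/\delta)=o(q)$ is also wrong on two counts: the double-count you sketch would give $O(q/\delta)$ even with the correct $|S|$, but $q/\delta$ is not $o(q)$ since $\delta$ is a fixed constant; the paper gets $|B|\le 2/\delta$.) Your fallback of applying Theorem \ref{Thm:Embedding} directly to $\dR$ likewise fails, since its threshold is roughly $q^2-0.618q$, which $|\dR|\ge q^2-\epsilon q$ does not meet for $\epsilon$ close to $1$.

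Closing this $\epsilon q$ gap is exactly the content of Lemma \ref{Lem:finding-1-intersecting}, and your vague ``iterative enlargement'' does not address it: the existence of a leftover block of size exactly $q$ through some point $u$ is not automatic and is precisely what must be manufactured. The paper does this in two stages. First (Subsection \ref{subsec:5.1}) it adjoins the family $\dL=\{N[v']:v\in V_1^*\}$ of closed neighborhoods of certain degree-$q$ vertices, using the ``property 1'' structure (Claims \ref{Cla:V1-structure}--\ref{Cla:L-property}) to verify the sun-flower hypothesis of Lemma \ref{lem:enlarge R}; this already yields $|\dR\cup\dL|\ge q^2+(1-2\epsilon)q-O(1)$, enough for $\epsilon<1/2$. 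For $\epsilon\in[1/2,1)$ a second, much more delicate stage (Subsection \ref{subsec:5.2}) is needed: one introduces ``property 2'' vertices, type-\1/type-\2 lines, good and bad vertices, an auxiliary bipartite graph $\dG$ on good lines, and rich/heavy components, ultimately extracting a further family $\dF_r\cup\dF_h'$ of mutually friendly good lines large enough to push the total past $q^2$. So the ``main obstacle'' is not showing $\dR$ is $1$-intersecting (that is the easy Claim \ref{Cla:R}), but constructing roughly $\epsilon q$ additional compatible lines from the degree-$q$ vertices of $G$.
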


The full proof of this is involved, which we break into two subsections.
However, Subsection \ref{subsec:5.1} is concise and about 2-page long,
which indicates that Lemma \ref{Lem:finding-1-intersecting} holds for any real
$\epsilon\in (0,1/2)$.\footnote{Note that Claim \ref{Cla:R+L} yields a 1-intersecting $(q+1)$-hypergraph containing $\dR$ and at least $q^2+(1-2\epsilon)q+O(1)$ lines.}
Subsection \ref{subsec:5.2} requires rather technical efforts to show Lemma \ref{Lem:finding-1-intersecting} for all $\epsilon \in (0,1)$.
We like to mention that Subsection \ref{subsec:5.2} is not necessary for the stability with a weaker bound such as in \cite{HMY20}.

\subsection{An initiatory bound}\label{subsec:5.1}
The proof of Lemma \ref{Lem:finding-1-intersecting} will process by showing a sequence of claims.
Before that, we first collect some basic properties on $G$.
By Proposition \ref{Prop:deficiency} and Lemma \ref{Lem:neigbor-of-q-vertex}, we have
\begin{equation}\label{equ:new fV}
q+1\leq |S|\leq \sum_{i=0}^q (i+1)|S_{q-i}|= f(V)=(q+1)n-2e(G)\leq q+\epsilon q+1
\end{equation}
and thus
\begin{equation}\label{equ:|Sq+1|}
q^2-\epsilon q\leq |S_{q+1}|\leq q^2.
\end{equation}
For any $T\subseteq S$,
it holds that $q+\epsilon q+1\geq f(V)\geq f(T)+(|S|-|T|)\geq f(T)+(q+1-|T|)$.
This implies that $$f(T)\leq |T|+\epsilon q \mbox{ ~for any } T\subseteq S$$
and in particular, one can derive that
\begin{equation}\label{equ:dx+dy}
d(x)\geq (1-\epsilon)q \mbox{~ and~ } d(x)+d(y)\geq (2-\epsilon)q \mbox{~ for any } x,y\in V.
\end{equation}

In the same way as we did in \cite{HMY20} (see Claim 3.1 in \cite{HMY20}), we can prove the first claim by deriving some bounds on the sizes of $A$ and $B$ (we omit the proof here, since they are similar).

\begin{claim}\label{Cla:|A|-|B|}
$|B|\leq \frac{2}{\delta}$ and $|A|\geq |S_{q+1}|-|B|\geq q^2-\epsilon q-\frac{2}{\delta}.$
\end{claim}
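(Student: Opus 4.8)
\textbf{Plan for proving Lemma \ref{Lem:finding-1-intersecting} (the claim $|B|\le 2/\delta$ and $|A|\ge |S_{q+1}|-|B|\ge q^2-\epsilon q - 2/\delta$).}

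The plan is to estimate $f(V)$ by counting the contribution of the edges incident to $S$ from vertices in $B$, using that every vertex of $B$ has at least $\delta q$ neighbours in $S$. First I would recall from \eqref{equ:new fV} that $f(V)=\sum_{v\in S}f(v)\le q+\epsilon q+1$, so the total deficiency is essentially $q$. Now fix $x\in B$. Each neighbour $y\in N(x)\cap S$ has $d(y)\le q$, so $f(y)\ge 1$; if $f(y)=0$ then $y\notin S$ by definition of $S$, so in fact $f(y)\ge 1$ for all $y\in S$, and I would want to say something slightly stronger. Since $G$ is $C_4$-free, the sets $N(y)\setminus\{x\}$ for $y\in N(x)$ are pairwise disjoint inside $V\setminus\{x\}$, so $\sum_{y\in N(x)}(d(y)-1)\le n-1 = q^2+q$. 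Because $d(x)=q+1$ (as $x\in B\subseteq S_{q+1}$, after intersecting with $S_{q+1}$; more carefully $B$ may contain vertices not in $S_{q+1}$, but the relevant count is over $A=S_{q+1}\setminus B$ so I only need the bound on $|B|$ itself), this forces $\sum_{y\in N(x)}(q+1-d(y))\ge (q+1)(q+1)-(q^2+q)-|N(x)| \ge 1$, which is too weak on its own.

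The key step is therefore a \emph{global} double count rather than a per-vertex one. I would count pairs $(x,y)$ with $x\in B$, $y\in N(x)\cap S$. On one hand this is at least $\delta q\,|B|$. On the other hand, for each $y\in S$, the number of such $x$ is at most $d(y)\le q$, but more usefully I would bound $\sum_{y\in S}(\text{number of }B\text{-neighbours of }y)$ by relating it to $f(V)$. The standard trick here (as in Claim 3.1 of \cite{HMY20}) is: for $y\in S$ with $d(y)=q+1-k$ where $k=f(y)\ge 1$, $y$ has at most $d(y)\le q$ neighbours total, but the edges from $S$-vertices into $B$ ``use up'' the deficiency budget — precisely, summing the $C_4$-free disjointness bound $\sum_{y\in N(x)}(d(y)-1)\le n-1$ over all $x\in B$ and combining with $\sum_x d(x) = (q+1)|B|$ and $\sum_{x\in B}|N(x)\cap S|\ge \delta q|B|$, one extracts that the total deficiency seen, counted with multiplicity at most... — this is where I would be careful. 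The cleanest route: each $x\in B$ satisfies $\sum_{y\in N(x)}f(y)\ge \sum_{y\in N(x)\cap S}f(y)\ge |N(x)\cap S|\ge \delta q$ (since each $S$-vertex has $f\ge 1$), while the $C_4$-freeness gives $\sum_{y\in N(x)}f(y)=\sum_{y\in N(x)}(q+1-d(y))\le (q+1)^2-\sum_{y\in N(x)}d(y)\le (q+1)^2-(n-1)-|N(x)| = q+1-(q+1) = 0$?? — no: $\sum_{y\in N(x)}(d(y)-1)\le n-1$ gives $\sum d(y)\le n-1+|N(x)| = q^2+2q+1$, hence $\sum_{y\in N(x)}f(y)\ge (q+1)^2-(q^2+2q+1)=0$, the wrong direction. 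So the useful inequality is in fact an \emph{upper} bound on deficiency from a single neighbourhood, meaning $x\in B$ having many low-degree neighbours is constrained by $f(V)$ being small only through summing: $\sum_{x\in B}|N(x)\cap S| \le \sum_{y\in S} d_B(y) \le \sum_{y\in S}\min(d(y), |B|)$, and separately $\sum_{y\in S}(q+1-d(y)) = f(V)\le q+\epsilon q+1$ forces most $y\in S$ to have $d(y)$ close to $q+1$, hence few of them can have $|B|$ or even $\delta q$ neighbours; quantitatively $\sum_{y\in S}d_B(y)\le |S|\cdot|B| \le (q+\epsilon q+1)|B|$ trivially, which combined with $\delta q|B|\le (q+\epsilon q+1)|B|$ is vacuous — so I genuinely need the deficiency bound, namely $\sum_{y\in S}d_B(y) \le 2 f(V) \le 2(q+\epsilon q+1)$ by the argument that each edge from $S$ to $B$ charges one unit of deficiency to an endpoint in a $C_4$-free way (two vertices of $B$ share $\le 1$ common neighbour, so a fixed $y\in S$ with deficiency $f(y)$ contributes... ). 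I expect the precise charging to be exactly the content of Claim 3.1 of \cite{HMY20}, which the paper explicitly tells us to reuse.

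\textbf{Conclusion and main obstacle.} Once $\sum_{x\in B}\delta q \le \sum_{x\in B}|N(x)\cap S| \le 2 f(V) \le 2(q+\epsilon q+1) \le 4q$ (for large $q$, using $\epsilon<1$), we get $|B|\le 4q/(\delta q) = 4/\delta$; tightening the constant via $f(V)\le q+\epsilon q+1$ and being slightly less wasteful in the charging yields $|B|\le 2/\delta$ as stated. Then $A = S_{q+1}\setminus B$ gives $|A|\ge |S_{q+1}|-|B| \ge (q^2-\epsilon q) - 2/\delta$ by \eqref{equ:|Sq+1|}. The main obstacle is getting the charging constant right — establishing that $\sum_{x\in B}|N(x)\cap S|$ is bounded by (essentially) $2f(V)$ rather than something larger, which relies on the $C_4$-free condition ensuring no pair in $B$ shares two neighbours and hence each $S$-vertex's deficiency is not over-counted. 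Since the paper states this is "the same way as we did in \cite{HMY20}" and omits the proof, I would simply cite Claim 3.1 of \cite{HMY20} and note that the identical computation, run with the current value of $\epsilon$ in place of the old constant, yields the stated bounds verbatim.
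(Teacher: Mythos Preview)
Your final decision to cite Claim~3.1 of \cite{HMY20} matches the paper exactly: the paper omits the proof and simply refers to that claim. So at the level of what is written down, you and the paper agree.

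That said, your attempted reconstruction misses the actual mechanism. The inequality you aim for, $\sum_{x\in B}|N(x)\cap S|\le 2f(V)$, is not the right target and you never justify it; indeed it seems to require already knowing $|B|=O(1/\delta)$, making the argument circular. The correct route is the one you brush past when you note that two vertices of $B$ share at most one common neighbour: one bounds the \emph{union} rather than the \emph{sum}. Since the sets $N(x)\cap S$ for $x\in B$ pairwise intersect in at most one point (by $C_4$-freeness), taking any $b$ vertices of $B$ gives
\[
|S|\ \ge\ \Big|\bigcup_{x}(N(x)\cap S)\Big|\ \ge\ b\,\delta q-\binom{b}{2}.
\]
Now $|S|\le f(V)\le (1+\epsilon)q+1<2q$ by \eqref{equ:new fV}. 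If $|B|>2/\delta$, choose $b=\lfloor 2/\delta\rfloor+1$ vertices; then $b\delta>2$, so $b\delta q>2q$, while $\binom{b}{2}$ is a constant depending only on $\delta$. Since $1/q\ll\delta$, for $q$ large this contradicts $b\delta q-\binom{b}{2}<2q$. Hence $|B|\le 2/\delta$, and then $|A|=|S_{q+1}\setminus B|\ge |S_{q+1}|-|B|\ge q^2-\epsilon q-2/\delta$ by \eqref{equ:|Sq+1|}. No ``charging of deficiency'' is needed; the bound comes purely from $|S|$ being small and the $S$-neighbourhoods of $B$-vertices being nearly disjoint.
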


Next we investigate properties on some special vertices of degree $q+1$, defined as following.
We remark that by Lemma \ref{Lem:neigbor-of-q-vertex}, any vertex in $S_{q+1}$ is adjacent to at least one vertex in $S$.

\begin{dfn}\label{dfn:V1}
A vertex $v\in V$ has {\bf property 1}, if $v\in S_{q+1}$ satisfies that $|N(v)\cap S_{q+1}|=q$ and $|N(v)\cap S_q|=1$.
Let $V_1$ denote the set of all vertices of property 1 in $G$.
\end{dfn}

We also can derive the following analogue of the size of $|V_1|$ (see the beginning of the proof of Claim 3.2 in \cite{HMY20}).

\begin{claim}\label{Cla:the size of V1}
$|V_1|\ge(1-\epsilon)q^2-(1+2\epsilon)q.$
\end{claim}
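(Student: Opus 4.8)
The plan is to prove Claim \ref{Cla:the size of V1} by a counting argument on the neighborhoods of vertices of small degree, mirroring the style used in Claims \ref{Cla:small degree vertex}--\ref{Cla:|A|-|B|} above. First I would recall from \eqref{equ:new fV} that $|S|\le q+\epsilon q+1$ and from \eqref{equ:|Sq+1|} that $|S_{q+1}|\ge q^2-\epsilon q$, and from \eqref{equ:dx+dy} that every vertex has degree at least $(1-\epsilon)q$; in particular the only degrees present in $S$ lie in the range $[(1-\epsilon)q,\,q]$, so $S=S_{q}\cup\bigcup_{1\le i\le \epsilon q}S_{q-i}$. The key observation is that a vertex $v\in S_{q+1}$ \emph{fails} property $1$ only if either $v$ has at least two neighbors in $S$, or $v$ has a neighbor in $S_{q-i}$ for some $i\ge 1$ (rather than exactly one neighbor in $S_q$ and the rest in $S_{q+1}$). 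So the plan is to bound $|S_{q+1}\setminus V_1|$ by the number of such ``bad'' vertices.

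The main step is a double-counting of edges between $S_{q+1}$ and $S$. Since $G$ is $C_4$-free, a vertex $u\in S$ of degree $d(u)$ lies on at most $d(u)\le q$ edges to $S_{q+1}$; summing, the number of edges between $S$ and $S_{q+1}$ is at most $\sum_{u\in S}d(u)=2e(G[S])+e(S,S_{q+1})$, but more simply it is at most $|S|\cdot q\le (q+\epsilon q+1)q$. On the other hand, by Lemma \ref{Lem:neigbor-of-q-vertex} every vertex of $S_{q+1}$ has at least one neighbor in $S$, so this edge count is at least $|S_{q+1}|$, and the number of vertices of $S_{q+1}$ with \emph{two or more} neighbors in $S$ is at most $e(S,S_{q+1})-|S_{q+1}|\le (q+\epsilon q+1)q-(q^2-\epsilon q)=2\epsilon q^2+(1+\epsilon)q$. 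Wait --- this bound is too weak; I would instead weight by deficiency: each $u\in S_{q-i}$ contributes $f(u)=i+1$ to $f(V)\le q+\epsilon q+1$, and a vertex $v\in S_{q+1}$ that is bad ``uses up'' at least one unit of deficiency from its neighborhood beyond the single mandatory $S_q$-neighbor. Concretely, for $v\in S_{q+1}$ we have $\sum_{u\in N(v)}(q+1-d(u))=(q+1)^2-\sum_{u\in N(v)}d(u)\ge (q+1)^2-(q+1)q-(n-1-\text{something})$; cleaner is: $v$ is bad iff $\sum_{u\in N(v)}f(u)\ge 2$ (since a good vertex has exactly one neighbor in $S_q$, contributing $f=1$, and all others in $S_{q+1}$). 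Hence $2\,|S_{q+1}\setminus V_1| \le \sum_{v\in S_{q+1}}\sum_{u\in N(v)}f(u) = \sum_{u\in S}f(u)\,|N(u)\cap S_{q+1}| \le q\sum_{u\in S}f(u)=q\,f(V)\le q(q+\epsilon q+1)$, where the middle equality swaps the order of summation and uses $f(u)=0$ for $u\notin S$, and $|N(u)\cap S_{q+1}|\le d(u)\le q$. This gives $|S_{q+1}\setminus V_1|\le \tfrac12 q(q+\epsilon q+1)$, again too weak.

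So the sharper input I actually need is $|N(u)\cap S_{q+1}|\le d(u)=q+1-f(u)$ for $u\in S$, giving $2|S_{q+1}\setminus V_1|\le \sum_{u\in S}f(u)(q+1-f(u))$. Since $\sum_{u\in S}f(u)=f(V)\le q+\epsilon q+1$ and each $f(u)\ge 1$, the worst case is $f(u)=1$ for all $u$, yielding $\sum_{u\in S}f(u)(q+1-f(u))\le q\cdot f(V)\le q(q+\epsilon q+1)$ --- same bound. The genuinely useful extra fact must come from also counting $V_1$ directly: $|V_1|\ge |S_{q+1}|-|S_{q+1}\setminus V_1|$, and I need to instead lower-bound the number of $v\in S_{q+1}$ whose \emph{unique} $S$-neighbor has degree exactly $q$ \emph{and} which have no other $S$-neighbor. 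Here I would count pairs: the number of edges from $S_{q+1}$ into $S\setminus S_q$ is at most $\sum_{i\ge 1}(q-i)|S_{q-i}|\le q\sum_{i\ge1}|S_{q-i}|=q(|S|-|S_q|)\le q(f(V)-|S_q|)$; combined with $\sum_{i\ge1}(i+1)|S_{q-i}|+|S_q|=f(V)$ one gets $|S|-|S_q|=\sum_{i\ge1}|S_{q-i}|\le \tfrac12(f(V)-|S_q|)\le \tfrac12 f(V)$. This, plus the ``$\ge 2$ neighbors in $S$'' count above, should yield a bad-vertex bound of the shape $2\epsilon q^2+O(q)$ --- but that only gives $|V_1|\ge (1-2\epsilon)q^2-O(q)$, whereas the claim asserts $(1-\epsilon)q^2-O(q)$. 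The resolution, and the point I expect to be the main obstacle, is that one must not double-count: a vertex with two neighbors in $S$ and a vertex with a low-degree neighbor are correlated, and the right inequality is $\sum_{v\in S_{q+1}}\max\{(\sum_{u\in N(v)}f(u))-1,\,0\}\le \sum_{u\in S}f(u)|N(u)\cap S_{q+1}| - |S_{q+1}|$ together with the fact (from $C_4$-freeness and Lemma \ref{Lem:neigbor-of-q-vertex}) that a vertex of $S_q$ has \emph{all} $q$ neighbors distinct and most of them in $S_{q+1}$, so $\sum_{u\in S}f(u)|N(u)\cap S_{q+1}|\le f(V)\cdot q - (\text{cross edges within }S)$. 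Tightening this to squeeze out the factor $(1-\epsilon)$ rather than $(1-2\epsilon)$ is exactly the delicate bookkeeping that \cite{HMY20} carries out in the proof of its Claim 3.2, and I would follow that argument verbatim, substituting the bound $f(V)\le q+\epsilon q+1$ from \eqref{equ:new fV}, to conclude $|V_1|\ge (1-\epsilon)q^2-(1+2\epsilon)q$.
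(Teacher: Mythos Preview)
Your second approach --- the deficiency-weighted double count --- is exactly the right one, and is essentially what the paper (via \cite{HMY20}) does. You abandoned it because of an accounting slip, not because the method is too weak.

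The slip is in the line
\[
2\,|S_{q+1}\setminus V_1|\ \le\ \sum_{v\in S_{q+1}}\sum_{u\in N(v)}f(u).
\]
You have thrown away the contribution of the vertices in $V_1$. By Lemma~\ref{Lem:neigbor-of-q-vertex}, every $v\in S_{q+1}$ has $f(N(v))\ge 1$, with equality precisely when $v\in V_1$; if $v\notin V_1$ then $f(N(v))\ge 2$. Hence
\[
\sum_{v\in S_{q+1}} f(N(v))\ \ge\ 1\cdot|V_1|+2\cdot|S_{q+1}\setminus V_1|\ =\ 2|S_{q+1}|-|V_1|.
\]
Combining this with your (correct) upper bound
\[
\sum_{v\in S_{q+1}} f(N(v))=\sum_{u\in S}f(u)\,|N(u)\cap S_{q+1}|\le q\sum_{u\in S}f(u)=q\,f(V)\le q(q+\epsilon q+1),
\]
you get directly
\[
|V_1|\ \ge\ 2|S_{q+1}|-q\,f(V)\ \ge\ 2(q^2-\epsilon q)-q(q+\epsilon q+1)=(1-\epsilon)q^2-(1+2\epsilon)q,
\]
which is the claim. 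No further refinements (and none of the later paragraphs in your proposal) are needed.
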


Suppose $v\in V_1$ has $N(v)=\{v_1,...,v_{q+1}\}$.
Let $N_{i}=N(v_{i})\setminus N[v]$ for $i\in [q+1].$
The next claim describes the structure of the neighborhood of a vertex in $V_1$ (see Proposition 5.4 in \cite{Fur88}).

\begin{claim}\label{Cla:V1-structure}
For $v\in V_1$, the sets $N_1,..., N_{q+1}$ form a partition of $V\backslash N[v]$,
and $G[N(v)]$ consists of a matching of size $\frac{q}{2}$ plus an isolated vertex of degree $q$.
\end{claim}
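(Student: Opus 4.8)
The plan is to extract the structure in three stages: (i) show $G[N(v)]$ is a matching; (ii) a double count pins the matching down to exactly $\frac q2$ edges and, simultaneously, forces $N_1,\dots,N_{q+1}$ to partition $V\setminus N[v]$; (iii) identify the single uncovered vertex of that matching with the unique neighbour of $v$ of degree $q$. The only genuinely delicate point will be (iii); stages (i)--(ii) are a clean consequence of $C_4$-freeness plus the degree information built into membership in $V_1$.

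For (i) and the disjointness of the $N_i$'s, I would argue entirely from $C_4$-freeness together with $v_1,\dots,v_{q+1}\in N(v)$. If $v_iv_jv_k$ were a path inside $G[N(v)]$, then $v_iv_j$, $v_jv_k$, $v_kv$, $vv_i$ form a $4$-cycle, a contradiction; hence $G[N(v)]$ has maximum degree at most $1$, i.e. it is a matching, say with $t$ edges, and $\sum_{x\in N(v)}|N(x)\cap N(v)|=2t$. Similarly, if some $x\in V\setminus N[v]$ had two neighbours $v_i\ne v_j$ in $N(v)$, then $v_ix$, $xv_j$, $v_jv$, $vv_i$ form a $4$-cycle; so each $x\in V\setminus N[v]$ has at most one neighbour in $N(v)$, the sets $N_i=N(v_i)\setminus N[v]$ are pairwise disjoint, and $\sum_i|N_i|\le|V\setminus N[v]|=q^2-1$.

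For (ii), I would use that $v\in V_1$ completely determines the degree profile of $N(v)$: exactly $q$ of the $v_i$ have degree $q+1$ and exactly one, call it $u$, has degree $q$, so $\sum_{x\in N(v)}d(x)=q(q+1)+q=q^2+2q$. Since $|N_i|=d(v_i)-1-|N(v_i)\cap N(v)|$, summing gives $\sum_i|N_i|=(q^2+2q)-(q+1)-2t=q^2+q-1-2t$. Comparing with the bound $\sum_i|N_i|\le q^2-1$ from (i) yields $t\ge\frac q2$, while a matching on the $q+1$ vertices of $N(v)$ has at most $\lfloor(q+1)/2\rfloor=\frac q2$ edges (here $q$ is even); hence $t=\frac q2$. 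Then $\sum_i|N_i|=q^2-1=|V\setminus N[v]|$, so by disjointness $N_1,\dots,N_{q+1}$ partition $V\setminus N[v]$, and a matching with $\frac q2$ edges on $q+1$ vertices leaves exactly one vertex uncovered.

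For (iii), which I expect to be the main obstacle, one must show the uncovered vertex is $u$. Suppose not; then $u$ is matched (to a vertex of degree $q+1$), and the uncovered vertex $z$ satisfies $d(z)=q+1$ with $N(z)=\{v\}\cup N_z$, where $N_z\subseteq V\setminus N[v]$ and $N_z\cap N(v)=\emptyset$; in particular $v$ is itself an uncovered vertex of the matching $G[N(z)]$. The plan is to re-run the stage-(ii) count with $z$ in place of $v$: $G[N(z)]$ is again a matching, and the disjointness bound $\sum_{x\in N(z)}|N(x)\setminus N[z]|\le|V\setminus N[z]|=q^2-1$ simplifies to $\sum_{x\in N_z}d(x)-2\,e(G[N(z)])\le q^2-1$. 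If every vertex of $N_z$ had degree $q+1$, this would force $e(G[N(z)])\ge\frac{q+1}{2}>\frac q2$, which is impossible for a matching on $q+1$ vertices; so $f(N_z)>0$, and the plan is to boost this — invoking Lemma \ref{Lem:neigbor-of-q-vertex} to control how many small-degree vertices must sit in the various parts of the partition, and the global deficiency estimate $f(V)\le q+\epsilon q+1$ from \eqref{equ:new fV} as a cap — to reach a contradiction. Pushing this quantitative accounting far enough to rule out the bad case entirely is the technical heart of the argument; once it is done, the uncovered vertex is $u$ and the claim follows.
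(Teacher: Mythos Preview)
The paper does not prove this claim itself; it simply cites Proposition~5.4 of F\"uredi's preprint~\cite{Fur88}. Your stages (i) and (ii) are correct and are almost certainly what F\"uredi does as well: $C_4$-freeness forces $G[N(v)]$ to be a matching and the $N_i$ to be disjoint, and the exact degree profile of $N(v)$ coming from $v\in V_1$ pins the matching to $q/2$ edges and turns disjointness into a partition of $V\setminus N[v]$.

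Stage (iii) is a genuine gap. You set up the contradiction correctly --- if the isolated vertex $z$ had degree $q+1$ then $v$ is isolated in $G[N(z)]$, and the disjointness bound around $z$ yields $f(N_z)\ge 1$ --- but then you stop, declaring that the remaining ``quantitative accounting'' is ``the technical heart of the argument'' without supplying it. I do not see how the tools you name close it: the bound $f(N_z)\ge 1$ is orders of magnitude too small to collide with $f(V)\le (1+\epsilon)q+1$, and Lemma~\ref{Lem:neigbor-of-q-vertex} applied across $N(v)$ does not separate the two configurations. In the bad case the degree-$(q+1)$ vertex matched to $u$ may take $u$ itself as its required $S$-neighbour, so the parts $N_i$ forced to meet $S$ number only $q-1$ rather than $q$; together with $u$ this gives $|S|\ge q$, whereas the good case gives $|S|\ge q+1$ --- and both sit comfortably within the known range $q+1\le |S|\le (1+\epsilon)q+1$. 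A purely local double count genuinely cannot distinguish ``$u$ isolated'' from ``$z$ isolated'' (both satisfy $\sum_i |N_i|=q^2-1$ exactly), so a different or sharper idea is needed here; as written, the last assertion of the claim remains unproved.
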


The following is key for constructing a $(q+1)$-uniform 1-interesting hypergraph (see (7) in \cite{HMY20}).

\begin{claim}\label{Cla:1-intersecting-tool}
Suppose $v\in V_1$ has $N(v)=\{v_1,...,v_{q+1}\}$.
If $u\in S_{q+1}\setminus N[v]$ is adjacent to $S_{q+1}\cap N(v)$,
then $|N(u)\cap N(v_i)|=1$ for all $i\in [q+1]$.
\end{claim}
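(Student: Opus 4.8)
The plan is to prove the claim by a double-counting argument on $\sum_{i\in[q+1]}|N(u)\cap N(v_i)|$, combining the trivial per-term upper bound coming from $C_4$-freeness with Claim \ref{Cla:V1-structure}, which forces the sum to equal exactly $q+1$.

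First I would locate $u$ and its neighbourhood inside the partition from Claim \ref{Cla:V1-structure}. Since $u\notin N[v]$, the vertex $u$ lies in $V\setminus N[v]=N_1\sqcup\cdots\sqcup N_{q+1}$. By hypothesis $u$ is adjacent to some vertex of $S_{q+1}\cap N(v)$, say $v_j$; since $G$ is $C_4$-free, $u$ and $v$ have at most one common neighbour, so $v_j$ is the unique vertex of $N(v)$ adjacent to $u$, i.e. $N(u)\cap N(v)=\{v_j\}$ and $v_j\in S_{q+1}$. Writing $N(u)=\{v_j,u_1,\dots,u_q\}$, each $u_t$ satisfies $u_t\notin N[v]$ (it is not $v$, as $u\notin N(v)$, and it is not in $N(v)$, as the only such neighbour of $u$ is $v_j\ne u_t$), so $u_t$ lies in exactly one block $N_{\ell_t}=N(v_{\ell_t})\setminus N[v]$.

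Second I would compute $\Sigma:=\sum_{i=1}^{q+1}|N(u)\cap N(v_i)|=\sum_{i=1}^{q+1}\big([v_j\in N(v_i)]+\sum_{t=1}^{q}[u_t\in N(v_i)]\big)$. For each fixed $t$, since $u_t\notin N[v]$ we have $u_t\in N(v_i)$ iff $u_t\in N_i$, and the blocks $N_i$ are disjoint, so $u_t$ contributes exactly $1$ to the sum; thus the $u_t$'s contribute $q$ in total. For $v_j$: because $v_j\in S_{q+1}\cap N(v)$, Claim \ref{Cla:V1-structure} tells us $v_j$ is an endpoint of the matching in $G[N(v)]$ (it is \emph{not} the isolated degree-$q$ vertex), hence $v_j$ has exactly one neighbour inside $N(v)$, i.e.\ $v_j\in N(v_i)$ for exactly one $i$, contributing $1$. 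Therefore $\Sigma=q+1$.

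Finally I would match this against the upper bound: for every $i\in[q+1]$ we have $u\ne v_i$ (as $v_i\in N(v)$ but $u\notin N[v]$), so $C_4$-freeness gives $|N(u)\cap N(v_i)|\le 1$; since $\Sigma=q+1$ is a sum of $q+1$ terms each at most $1$, every term equals exactly $1$, which is the assertion. The only point requiring care is the bookkeeping at $v_j$: one must use that $v_j\in S_{q+1}$ (guaranteed by the hypothesis together with uniqueness of the common neighbour of $u$ and $v$) so that $v_j$ sits in the perfect-matching part of $G[N(v)]$ and contributes one incidence rather than zero; beyond this the argument is a short direct consequence of Claim \ref{Cla:V1-structure} and $C_4$-freeness, with no substantial obstacle.
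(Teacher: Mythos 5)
Your proof is correct and complete. The double-counting of $\Sigma=\sum_{i}|N(u)\cap N(v_i)|$ by swapping to vertices of $N(u)$, using Claim~\ref{Cla:V1-structure} to pin down the contribution of the unique neighbour $v_j\in N(v)$ (a matching endpoint, hence exactly one neighbour inside $N(v)$) and the partition of $V\setminus N[v]$ by the $N_i$'s to see that each of the remaining $q$ vertices of $N(u)$ contributes exactly $1$, yields $\Sigma=q+1$; combined with the per-term bound $|N(u)\cap N(v_i)|\le 1$ from $C_4$-freeness this forces equality in every term. This is the natural argument and, as far as I can tell, the same one the paper (via~\cite{HMY20}) has in mind.
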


We then show that the neighborhood of any vertex in $A$ contains many vertices of property 1.
Following the definitions in \cite{HMY20},
for any $x\in A$ we define
$S_x=N(x)\cap S$ and $S^*_x=S_x\cup (N(S_x)\cap N(x))$.
Since $x\in A$, we have $|S_x|\le \delta q$.
Every vertex in $S_x$ has at most one neighbor in $N(x)$, so $|S^*_x\backslash S_x|\leq |S_x|$ and thus $|S^*_x|\leq 2|S_x|\leq 2\delta q$. Then we can get the following claim (see the proof of Claim 3.2 in \cite{HMY20}).

\begin{claim}\label{Cla:many-prop1-for-A-vtx}
For $x\in A$, there are at least $(1-\epsilon-3\delta)q+1$ vertices of property 1 in $N(x)\backslash S^*_x$.
\end{claim}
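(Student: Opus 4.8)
I want to bound from below the number of property-1 vertices that lie in $N(x)\setminus S^*_x$ for a fixed $x\in A$. Write $N(x)=\{v_1,\dots,v_{q+1}\}$. Since $x\in S_{q+1}$, it is adjacent to at least one vertex of $S$, so $|S_x|\ge 1$; on the other hand $x\in A$ forces $|S_x|\le\delta q$, and as noted just above the claim, $|S^*_x|\le 2\delta q$. So it suffices to show that all but at most $(\epsilon+\delta)q+O(1)$ vertices of $N(x)$ have property 1, i.e.\ satisfy $|N(v_i)\cap S_{q+1}|=q$ and $|N(v_i)\cap S_q|=1$ (equivalently $d(v_i)=q+1$ and $v_i$ has exactly one neighbor in $S$, that neighbor having degree exactly $q$).

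The first step is to control how many $v_i$ have degree $q+1$ at all. By \eqref{equ:new fV} the total deficiency $f(V)\le q+\epsilon q+1$, and since $G$ has maximum degree $q+1$, every vertex of $N(x)$ with degree $\le q$ contributes at least $1$ to $f(N(x))\le f(V)$. Hence at most $(\epsilon+1)q+O(1)$ of the $v_i$ have degree $\le q$; more usefully, $\sum_{v_i\in N(x)}(q+1-d(v_i))^+\le q+\epsilon q+1$, so the number of $v_i$ with $d(v_i)\le q$ is at most $(1+\epsilon)q+1$ — but I need the sharper count that almost all $v_i$ are in $S_{q+1}$. For that I use that $x\in A\subseteq S_{q+1}$ and a deficiency-accounting argument: by \eqref{equ:dx+dy} every vertex has degree $\ge(1-\epsilon)q$, and a short computation with $f(N(x))\le f(V)\le(1+\epsilon)q+O(1)$ shows at most $(\epsilon+o(1))q$ of the neighbors $v_i$ fail to have degree exactly $q+1$ — here is where the $\epsilon q$ loss enters. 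So at least $(1-\epsilon)q+O(1)$ of the $v_i$ lie in $S_{q+1}$.

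The second step pins down, among the $v_i\in N(x)\cap S_{q+1}$, those that additionally have exactly one neighbor in $S$ with that neighbor of degree $q$. A vertex $v_i\in S_{q+1}$ always has $\ge 1$ neighbor in $S$ by Lemma~\ref{Lem:neigbor-of-q-vertex}; so the only way property 1 can fail (given $d(v_i)=q+1$) is if $v_i$ has $\ge 2$ neighbors in $S$, or has a neighbor in $S_{q-1}\cup S_{q-2}\cup\cdots$ rather than in $S_q$. The total ``excess'' here is again governed by $f(S)=f(V)\le(1+\epsilon)q+O(1)$: each vertex $u\in S$ has $f(u)=q+1-d(u)\ge 1$, and the number of edges from $N(x)\cap S_{q+1}$ into $S$, beyond one per such $v_i$, is at most $|S_x|+ (\text{extra incidences}) $, which I bound using $|S_x|\le\delta q$ together with the fact (each $v_i$ adjacent to $x$ has $\le 1$ common neighbor with $x$, $C_4$-freeness) that distinct $v_i$ reach $S$ through edges counted against $f(S)$. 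Carefully, the number of $v_i\in N(x)\cap S_{q+1}$ that are either adjacent to $\ge 2$ vertices of $S$ or adjacent to a vertex of $S\setminus S_q$ is $O(f(S)) + |S_x| = O(\delta q)$. Subtracting the $\le 2\delta q$ vertices of $S^*_x$ and the $O(1)$ error terms, at least $(1-\epsilon-3\delta)q+1$ vertices of $N(x)\setminus S^*_x$ have property 1.

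\textbf{The main obstacle.} The delicate point is the bookkeeping in the second step: separating the contribution of the $\delta q$ ``bad'' edges from $x$ into $S$ (controlled by $x\in A$) from the contribution of neighbors $v_i$ reaching \emph{into} $S$ through their own edges (controlled by $f(S)$), without double counting, and while keeping the $C_4$-freeness bookkeeping straight so that each excess incidence is charged against a distinct unit of deficiency. Because this is essentially the computation carried out in the proof of Claim~3.2 of \cite{HMY20} with $0.2$ replaced by the general constant $\epsilon$, I expect it to go through verbatim with $\epsilon$ in place of the old constant; the statement has been set up (via $S^*_x$, via \eqref{equ:new fV}--\eqref{equ:dx+dy}, and via the choice $\tfrac1q\ll\delta\ll 1-\epsilon$) precisely so that all error terms are absorbed into the $3\delta q$ and the additive $O(1)$.
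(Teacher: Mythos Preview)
Your approach---deficiency counting over the disjoint sets $N_i=N(v_i)\setminus N[x]$---is exactly the one the paper intends (it simply refers to the proof of Claim~3.2 in \cite{HMY20}), and your final count $(1-\epsilon-3\delta)q+1$ is correct. But the way you distribute the $\epsilon q$ and $\delta q$ losses between your two steps is backwards, and two of the intermediate bounds you state are false.

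Step~1 is \emph{not} where the $\epsilon q$ enters. The bound $|N(x)\cap S|=|S_x|\le\delta q$ is immediate from the definition of $A$ (you already wrote this down), so at most $\delta q$ neighbours have degree $\le q$; your deficiency argument ``$f(N(x))\le(1+\epsilon)q+O(1)$ shows at most $(\epsilon+o(1))q$ low-degree neighbours'' is invalid---that inequality only bounds the count by $(1+\epsilon)q+O(1)$. Conversely, Step~2 is where the $\epsilon q$ actually appears, and your claim there that the bad count is ``$O(f(S))+|S_x|=O(\delta q)$'' is wrong, since $f(S)=f(V)$ has order $(1+\epsilon)q$, not $\delta q$. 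The clean bookkeeping is: for $v_i\in N(x)\setminus S^*_x$ one has $d(v_i)=q+1$ and (since $v_i$ has no neighbour in $S_x$ and $x\notin S$) $N(v_i)\cap S\subseteq N_i$, so $f(N_i)\ge 1$ with equality iff $v_i$ has property~1; as the $N_i$ are pairwise disjoint and disjoint from $S_x\subseteq N[x]$,
\[
m + 2\bigl(|N(x)\setminus S^*_x|-m\bigr) + |S_x| \;\le\; f(V)\;\le\;(1+\epsilon)q+1,
\]
whence $m\ge(1-\epsilon)q+1-(2|S^*_x|-|S_x|)\ge(1-\epsilon-3\delta)q+1$, using $|S^*_x|\le 2|S_x|\le 2\delta q$.
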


Finally, combining the above claims, the selection of $\delta$ and the analysis at the ending of the proof of Claim 3.2 in \cite{HMY20}, we can conclude the following claim.

\begin{claim}\label{Cla:R}
$\dR$ is a 1-intersecting $(q+1)$-hypergraph with $|\dR|\geq q^2-\epsilon q-2/\delta$.
\end{claim}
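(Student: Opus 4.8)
The plan is to prove the two assertions of Claim~\ref{Cla:R} separately. For the size bound I would first observe that every $x\in A\subseteq S_{q+1}$ has $|N(x)|=q+1$, so $\dR$ is $(q+1)$-uniform and consists of lines. If $N(x)=N(x')$ for distinct $x,x'\in A$, then $x$ and $x'$ would have $q+1\ge 3$ common neighbors, contradicting that $G$ is $C_4$-free; hence $x\mapsto N(x)$ is injective and $|\dR|=|A|\ge |S_{q+1}|-|B|\ge q^2-\epsilon q-2/\delta$ by Claim~\ref{Cla:|A|-|B|}.

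The substance of the claim is that $\dR$ is $1$-intersecting, i.e., any two distinct $x,y\in A$ have exactly one common neighbor. Since $G$ is $C_4$-free they have at most one, so I must rule out that $\{x,y\}$ is an uncovered pair; assume for contradiction $N(x)\cap N(y)=\emptyset$. By Claim~\ref{Cla:many-prop1-for-A-vtx} there is a set $P\subseteq N(x)\cap V_1$ of property-$1$ vertices with $|P|\ge (1-\epsilon-3\delta)q+1$; after possibly deleting the single vertex $y$ from it (which could only lie in $P$ when $x\sim y$), I obtain $P'\subseteq P$ with $y\notin P'$ and $|P'|\ge (1-\epsilon-3\delta)q$. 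For each $v\in P'$, since $v\in N(x)$ and $x,y$ have no common neighbor we get $v\notin N(y)$, so $y\in V\setminus N[v]$; writing $N(v)=\{v_1,\dots,v_{q+1}\}$, Claim~\ref{Cla:V1-structure} says the sets $N_i=N(v_i)\setminus N[v]$ partition $V\setminus N[v]$, hence $y$ has a \emph{unique} neighbor in $N(v)$.

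Here is the crux. If that unique neighbor lies in $S_{q+1}$, then $y\in S_{q+1}\setminus N[v]$ is adjacent to $S_{q+1}\cap N(v)$, and Claim~\ref{Cla:1-intersecting-tool} forces $|N(y)\cap N(v_i)|=1$ for every $i$; choosing the index $i$ with $v_i=x$ gives $|N(y)\cap N(x)|=1$, contradicting our assumption. So the unique neighbor of $y$ in $N(v)$ must be $z_v$, the sole degree-$q$ vertex of $N(v)$ supplied by property $1$; in particular $z_v\in N(y)\cap S$. Finally, $v\mapsto z_v$ is injective on $P'$: if $z_v=z_{v'}$ for $v,v'\in P'$, then $v,v'\in N(x)\cap N(z_v)$, and $C_4$-freeness gives $v=v'$. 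Hence $|N(y)\cap S|\ge |P'|\ge (1-\epsilon-3\delta)q$, while $y\in A=S_{q+1}\setminus B$ forces $|N(y)\cap S|<\delta q$; since $\delta\ll 1-\epsilon$ by \eqref{equ:choices-of-q&delta}, $(1-\epsilon-3\delta)q>\delta q$, the desired contradiction.

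I expect the main obstacle to be exactly this middle step: converting the hypothesis that $\{x,y\}$ is uncovered into a lower bound on $|N(y)\cap S|$ that overshoots the threshold $\delta q$ defining $B$. This requires combining three earlier structural facts in the right order — the partition of $V\setminus N[v]$ for property-$1$ vertices, the $1$-intersecting tool of Claim~\ref{Cla:1-intersecting-tool}, and the $C_4$-free condition used twice (once to pin down $y$'s neighbor in $N(v)$ and once for injectivity of $v\mapsto z_v$). The remaining accounting, including tracking the $O(1/\delta)$ and $\epsilon q$ error terms, is bookkeeping following the template of Claim~3.2 in \cite{HMY20}.
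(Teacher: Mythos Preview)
Your proposal is correct and follows essentially the same approach as the paper, which defers the details to the ``analysis at the ending of the proof of Claim~3.2 in \cite{HMY20}''. You have reconstructed that argument accurately: the injectivity giving $|\dR|=|A|$ together with Claim~\ref{Cla:|A|-|B|} handles the size bound, and for $1$-intersection you combine Claims~\ref{Cla:V1-structure}, \ref{Cla:1-intersecting-tool}, and \ref{Cla:many-prop1-for-A-vtx} exactly as intended, using the dichotomy on whether $y$'s unique neighbor in $N(v)$ lies in $S_{q+1}$ or is the sole $S_q$-vertex, and then the injectivity of $v\mapsto z_v$ to force $|N(y)\cap S|\ge (1-\epsilon-3\delta)q>\delta q$.
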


\begin{dfn}\label{dfn:L-family}
Let $V_1^*=V_1\backslash N(B)$. For $v\in V_1^*$, let $v'$ be the unique vertex in $N(v)$ of degree $q$.
Let $\mathcal{L}=\{N[v']: v\in V_1^*\}$.
\end{dfn}

It is clear that $\mathcal{R}\cap \mathcal{L}=\emptyset$.

\begin{claim}\label{Cla:L-property}
For any $L\in \mathcal{L}$, there exist $R_1,...,R_q\in \dR$ such that $L\cup R_1\cup...\cup R_q=V$ and $|L\cap R_1\cap...\cap R_q|=1$.
Moreover, $|\dL|\geq (1-\epsilon)q-(2+2\epsilon+2/\delta)$.
\end{claim}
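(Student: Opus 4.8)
The plan is to verify the two assertions separately, with the first one being the main point. Fix $L\in\mathcal L$, so $L=N[v']$ for some $v\in V_1^*=V_1\setminus N(B)$, where $v'$ is the unique degree-$q$ neighbor of $v$. First I would describe $L$ concretely: since $d(v')=q$ and $v'\in N(v)$, we have $L=N[v']=\{v'\}\cup N(v')$ with $|L|=q+1$, and $v\in N(v')$. The key structural input is Claim \ref{Cla:V1-structure} applied to $v$: writing $N(v)=\{v_1,\dots,v_{q+1}\}$ with, say, $v_{q+1}=v'$ the isolated vertex of degree $q$ in $G[N(v)]$, the sets $N_i=N(v_i)\setminus N[v]$ partition $V\setminus N[v]$. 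Now I want to produce $R_1,\dots,R_q\in\dR$ together with a common point, covering $V$. The natural candidate for the common point is $v'$ itself: I would look at the $q$ neighbors of $v'$ other than $v$ — these are $q$ vertices, and the aim is to show that (after discarding $v$) the neighborhoods $N(w)$ for $w\in N(v')\cap A$, $w$ of property $1$, are lines in $\dR$ passing through $v'$, and that together with $L$ they cover all of $V$.

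The key step is the covering. Each neighbor $w$ of $v'$ satisfies $v'\in N(w)$, so if $w\in A$ then $N(w)\in\dR$ is a line through $v'$. Since $\dR$ is $1$-intersecting (Claim \ref{Cla:R}) and every line in $\dR$ has size $q+1$, any two distinct lines through $v'$ meet only at $v'$; hence $q$ such lines minus the point $v'$ are pairwise disjoint sets of size $q$, covering $q^2$ of the $q^2+q+1$ vertices, and together with $v'$ and the remaining $q$ vertices of $L\setminus\{v'\}=N(v')$ we would get all of $V$ — provided the $q$ lines are exactly $N(w)$ for the $q$ neighbors $w\ne v$ of $v'$ and these all lie in $A$ with property $1$. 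This is where $v\in V_1^*$, i.e. $v\notin N(B)$, is used: it forces $v'\notin B$, and then one must argue that $v'$ has few neighbors in $S$, so that almost all of $N(v')$ lies in $A$ and has property $1$ — here I would invoke the same reasoning as in Claims \ref{Cla:many-prop1-for-A-vtx} and \ref{Cla:1-intersecting-tool}, using that $N(v')\subseteq V\setminus N[v]\cup\{v\}$ interacts cleanly with the partition $\{N_i\}$. The honest difficulty is bookkeeping: one must handle the one or two "bad" neighbors of $v'$ (those in $S$, or $w=v$ itself) and check that replacing them does not destroy either the covering or the single-common-point property; I expect this to require Claim \ref{Cla:1-intersecting-tool} to guarantee $|N(w)\cap N(v_i)|=1$ and a short counting argument, rather than anything deep. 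Applying Lemma \ref{lem:enlarge R} then becomes possible once this is established, which is presumably the purpose of the claim.

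For the lower bound $|\dL|\geq(1-\epsilon)q-(2+2\epsilon+2/\delta)$, I would argue that the map $v\mapsto N[v']$ from $V_1^*$ to $\dL$ is "almost injective": by Claim \ref{Cla:V1-structure} each $v\in V_1$ has a unique $v'$, and conversely a degree-$q$ vertex $v'$ can be the associated vertex of at most $d(v')\le q+1$ vertices $v\in V_1$, but more usefully one shows each $L\in\dL$ arises from essentially one $v$ (or bounds the multiplicity by a constant) — alternatively, count directly: $|V_1^*|\ge|V_1|-|N(B)|$, and $|N(B)|\le |B|(q+1)\le \frac{2}{\delta}(q+1)$ by Claim \ref{Cla:|A|-|B|}, which combined with $|V_1|\ge(1-\epsilon)q^2-(1+2\epsilon)q$ from Claim \ref{Cla:the size of V1} and the (at most $q$-to-one) correspondence yields the stated bound after dividing by roughly $q$. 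The cleanest route is probably to note that distinct $v\in V_1^*$ with the same $v'$ all lie in $N(v')$, so the fibers have size $\le q+1$, giving $|\dL|\ge |V_1^*|/(q+1)\ge\big((1-\epsilon)q^2-(1+2\epsilon)q-\frac{2}{\delta}(q+1)\big)/(q+1)\ge(1-\epsilon)q-(2+2\epsilon+2/\delta)$, using \eqref{equ:choices-of-q&delta} to absorb lower-order terms. I expect no obstacle here beyond routine arithmetic; the entire weight of the claim is in verifying the covering configuration in the first half.
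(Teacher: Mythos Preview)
Your approach to the first assertion takes a wrong turn at the choice of the common point. You pick $v'$ as the centre and look for $q$ lines of the form $N(w)$ with $w\in N(v')\setminus\{v\}$. But $d(v')=q$, so $N(v')\setminus\{v\}$ has only $q-1$ elements, not $q$; and more seriously, nothing forces these $w$ to lie in $A$. Knowing $v'\notin B$ only gives $|N(v')\cap S|<\delta q$, which still allows several neighbours of $v'$ to have degree $\le q$ or to lie in $B$, and then $N(w)\notin\dR$. This cannot be fixed by ``bookkeeping'': the claim asks for \emph{exactly} $q$ lines from $\dR$ forming a sunflower with $L$, so even a single bad $w$ destroys the configuration at $v'$.

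The paper's argument avoids all of this by taking $v$, not $v'$, as the common point, and using the \emph{other} $q$ neighbours $v_1,\dots,v_q$ of $v$ (those of degree $q+1$) as the sources of the $R_i$. Here Property~1 of $v$ guarantees $v_1,\dots,v_q\in S_{q+1}$, and the hypothesis $v\in V_1^*=V_1\setminus N(B)$ gives $N(v)\cap B=\emptyset$, so each $v_i\in S_{q+1}\setminus B=A$ and hence $N(v_i)\in\dR$ automatically. The covering $L\cup N(v_1)\cup\cdots\cup N(v_q)=V$ and the intersection $=\{v\}$ then follow in one line from Claim~\ref{Cla:V1-structure} (the partition of $V\setminus N[v]$ by the $N_i$ and the matching structure on $G[N(v)]$). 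No counting of bad vertices, no appeal to Claim~\ref{Cla:1-intersecting-tool}, no replacement argument is needed.

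Your plan for the lower bound on $|\dL|$ is essentially the paper's: bound $|V_1^*|\ge |V_1|-|N(B)|$ and divide by the fibre size. Note that since $d(v')=q$, each $L=N[v']$ can arise from at most $q$ (not $q+1$) vertices $v\in V_1^*$; dividing by $q$ rather than $q+1$ gives the stated constant directly, though with \eqref{equ:choices-of-q&delta} your slightly weaker division would also suffice.
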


\begin{proof}
Given $L\in \mathcal{L}$, there exists a vertex $v\in V_1^*$ with $N(v)=\{v_1,...,v_{q+1}\}$ and $L=N[v_{q+1}]$.
Since $N(v)\cap B=\emptyset$, we see that $v_1,...,v_q\in A$ and thus $N(v_1),...,N(v_q)\in \dR$.
By Claim \ref{Cla:V1-structure}, it implies that $L\cup N(v_1)\cup...\cup N(v_q)=V$ and $L\cap N(v_1)\cap ...\cap N(v_q)=\{v\}$, as desired.
It remains to show the lower bound of $|\dL|$.
By Claims \ref{Cla:|A|-|B|} and \ref{Cla:the size of V1}, we have $|V_1^*|\geq |V_1|-|N(B)|\geq (1-\epsilon)q^2-(1+2\epsilon)q-2(q+1)/\delta$.
It is obvious that each $L\in \mathcal{L}$ corresponds to a unique vertex $u\in S_q\backslash B$ with $L=N[u]$ (as otherwise it will force a $C_4$),
however such a vertex $u$ may be adjacent to at most $q$ vertices in $V_1\backslash N(B)$.
Thus we have that
$|\dL|\geq \frac{|V_1^*|}{q}\geq \frac{1}{q}\left((1-\epsilon)q^2-(1+2\epsilon)q-2(q+1)/\delta\right) \geq (1-\epsilon)q-(2+2\epsilon+2/\delta).$
\end{proof}

Using the above claims, we now can construct a 1-intersecting $(q+1)$-hypergraph with many edges.

\begin{claim}\label{Cla:R+L}
$\dR\cup \dL$ is a 1-intersecting $(q+1)$-hypergraph based on $G$ with $|\dR\cup \dL|\geq q^2+(1-2\epsilon)q-(2+2\epsilon+4/\delta)$.
\end{claim}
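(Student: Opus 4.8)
The plan is to show that $\dR\cup\dL$ is $1$-intersecting by invoking Lemma~\ref{lem:enlarge R} with $\dH=\dR$ and $\dF=\dL$. The hypotheses required are already in place: Claim~\ref{Cla:R} tells us $\dR$ is a $1$-intersecting $(q+1)$-hypergraph on the vertex set $V$ of size $q^2+q+1$; the set $\dL$ is a $(q+1)$-uniform hypergraph on the same vertex set; and the observation recorded just before Claim~\ref{Cla:L-property} gives $\dR\cap\dL=\emptyset$. The crucial structural condition---that for each $L\in\dL$ there exist $q$ edges $R_1,\dots,R_q\in\dR$ with $L\cup R_1\cup\cdots\cup R_q=V$ and $|L\cap R_1\cap\cdots\cap R_q|=1$---is exactly the first assertion of Claim~\ref{Cla:L-property}. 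So Lemma~\ref{lem:enlarge R} applies verbatim and yields that $\dR\cup\dL$ is $1$-intersecting. It is clearly $(q+1)$-uniform since both $\dR$ and $\dL$ are, and it is ``based on $G$'' in the sense used throughout this section because every edge of $\dR$ is a neighborhood $N(x)$ for $x\in A$ and every edge of $\dL$ is a closed neighborhood $N[v']$ arising from a vertex of $V_1^*$.

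It remains to count. Since $\dR\cap\dL=\emptyset$ we have $|\dR\cup\dL|=|\dR|+|\dL|$. Now I would simply add the two lower bounds already proven: $|\dR|\geq q^2-\epsilon q-2/\delta$ from Claim~\ref{Cla:R}, and $|\dL|\geq (1-\epsilon)q-(2+2\epsilon+2/\delta)$ from Claim~\ref{Cla:L-property}. Summing gives
\[
|\dR\cup\dL|\;\geq\; q^2-\epsilon q-\tfrac{2}{\delta}+(1-\epsilon)q-(2+2\epsilon+\tfrac{2}{\delta})\;=\;q^2+(1-2\epsilon)q-(2+2\epsilon+\tfrac{4}{\delta}),
\]
which is precisely the claimed bound. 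This is the entire argument; no new estimates are needed.

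There is essentially no obstacle here: the claim is a bookkeeping consolidation of Claims~\ref{Cla:R}, \ref{Cla:L-property} and the disjointness remark, packaged through Lemma~\ref{lem:enlarge R}. The only point deserving a word of care is confirming the hypotheses of Lemma~\ref{lem:enlarge R} are met with $\dH=\dR$ rather than with a full projective plane: the lemma as stated only requires $\dH$ to be a $1$-intersecting $(q+1)$-hypergraph on a vertex set of the right size (not a plane), which is exactly what Claim~\ref{Cla:R} provides, and it requires the covering/intersection condition only for edges $f$ of $\dF$, which Claim~\ref{Cla:L-property} supplies. So the verification is routine. If one wanted to be fully self-contained one could also re-note that $\dR$ and $\dL$ live on the common vertex set $V$ of size $q^2+q+1$, but this is immediate from Definitions~\ref{dfn:def-A-B-R} and~\ref{dfn:L-family}.
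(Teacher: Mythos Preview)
Your proposal is correct and follows essentially the same approach as the paper: apply Lemma~\ref{lem:enlarge R} with $\dH=\dR$ and $\dF=\dL$, using Claims~\ref{Cla:R} and~\ref{Cla:L-property} to verify the hypotheses, then add the two lower bounds using $\dR\cap\dL=\emptyset$. The paper's proof is in fact more terse than yours, but the logic is identical.
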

\begin{proof}
In view of Lemma \ref{lem:enlarge R}, we see from Claims \ref{Cla:R} and \ref{Cla:L-property} that $\dR\cup \dL$ is a 1-intersecting $(q+1)$-hypergraph.
Since $\mathcal{R}\cap \mathcal{L}=\emptyset$, we have
$|\dR\cup \dL|\geq (q^2-\epsilon q-2/\delta)+(1-\epsilon)q-(2+2\epsilon+2/\delta)=q^2+(1-2\epsilon)q-(2+2\epsilon+4/\delta),$
completing the proof.
\end{proof}

\subsection{The completion of the proof of Lemma \ref{Lem:finding-1-intersecting}}\label{subsec:5.2}
Following the sequence of previous claims, we now continue and complete the proof of Lemma \ref{Lem:finding-1-intersecting}.
By \eqref{equ:|Sq+1|}, we write
\begin{equation}\label{equ:Sq+1=alpha}
|S_{q+1}|=q^2-\alpha q \mbox{~ for some constant } 0\leq \alpha\leq \epsilon.
\end{equation}
Using $|V|=\sum_{i=0}^{q+1}|S_i|$ and $2e(G)=\sum_{i=0}^{q+1}i|S_i|$,
by \eqref{equ:e(G)+1} and \eqref{equ:Sq+1=alpha} we can conclude that
\begin{equation}\label{equ sq}
|S_q|\geq 2e(G)-(q-1)|V|-2|S_{q+1}|\geq (2\alpha+1-\epsilon)q+1.
\end{equation}

\begin{dfn}\label{dfn:V2}
We say a vertex $v$ has {\bf property 2}, if $v\in S_{q+1}\backslash N(B)$ satisfies $|N(v) \cap S_{q+1}|=q-1$ and $|N(v) \cap S_q|=2$.
Let $V_2$ denote the set of all vertices of property 2 in $G$.
\end{dfn}

\begin{claim}\label{Cla:many prop 2}
Any $u\in S_q\backslash (B\cup N(V_1^*))$ has at least $(1-\epsilon-3\delta)q-(1+6/\delta)$ neighbors in $V_2$.
\end{claim}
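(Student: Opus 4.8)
\textbf{Proof proposal for Claim \ref{Cla:many prop 2}.}

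The plan is to fix a vertex $u\in S_q\backslash(B\cup N(V_1^*))$ and carefully analyze the neighborhood $N(u)$, which is a set of $q$ vertices, together with the edges emanating from it, in order to show that most of the $q$ neighbors of $u$ lie in $V_2$. First I would record what we already know: since $u\notin B$, $u$ has fewer than $\delta q$ neighbors in $S$, so at least $(1-\delta)q$ neighbors of $u$ lie in $S_{q+1}$; moreover since $u\notin N(V_1^*)$, none of these neighbors is a vertex of property $1$ whose unique degree-$q$ neighbor is $u$. The target is to find $(1-\epsilon-3\delta)q-(1+6/\delta)$ neighbors $v\in N(u)$ that lie in $S_{q+1}\backslash N(B)$ and satisfy $|N(v)\cap S_{q+1}|=q-1$, $|N(v)\cap S_q|=2$. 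Note that such a $v$ has $u$ itself as one of its two degree-$q$ neighbors, so the condition is really that $v$ has \emph{exactly one other} neighbor of degree $\le q$ besides $u$, and no neighbor of degree strictly less than $q$.

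The key step is a counting/deficiency argument on $N(u)$. Because $G$ is $C_4$-free, the neighborhoods $\{N(w)\backslash\{u\}: w\in N(u)\}$ are pairwise disjoint subsets of $V\backslash\{u\}$, so $\sum_{w\in N(u)}(d(w)-1)\le q^2+q$, i.e. $\sum_{w\in N(u)} f(w)\ge \sum_{w\in N(u)}(q+1-d(w))\ge (q+1)q-(q^2+q)=0$ trivially, but the sharper global bound is what matters: using \eqref{equ:new fV}, $f(V)\le q+\epsilon q+1$, and $f(T)\le |T|+\epsilon q$ for $T\subseteq S$. I would bound the total deficiency $f(N(u))$ by $O(\epsilon q)$ and then argue, vertex by vertex, that a neighbor $v$ of $u$ \emph{fails} property $2$ only for one of a small number of reasons: (i) $v\in S$, i.e. $d(v)\le q$ — there are at most $\delta q$ such since $u\notin B$; (ii) $v\in S_{q+1}$ but $v\in N(B)$ — I would bound the number of these using $|B|\le 2/\delta$ (Claim \ref{Cla:|A|-|B|}) and the fact that each vertex of $B$, being $C_4$-free-adjacent, meets $N(u)$ in at most one vertex, giving at most $2/\delta$; (iii) $v\in S_{q+1}\backslash N(B)$ but $v$ has fewer than $q-1$ neighbors in $S_{q+1}$, equivalently $v$ has $\ge 2$ neighbors in $S$ other than possibly $u$ — this is where the deficiency budget is spent: each such $v$ contributes at least one "extra" unit that is charged against $f(S)\le |S|+\epsilon q\le O(q)$ after accounting for the base count $|S|\ge q+1$, and since each vertex of $S$ lies in at most a bounded number of these neighborhoods (again $C_4$-freeness forces $|N(s)\cap N(u)|\le 1$ for $s\neq u$), the number of bad $v$ of this type is $O(\epsilon q + \text{const})$. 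Collecting the three error terms and choosing constants as in \eqref{equ:choices-of-q&delta} yields the bound $(1-\epsilon-3\delta)q-(1+6/\delta)$.

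There is a subtlety I would want to handle carefully: the condition for property $2$ also requires $|N(v)\cap S_q|=2$ exactly, not just $\ge 2$, and it requires that $v$ has \emph{no} neighbor in $S_{q-1}\cup S_{q-2}\cup\cdots$. So after isolating the neighbors $v$ with $|N(v)\cap S_{q+1}|\ge q-1$, I must further discard those for which the at most one non-$u$ low-degree neighbor has degree $<q$ rather than exactly $q$; but any neighbor of $v$ of degree $q-k$ contributes $k\ge 1$ to $f(S)$, so these too are absorbed into the same $O(\epsilon q)$ budget. The main obstacle, and the place requiring the most care, is getting the constants right in the deficiency bookkeeping — specifically, showing that the number of neighbors $v$ with two or more small-degree neighbors is genuinely $O(\epsilon q)+O(1/\delta)$ and not, say, $\Theta(\delta q)$, which would be too weak. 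This hinges on the global inequality $f(V)\le (1+\epsilon)q+1$ being tight enough and on the disjointness of second neighborhoods; I expect the proof to mirror the corresponding deficiency analysis for $V_1$ in \cite{HMY20} (cf. the proof of Claim \ref{Cla:many-prop1-for-A-vtx}) but with one more "level" of low-degree vertices tracked, reflecting that property $2$ permits two degree-$q$ neighbors rather than one.
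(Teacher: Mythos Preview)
Your proposal is essentially the paper's proof: a deficiency count over the pairwise-disjoint sets $N_i=N(u_i)\backslash N[u]$, using that each ``good'' neighbor $u_i$ contributes $f(N_i)=1$ and each ``bad'' one contributes $f(N_i)\ge 2$, all bounded by $f(V)\le(1+\epsilon)q+1$. Two points of precision are worth noting, though neither is fatal.

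First, your reading of the hypothesis $u\notin N(V_1^*)$ is slightly off. Recall $V_1^*=V_1\backslash N(B)$, so the hypothesis says only that every neighbor of $u$ lying in $V_1$ must lie in $N(B)$; it does \emph{not} say $N(u)\cap V_1=\emptyset$. The paper uses this exactly once, to conclude $|N(u)\cap V_1|\le |N(u)\cap N(B)|\le|B|\le 2/\delta$. You do need to account for neighbors in $V_1$ separately (they have $f(N_i)=0$, so they contribute nothing to the budget and cannot be lumped with your case~(iii)), but this bound shows there are at most $2/\delta$ of them, and they are then absorbed into your case~(ii) anyway.

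Second, you should make explicit why the ``extra'' $S$-neighbors of each $u_i$ actually lie in $N_i$ rather than back in $N[u]$; otherwise the charging to $f(N_i)$ fails. The paper handles this cleanly by discarding the set $S_u^*:=S_u\cup(N(S_u)\cap N(u))$ of size at most $2\delta q$: for $u_i\notin S_u^*$, every $S$-neighbor of $u_i$ other than $u$ is outside $N[u]$, hence in $N_i$. After this, the single inequality
\[
|M|+2\bigl(|N(u)\backslash V_1|-|S_u^*|-|M|\bigr)+|S_u|\ \le\ \sum_i f(N_i\cup\{u_i\})\ \le\ f(V)\ \le\ (1+\epsilon)q+1
\]
(with $M$ the set of $u_i$ having $f(N_i)=1$) gives $|M|\ge(1-\epsilon-3\delta)q-(1+4/\delta)$, and removing at most $|B|\le 2/\delta$ further vertices in $N(B)$ yields the claim. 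Your ``subtlety'' paragraph is already handled automatically by this count, since $f(N_i)=1$ forces the unique non-$u$ vertex of $N_i\cap S$ to lie in $S_q$.
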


\begin{proof}
Let $S_u=N(u)\cap S$ and $S_u^*=S_u\cup (N(S_u)\cap N(u))$.
Since $u\notin B$ and  every vertex in $S_u$ has at most one neighbor in $N(u)$,
we have $|S_u^*\backslash S_u|\leq |S_u|\leq \delta q$ and $|S_u^*|\leq 2\delta q$.

Write $N(u)=\{u_1,...,u_q\}$ and $N_{i}=N(u_{i})\setminus N[u]$ for each $i\in [q]$.
Since $G$ is $C_4$-free, all the sets $N_i$ are disjoint.
Note that $N(u)\cap V_1^*=\emptyset$.
So $N(u)\cap V_1\subseteq N(u)\cap N(B)$.
Since each vertex in $B$ has at most one neighbor in $N(u)$,
we have $|N(u)\cap V_1|\leq |N(u)\cap N(B)|\leq |B|\leq 2/\delta$ and thus $|N(u)\backslash V_1|\geq q-2/\delta$.
Each $u_i\in N(u)\backslash (V_1\cup S_u^*)$ has degree $q+1$ and at least two neighbors in $S$,
all of which (except $u$) are in $V\backslash N[u]$.
Hence for such $u_i\in N(u)\backslash (V_1\cup S_u^*)$, the deficiency $f(N_i)$ is at least 1 with equality if and only if $|N(u_i)\cap S_{q+1}|=q-1$ and $|N(u_i)\cap S_q|=2$.
Let $M$ be the set of vertices $u_i\in N(u)\backslash (V_1\cup S_u^*)$ with $f(N_i)=1$.
By counting the deficiency, we have
$$|M|+2(|N(u)\backslash V_1|-|S_u^*|-|M|)+|S_u|\leq \sum_{i=1}^q f(N_i\cup \{u_i\})\leq f(V)\leq q+\epsilon q+1.$$
Using $|N(u)\backslash V_1|\geq q-2/\delta$, we can derive that $|M|\geq (1-\epsilon-3\delta)q-(1+4/\delta)$.
Notice that $M\backslash N(B)\subseteq V_2$.
Since $M\subseteq N(u)$, each vertex in $B$ has at most one neighbor in $M$ and thus we have at least
$|M\backslash N(B)|\geq |M|-|B|\geq (1-\epsilon-3\delta)q-(1+6/\delta)$ vertices in $N(u)\cap V_2$.
\end{proof}

\begin{claim}\label{Cla:V2-structure}
For $v\in V_2$, $G[N(v)]$ consists of a matching of size $\frac{q}{2}$ plus an isolated vertex of degree $q$.
\end{claim}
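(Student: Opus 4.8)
The plan is to mimic the structure of Claim \ref{Cla:V1-structure}, which handles the analogous statement for vertices of property $1$, and adapt it to the property $2$ setting. Fix $v\in V_2$ with $N(v)=\{v_1,\dots,v_{q+1}\}$. First I would recall the standard facts: since $G$ is $C_4$-free, the sets $N_i=N(v_i)\setminus N[v]$ are pairwise disjoint, and because $d(v)=q+1$ with $q^2+q+1$ vertices in total, $\sum_{i=1}^{q+1}|N_i|=q^2+q+1-(q+2)=q^2-1$. Moreover, each edge inside $N(v)$ contributes to at least one $|N_i|$ being smaller than its ``maximal'' value: precisely, $|N_i|=d(v_i)-1-|N(v_i)\cap N(v)|$ since $v_i\in N(v)$ and $N(v_i)\cap N(v)$ counts the $G[N(v)]$-degree of $v_i$.

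Next I would do the counting that forces $G[N(v)]$ to be a near-perfect matching. Summing, $\sum_i\bigl(d(v_i)-1-d_{G[N(v)]}(v_i)\bigr)=q^2-1$, i.e. $\sum_i d(v_i)-(q+1)-2e(G[N(v)])=q^2-1$. By the definition of property $2$, exactly $q-1$ of the $v_i$ lie in $S_{q+1}$ (degree $q+1$) and exactly $2$ lie in $S_q$ (degree $q$), so $\sum_i d(v_i)=(q-1)(q+1)+2q=q^2+2q-1$. Substituting gives $2e(G[N(v)])=q^2+2q-1-(q+1)-(q^2-1)=q-1$, so $e(G[N(v)])=\frac{q-1}{2}$. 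Wait --- here I need to be careful about parity: with $q$ even, $\frac{q-1}{2}$ is not an integer, so I must recheck the bookkeeping. The resolution is that the deficiency argument actually needs the stronger input that no $N_i$ ``overflows'': since the $N_i$ partition $V\setminus N[v]$ (a set of size $q^2-1$) and we want each vertex $v_i$ to behave like in a projective plane, the correct statement is $2e(G[N(v)])=q$ after accounting correctly, yielding a matching of size $q/2$ together with one extra vertex of $G[N(v)]$-degree $0$ among the matched structure --- but property $2$ has \emph{two} low-degree neighbors, so I expect the honest computation to give: $G[N(v)]$ is a matching of size $q/2$ plus an isolated vertex (of degree $q$ in $G$, i.e. the vertex that is ``its own polar''), exactly as claimed, with the two $S_q$-vertices being the isolated vertex and one endpoint of the matching, or both endpoints of one matching edge. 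I would run the deficiency/partition count precisely to pin down which configuration occurs and confirm it is forced.

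Concretely, the key steps in order are: (i) establish the disjointness and the partition $V\setminus N[v]=\bigsqcup_i N_i$, noting $|N_i|=d(v_i)-1-d_{G[N(v)]}(v_i)$; (ii) observe each $|N_i|\le q$ (a vertex of degree $\le q+1$ minus the edge to $v$), hence $\sum_i |N_i|\le (q+1)q$, and compare with the exact value $q^2-1$ to bound $\sum_i d_{G[N(v)]}(v_i)$ from above; (iii) plug in the property-$2$ degree profile $\sum_i d(v_i)=q^2+2q-1$ to get the exact value of $e(G[N(v)])$; (iv) combine with $C_4$-freeness, which forces $G[N(v)]$ to have maximum degree $1$ \emph{except possibly at a vertex all of whose $N(v)$-neighbors are... } --- actually $C_4$-freeness forbids two vertices of $N(v)$ from having two common neighbors, but inside $N(v)$ the relevant constraint is that $G$ restricted to $N(v)$ cannot contain a path of length $2$ whose endpoints also have another common neighbor; the clean statement used in Claim \ref{Cla:V1-structure} is that $G[N(v)]$ has maximum degree at most $1$ at every vertex of degree $q+1$ in $G$ and the unique possible high-degree vertex in $G[N(v)]$ is an $S_q$-vertex; (v) conclude the matching-plus-isolated-vertex structure.

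The main obstacle I anticipate is step (iv): correctly identifying why $G[N(v)]$ cannot have a vertex of $G[N(v)]$-degree $\ge 2$ other than the designated isolated vertex, and handling the interaction between the \emph{two} $S_q$-vertices of property $2$ (versus the single one in property $1$). The degree count alone gives $e(G[N(v)])$, but turning an edge-count into the precise ``matching of size $q/2$ plus one isolated vertex of degree $q$'' shape requires ruling out, say, a triangle or a path of length $2$ inside $N(v)$; this is where $C_4$-freeness of $G$ (applied to the common neighbor $v$ together with a second common neighbor that such a configuration would create) does the work, essentially as in F\"uredi's Proposition 5.4 (\cite{Fur88}) cited for Claim \ref{Cla:V1-structure}. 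I would lean on that reference and spell out only the adaptation needed for the two $S_q$-vertices.
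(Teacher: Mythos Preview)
Your counting approach has a genuine gap that the paper's proof avoids by a different route. Two issues:

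\textbf{(1) The partition fails for $V_2$.} You write $\sum_i |N_i|=q^2-1$ by asserting that the $N_i$ partition $V\setminus N[v]$. For $v\in V_1$ this holds, but for $v\in V_2$ it does not: with $\sum_i d(v_i)=(q-1)(q+1)+2q=q^2+2q-1$ and $|N_i|=d(v_i)-1-d_{G[N(v)]}(v_i)$, one gets $\sum_i|N_i|=q^2+q-2-2e(G[N(v)])$. Since $C_4$-freeness alone already gives $\Delta(G[N(v)])\le 1$ (if $w\in N(v)$ had two neighbours $u_1,u_2\in N(v)$ then $u_1vu_2wu_1$ would be a $C_4$), we have $e(G[N(v)])\le \lfloor (q+1)/2\rfloor=q/2$; combining with $\sum_i|N_i|\le |V\setminus N[v]|=q^2-1$ gives $e(G[N(v)])\ge (q-1)/2$, and integrality with $q$ even forces $e(G[N(v)])=q/2$. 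So $\sum_i|N_i|=q^2-2$: exactly one vertex $v^\star\in V\setminus N[v]$ is uncovered (this is precisely the $v^\star$ of Definition~\ref{dfn:more-on-V2}). Your computation $e=(q-1)/2$ came from the false partition hypothesis.

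\textbf{(2) Counting cannot identify the isolated vertex.} Even after correcting to $e(G[N(v)])=q/2$, your approach only yields ``matching of size $q/2$ plus one isolated vertex''; it does \emph{not} force that isolated vertex to lie in $S_q$. Both scenarios --- (a) the two $S_q$-vertices are matched to each other and an $S_{q+1}$-vertex is isolated, or (b) each $S_q$-vertex is matched to an $S_{q+1}$-vertex and another $S_{q+1}$-vertex is isolated --- are consistent with the edge count, with parity, and with $\Delta(G[N(v)])\le 1$. No deficiency or partition bookkeeping distinguishes them.

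The paper closes this gap with a single extra observation that you never invoke: by the definition of $V_2\subseteq S_{q+1}\setminus N(B)$, one has $v\in A$ and every $x\in N(v)\cap S_{q+1}$ also lies in $A$. Hence $N(v),N(x)\in\dR$, and Claim~\ref{Cla:R} (that $\dR$ is $1$-intersecting) gives $|N(v)\cap N(x)|=1$, i.e.\ $d_{G[N(v)]}(x)=1$ \emph{exactly}, not merely $\le 1$. Since there are $q-1$ such $x$ and $q-1$ is odd, they cannot be perfectly matched among themselves, so one of the two $S_q$-vertices is matched; parity of the degree sum then forces the other $S_q$-vertex to be the isolated one. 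This use of $\dR$ being $1$-intersecting is the missing idea in your plan.
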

\begin{proof}
We observe that every $v\in V_2$ is in $A$ and every $x\in N(v)\cap S_{q+1}$ is also in $A$.
By Claim \ref{Cla:R}, $|N(v)\cap N(x)|=1$.
Since $v$ has $q-1$ neighbors of degree $q+1$ and $q$ is even,
it is easy to see that $G[N(v)]$ must contain exactly $\frac{q}{2}$ edges which form a matching and moreover, the only isolated vertex in $G[N(v)]$ has degree $q$.
\end{proof}

In view of Claim \ref{Cla:V2-structure}, we now give some notations in relation to $V_2$ for later use.

\begin{dfn}\label{dfn:more-on-V2}
For $v\in V_2$, we write $N(v)=\{v_{1},...,v_{q+1}\}$ such that $d(v_1)=d(v_2)=q$ and the edge set of $G[N(v)]$ is $\{v_{2i}v_{2i+1}: 1\leq i\leq \frac{q}{2}\}$.
Let $N_i=N(v_i)\backslash N[v]$ for each $i$, and let the unique vertex not contained in $\cup_{1\leq i\leq q+1}N[v_i]$ be $v^\star$.
\end{dfn}

Note that the sets $N[v], N_1,...,N_{q+1}$ and $\{v^\star\}$ form a partition of $V$.
We recall that $\dR=\{N(x): x\in A\}$ is $1$-intersecting and subsets in $V$ of size $q+1$ are called {\it lines}.

\begin{dfn}\label{dfn:extendable-V2}
For $v\in V_2$, we say $v_1$ and $v_2$ are the {\it type-\1 vertex} and {\it type-\2 vertex} for $v$, respectively.
We also say that $L_{\1}^v=N[v_1]$ is the {\it type-\1 line} for $v$ and $L_{\2}^v=N(v_2)\cup \{v^\star\}$ is the {\it type-\2 line} for $v$.\footnote{We will also say that, for instance, $v_1$ is the vertex of type-\1 for $v$ and $L_{\2}^v$ is the line of type-\2 for $v$.}
Furthermore, we say $v\in V_2$ is {\bf extendable} if $\dR\cup \{L_{\1}^v, L_{\2}^v\}$ is $1$-intersecting; otherwise, we call it {\bf non-extendable}.
\end{dfn}

\begin{claim}\label{Cal:RcapV2}
For $v\in V_2$, we have $v_3,...,v_{q+1}\in A$, $L_{\1}^v\cup L_{\2}^v\cup N(v_3)\cup...\cup N(v_{q+1})=V$ and $L_{\1}^v\cap L_{\2}^v\cap N(v_3)\cap...\cap N(v_{q+1})=\{v\}$.
Moreover for each $N\in \dR$, $|N\cap L_{\1}^v|+|N\cap L_{\2}^v|=2$.
\end{claim}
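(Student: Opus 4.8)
The plan is to read off the first three assertions directly from the structure of $G[N(v)]$ established in Claim~\ref{Cla:V2-structure} together with Definition~\ref{dfn:more-on-V2}, and then to obtain the last assertion as an easy consequence of $1$-intersectedness of $\dR$ (Claim~\ref{Cla:R}) combined with the partition structure. First I would observe that, by Definition~\ref{dfn:more-on-V2}, the vertices $v_3,\dots,v_{q+1}$ all have degree $q+1$; moreover none of them lies in $N(B)$ because $v\in V_2$ forces $v\notin N(B)$, hence (as $v$ is adjacent to each $v_i$ and each vertex of $B$ has at most one neighbor in $N(v)$) at most $|B|\le 2/\delta$ of the $v_i$ can lie in $N(B)$ — but here I need to be a bit more careful, so instead I would argue that each $v_i$ with $i\ge 3$ has degree $q+1$ and, since $v\in S_{q+1}\setminus N(B)$, every neighbour of $v$ of degree $q+1$ is in $S_{q+1}$; to land in $A$ we additionally need $v_i\notin B$, i.e. $|N(v_i)\cap S|<\delta q$. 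This should follow from the same deficiency bookkeeping used in Claim~\ref{Cla:many prop 2}: the $N_i=N(v_i)\setminus N[v]$ are pairwise disjoint, so $\sum_i f(N_i\cup\{v_i\})\le f(V)\le q+\epsilon q+1$, which caps how many $v_i$ can have many low-degree neighbours. I expect this membership verification $v_3,\dots,v_{q+1}\in A$ to be the main point requiring care.

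Granting $v_3,\dots,v_{q+1}\in A$, the union and intersection identities are immediate: by Definition~\ref{dfn:more-on-V2} the sets $N[v], N_1,\dots,N_{q+1},\{v^\star\}$ partition $V$, and by definition $L_{\1}^v=N[v_1]=\{v\}\cup\{v_1\}\cup N_1\cup(\text{the matching partner of }v_1\text{ inside }N(v))$ — more precisely $L_{\1}^v$ contains $v$, contains $N_1$, and contains the part of $N(v)$ adjacent to $v_1$; similarly $L_{\2}^v=N(v_2)\cup\{v^\star\}$ contains $v$ (since $v\in N(v_2)$), contains $N_2$? No — here I must track carefully that $L_{\2}^v$ is built to be a \emph{line} through $v$ and $v^\star$, replacing $v_2$ itself by $v^\star$. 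The cleanest route is: $L_{\1}^v\cup L_{\2}^v$ already covers $N[v]\cup N_1\cup\{v^\star\}$ plus possibly $N_2$; and $N(v_3)\cup\cdots\cup N(v_{q+1})$ covers $N[v]\cup N_3\cup\cdots\cup N_{q+1}$ since each $N(v_i)\ni v$ and $N(v_i)\supseteq N_i$. Putting these together exhausts the partition, giving the union $=V$. For the intersection, $v$ lies in every one of $L_{\1}^v$, $L_{\2}^v$, $N(v_3),\dots,N(v_{q+1})$; and no other vertex can, because any vertex $\ne v$ lies in a unique part of the partition and hence in at most one of $N_1,\dots,N_{q+1}$ (or is $v^\star$), so it misses all but at most one of these $q+1$ sets.

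Finally, for the last sentence: fix $N=N(x)\in\dR$ with $x\in A$. By Claim~\ref{Cla:R}, $\dR$ is $1$-intersecting, so $|N\cap N(v_i)|=1$ for every $i$ with $3\le i\le q+1$ (and for $x=v_i$ itself the statement is vacuous/degenerate, which I would handle separately by noting $v\in V_2\subseteq A$ too, hence $N(v)\in\dR$ and $|N(v)\cap N(v_i)|=1$ as in Claim~\ref{Cla:V2-structure}). Now apply the union/intersection identities just proved: the $q+1$ sets $L_{\1}^v, L_{\2}^v, N(v_3),\dots,N(v_{q+1})$ cover $V$ with total overlap exactly $\{v\}$, so by inclusion–exclusion $\sum$ of their sizes is $|V|+q\cdot|\{v\}|$ — equivalently the sets $L_{\1}^v\setminus\{v\}, L_{\2}^v\setminus\{v\}, N(v_3)\setminus\{v\},\dots,N(v_{q+1})\setminus\{v\}$ partition $V\setminus\{v\}$. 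Intersecting this partition with $N$: since $|N|=q+1$ and $|N\cap N(v_i)|=1$ for $3\le i\le q+1$, exactly $q-1$ elements of $N$ are distributed among the $N(v_i)\setminus\{v\}$ (one each, and none equal to $v$ since $v\notin N$ as $x\ne v$ would need checking — if $x=v$ then $N=N(v)$ and one checks directly), leaving exactly $2$ elements of $N$ to be split between $L_{\1}^v\setminus\{v\}$ and $L_{\2}^v\setminus\{v\}$. Hence $|N\cap L_{\1}^v|+|N\cap L_{\2}^v|=2$, as claimed. The one subtlety to nail down is the edge case $N=N(v)$ and whether $v\in N$ for some $N\in\dR$, which is ruled out because $\dR$ consists of neighbourhoods $N(x)$ and $v\notin N(x)$ whenever $x\notin N(v)$; the remaining $x\in N(v)\cap A$ are exactly $v_3,\dots,v_{q+1}$, handled above.
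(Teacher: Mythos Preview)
Your proposal has the right overall shape, and the final partition-plus-$1$-intersectedness argument for the ``moreover'' assertion is essentially what the paper does. However, there is a genuine gap in your verification that $v_3,\dots,v_{q+1}\in A$, and the approach you sketch for it would not work.

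You correctly identify that one needs $v_i\in S_{q+1}$ (given by Definition~\ref{dfn:more-on-V2}) and $v_i\notin B$. But your proposed deficiency bookkeeping only bounds $\sum_i f(N_i)\le f(V)\le(1+\epsilon)q+1$; this caps the \emph{sum} of the $|N(v_i)\cap S|$ and does not rule out a single $v_i$ having $|N(v_i)\cap S|\ge\delta q$. So that route would show only that \emph{most} of $v_3,\dots,v_{q+1}$ lie in $A$, not all of them. The correct argument is much simpler and is hiding in Definition~\ref{dfn:V2}: membership $v\in V_2$ requires $v\in S_{q+1}\setminus N(B)$, and $v\notin N(B)$ literally means that no vertex of $B$ is adjacent to $v$, i.e.\ $N(v)\cap B=\emptyset$. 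Hence every $v_i\in N(v)$ automatically satisfies $v_i\notin B$, and those with degree $q+1$ land in $A=S_{q+1}\setminus B$ immediately. (This is exactly the observation recorded at the start of the proof of Claim~\ref{Cla:V2-structure}.) Your opening sentence ``none of them lies in $N(B)$'' conflates $B$ with $N(B)$; once disentangled, the step is one line.

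A smaller imprecision: in your union argument you assert that $L_{\1}^v\cup L_{\2}^v$ ``already covers $N[v]\cup N_1\cup\{v^\star\}$'', but in fact $L_{\1}^v\cup L_{\2}^v=\{v,v_1,v_3,v^\star\}\cup N_1\cup N_2$, which misses $v_2,v_4,\dots,v_{q+1}$; these are picked up by $N(v_3),\dots,N(v_{q+1})$ via the matching structure of $G[N(v)]$. The cleanest route to both the union and the intersection identities is the counting you hint at later: all $q+1$ sets have size $q+1$, all contain $v$, and $|V|=(q+1)q+1$, so once the union is $V$ the sets minus $\{v\}$ must partition $V\setminus\{v\}$, giving the intersection $\{v\}$ for free and also dispatching the ``moreover'' statement exactly as you and the paper do.
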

\begin{proof}
It is clear that the first conclusion follows by definition.
Consider any $v\in V_2$ and $N\in \dR$.
Note that $v_i\in A$ for $3\leq i\leq q+1$.
If $N=N(v_i)$ for some $3\leq i\leq q+1$, then the second conclusion is clear.
Otherwise, as $\dR$ is 1-intersecting, we see that $|N\cap N(v_i)|=1$ for $3\leq i\leq q+1$.
This also infers that $|N\cap (L_\1^v\cup L_\2^v)|=2$, completing the proof.
\end{proof}

Therefore, if $v\in V_2$ is non-extendable, then there must be a line $N\in\dR$ with $(|N\cap L_{\1}^v|,|N\cap L_{\2}^v|)\in \{(0,2),(2,0)\}$.
This motivates the following definitions.

\begin{dfn}\label{dfn:good-bad-vertices}
If $v\in V_2$ is extendable, we say both vertices $v_1, v_2$ and both lines $L_\1^v, L_\2^v$ are {\bf good} (of type-\1, type-\2 respectively) for $v$.
Suppose $v\in V_2$ is non-extendable.
If there exists $N\in \dR$ with $|N\cap L_{\1}^v|=2$,
then the type-\1 vertex $v_1$ is called {\bf bad} for $v$.
If there exists $N'\in \dR$ with $|N'\cap L_{\2}^v|=2$,
then the type-\2 vertex $v_2$ is called {\bf bad} for $v$.
\end{dfn}

We point out that any vertex of type-\1 or -\2 for some vertex in $V_2$ belongs to the set $S_q\backslash B$.
Recall that $V_1^*=V_1\backslash N(B)$ and $\dL=\{N[u]: u\in S_q\cap N(V_1^*)\}$ from Definition \ref{dfn:L-family}.

\begin{claim}\label{Cla:N(V1^*)}
Let $u$ be a vertex of type-\1 or -\2 for some $v\in V_2$.
If $u\in N(V_1^*)$, then $N[u]\in \dL$ and $u$ is a good vertex of type-\1 for $v$.
\end{claim}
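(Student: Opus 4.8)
The plan is to separate the claim into its two assertions — that $N[u]\in\dL$, and that $u$ is in fact the type-\1 vertex for $v$ and is good — and to derive each from the local structure already established for members of $V_1$, $V_1^*$ and $V_2$. For the first assertion, recall from the remark preceding the claim that any type-\1 or type-\2 vertex for $v\in V_2$ lies in $S_q\setminus B$, so $d(u)=q$. Given $u\in N(V_1^*)$, fix $w\in V_1^*$ adjacent to $u$. Since $w\in V_1$ and $\Delta(G)=q+1$, Definition \ref{dfn:V1} forces all $q+1$ neighbours of $w$ to have degree $q$ or $q+1$, exactly one of them — namely $w'$ — having degree $q$; as $d(u)=q$ this identifies $u=w'$, and hence $N[u]=N[w']\in\dL$ by Definition \ref{dfn:L-family}.

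Next I would rule out that $u$ is the type-\2 vertex $v_2$. Suppose it were. By Definition \ref{dfn:more-on-V2} the edge $v_2v_3$ belongs to $G[N(v)]$, and by Claim \ref{Cal:RcapV2} we have $v_3\in A$, so $N(v_3)\in\dR$. Now $v_2\in N(v_3)$ (because $v_2v_3\in E(G)$) and also $v\in N(v_3)$ (because $v_3\in N(v)$); moreover $v_2\in N[v_2]$ and $v\in N(v_2)\subseteq N[v_2]$. Since $d(v)=q+1\ne q=d(v_2)$ these two vertices are distinct, so $|N(v_3)\cap N[u]|\ge 2$ with $N(v_3)\in\dR$ and $N[u]\in\dL$, contradicting that $\dR\cup\dL$ is $1$-intersecting (recall $\dR\cap\dL=\emptyset$). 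Hence $u=v_1$ is the type-\1 vertex for $v$, and consequently $L_\1^v=N[v_1]=N[u]\in\dL$.

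It then remains to show $v$ is extendable, i.e. $\dR\cup\{L_\1^v,L_\2^v\}$ is $1$-intersecting. Since $L_\1^v\in\dL\setminus\dR$ and $\dR\cup\dL$ is $1$-intersecting, $|L_\1^v\cap N|=1$ for every $N\in\dR$; Claim \ref{Cal:RcapV2} gives $|N\cap L_\1^v|+|N\cap L_\2^v|=2$, whence $|N\cap L_\2^v|=1$ as well. Finally $|L_\1^v\cap L_\2^v|=1$: the vertex $v$ lies in both, while any further common vertex $z$ would be either $v^\star$ or $v_1$ (both impossible, since $v^\star\notin N[v_1]$ and $v_1\notin N(v_2)$ as $v_1$ is the isolated vertex of $G[N(v)]$) or a common neighbour of $v_1$ and $v_2$ distinct from $v$, which would create the $4$-cycle $v_1zv_2v$, contradicting that $G$ is $C_4$-free. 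Thus $\dR\cup\{L_\1^v,L_\2^v\}$ is $1$-intersecting, so $v$ is extendable and $u=v_1$ is a good vertex of type-\1 for $v$ by Definition \ref{dfn:good-bad-vertices}. The main obstacle is making the elimination of the type-\2 case fully rigorous: it relies essentially on extracting from Definition \ref{dfn:more-on-V2} that $v_2$ is matched inside $G[N(v)]$ to a vertex $v_3\in A$ (so that $N(v_3)\in\dR$), combined with the disjointness $\dR\cap\dL=\emptyset$; the remaining steps are routine bookkeeping with the definitions and one application of $C_4$-freeness.
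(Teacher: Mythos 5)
Your proof is correct and follows essentially the same route as the paper: you identify $u$ as the unique degree-$q$ neighbour of some $w\in V_1^*$ to get $N[u]\in\dL$, eliminate the type-\2 case via the matched neighbour $v_3\in A$ together with the $1$-intersecting property of $\dR\cup\dL$ (Claim~\ref{Cla:R+L}), and then upgrade to extendability via Claim~\ref{Cal:RcapV2}. The only cosmetic difference is that the paper invokes Lemma~\ref{lem:enlarge R} to conclude $\dR\cup\{L_\1^v,L_\2^v\}$ is $1$-intersecting, whereas you unroll that application by directly verifying $|N\cap L_\2^v|=1$ for $N\in\dR$ and $|L_\1^v\cap L_\2^v|=1$; both are sound.
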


\begin{proof}
Suppose that $u$ is adjacent to $w\in V_1\backslash N(B)$.
Then $N[u]\in \dL$ and by Claim \ref{Cla:R+L}, $\dR\cup \{N[u]\}$ is 1-intersecting.
If $u$ is a type-\2 vertex for $v$, then there exists some $u'\in N(v)\cap A$ adjacent to $u$.
So $N(u')\in \dR$, but $\{u,v\}\subseteq N(u')\cap N[u]$, a contradiction.
Hence $u$ is type-\1 for $v$. In particular, $L_\1^v=N[u]$.
By Claim \ref{Cal:RcapV2}, applying Lemma \ref{lem:enlarge R} with $\dH=\dR\cup \{N[u]\}$ and $\dF=\{L_\2^v\}$,
we derive that $\dR\cup \{L_\1^v, L_\2^v\}$ is 1-intersecting.
This shows that $v$ is extendable, finishing the proof.
\end{proof}

We now investigate more properties for general $u\in S_q\backslash B$.
It seems possible that $u$ can be good or bad for different vertices in $V_2\cap N(u)$.
Nevertheless, we will show in the following claims that there are strong restrictions one can say for the goodness/badness.

\begin{claim}\label{Cla:non-extendable-V2}
If $v\in V_2$ is non-extendable, then there always exists $N\in \dR$ with $|N\cap L_{\1}^v|=2$.
\end{claim}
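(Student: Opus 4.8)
The plan is to argue by contradiction: assume $v\in V_2$ is non-extendable and yet \emph{no} $N\in\dR$ satisfies $|N\cap L_{\1}^v|=2$; call this assumption $(\ast)$. Recall $L_{\1}^v=N[v_1]$ and $L_{\2}^v=N(v_2)\cup\{v^\star\}$, and that by Claim~\ref{Cal:RcapV2} every $N\in\dR$ has $|N\cap L_{\1}^v|+|N\cap L_{\2}^v|=2$, while $L_{\1}^v\cap L_{\2}^v=\{v\}$. Since $\dR$ is $1$-intersecting, the failure of $\dR\cup\{L_{\1}^v,L_{\2}^v\}$ to be $1$-intersecting must come from some $N_0=N(x_0)\in\dR$ with $|N_0\cap L_{\1}^v|\ne 1$; by $(\ast)$ this forces $|N_0\cap L_{\1}^v|=0$ (hence $|N_0\cap L_{\2}^v|=2$). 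Thus $N(x_0)\cap N[v_1]=\emptyset$: the pair $\{x_0,v_1\}$ is uncovered and non-adjacent.

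Next I would show $v_1$ has many neighbours in $V_2$ whose type-\1 line is exactly $N[v_1]$. By Claim~\ref{Cla:N(V1^*)}, if $v_1\in N(V_1^*)$ then $v$ is extendable, a contradiction; so $v_1\notin N(V_1^*)$, and since $v\notin N(B)$ forces $v_1\notin B$, we have $v_1\in S_q\setminus(B\cup N(V_1^*))$, whence Claim~\ref{Cla:many prop 2} gives a set $W:=N_1\cap V_2$ with $|W|\ge (1-\epsilon-3\delta)q-O(1/\delta)$. Because $G$ is $C_4$-free, $G[N(v_1)]$ has maximum degree $\le 1$ (a path of length $2$ there would close a $C_4$ through $v_1$), i.e.\ it is a matching in which $v$ is isolated; and $(\ast)$ applied to each $w\in W$ (which lies in $A$) says $w$ is isolated in $G[N(v_1)]$, so $v_1$ is the type-\1 (isolated) vertex of $w$ and $L_{\1}^w=N[v_1]$. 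Each such $w$ is non-extendable — otherwise $\dR\cup\{N[v_1]\}$ would be $1$-intersecting, contradicting $N(x_0)\cap N[v_1]=\emptyset$ — so, using $(\ast)$ again together with Claim~\ref{Cal:RcapV2} for $w$, we get $|N_0\cap L_{\2}^w|=2$. Writing $L_{\2}^w=N(v_2^w)\cup\{w^\star\}$ with $w^\star\notin N[v_2^w]$, and noting $|N_0\cap N(v_2^w)|\le 1$ (two common neighbours of $x_0,v_2^w$ would form a $C_4$), we conclude $w^\star\in N(x_0)$ for every $w\in W$.

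The main obstacle — and the place I expect the real work — is extracting a contradiction from this. The idea is to exploit the rigidity of $N(x_0)$: since $v_3,\dots,v_{q+1}\in A$ and $v\notin N(x_0)$, the vertices $c_i:=N(x_0)\cap N(v_i)$ ($3\le i\le q+1$) are forced to be well-defined, pairwise distinct, and disjoint from $N(x_0)\cap L_{\2}^v$, so they, together with the two vertices of $N(x_0)\cap L_{\2}^v$, exhaust the $q+1$ vertices of $N(x_0)$. Each $w^\star$ must therefore coincide with some $c_{i(w)}$ (up to the at most two exceptions coming from $N(x_0)\cap L_{\2}^v$), and since $w^\star$ has no neighbour in $N(w)$ while $c_{i(w)}\sim v_{i(w)}$, we get $w\not\sim v_{i(w)}$ with $w\mapsto i(w)$ injective. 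Pushing this back through the matching structure of $G[N(v_1)]$ and the uncovered-pair bounds — $d_0(v_1)\le(2+\epsilon)q+O(1)$, $f(V)\le(1+\epsilon)q+1$, $|B|\le 2/\delta$, together with $\frac1q\ll\delta\ll 1-\epsilon$ — should overdetermine the neighbourhood of $x_0$ (or of $v_1$) and yield the contradiction. The delicate point is controlling whether the $w^\star$ are genuinely distinct and ruling out the two exceptional identifications $w^\star\in N(x_0)\cap L_{\2}^v$; this will require a short $C_4$-freeness argument around $v_1$ plus the quantitative slack from $|W|\ge(1-\epsilon-3\delta)q-O(1/\delta)$. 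Once the contradiction is reached, $(\ast)$ is impossible, so there is always $N\in\dR$ with $|N\cap L_{\1}^v|=2$, as claimed.
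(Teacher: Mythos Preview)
Your setup is correct and clean up to the point where you derive $w^\star\in N(x_0)$ for every $w\in W:=N(v_1)\cap V_2$: the deduction that $v_1\in S_q\setminus(B\cup N(V_1^*))$, the use of Claim~\ref{Cla:many prop 2}, the observation that $(\ast)$ forces each $w\in W$ to be isolated in $G[N(v_1)]$ (hence $v_1$ is the type-\1\ vertex of $w$ and $L_\1^w=N[v_1]$), and the conclusion $|N(x_0)\cap L_\2^w|=2$ are all fine. The gap is the last paragraph. The map $w\mapsto i(w)$ need not be injective: nothing prevents two distinct $w_a,w_b\in W$ from having $w_a^\star=w_b^\star$ (indeed, later in the paper Claim~\ref{Cla:half-of-heavy} explicitly exploits the fact that many $w$'s can share a common $w^\star$). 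Without injectivity, the containment $\{w^\star:w\in W\}\subseteq N(x_0)$ carries no cardinality tension, since $|W|\le q\le |N(x_0)|$. Your ``overdetermination'' via $d_0(v_1)$, $f(V)$, and the matching structure of $G[N(v_1)]$ is not spelled out, and I do not see a way to close it along these lines: the constraints you list are all compatible with the picture you have built.

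The paper's argument proceeds quite differently. It does not work inside $N(v_1)$ at all. Instead, it fixes $N(c)\in\dR$ with $|N(c)\cap L_\2^v|=2$ (so $v^\star\in N(c)$), and then exploits the abundance of \emph{property-1} vertices $b$ inside $N(c)$ via Claim~\ref{Cla:many-prop1-for-A-vtx}. For each such $b$ (chosen outside $N(B)\cup N(v_1)\cup N(v_2)\cup N(v_3)\cup N(v)\cup N(v^\star)$), a short counting argument in $N(b)$ produces a vertex $a\in N(b)\setminus\{v_i\}$ with $N(a)\cap(N_2\cup\{v_3\})=\emptyset$; a case analysis (using $(\ast)$ and the $1$-intersecting property of $\dR\cup\{N[a]\}$ when $a$ has a $V_1^*$-neighbour) then pins down $d(a)=q$ and $a\in N_1$, i.e.\ $a\in N(v_1)\cap S_q$. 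Crucially, distinct $b$'s yield distinct $a$'s, since two $b$'s with the same $a$ would close the $4$-cycle $b_1\,a\,b_2\,c\,b_1$. This gives $|N(v_1)\cap S|\ge(1-\epsilon-3\delta)q-2/\delta-4\ge\delta q$, so $v_1\in B$, contradicting $v\in V_2\subseteq V\setminus N(B)$. The essential difference from your attempt is the direction of the pigeonhole: the paper manufactures many \emph{distinct} low-degree vertices inside $N(v_1)$ (forcing $v_1\in B$), whereas your argument only places the $w^\star$'s inside $N(x_0)$ without any distinctness, which is why it stalls.
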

\begin{proof}
Let $N_i=N(v_i)\backslash N[v]$ for $i\in [q+1]$.
Suppose not. Then by Claim \ref{Cal:RcapV2}, there exists some $N(c)\in \dR$ with $|N(c)\cap L_{\2}^v|=2$.
It is clear that $v^\star\in N(c)$.
Since $c\in A$, by Claim \ref{Cla:many-prop1-for-A-vtx} there are at least $(1-\epsilon-3\delta)q+1$ many neighbors of $c$ in $V_1\backslash S_c^*$.
As $G$ is $C_4$-free, we can find at least $(1-\epsilon-3\delta)q-2/\delta+1$ vertices in $(V_1\cap N(c))\backslash N(B)$.

Now we take $b\in(V_1\cap N(c))\backslash\big(N(B)\cup N(v_1)\cup N(v_2)\cup N(v_3)\cup N(v)\cup N(v^\star)\big)$.
Suppose $N(b)\cap N(v)=\{v_i\}$ (by Claim \ref{Cla:R} they intersect).
By the choice of $b$, we may write $N(b)=\{b_1,...,b_{q+1}\}$ with $b_1=v_i$, where $d(b_1)=q+1$ and $d(b_{q+1})=q$.
Since $e(N(b), N_2\cup \{v_3\})\leq | N_2\cup \{v_3\}|=q-1$,
there are at least two vertices in $N(b)$ with no neighbor in $N_2 \cup \{v_{3}\}$.
So there exists $a\in N(b)\backslash \{v_i\}=N(b)\backslash N(v)$ with $N(a)\cap (N_2 \cup \{v_{3}\})=\emptyset$.

We assert that $a\in N_1\cup ...\cup N_{q+1}$ and $N(a)\cap L_\2^v=\emptyset$.
First, we see that $av\notin E(G)$ (as otherwise, $a\in N(v)\cap N(b)=\{v_i\})$.
It then suffices to show $v^\star\notin N(a)$.
To see this, suppose $v^\star\in N(a)$ and then one can find a $C_4$, namely $abcv^\star a$ in $G$, a contradiction.

We also assert that it has to be $d(a)=q$.
It is clear that $d(a)\in \{q, q+1\}$.
Suppose for a contradiction that $d(a)=q+1$.
Since $b\in V_1\backslash N(B)$, we have $a\in A$ and thus $N(a)\in \dR$.
Since $v_3,...,v_{q+1}\in A$,
by Claim \ref{Cla:R} we have $|N(a)\cap N(v_\ell)|=1$ for $3\leq \ell\leq q+1$.
Then by Claim \ref{Cal:RcapV2}, $|N(a)\cap L_\1^v|=2$, contradicting our assumption.

We now further show that $a\in N_1$.
Assume for a contradiction that $av_1\notin E(G)$.
As $av_{3}\notin E(G)$, we have $a\in N_2\cup N_4\cup ...\cup N_{q+1}$.
Suppose $a\in N_2$.
By Claim \ref{Cla:V1-structure}, we see that $\{b_1,...,b_q\}$ induces a matching in $G[N(b)]$;
then at least two of $\{b_1,...,b_q\}$ have no neighbor in $N_2\cup \{v_3\}$.
Hence we can choose $a'\in \{b_1,...,b_q\}\backslash \{v_i\}$ with $N(a')\cap (N_2 \cup \{v_{3}\})=\emptyset$.
But such $a'$ has degree $q+1$, contradicting the previous assertion.
Hence we may assume $a\in N_j$ for some $j\in \{4,...,q+1\}$.
Since $b\in V_1\backslash N(B)$ and $a\in N(b)\cap S_q$, by Claim \ref{Cla:R+L}, $\dR\cup \{N[a]\}$ is 1-intersecting.
Because $v_\ell\in A$ for $\ell\in \{3,4,...,q+1\}$, we have $|N[a]\cap N(v_\ell)|=1$.
This, together with Claim \ref{Cal:RcapV2} and the fact $N[a]\cap L_\2^v=\emptyset$, imply that $|N[a]\cap L_\1^v|\geq 2$.
As $av_1\notin E(G)$, we have $|N(a)\cap N(v_1)|\geq 2$, a contradiction as it would force a $C_4$.

Lastly, we observe that for every choice of such $b$, the above vertex $a$, which lies in $N_1\cap S_q\subseteq N(v_1)\cap S_q$, must be distinct.
This is because if there exist two vertices say $b_1,b_2$ corresponding to the same vertex $a$,
then it provides a $C_4$ such as $b_1ab_2cb_1$ in $G$.
There are at least $(1-\epsilon-3\delta)q-2/\delta-4$ choices for $b$, implying that
$|N(v_1)\cap S_q|\geq (1-\epsilon-3\delta)q-2/\delta-4\geq \delta q$,
where the last inequality holds as $1/q\ll\delta\ll 1-\epsilon$.
This shows $v_1\in B$, a contradiction to that $v\in S_{q+1}\backslash N(B)$, completing the proof.
\end{proof}

\begin{claim}\label{Cla:little type 2}
For any $u\in S_q\backslash B$, the number of non-extendable vertices $v\in V_2\cap N(u)$ with $u$ as the type-\2 vertex is at most $4\delta q$.
\end{claim}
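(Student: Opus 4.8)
The plan is to fix $u \in S_q \setminus B$ and consider the set $W$ of non-extendable vertices $v \in V_2 \cap N(u)$ for which $u$ plays the role of the type-\2 vertex. By Claim \ref{Cla:non-extendable-V2}, for every such $v$ the type-\1 vertex $v_1$ is bad, witnessed by some line $N_v \in \dR$ with $|N_v \cap L_\1^v| = 2$; moreover, since $u = v_2$ is the type-\2 vertex for $v$, we have $L_\2^v = N(u) \cup \{v^\star\}$ and $v^\star$ is the unique vertex of $V \setminus \bigcup_i N[v_i]$. The first step is to record the structural picture around $u$ coming from Definition \ref{dfn:more-on-V2} and Claim \ref{Cla:V2-structure}: for each $v \in W$, the edge set of $G[N(v)]$ is a perfect matching of $N(v) \setminus \{v^\star$-partner$\}$ plus an isolated vertex of degree $q$, and $v$ itself is a common neighbor of $u$ and of the type-\1 vertex $v_1$. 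Since $G$ is $C_4$-free, distinct $v, v' \in W$ must have distinct type-\1 vertices $v_1 \neq v_1'$ (two vertices $v,v'$ with a common neighbor $u$ cannot have a second common neighbor).

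The core of the argument is a double-counting or injectivity argument that forces $|W|$ to be small, using that each $v \in W$ ``consumes'' a line of $\dR$ in an essentially rigid way. Here is the route I would take. For $v \in W$ with witness $N_v \in \dR$, we have $v^\star \notin N_v$ is not forced, but $N_v$ meets $L_\1^v = N[v_1]$ in two points, say $\{a_v, b_v\} \subseteq N[v_1]$; since $N_v$ is $1$-intersecting with every other line of $\dR$ and in particular (via Claim \ref{Cal:RcapV2}) $\sum_{N \in \dR} (|N \cap L_\1^v| + |N \cap L_\2^v|) $ behaves like a projective incidence count, the pair $\{a_v,b_v\}$ cannot be $\{v, v_1\}$-type and must sit among $N(v_1)$. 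I would then show that the center vertex $x_v \in A$ with $N(x_v) = N_v$ is adjacent to two vertices of $N[v_1]$ and hence, again by $C_4$-freeness, the map $v \mapsto x_v$ together with which two vertices of $N[v_1]$ are hit is highly constrained. The key quantitative input is that each $v \in W$ produces $\geq \delta q$-ish worth of neighbors of $v_1$ inside $S_q$ if too many such $v$ existed — mirroring the final contradiction in Claim \ref{Cla:non-extendable-V2}, where one derived $|N(v_1) \cap S_q| \geq \delta q$ forcing $v_1 \in B$. So the strategy is: assume $|W| > 4\delta q$; pick a vertex $v \in W$ whose type-\1 vertex $v_1$ has few neighbors among the relevant forbidden sets; and run an argument that, using the other members of $W$ as a reservoir of common neighbors of $u$, builds $\delta q$ many $S_q$-neighbors of a single vertex, contradicting that that vertex is not in $B$.

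Concretely, the counting I expect to work is this: the $|W|$ vertices $v$ are distinct common neighbors of $u$, so they occupy $|W|$ of the $q$ vertices of $N(u)$, and their type-\1 partners $v_1$ are $|W|$ distinct vertices in $S_q \setminus B$. For each $v$, the witnessing line $N_v = N(x_v)$ with $x_v \in A$ satisfies $x_v \notin N[u]$ and $x_v$ has two neighbors in $N[v_1]$; the edge from $v$ to $u$ and the matching structure of $G[N(v)]$ then let one locate $x_v$ in exactly one of the parts $N_i = N(v_i) \setminus N[v]$. Summing over $v \in W$ and using that the $x_v$ are forced to be fairly spread out, one over-counts incidences at $u$ (degree $q$) or at some fixed $v_1$ (which would then have $\geq \delta q$ neighbors in $S$, contradiction with $v_1 \notin B$). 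I would calibrate the constant so that $|W| \leq 4\delta q$ falls out; the slack between the true bound one can squeeze and $4\delta q$ gives room to absorb the $O(1/\delta)$ and $O(1)$ error terms that have been appearing throughout (as in Claims \ref{Cla:many prop 2} and \ref{Cla:non-extendable-V2}).

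The main obstacle will be the bookkeeping of which vertex of $N[v_1]$ the witness line $N_v$ actually passes through, and ruling out degenerate overlaps — i.e. showing the assignment $v \mapsto (x_v, \text{pair in } N[v_1])$ is injective enough to convert ``$|W|$ large'' into ``some vertex has $\delta q$ neighbors in $S$''. This is exactly the kind of delicate $C_4$-freeness case analysis carried out in the proof of Claim \ref{Cla:non-extendable-V2}, and I would expect to reuse that machinery (picking an auxiliary vertex $b \in V_1 \setminus N(B)$ avoiding a bounded list of forbidden neighborhoods, then tracing a forced path to produce a new $S_q$-neighbor). The parity hypothesis ($q$ even) enters only through Claim \ref{Cla:V2-structure} to guarantee the clean matching-plus-isolated-vertex structure of $G[N(v)]$ for $v \in V_2$.
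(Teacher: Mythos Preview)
Your proposal is too vague to constitute a proof, and more importantly it misses the actual mechanism. The approach you sketch --- an injectivity argument for $v \mapsto x_v$, or accumulating $\delta q$ many $S_q$-neighbors of some fixed type-\1 vertex as in Claim \ref{Cla:non-extendable-V2} --- is not what makes this work, and I do not see how to push it through: the type-\1 vertices $v_1$ attached to different $v \in W$ are all distinct, so there is no single vertex at which the $S_q$-neighbors pile up.

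The paper's argument is quite different and hinges on a dichotomy you do not mention. Fix \emph{one} witness line $L = N(a) \in \dR$ for one $v_1 \in T$. Since $|L \cap L_\2^{v_1}| = 0$ by Claim~\ref{Cal:RcapV2}, and $L_\2^{v_i} = N(u) \cup \{v_i^\star\}$ for every $v_i \in T$, we get $L \cap N(u) = \emptyset$. Now for every other $v_i \in T$, the same Claim~\ref{Cal:RcapV2} forces $|L \cap L_\1^{v_i}| + |L \cap L_\2^{v_i}| = 2$, and since $L \cap N(u) = \emptyset$ this yields exactly two cases: either $v_i^\star \in L$, or $|L \cap L_\1^{v_i}| = 2$, which forces the type-\1 vertex $u_i$ itself to lie in $L$ (because $|L \cap N(u_i)| \le 1$). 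But the $u_i$ are distinct vertices of $S_q$, and $a \in A$ gives $|N(a) \cap S| < \delta q$, so fewer than $\delta q$ of the $v_i$ fall into the second case. For all the remaining $v_i$ one has $v_i^\star \in L = N(a)$. Repeating this with a second witness line $L' = N(a')$ (for some $v_j$ not handled by $L$), one finds two distinct $v_i^\star$'s lying in $N(a) \cap N(a')$, producing a $C_4$. An auxiliary step bounds how many $v_i$ can share the same $v_i^\star$ by $\delta q$, and the constants then give $|T| \le 4\delta q$.

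So the point you are missing is that a \emph{single} witness line, once chosen, interacts in a rigid way with \emph{every} element of $T$ simultaneously through the shared type-\2 vertex $u$; the contradiction is a $C_4$ between two witness-line centers and two distinct $v_i^\star$'s, not a degree blow-up at some $v_1$.
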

\begin{proof}
Suppose on the contrary that there exists $u\in S_q\backslash B$ such that the set
$T=\{\mbox{non-extendable } v\in V_2\cap N(u): u \mbox{ is the type-\2 vertex of } v\}$ has size $t>4\delta q$.
We write $T=\{v_i: i\in [t]\}$ and for each $v_i\in T$, we denote the type-\1 vertex of $v_i$ by $u_i$.
It is clear that all $u_i$ are distinct (as otherwise it would force a $C_4$).

By Claim \ref{Cla:non-extendable-V2}, for $v_1$ there exists $L=N(a)\in \dR$ such that $|L\cap L_\1^{v_1}|=2$.
By Claim \ref{Cal:RcapV2}, $L\cap N(u)=\emptyset$.
This shows that exactly one of the following facts holds for $L$ and any other $v_i$'s in $T$:
\begin{itemize}
\item[(1).] If $|L\cap L_\1^{v_i}|=|L\cap L_\2^{v_i}|=1$, then $v_i^\star\in L$ (because $L_\2^{v_i}=N(u)\cup \{v_i^\star\}$ and $L\cap N(u)=\emptyset$);
\item[(2).] Otherwise $|L\cap L_\1^{v_i}|=2$, then $u_i\in L$ (because $L_\1^{v_i}=N[u_i]$ and $|L\cap N(u_i)|\leq 1$).
\end{itemize}
Let $T_1$ consist of all $v_i\in T$ with $|L\cap L_\1^{v_i}|=2$.
By (2), we see $u_i\in L=N(a)$ for all $v_i\in T_1$.
Since $a\in A$ and $u_i\in S$ are distinct, we have $|T_1|\leq |N(a)\cap S|<\delta q$.
Now let $L'=N(a')\in \dR$ be another line, other than $L$, such that $|L'\cap L_\1^{v_j}|=2$ for some $v_j\in T\backslash T_1$.
Let $T_2$ consist of all $v_j\in T\backslash T_1$ with $|L'\cap L_\1^{v_j}|=2$.
Similarly, we also have $|T_2|<\delta q$.

We also assert that there are at most $\delta q$ vertices $v_i\in T$ sharing a common $v^\star_i$ (denoted by $v^\star$).
Suppose $v_{j_1},...,v_{j_s}\in T$ share a common $v^\star$ and subject to this, $s$ is maximum.
We first find a line $L_0$ with $|L_0\cap L_\1^{v_{j_1}}|=2$, which also satisfies $L_0\cap (N(u)\cup \{v^\star\})=\emptyset$.
If $L_0$ satisfies that $|L_0\cap L_\1^{v_{j_i}}|=2$ for all $i\in [s]$,
then we can get that $s<\delta q$ as above.
So we may assume that there is some $v_{j_i}$ such that (1) holds.
By (1), we then derive that $v^\star\in L_0$, a contradiction to that $L_0\cap (N(u)\cup \{v^\star\})=\emptyset$.
This proves the assertion.

By our choice, (1) holds for each of $L=N(a)$ and $L'=N(a')$ and for any $v_i\in T\backslash (T_1\cup T_2)$.
Since $|T\backslash (T_1\cup T_2)|>2\delta q$ and at most $\delta q$ vertices $v_i\in T\backslash (T_1\cup T_2)$ share a common $v^\star_i$,
we can find two different $v^\star_i$, say $x$ and $y$.
By (1), we have $x,y\in N(a)\cap N(a')$.
This forces a $C_4$ in $G$ and finishes the proof.
\end{proof}

The next claim shows that the type of a good vertex $u\in S_q\backslash B$ in fact is an invariance
(that is, the type remains the same for all extendable vertices in $N(u)\cap V_2$).

\begin{claim} \label{Cla:good type}
Let $u\in S_q\backslash B$.
If $u$ is a good vertex of type-\1 for some vertex in $V_2$, then $u$ is the good vertex of type-\1 for all vertices in $N(u)\cap V_2$.

If $u$ is a good vertex of type-\2 for some vertex in $V_2$, then $u$ is a vertex of type-\1 (which must be bad) for at most one vertex in $N(u)\cap V_2$;
moreover, there are at most two good lines of type-\2 for all the vertices in $N(u)\cap V_2$.
\end{claim}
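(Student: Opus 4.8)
The plan is to deduce the type-\2 assertion from the type-\1 one, so I would prove the type-\1 statement first. Suppose $u$ is a good vertex of type-\1 for some $v\in V_2$; by Definition~\ref{dfn:good-bad-vertices} this means $v$ is extendable with $L_{\1}^{v}=N[u]$, so $\dR\cup\{N[u]\}$ is $1$-intersecting. Fix any $v'\in N(u)\cap V_2$. Since $d(u)=q$ while every neighbour of $v'$ other than $v'_1,v'_2$ has degree $q+1$, we have $u\in\{v'_1,v'_2\}$. If $u=v'_2$, then the $G[N(v')]$-matching partner $v'_3$ of $u$ lies in $N(v')\cap S_{q+1}\subseteq A$ (as $v'\notin N(B)$), so $N(v'_3)\in\dR$; but $u,v'\in N(v'_3)\cap N[u]$ and $N(v'_3)\ne N[u]$ because $G$ is $C_4$-free, contradicting that $\dR\cup\{N[u]\}$ is $1$-intersecting. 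Hence $u=v'_1$, i.e.\ $L_{\1}^{v'}=N[u]$, and I would then apply Lemma~\ref{lem:enlarge R} with $\dH=\dR\cup\{L_{\1}^{v'}\}$ (which is $1$-intersecting) and $\dF=\{L_{\2}^{v'}\}$: Claim~\ref{Cal:RcapV2} gives that the $q$ sets $L_{\1}^{v'},N(v'_3),\dots,N(v'_{q+1})$ of $\dH$ together with $L_{\2}^{v'}$ have union $V$ and common intersection $\{v'\}$, and $L_{\2}^{v'}\notin\dH$ for $q$ large, so the lemma shows $\dR\cup\{L_{\1}^{v'},L_{\2}^{v'}\}$ is $1$-intersecting, i.e.\ $v'$ is extendable and $u$ is its good vertex of type-\1.

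Next, suppose instead that $u$ is a good vertex of type-\2 for some $v\in V_2$, so $v$ is extendable, $u=v_2$, and $L_{\2}^{v}=N(u)\cup\{v^\star\}$; in particular $\dR\cup\{L_{\2}^{v}\}$ is $1$-intersecting. If some $v'\in N(u)\cap V_2$ had $u$ as its type-\1 vertex and were extendable, then by the type-\1 statement $u$ would be the good vertex of type-\1 for \emph{every} vertex of $N(u)\cap V_2$, in particular for $v$ — impossible since $u=v_2\ne v_1$. So every $v'\in N(u)\cap V_2$ with $u=v'_1$ is non-extendable, and by Claim~\ref{Cla:non-extendable-V2} its type-\1 vertex $u$ is then bad for it; consequently every extendable $w\in N(u)\cap V_2$ satisfies $u=w_2$, so its good type-\2 line is $L_{\2}^{w}=N(u)\cup\{w^\star\}$.

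It remains to bound the number of non-extendable $v'$ with $u=v'_1$ by one, and the number of distinct lines $N(u)\cup\{w^\star\}$ by two; these are the quantitative heart of the claim and, I expect, the main obstacle. For the second bound I would observe that all such lines contain the $q$-set $N(u)$, and that every $R\in\dR$ meets $N(u)$ in at most one point (else a $C_4$) while meeting each such line in exactly one point; hence at most one $R_0\in\dR$ avoids $N(u)$, and $R_0$, if it exists, must contain $w^\star$ for every such $w$. Were there three distinct $w^\star$, I would combine this with the pencil of $v$ from Claim~\ref{Cal:RcapV2} and the size bounds $|S|\le(1+\epsilon)q+O(1)$ and $|A|\ge q^2-O(q)$ (from~\eqref{equ:new fV} and Claim~\ref{Cla:R}) to see that too few lines of $\dR$ can pass through the $w^\star$, a contradiction. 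For the first bound I would assume $u=v'_1=v''_1$ with $v'\ne v''$ both non-extendable, note both are then isolated vertices of the matching $G[N(u)]$, take the witnesses $N'=N(a')$ and $N''=N(a'')$ in $\dR$ from Claim~\ref{Cla:non-extendable-V2} with $|N'\cap N[u]|=|N''\cap N[u]|=2$, use $C_4$-freeness and the $1$-intersecting family $\dR\cup\{L_{\2}^{v}\}$ to pin down their structure (so that $a'$ is matched to some $z'\in N(u)$ in $G[N(u)]$, $N'\cap L_{\2}^{v}=\{z'\}$, and $a'\notin N(v^\star)$, and likewise for $N''$), and again force a $C_4$ or an intersection of size $\ge2$ inside $\dR$. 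The extendability bookkeeping above is routine given Claims~\ref{Cla:V2-structure} and~\ref{Cal:RcapV2} and Lemma~\ref{lem:enlarge R}; the hard part is extracting the \emph{absolute} constants $1$ and $2$ — rather than $\epsilon$-dependent bounds — by playing the extremal edge count against the rigidity of the large $1$-intersecting family carrying $\dR$, which is presumably where the technical work of Subsection~\ref{subsec:5.2} lies.
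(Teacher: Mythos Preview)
Your type-\1 paragraph is essentially the paper's argument and is fine.

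The type-\2 part, however, has real gaps. For the bound ``at most one $v'$ with $u=v'_1$'', you miss the clean move the paper makes: since $V_2\subseteq A$ (cf.\ the proof of Claim~\ref{Cla:V2-structure}), the line $N(v')$ itself lies in $\dR$; and because $u=v'_1$ is the isolated vertex of $G[N(v')]$, we have $N(v')\cap N(u)=\emptyset$. The $1$-intersecting family $\dR\cup\{F\}$ with $F=N(u)\cup\{v^\star\}$ then forces $N(v')\cap F=\{v^\star\}$, i.e.\ $v^\star\in N(v')$. Two such vertices $v',v''$ would therefore both be adjacent to $u$ and to $v^\star$, producing the $4$-cycle $uv'v^\star v''u$. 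Your proposal to go through the witness lines $N(a'),N(a'')$ of Claim~\ref{Cla:non-extendable-V2} never reaches a contradiction, and the claim that $v',v''$ are ``isolated vertices of the matching $G[N(u)]$'' is unjustified (you have no structural control over $G[N(u)]$, since $u\in S_q$ is not known to lie in $V_1$ or $V_2$).

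For ``at most two good type-\2 lines'', your sketch breaks at the assertion that at most one $R_0\in\dR$ avoids $N(u)$: nothing you have written rules out several such lines (they would all meet pairwise at $v^\star$, which is consistent with $1$-intersecting). The paper takes a completely different route: by Claim~\ref{Cla:N(V1^*)} one has $u\notin N(V_1^*)$, so Claims~\ref{Cla:many prop 2} and~\ref{Cla:little type 2} guarantee at least $20$ extendable $w_i\in N(u)\cap V_2$ with $u$ as their type-\2 vertex. Assuming three distinct stars $w_1^\star,w_2^\star,w_3^\star$, one picks $w_4,w_5$ whose (distinct) type-\1 vertices $u_4,u_5$ avoid $\{w_1^\star,w_2^\star\}$, and then locates $w_1^\star$ in the pencil $L_\1^{w_4}\cup L_\2^{w_4}\cup\bigcup N(v_j)$ at $w_4$: since $w_1^\star\notin L_\2^{w_4}$ and the $A$-lines each meet $F_1$ only at $w_4$, one gets $w_1^\star\in N(u_4)$, and symmetrically $w_1^\star,w_2^\star\in N(u_4)\cap N(u_5)$, a $C_4$. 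The absolute constant $2$ thus comes from this pigeonhole on the pencil, not from counting lines of $\dR$ through $N(u)$.
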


\begin{proof}
First let us assume that $u$ is a good vertex of type-\1 for some $v\in V_2$.
Since $v$ is extendable, $\dR\cup \{N[u]\}$ is 1-intersecting.
Consider any $v'\in N(u)\cap V_2\backslash \{v\}$.
Suppose that $u$ is a type-\2 vertex for $v'$.
If we let $x$ be the unique vertex in $N(v')$ adjacent to $u$,
then we see $N(x)\in \dR$ and $N(x)\cap N[u]=\{u,v'\}$, a contradiction to that $\dR\cup \{N[u]\}$ is 1-intersecting.
Thus $u$ is always type-\1 for all vertices in $N(u)\cap V_2$.
Let $N(v')=\{u,a_2,...,a_{q+1}\}$, where $a_2$ is type-\2 for $v'$ and $a_3,...,a_{q+1}\in A$.
Then Claim \ref{Cal:RcapV2} holds for $v'$ analogously, where $N(a_3),...,N(a_{q+1})$ and $L_\1^{v'}=N[u]$ are $q$ lines in the 1-intersecting hypergraph $\dR\cup \{N[u]\}$.
By Lemma \ref{lem:enlarge R}, we infer that $\dR\cup \{L_\1^{v'}, L_\2^{v'}\}$ is 1-intersecting and thus $v'$ is also extendable.
This shows that $u$ is a good vertex of type-\1 for all vertices in $N(u)\cap V_2$.

Now we assume that $u$ is a good vertex of type-\2 vertex for $v_1\in V_2$.
Let $F=N(u)\cup \{v_1^\star\}$. Since $v_1$ is extendable, $\dR\cup \{F\}$ is 1-intersecting.
Suppose that $v_2\in N(u)\cap V_2$ is distinct from $v_1$ and $u$ is a type-\1 vertex for $v_2$.
By the first assertion, we see $v_2$ is non-extendable and $u$ is bad for $v_2$.
So $N(v_2)\in \dR$ and $N(v_2)\cap N(u)=\emptyset$.
Since $\dR\cup \{F\}$ is 1-intersecting, the only possibility is that $v^\star_1\in N(v_2)$.
Thus we have $u,v^\star_1\in N(v_2)$.
Now suppose there exists another $v_3\in N(u)\cap V_2\backslash \{v_1,v_2\}$ which has $u$ as its type-\1 vertex.
Similarly we have $u,v^\star_1\in N(v_3)$, giving $uv_2v^\star_1v_3u$ as a $C_4$ in $G$, a contradiction.
Hence by our discussion, $u$ is a vertex of type-\1 for at most one vertex of $N(u)\cap V_2$.

By Claim \ref{Cla:N(V1^*)}, $u\in S_q\backslash (B\cup N(V_1^*))$.
Then the previous paragraph, together with Claims \ref{Cla:many prop 2} and \ref{Cla:little type 2},
show that $u$ appears as a good type-\2 vertex for at least $(1-\epsilon-3\delta)q-(1+6/\delta)-1-4\delta q\geq 20$ vertices $w_i\in N(u)\cap V_2$.
Suppose that there exist at least three good lines of type-\2 for these $w_i$'s, say $F_1=N(u)\cup \{w_1^\star\}$, $F_2=N(u)\cup \{w_2^\star\}$ and $F_3=N(u)\cup \{w_3^\star\}$.
By renaming notations if necessary, we may assume that there are at least four $w_i\in N(u)\cap V_2$ whose $w_i^\star\notin \{w_1^\star,w_2^\star\}$.
We observe that the type-\1 vertices $u_i$ of these $w_i$ are all distinct;
indeed, otherwise say $w_i$ and $w_j$ have the same type-\1 vertex $u'$, then it yields a $4$-cycle $w_iu'w_juw_i$, a contradiction.
Hence, we can further find two of these $w_i$, say $w_4,w_5\in N(u)\cap V_2$, such that their type-\1 vertices $u_4,u_5$ are distinct and not in $\{w_1^\star,w_2^\star\}$.
Now we show $w_1^\star \in N(u_4)$ by considering the location of $w_1^\star$ in the local structure based on $w_4$.
As $w_4\in V_2$, any $v\in N(w_4)\cap S_{q+1}$ is in $A$, so $w_4$ is the unique vertex in $N(v)\cap F_1$ and thus $w_1^\star\notin N(v)$.
Since $V=\left(\cup_{v\in N(w_4)\cap S_{q+1}} N(v)\right)\cup L_\1^{w_4}\cup L_\2^{w_4}$ and $w_1^\star\notin L_\2^{w_4}=N(u)\cup \{w_4^\star\}$,
we conclude that $w_1^\star \in L_\1^{w_4}=N[u_4]$, which further shows $w_1^\star \in N(u_4)$, as wanted.
Analogously, we can derive that $w_1^\star, w_2^\star\in N(u_4)\cap N(u_5)$.
This provides a $C_4$ in $G$, a contradiction.
\end{proof}

Claim \ref{Cla:good type} also indicates that the type of a good vertex $u\in S_q\backslash B$ is consistent with the type of all good lines containing $N(u)$.
Consequently, just the same as a good vertex, a good line can only be of one particular type.
To further study good lines, we define an auxiliary graph as following.

\begin{dfn}\label{dfn:dG}
We denote $\dF$ by the set of all good lines (of type-\1 or type-\2 for any vertex in $V_2$).
Let $\dG$ be the graph with vertex set $\dF$, where $F,F'\in \dF$ are adjacent if and only if they are the type-\1 and type-\2 lines for some extendable vertex in $V_2$.
\end{dfn}

By the above discussion, we see that $\dG$ is a bipartite graph on two parts $(\dF_\1,\dF_\2)$, where $\dF_\1$ consists of all good lines of type-\1 and $\dF_\2$ consists of all good lines of type-\2.

\begin{claim}\label{Cla:vector-of-4-pairs}
For two independent edges $(L_\1^v, L_\2^v)$ and $(L_\1^w, L_\2^w)$ in $\dG$,
we have $\overline{vw}=(|L_\1^v\cap L_\1^w|, |L_\1^v\cap L_\2^w|, |L_\2^v\cap L_\1^w|, |L_\2^v\cap L_\2^w|)\in \{(1,1,1,1), (2,0,0,2),(0,2,2,0),(0,2,2,q)\}$.
\end{claim}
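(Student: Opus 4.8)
The plan is to analyze how the two 2-path decompositions (one based on $v$, one based on $w$) interact. Recall that for any extendable $v\in V_2$, Claim \ref{Cal:RcapV2} gives the partition $L_\1^v\cup L_\2^v\cup N(v_3)\cup\cdots\cup N(v_{q+1})=V$ with common intersection $\{v\}$, and $|N\cap L_\1^v|+|N\cap L_\2^v|=2$ for every $N\in\dR$. I would first apply this second identity with $N=N(w_j)$ for each $j\in\{3,\dots,q+1\}$ (which lies in $\dR$ since $w_j\in A$): this yields $|L_\1^v\cap N(w_j)|+|L_\2^v\cap N(w_j)|=2$ for all such $j$, and symmetrically $|N(v_i)\cap L_\1^w|+|N(v_i)\cap L_\2^w|=2$. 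Summing over the appropriate index and using that both $\{L_\1^v,L_\2^v,N(v_3),\dots\}$ and $\{L_\1^w,L_\2^w,N(w_3),\dots\}$ partition $V$ away from $v$ respectively $w$, one gets a counting identity relating the four quantities in $\overline{vw}$ to $\sum_{i,j}|N(v_i)\cap N(w_j)|$. The independence of the two edges in $\dG$ (so $L_\1^v,L_\2^v,L_\1^w,L_\2^w$ are four distinct lines, and the type-I/II vertices are distinct) is what lets us control the ``diagonal'' terms.

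Next I would pin down the total. Since $|V|=q^2+q+1$ and the $L$'s together with the $N(v_i)$'s partition $V$, a double count of $|V|=\sum$ over one partition, intersected with the other partition, should give $|L_\1^v\cap L_\1^w|+|L_\1^v\cap L_\2^w|+|L_\2^v\cap L_\1^w|+|L_\2^v\cap L_\2^w|+(\text{cross terms with the }N(v_i),N(w_j))$ equal to something like $q+1$ plus a controlled correction. Because any two lines in a 1-intersecting configuration meet in at most one point unless forced otherwise — and $\dR\cup\{L_\1^v,L_\2^v\}$ and $\dR\cup\{L_\1^w,L_\2^w\}$ are each 1-intersecting by extendability — the pairwise intersections $|L_\1^v\cap L_\1^w|$ etc. are each in $\{0,1,q+1\}$ a priori, with the value $q+1$ only if the two lines coincide (excluded by independence) except possibly for the $L_\2$--$L_\2$ pair, whose structure $L_\2^v=N(v_2)\cup\{v^\star\}$ is not an $\dR$-line and can behave differently. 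This is exactly where the anomalous fourth option $(0,2,2,q)$ creeps in. So each coordinate of $\overline{vw}$ lies in $\{0,1,2\}$ except the last, which may also be $q$; and the sum of the four coordinates is forced to a small constant unless $|L_\2^v\cap L_\2^w|$ is large.

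Then I would do the case analysis on $v\in\{$is $v$ in $L_\1^w\cup L_\2^w$ or not$\}$ and symmetrically. If $v$ lies in exactly one of $L_\1^w,L_\2^w$, then by the partition it lies in none of the $N(w_j)$, which via $C_4$-freeness and the edge structure of $G[N(w)]$ constrains where the type-I and type-II vertices of $v$ can sit relative to $w$'s decomposition; a short argument (avoiding a 4-cycle through $v$, $w$, and two of the $v_i$) should show the intersection pattern collapses to one of the four listed vectors. The balanced case $(1,1,1,1)$ is the generic one (every pair of the four lines meets in exactly one point, as in a projective plane); $(2,0,0,2)$ and $(0,2,2,0)$ arise when the type-I line of one coincides ``in spirit'' with the type-II line of the other — i.e. $v^\star\in L_\1^w$ etc.; and $(0,2,2,q)$ is the degenerate case where $L_\2^v$ and $L_\2^w$ nearly coincide (sharing $q$ points, i.e. $N(v_2)\setminus\{v^\star_v\}$ and $N(w_2)\setminus\{v^\star_w\}$ overlap heavily), which happens precisely when $v_2=w_2$ is the common type-II vertex $u$ of Claim \ref{Cla:good type} — but then the edge independence must be re-examined, so I expect this branch needs the extra input that $u$ has at most two good type-II lines.

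The main obstacle, I expect, is ruling out the ``mixed'' vectors such as $(2,0,2,0)$, $(1,1,0,2)$, $(0,2,1,1)$, etc., and more importantly showing that when $|L_\2^v\cap L_\2^w|$ is large it must be exactly $q$ (not $q-1$ or $q+1$): this requires carefully using that both $L_\2^v$ and $L_\2^w$ have the special form $N(u_\bullet)\cup\{\bullet^\star\}$ with $u_\bullet$ of degree $q$, combined with the $C_4$-free condition to force the overlap to be precisely $N(u)\setminus\{\text{one point}\}$. Extracting the exact value $q$ here, rather than just ``$\Omega(q)$'', is the delicate part and will lean on Claim \ref{Cla:V2-structure} (the matching structure of $G[N(v)]$) together with the partition identity from Claim \ref{Cal:RcapV2}.
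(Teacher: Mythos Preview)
Your general plan---use the sunflower partitions from Claim~\ref{Cal:RcapV2} around each of $v$ and $w$, intersect one against the other, and do a case analysis on where $v$ sits in $w$'s partition---is exactly the skeleton of the paper's proof. But two things in your proposal are off and would leave real gaps.

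First, the assertion that the pairwise intersections $|L_\1^v\cap L_\1^w|$, etc., lie in $\{0,1,q+1\}$ ``a priori'' is false. These $L$'s are not (yet) lines of any projective plane; they are just $(q{+}1)$-subsets of $V$. The extendability of $v$ only gives that $\dR\cup\{L_\1^v,L_\2^v\}$ is $1$-intersecting, which says nothing about how $L_\1^v$ meets $L_\1^w$ or $L_\2^w$. Indeed the claim's very conclusion has entries equal to $2$. You need a different mechanism to bound these intersections.

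Second---and this is the key step you are missing---the paper's first move is to observe that if \emph{any} single entry of $\overline{vw}$ equals $1$, then Lemma~\ref{lem:enlarge R} (applied to the sunflower from Claim~\ref{Cal:RcapV2}) forces $\dR\cup\{L_\1^v,L_\2^v,L_\1^w,L_\2^w\}$ to be $1$-intersecting, hence all four entries are $1$. This immediately disposes of every ``mixed'' vector like $(1,1,0,2)$ or $(0,2,1,1)$ that you flagged as the main obstacle, and lets one assume every entry is $0$ or $\ge 2$. Without this reduction your case analysis would be much longer and it is not clear your counting identity alone can exclude these mixed patterns. After this reduction, the paper shows directly that $v\notin L_\1^w$ (a short argument: if $v\in N(w_1)$ then $w_1\in\{v_1,v_2\}$, and both options contradict either independence or $1$-intersecting), which together with $|L_\1^w\cap N(v_i)|=1$ for $i\ge 3$ gives $|L_\1^v\cap L_\1^w|+|L_\2^v\cap L_\1^w|=2$; by symmetry also $|L_\1^v\cap L_\1^w|+|L_\1^v\cap L_\2^w|=2$. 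Now the first entry is $0$ or $2$, pinning down the second and third, and the last coordinate follows from the sunflower count with the single remaining case split on whether $v\in L_\2^w$. The degenerate branch $v\in N(w_2)$ forces $w_2\in\{v_1,v_2\}$; you correctly guess $w_2=v_2$ is what yields $(0,2,2,q)$ (since then $L_\2^v$ and $L_\2^w$ share $N(v_2)$, exactly $q$ points), and the alternative $w_2=v_1$ is ruled out because it would give $|L_\2^w\cap L_\1^v|\ge q$, incompatible with the already-established value $2$.
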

\begin{proof}
Clearly $v,w\in V_2$ are extendable.
We define $v_1,...,v_{q+1}, v^\star$ and $w_1,...,w_{q+1}, w^\star$ according to Definition \ref{dfn:more-on-V2} for $v$ and $w$, respectively.

First we observe that if any entry in the vector $\overline{vw}$ is one, then by applications of Lemma \ref{lem:enlarge R}, we can infer that $\dR\cup \{L_\1^v, L_\2^v, L_\1^w, L_\2^w\}$ is $1$-intersecting and thus $\overline{vw}=(1,1,1,1)$.
Hence, we may assume that none of the entries in $\overline{vw}$ is one.

Suppose $v\in L_\1^w$. Then we have $v\in N(w_1)$ and thus $w_1\in \{v_1,v_2\}$.
If $w_1=v_1$, then $L_\1^w=L_\1^v$, a contradiction as the two edges in $\dG$ are independent.
So $w_1=v_2$. Then $\{v_2,v_3\}\subseteq N(v)\cap L_\1^w$, however $\{N(v),L_\1^w\}\subseteq \dR\cup \{L_\1^w\}$ is 1-intersecting, a contradiction.
Hence we must have $v\notin L_\1^w$.
Also we have $|L_\1^w\cap N(v_i)|=1$ for $3\leq i\leq q+1$.
So $|L_\1^w\cap (L_\1^v\cup L_\2^v)|=2$.
By symmetry between $v$ and $w$, we also can get $|L_\1^v\cap (L_\1^w\cup L_\2^w)|=2$.

If $|L_\1^v\cap L_\1^w|=2$, then $|L_\1^v\cap L_\2^w|=|L_\2^v\cap L_\1^w|=0$.
Note that $L_\1^v, L_\2^v, N(v_3),...,N(v_{q+1})$ form a sun-flower with center $v$ and vertex set $V$.
Since $L_\2^w\cap L_\1^v=\emptyset$ and $|L_\2^w\cap N(v_i)|=1$ for $3\leq i\leq q+1$,
we derive that $|L_\2^w\cap L_\2^v|=2$, i.e., $\overline{vw}=(2,0,0,2)$.

It remains to consider $|L_\1^v\cap L_\1^w|=0$. Then $|L_\1^v\cap L_\2^w|=|L_\2^v\cap L_\1^w|=2$.
If $v\notin L_\2^w$, using the same argument as above, we can easily get $\overline{vw}=(0,2,2,0)$.
Hence we assume $v\in L_\2^w$. If $v=w^\star$, then clearly $N(v)\cap N(w)=\emptyset$, contradicting that $\dR$ is 1-intersecting.
So $v\in N(w_2)$, implying that $w_2\in \{v_1,v_2\}$.
If $w_2=v_1$, as $N(w_2)\subseteq L_\2^w$ and $N(v_1)\subseteq L_\1^v$, we have a contradiction that $2=|L_\2^w\cap L_\1^v|\geq q$.
So $w_2=v_2$, which shows either $\overline{vw}=(0,2,2,q)$ or $L_\2^w=L_\2^v$ (a contradiction).
\end{proof}

We say two good lines $F, F'\in \dF$ are {\it friendly} if $|F\cap F'|=1$ and two components $\mathcal{D}$ and $\mathcal{D}'$ of $\dG$ are {\it friendly}
if there is a friendly pair $\{F,F'\}$ with $F\in V(\mathcal{D})$ and $F'\in V(\mathcal{D}')$.

\begin{claim}\label{Cla:how-be-friendly}
If two components $\mathcal{D}$ and $\mathcal{D}'$ of $\dG$ are {\it friendly} (possibly $\mathcal{D}=\mathcal{D}'$),
then all pairs in $\mathcal{D}\cup\mathcal{D}'$ are friendly.
In particular, any two vertices in the same component of $\dG$ are friendly.
\end{claim}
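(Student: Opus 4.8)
The plan is to show that ``friendliness'' propagates along the edges of $\dG$, and then combine this with the list of four possible intersection patterns provided by Claim \ref{Cla:vector-of-4-pairs}. The key observation is that the vector $\overline{vw}$ of an independent edge pair records how the two type lines of $v$ meet the two type lines of $w$; among the four admissible vectors, exactly one --- namely $(1,1,1,1)$ --- corresponds to all four cross-pairs being friendly, while the others have only $0$'s and $2$'s (and a $q$). So the real content is a dichotomy: given two components $\mathcal{D},\mathcal{D}'$, \emph{either} every cross-pair $\{F,F'\}$ with $F\in V(\mathcal{D})$, $F'\in V(\mathcal{D}')$ has $|F\cap F'|=1$, \emph{or} none of them does. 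I would prove this dichotomy by a connectivity argument: walk along edges of $\mathcal{D}$ and $\mathcal{D}'$ and show that the intersection type of a cross-pair is forced to stay of type-$(1,1,1,1)$ once it is so for one pair.

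First I would set up the local picture. Fix an extendable $v\in V_2$ with good lines $L_\1^v,L_\2^v$ and, as in Definition \ref{dfn:more-on-V2}, the vertices $v_1,\dots,v_{q+1},v^\star$; recall from Claim \ref{Cal:RcapV2} that $L_\1^v,L_\2^v,N(v_3),\dots,N(v_{q+1})$ form a sunflower with core $\{v\}$ covering $V$. The crucial point is that moving along an edge of $\dG$ at $v$ --- i.e. replacing $(L_\1^v,L_\2^v)$ by $(L_\1^{v'},L_\2^{v'})$ for a neighbour $v'$ --- only changes the sunflower in a controlled way: by Claim \ref{Cla:good type} the type-$\1$ line is an invariant of the type-$\1$ vertex, and the sunflowers of $v$ and $v'$ share all but two of their petals (they share the $N(v_i)\in\dR$, $i\ge 3$, except those meeting the two changed lines). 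So if $F'$ is friendly with one of $L_\1^v,L_\2^v$, I can track $|F'\cap L_\1^{v'}|$ and $|F'\cap L_\2^{v'}|$: since $F'$ meets every petal of $v'$'s sunflower in exactly one point (it lies in, or is friendly with, $\dR$), and meets $V$ in $q+1$ points, a counting identity $\sum(\text{petal intersections}) + |F'\cap\{v'\}| = q+1$ forces the two unknown intersections to be $(1,1)$ as soon as no $2$ can appear. The step that rules out a $2$ is where I would invoke Claim \ref{Cla:vector-of-4-pairs}: if $\{F',L_\1^{v'}\}$ or $\{F',L_\2^{v'}\}$ had intersection $2$, then taking a suitable extendable vertex whose type line is $F'$ (or arguing directly with the sunflower of that vertex as in the proof of Claim \ref{Cla:vector-of-4-pairs}) would put a $C_4$ in $G$, since $F'$ is already pinned to meet $L_\1^v$ or $L_\2^v$ in a single point and the two sunflowers overlap too much.

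Concretely the steps are: (1) reduce to showing that if $F_0\in V(\mathcal{D})$ and $F_0'\in V(\mathcal{D}')$ are friendly, then for every edge $(L_\1^w,L_\2^w)$ of $\mathcal{D}$ incident to $F_0$, both cross-pairs $\{L_\1^w,F_0'\}$ and $\{L_\2^w,F_0'\}$ are friendly; (2) prove this by the sunflower/counting argument above, using Claim \ref{Cal:RcapV2} for the sunflower covering $V$ and Claim \ref{Cla:vector-of-4-pairs} (or its proof technique) to exclude intersection $2$; (3) iterate along a walk in $\mathcal{D}$ to get that $F_0$ is friendly with \emph{every} line of $\mathcal{D}$, then symmetrically walk in $\mathcal{D}'$ to upgrade to: every line of $\mathcal{D}$ is friendly with every line of $\mathcal{D}'$; (4) for the ``in particular'' clause, take $\mathcal{D}=\mathcal{D}'$ and note that adjacent lines in $\dG$ (the type-$\1$ and type-$\2$ lines of a single extendable vertex) intersect in exactly the point $v$, hence are friendly, so the propagation makes all of $\mathcal{D}$ pairwise friendly. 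The main obstacle I anticipate is step (2): carefully excluding the intersection-$2$ (and the anomalous $q$) case requires re-deriving a sunflower decomposition for the vertex whose type line is $F'$ and matching it against $w$'s decomposition without a clean symmetry, so one has to chase which petals of the two sunflowers coincide and locate the two ``special'' points ($v^\star$-type and $w^\star$-type vertices and the type-$\1$ vertices) precisely, exactly the kind of bookkeeping that drove the proofs of Claims \ref{Cla:non-extendable-V2}--\ref{Cla:vector-of-4-pairs}. Everything else is routine counting with the identity $|F\cap V|=q+1$ and the $1$-intersecting property of $\dR$.
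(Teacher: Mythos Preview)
Your approach is correct and is essentially the paper's: propagate friendliness along edges of $\dG$ by induction on distance, using the fact that among the four admissible vectors in Claim \ref{Cla:vector-of-4-pairs} only $(1,1,1,1)$ contains a $1$. However, you are making step~(2) much harder than it is. You worry about having to ``re-derive a sunflower decomposition'' for the vertex whose type line is $F'$ and to chase the special points $v^\star,w^\star$; none of this is needed. The paper's move is simply this: having $|L^*\cap F'|=1$ by induction (where $(L,L^*)$ is the edge of $\mathcal{D}$ you are walking along), pick \emph{any} edge $(L',F')$ of $\dG$ incident to $F'$ in $\mathcal{D}'$. If $(L,L^*)$ and $(L',F')$ share a vertex, then $\dR\cup\{L,L^*,L',F'\}$ is $1$-intersecting by two applications of Lemma~\ref{lem:enlarge R} (via Claim~\ref{Cal:RcapV2}), and you are done. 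If they are independent, apply Claim~\ref{Cla:vector-of-4-pairs} directly: one entry of the vector equals $1$, hence the vector is $(1,1,1,1)$, so in particular $|L\cap F'|=1$. That is the entire inductive step --- no additional sunflower bookkeeping beyond what is already encapsulated in Claim~\ref{Cla:vector-of-4-pairs}.
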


\begin{proof}
First we point out that if two edges say $(L_1,L_2)$ and $(L_1,L_3)$ in $\dG$ share a common vertex,
then using Lemma \ref{lem:enlarge R}, one can derive that $\dR\cup \{L_1,L_2,L_3\}$ is 1-intersecting.

Assume that there exist $F\in V(\mathcal{D})$ and $F'\in V(\mathcal{D}')$ with $|F\cap F'|=1$.
To complete the proof, it suffices to show that for any vertex $L\in V(\mathcal{D})$, we have $|L\cap F'|=1$ (unless $\mathcal{D}=\mathcal{D}'$ and $L=F'$).
We will prove this by induction on the length $d_L$ of the shortest path between $L$ and $F$ in $\mathcal{D}$.
If $d_L=0$, then $L=F$ and clearly it is true.
Now suppose that for any $L^*\in V(\mathcal{D})$ with $d_{L^*}\leq k$, the above statement holds.
Consider any $L\in V(\mathcal{D})$ with $d_{L}=k+1$.
Then there exists an edge $(L,L^*)$ in $\mathcal{D}$ with $d_{L^*}=k$.
By induction, we have $|L^*\cap F'|=1$, unless $\mathcal{D}=\mathcal{D}'$ and $L^*=F'$.
In the latter case, obviously we have $|L\cap F'|=1$.
Therefore $|L^*\cap F'|=1$.
Fix an edge incident to $F'$, say $(L',F')$ in $\mathcal{D}'$.
If $(L,L^*)$ and $(L',F')$ share a common vertex, then $\mathcal{D}=\mathcal{D}'$ and by the first paragraph, it is easy to see that either $L=F'$ or $|L\cap F'|=1$.
So we may assume $(L,L^*)$ and $(L',F')$ are two independent edges.
By Claim \ref{Cla:vector-of-4-pairs}, as $|L^*\cap F'|=1$, it infers that $|L\cap F'|=1$.
This finishes the proof.
\end{proof}

\begin{dfn}\label{dfn:rich-component}
For each $F\in \dF$, let $u_F$ be the unique vertex in $S_q\backslash B$ satisfying $N(u_F)\subseteq F$.
We say a component of $\dG$ is {\bf rich} if it contains some vertex $F$ with $u_F\in N(V_1^*)$.
\end{dfn}

We remark that for each $F\in \dF$, there exists some extendable $v_F\in V_2$ such that $u_F$ is a good vertex and $F$ is a good line for $v_F$ of the same type.

\begin{claim}\label{Cla:rich-component}
A rich component of $\dG$ is friendly with any component of $\dG$.
\end{claim}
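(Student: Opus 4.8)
\textbf{Proof proposal for Claim \ref{Cla:rich-component}.}
The plan is to show that a rich component, say $\mathcal{D}$, is friendly with an arbitrary component $\mathcal{D}'$ of $\dG$ by exploiting the defining vertex $v_F\in V_2$ and its associated line of type-\2, together with Claim \ref{Cla:N(V1^*)} and Claim \ref{Cla:how-be-friendly}. By Claim \ref{Cla:how-be-friendly}, it suffices to exhibit a single friendly pair $\{F,F'\}$ with $F\in V(\mathcal{D})$ and $F'\in V(\mathcal{D}')$, since then all pairs across $\mathcal{D}$ and $\mathcal{D}'$ become friendly; in particular this shows the edges of $\mathcal{D}'$ can be added to the $1$-intersecting family, which is precisely what we ultimately want. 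So the whole burden reduces to finding one such pair.

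First I would pick a vertex $F\in V(\mathcal{D})$ with $u_F\in N(V_1^*)$, which exists by richness, and let $w\in V_1^*$ be adjacent to $u_F$. By Claim \ref{Cla:N(V1^*)}, $N[u_F]\in \dL$, so $F=N[u_F]$ is of type-\1 and, more importantly, $\dR\cup\{F\}$ is $1$-intersecting. Next, take any component $\mathcal{D}'$ and any edge $(L_\1^{v'},L_\2^{v'})$ of $\mathcal{D}'$, coming from an extendable $v'\in V_2$; by Claim \ref{Cal:RcapV2} the lines $N(v'_3),\dots,N(v'_{q+1})\in\dR$ together with $L_\1^{v'},L_\2^{v'}$ form a sunflower with core $\{v'\}$ and petals covering $V$. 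Since $\dR\cup\{F\}$ is $1$-intersecting, $F$ meets each $N(v'_i)$ ($3\le i\le q+1$) in exactly one point, so $|F\cap(L_\1^{v'}\cup L_\2^{v'})|=2$ by the sunflower/covering identity. The goal is then to rule out the ``$(2,0)$ or $(0,2)$'' split and force $|F\cap L_\1^{v'}|=|F\cap L_\2^{v'}|=1$ — equivalently, to show $\dR\cup\{F,L_\1^{v'},L_\2^{v'}\}$ is $1$-intersecting, so that Lemma \ref{lem:enlarge R} can be invoked — and in that case $F$ is friendly with both $L_\1^{v'}$ and $L_\2^{v'}$, giving the desired cross-friendly pair.

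To rule out the bad split I expect to argue much as in Claims \ref{Cla:non-extendable-V2} and \ref{Cla:good type}: suppose, say, $|F\cap L_\1^{v'}|=2$. Since $F=N[u_F]$ and $|F\cap N(u')|\le 1$ for the type-\1 vertex $u'$ of $v'$, the two common points cannot both lie in $N(u')$, so $u'\in F=N[u_F]$, i.e.\ $u'\in N(u_F)\cup\{u_F\}$; a parallel analysis handles $|F\cap L_\2^{v'}|=2$, which forces $v'^\star\in F$ or a coincidence of lines. In either case I would then use the abundance of choices: $w\in V_1^*$ has $q$ neighbours of degree $q+1$ (Claim \ref{Cla:V1-structure}), and $u_F$ lies in many good type-\1 lines (via Claim \ref{Cla:many prop 2} applied to $u_F\in S_q\setminus(B\cup N(V_1^*))$ — wait, here $u_F\in N(V_1^*)$, so instead I use that $u_F$ is good type-\1 and apply Claim \ref{Cla:good type} to get that $u_F$ is good type-\1 for \emph{all} of $N(u_F)\cap V_2$, a set of size $\ge(1-\epsilon-3\delta)q-(1+6/\delta)$). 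Picking $v'$ (or a substitute $v''\in N(u_F)\cap V_2$ with the same line $F$) generically so as to avoid the finitely-many (at most a bounded-times-$q$) ``obstructing'' vertices $u_F$, $w$, $v'$, $v'^\star$, and their neighbourhoods, the equation $u'\in N[u_F]$ together with $v'^\star\notin N(u')$-type relations produces a short cycle through $u_F$, $w$, and the petals, i.e.\ a $C_4$ in $G$, a contradiction. The main obstacle is precisely this last bookkeeping step: one must set up the counting so that the forbidden coincidences are each confined to $O(1)$ or $O(\sqrt q)$ vertices while the supply of candidate extendable vertices at $u_F$ is $\Omega(q)$, and then extract an explicit $4$-cycle; this is the same flavour of delicate local analysis as in Claim \ref{Cla:little type 2}, and getting the parameters to line up (using $1/q\ll\delta\ll 1-\epsilon$) is where the real work lies. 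Once the split $(1,1)$ is forced, Lemma \ref{lem:enlarge R} and Claim \ref{Cla:how-be-friendly} finish the proof immediately.
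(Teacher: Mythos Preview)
Your overall setup is right, but you have missed the one observation that makes this claim a two-line proof, and the long ``bookkeeping'' detour you sketch is both unnecessary and not actually carried out.

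The point you overlook is that Claim \ref{Cla:N(V1^*)} gives you more than just ``$\dR\cup\{F\}$ is $1$-intersecting'': it gives $F=N[u_F]\in\dL$. By Claim \ref{Cla:L-property}, this means there exist $R_1,\dots,R_q\in\dR$ with $F\cup R_1\cup\dots\cup R_q=V$ and $|F\cap R_1\cap\dots\cap R_q|=1$; in other words, $F$ is the centre of a sunflower whose petals already lie in $\dR$. Now take any component $\dD'$ and any line $L\in V(\dD')$. Since $L$ is good, $\dR\cup\{L\}$ is $1$-intersecting. Apply Lemma \ref{lem:enlarge R} with $\dH=\dR\cup\{L\}$ and $\dF=\{F\}$: the required $q$ edges $R_1,\dots,R_q$ are in $\dR\subseteq\dH$, so $\dR\cup\{L,F\}$ is $1$-intersecting, i.e.\ $|F\cap L|=1$. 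That is exactly the friendly pair you need.

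Your approach inverts the roles: you put $F$ into the base hypergraph and try to add $L_\1^{v'},L_\2^{v'}$ on top. But the sunflower for $L_\1^{v'}$ (from Claim \ref{Cal:RcapV2}) uses $L_\2^{v'}$ as one of its $q$ petals, and $L_\2^{v'}\notin\dR\cup\{F\}$, so Lemma \ref{lem:enlarge R} does not apply directly; this is why you are forced into the split analysis. The ``generic $v''\in N(u_F)\cap V_2$'' idea you suggest does not help either, because by Claim \ref{Cla:good type} the good line for every such $v''$ is the same $F=N[u_F]$, so there is nothing to vary on the $\dD$ side. The correct move is simply to swap which line plays the role of $\dH$-extension and which plays the role of $\dF$-addition in Lemma \ref{lem:enlarge R}.
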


\begin{proof}
Let $\mathcal{D}$ be a rich component of $\dG$ and let $F\in \dF$ be a vertex in $\mathcal{D}$ with $u_F\in N(V_1^*)$.
Then $u_F$ is a good vertex and $F$ is a good line for some $v\in V_2$ of the same type.
By Claim \ref{Cla:N(V1^*)}, $u_F$ is type-\1 for $v$ and $F=N[u_F]\in \dL$.
Now consider any component $\mathcal{D}'$ of $\dG$ and take any vertex $L\in V(\mathcal{D}')\cap\dF$.
Then $\dR\cup \{L\}$ is 1-intersecting.
Recall that Claim \ref{Cla:L-property} holds for $F\in \dL$.
By Lemma \ref{lem:enlarge R}, we can derive that $\dR\cup \{L,F\}$ is 1-intersecting.
This shows that $\mathcal{D}$ and $\mathcal{D}'$ are friendly.
\end{proof}

\begin{dfn}\label{dfn:large-component}
If a component of $\dG$, which is not rich, contains at least $40$ vertices in $\dF_\1$ and at least $\delta q$ vertices in $\dF_\2$, then we say it is {\bf heavy}.
\end{dfn}

\begin{claim}\label{Cla:size-large-component}
Let $\dF_r$ and $\dF_h$ denote two unions of vertices in rich components and heavy components of $\dG$, respectively.
Then $|\dF_r\cup \dF_h\cup \dL|\geq 2\alpha q +\frac12(1-\epsilon)q.$
\end{claim}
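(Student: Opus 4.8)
The plan is to translate the claim into a counting statement about the degree-$q$ vertices of $G$. To every line of $\dF_r\cup\dF_h\cup\dL$ I attach a vertex of $S_q\backslash B$: a line $N[u]\in\dL$ is sent to $u$, and a line $F\in\dF_r\cup\dF_h$ is sent to the vertex $u_F$ of Definition \ref{dfn:rich-component}; these rules agree on any line lying in both families, since $N(u)\subseteq N[u]$ forces $u_{N[u]}=u$. This exhibits a surjection of $\dF_r\cup\dF_h\cup\dL$ onto a set $U\subseteq S_q\backslash B$, so $|\dF_r\cup\dF_h\cup\dL|\ge|U|$. By Claim \ref{Cla:|A|-|B|} and \eqref{equ sq} one has $|S_q\backslash B|\ge (2\alpha+1-\epsilon)q+1-2/\delta$, and since every $N[u]$ with $u\in S_q\cap N(V_1^*)$ lies in $\dL$ we get $S_q\cap N(V_1^*)\subseteq U$. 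Hence it suffices to prove that the set $W:=(S_q\backslash B)\backslash U$ of \emph{uncovered} vertices satisfies $|W|\le \frac12(1-\epsilon)q-2/\delta$; note that every $u\in W$ lies outside $N(V_1^*)$.

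The second step is to classify $u\in W$. Since $d(u)=q$, every $v\in V_2\cap N(u)$ has $u$ among its two degree-$q$ neighbours, so $u$ is a type-\1 or type-\2 vertex for $v$. By Claim \ref{Cla:many prop 2}, $|V_2\cap N(u)|\ge (1-\epsilon-3\delta)q-(1+6/\delta)$, and by Claim \ref{Cla:little type 2} all but at most $4\delta q$ of these $v$ have $u$ of type-\1. Combining this with Claim \ref{Cla:good type} gives a trichotomy: either (i) $u$ is a good type-\1 vertex for every $v\in V_2\cap N(u)$; or (ii) $u$ is a good type-\2 vertex, hence for $(1-\epsilon-3\delta)q-O(1/\delta)$ of them; or (iii) $u$ is never a good vertex and is a bad type-\1 vertex for at least $(1-\epsilon-7\delta)q-O(1/\delta)$ non-extendable $v\in V_2\cap N(u)$. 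I would dispose of (ii) and (i) first. In (ii), some good type-\2 line $F$ through $u$ is the type-\2 line of $\ge\frac12[(1-\epsilon-3\delta)q-O(1/\delta)]$ such $v$; each gives a distinct $\dF_\1$-neighbour $N[z]$ of $F$ in $\dG$ ($C_4$-freeness gives distinctness), and each such $N[z]\in\dF_\1$ has in turn $\ge(1-\epsilon-3\delta)q-O(1/\delta)$ distinct $\dF_\2$-neighbours (by Claim \ref{Cla:good type}, using that $z\notin N(V_1^*)$, else the component of $F$ is already rich by Claim \ref{Cla:N(V1^*)}). So the component of $F$ has $\ge 40$ vertices in $\dF_\1$ and $\gg\delta q$ in $\dF_\2$, hence is rich or heavy, giving $u=u_F\in U$, a contradiction; thus (ii) is vacuous. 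In (i), $N[u]\in\dF_\1$ has $\deg_\dG(N[u])=|V_2\cap N(u)|\ge(1-\epsilon-3\delta)q-O(1/\delta)$ distinct $\dF_\2$-neighbours, so its component carries $\gg\delta q$ vertices of $\dF_\2$; since $u\in W$ this component is neither rich nor heavy, hence by Definition \ref{dfn:large-component} has fewer than $40$ vertices in $\dF_\1$. As the vertex sets of distinct components partition $\dF$ while $|\dF_\2|=O(q)$, only $O\!\left(\frac{1}{1-\epsilon}\right)$ such narrow components exist, so via the injection $z\mapsto N[z]$ case (i) contributes only $O(1)$ vertices to $W$.

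The main obstacle is case (iii), which is where the factor $\frac12$ originates. For $u\in W$ in this case and each witnessing non-extendable $v\in V_2\cap N(u)$, Claim \ref{Cla:non-extendable-V2} (whose proof rules out the type-\2 witness precisely because $v_1=u\notin B$) supplies $c_v\in A\cap N(u)$ with $N(c_v)\cap N[u]=\{u,x_v\}$; then $x_v\in N(u)$ and $x_v c_v\in E(G)$, so $\{x_v,c_v\}$ is an edge of $G[N(u)]$, which $C_4$-freeness forces to be a matching (of size $\le q/2$) plus isolated vertices. One checks that $x_v$ determines $c_v$, since a repeated $x_v$ would create a $C_4$ through $u$, so $u$ forces roughly $\frac12(1-\epsilon)q$ matching edges of $G[N(u)]$ whose $A$-endpoints $c_v$ obstruct the addition of $N[u]$ to $\dR$. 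The plan is then to count, globally over all type-(iii) vertices $u\in W$, the pairs $(u,N(c_v))$ against the $1$-intersecting family $\dR$: each line $N(c)\in\dR$ can over-intersect only boundedly many of the closed neighbourhoods $N[u]$, while the deficiency budget $f(V)\le(1+\epsilon)q+1$ of \eqref{equ:new fV} limits how many degree-$q$ vertices can absorb such obstructions, yielding $|W\cap\text{(iii)}|\le\frac12(1-\epsilon)q-O(1/\delta)$. Putting the three cases together gives $|W|\le\frac12(1-\epsilon)q-2/\delta$, whence $|\dF_r\cup\dF_h\cup\dL|\ge|U|=|S_q\backslash B|-|W|\ge 2\alpha q+\frac12(1-\epsilon)q$. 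I expect the global count in case (iii) — reconciling the near-perfect matching structure of these neighbourhoods with the $1$-intersecting hypergraph $\dR$ — to be by far the most delicate ingredient; the rest is bookkeeping with the already-established claims.
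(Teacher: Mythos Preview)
Your framework---mapping each line to its vertex $u_F\in S_q\backslash B$ and classifying the uncovered vertices into three cases---is essentially the paper's, and your handling of cases (i) and (ii) is correct and matches the paper's argument. The gap is in case~(iii), which you flag as the most delicate but which is in fact the easiest.

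Your argument for (iii) breaks at the step ``$u$ forces roughly $\frac12(1-\epsilon)q$ matching edges of $G[N(u)]$.'' The witness line $N(c_v)$ supplied by Claim~\ref{Cla:non-extendable-V2} is a property of the type-\1 line $L_\1^v=N[u]$, not of $v$: the claim only asserts that \emph{some} $N\in\dR$ satisfies $|N\cap N[u]|=2$. Since all your $v$'s share the same $L_\1^v=N[u]$, they can all produce the same pair $(c_v,x_v)$, and you obtain only one matching edge, not $\Theta(q)$ of them. The ``global count'' you sketch therefore has no foundation.

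The paper instead dispatches case~(iii) by a one-line double count. Every non-extendable $v\in V_2$ has exactly one type-\1 vertex and one type-\2 vertex, both in $S_q\backslash B$. By Claim~\ref{Cla:little type 2}, each vertex of $S_q\backslash B$ is the type-\2 vertex of at most $4\delta q$ non-extendable $v$'s, so the total number of non-extendable $v\in V_2$ is at most $4\delta q\cdot|S_q\backslash B|$. On the other hand, each $u$ in your case~(iii) is the type-\1 vertex of at least $(1-\epsilon-7\delta)q-O(1/\delta)$ such $v$'s (precisely your own count). Dividing,
\[
|\text{case (iii)}|\;\le\;\frac{4\delta q\cdot|S_q\backslash B|}{(1-\epsilon-7\delta)q-O(1/\delta)}\;=\;O\!\Big(\tfrac{\delta}{1-\epsilon}\Big)\cdot|S_q\backslash B|\;=\;o(q),
\]
since $\delta\ll 1-\epsilon$. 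Combined with your (correct) $O(1)$ bound for case~(i) and vacuity of case~(ii), this gives $|W|=o(q)$---much stronger than the $\frac12(1-\epsilon)q$ you aimed for---and the claim follows from $|S_q\backslash B|\ge(2\alpha+1-\epsilon)q-O(1/\delta)$.
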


\begin{proof}
We note that $\dL=\{N[u]: u\in S_q\cap N(V_1^*)\}$ and $|\dL|=|S_q\cap N(V_1^*)|$.

For $u\in S_q\backslash (B\cup N(V_1^*))$, if all vertices in $N(u)\cap V_2$ are non-extendable, then we say $u$ is {\it poor}.
Let $P$ be the set of all poor vertices in $S_q\backslash (B\cup N(V_1^*))$.
Consider the number $M$ of pairs $(v,u)$, where $v\in V_2$ is non-extendable and $u\in S_q\backslash (B\cup N(V_1^*))$ is the type-\1 vertex of $v$.
Clearly $M$ is at most the number of pairs $(v,u')$, where $v\in V_2$ is non-extendable and $u'\in S_q\backslash B$ is the type-\2 vertex of $v$.
By Claims \ref{Cla:many prop 2} and \ref{Cla:little type 2},
$|P|\cdot\left((1-\epsilon-3\delta)q-(1+6/\delta)-4\delta q\right)\leq |M|\leq |S_q\backslash B|\cdot 4\delta q.$
This implies that
\begin{equation} \label{equ:poor}
|P|\leq \frac{4\delta q\cdot|S_q\backslash B|}{(1-\epsilon-3\delta)q-(1+6/\delta)-4\delta q}= \frac{4\delta q\cdot|S_q\backslash B|}{(1-\epsilon-7\delta) q-(1+6/\delta)}.
\end{equation}
We point out that any $u\in S_q\backslash (B\cup N(V_1^*)\cup P)$ has an extendable neighbor in $V_2$,
which implies at least one good line (not in $\dL$) containing $N(u)$.
By Claim \ref{Cla:good type},
if $u$ is type-\1, then the unique good line containing $N(u)$ is $N[u]$.
If $u$ is type-\2, then there are at most two good lines containing $N(u)$, say $F^u_1$ and $F^u_2$.
We call these good lines as the {\it associated} lines of $u\in S_q\backslash (B\cup N(V_1^*)\cup P)$.

Let $F$ be any line in $\dF_\1$ with $u_F\notin N(V_1^*)$.
Clearly $u_F\in S_q\backslash (B\cup N(V_1^*))$ is type-\1 and $F=N[u_F]$.
By Claims \ref{Cla:many prop 2} and \ref{Cla:good type},
$u_F$ is the good vertex of type-\1 for at least $(1-\epsilon-3\delta)q-(1+6/\delta)\geq \delta q$ vertices in $N(u_F)\cap V_2$.
This shows that there are at least $\delta q$ good lines of type-\2 adjacent to $F$ in $\dG$, i.e., $F$ has degree at least $\delta q$ in $\dG$.

Consider a type-\2 vertex $u\in S_q\backslash (B\cup N(V_1^*)\cup P)$.
As mentioned above, there are at least one and at most two associated lines say $F^u_1$ and $F^u_2$ of $u$.
By Claims \ref{Cla:many prop 2}, \ref{Cla:little type 2} and \ref{Cla:good type}, we see that $u$ is the good vertex of type-\2 for at least
$(1-\epsilon-3\delta)q-(1+6/\delta)-4\delta q-1\geq 80$ vertices in $N(u)\cap V_2$.
Thus at least one of $F^u_1$ and $F^u_2$ has at least $40$ neighbors in $\dF_\1$.
By the previous paragraph, we see that at least one associated line of $u$ is contained in a rich or heavy component of $\dG$.

Now we show that the number of type-\1 vertices $u\in S_q\backslash (B\cup N(V_1^*)\cup P)$,
which has no associated lines in any rich or heavy component of $\dG$, is at most $\frac{40 |S_q\backslash B|}{(1-\epsilon-3\delta)q-(1+6/\delta)}$.
If a component $\mathcal{D}$ of $\dG$ is neither rich nor heavy,
then each $F\in V(\mathcal{D})$ has $u_F\notin N(V_1^*)$ and it has no more than $40$ vertices in $\dF_\1$,
implying that each $F\in V(\mathcal{D})\cap \dF_\2$ has degree at most $40$.
Let $e^*, \ell_1$ and $\ell_2$ denote the numbers of edges, vertices of $\dF_\1$ and vertices of $\dF_\2$ contained in all components of $\dG$ which are neither rich nor heavy, respectively.
Then we have $\ell_1\cdot ((1-\epsilon-3\delta)q-(1+6/\delta))\leq e^*\leq 40\cdot \ell_2\leq 40 |S_q\backslash B|$.
This implies what we want.

Therefore we have $|(\dF_r\cup \dF_h)\backslash \dL|\geq |S_q\backslash (B\cup N(V_1^*)\cup P)|-\frac{40 |S_q\backslash B|}{(1-\epsilon-3\delta)q-(1+6/\delta)}.$
This, together with \eqref{equ:poor} and $|\dL|=|S_q\cap N(V_1^*)|$, imply that
\begin{equation*}
\begin{split}
|\dF_r\cup \dF_h\cup \dL|&\geq |S_q\backslash (B\cup N(V_1^*)\cup P)|-\frac{40 |S_q\backslash B|}{(1-\epsilon-3\delta)q-(1+6/\delta)}+|S_q\cap N(V_1^*)|\\
&\geq |S_q\backslash B|- \frac{4\delta q\cdot|S_q\backslash B|}{(1-\epsilon-7\delta) q-(1+6/\delta)}-\frac{40 |S_q\backslash B|}{(1-\epsilon-3\delta)q-(1+6/\delta)}.
\end{split}
\end{equation*}
By \eqref{equ:choices-of-q&delta} and \eqref{equ sq}, it shows that $|\dF_r\cup \dF_h\cup \dL|\geq 2\alpha q +\frac12(1-\epsilon)q$, completing the proof.
\end{proof}

\begin{claim}\label{Cla:half-of-heavy}
At least half of the lines in $\dF_h$ are pair-wise friendly.
\end{claim}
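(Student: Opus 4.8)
\emph{Plan.} The first move is to reduce the claim to a statement purely about heavy components. By Claim \ref{Cla:how-be-friendly}, any two lines lying in the same component of $\dG$ are friendly, and whenever two heavy components $\mathcal{D},\mathcal{D}'$ are friendly, \emph{every} line of $\mathcal{D}$ is friendly with \emph{every} line of $\mathcal{D}'$. Hence it suffices to produce a family $\mathcal{S}$ of heavy components that are pairwise friendly and together contain at least $\tfrac12|\dF_h|$ lines. To organise this, form the auxiliary ``conflict'' graph $\mathcal{C}$ on the set of heavy components of $\dG$, with $\mathcal{D}\mathcal{D}'\in E(\mathcal{C})$ exactly when $\mathcal{D}$ and $\mathcal{D}'$ are \emph{not} friendly.

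\emph{Main step.} I would prove that $\mathcal{C}$ is a matching (bipartiteness of $\mathcal{C}$ would already be enough, but a matching is the cleanest target): no heavy component is non-friendly with two distinct heavy components. Granting this, the claim follows by a short weighting argument: for each edge of $\mathcal{C}$ retain the endpoint containing more lines, and retain every $\mathcal{C}$-isolated heavy component. Since $\mathcal{C}$ has maximum degree at most $1$, any two retained components fail to form an edge of $\mathcal{C}$, so they are friendly; and since we discard, per edge of $\mathcal{C}$, at most half of the lines of that edge's two endpoints, the retained components contain at least $\tfrac12|\dF_h|$ lines, all of which are pairwise friendly.

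\emph{Proving the main step.} Suppose for contradiction that a heavy component $\mathcal{D}$ is non-friendly with distinct heavy components $\mathcal{D}'$ and $\mathcal{D}''$. By definition, $|F\cap F'|\neq 1$ for every $F\in\mathcal{D}$ and every $F'\in\mathcal{D}'$. Choosing extendable $v\in V_2$ realising an edge of $\mathcal{D}$ and extendable $w\in V_2$ realising an edge of $\mathcal{D}'$, these edges lie in different components of $\dG$ and hence are independent, so Claim \ref{Cla:vector-of-4-pairs} forces $\overline{vw}\in\{(2,0,0,2),(0,2,2,0),(0,2,2,q)\}$ for \emph{every} such choice, and likewise for pairs from $\mathcal{D}$ and $\mathcal{D}''$. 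The heaviness hypotheses — at least $40$ type-\1 lines and at least $\delta q$ type-\2 lines in each of $\mathcal{D},\mathcal{D}',\mathcal{D}''$, and hence at least $\tfrac{\delta q}{2}$ distinct type-\2 vertices by Claim \ref{Cla:good type} — give enough room to choose all auxiliary lines and vertices avoiding the $O(1/\delta)$- and $O(\delta q)$-sized exceptional sets that appear. With this room I would first show that, for a fixed generic $v$, the case of $\overline{vw}$ is constant as $w$ ranges over a large part of $\mathcal{D}'$ (and similarly for $\mathcal{D}''$), using the vertices $v^\star$ from Definition \ref{dfn:more-on-V2}, the type-\1/type-\2 vertices, and Claim \ref{Cla:good type} to pin these down. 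One then locates two distinct vertices of $G$ with two common neighbours — for instance two of the $v^\star$'s across two components, or a type-\1 vertex together with some $v^\star$ — producing a $C_4$ in $G$, the desired contradiction.

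\emph{Main obstacle.} The crux is precisely this last step: excluding three pairwise non-friendly heavy components. The difficulty is the simultaneous bookkeeping of the three admissible non-friendly patterns $(2,0,0,2)$, $(0,2,2,0)$, $(0,2,2,q)$ across three components, and showing that no distribution of them over the pairs $\{\mathcal{D},\mathcal{D}'\}$, $\{\mathcal{D},\mathcal{D}''\}$ can avoid creating a pair of vertices with two common neighbours. I expect this to demand a careful case split on which pattern governs each pair, combined with the structural facts about type-\1 and type-\2 vertices and about the vertices $v^\star$; the reduction and weighting arguments around it are routine.
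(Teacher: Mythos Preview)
Your reduction to a conflict graph on heavy components (edges for non-friendly pairs) and the observation that bipartiteness suffices both match the paper exactly (the paper calls this graph $H$). However, the paper proves only that $H$ is bipartite, not that it is a matching; its argument does \emph{not} derive a contradiction from a path of length two in $H$, only from an odd cycle. So your ``cleanest target'' may in fact be unreachable with the available tools, and you should retreat to bipartiteness. (There is also a slip: your main obstacle paragraph speaks of ``three pairwise non-friendly heavy components'', which is a triangle in $H$, not the path of length two you set up in the main step.)

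The substantive gap is in the main step. A direct case split on the three patterns of Claim~\ref{Cla:vector-of-4-pairs} does not by itself locate a $C_4$; the paper needs an additional structural invariant you do not mention. Concretely, for non-friendly $\mathcal{C},\mathcal{D}$ the paper first rules out the case where type-\1{} lines of each meet in two points (that case does give an immediate $C_4$), so every type-\1{} line $F\in\mathcal{D}$ meets every type-\2{} line $L\in\mathcal{C}$ in two points. Partitioning $V(\mathcal{C})\cap\dF_\2$ into $X(F),Y(F),Z(F)$ according to whether $v_L^\star=u_F$, $v_L^\star u_F\in E(G)$, or $u_Lu_F\in E(G)$, a pigeonhole/counting argument over many $F$'s (this is where the $\ge 40$ type-\1{} lines are used) produces a single \emph{associated vertex} $w_{\mathcal{C}}$ with $v_L^\star=w_{\mathcal{C}}$ for at least $60\%$ of the type-\2{} lines $L\in\mathcal{C}$. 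One then shows that any two $H$-neighbours of a common component share the same associated vertex; propagating this around an odd cycle makes all associated vertices equal to some $w$, and picking type-\2{} lines $L_i$ with $v_{L_i}^\star=w$ in two cycle-adjacent components forces $|L_1\cap L_2|=2$, which via Claim~\ref{Cla:vector-of-4-pairs} feeds back into the already-excluded type-\1{} case and yields the contradiction. Your sketch lacks this associated-vertex construction, which is the heart of the argument.
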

\begin{proof}
We define $H$ to be a graph whose vertices are heavy components of $\dG$, where components $\mathcal{C}, \mathcal{D}\in V(H)$ are adjacent if and only if they are not friendly.
To prove this, we may assume that any component has a non-friendly component in $H$ (as otherwise, we can delete it and process).
By Claim \ref{Cla:how-be-friendly}, it suffices to show that $H$ is a bipartite graph.

Let $\mathcal{C}$ and $\mathcal{D}$ be two heavy components of $\dG$ which are not friendly.
Take a type-\1 line $F\in V(\mathcal{D})$.
By Claim \ref{Cla:vector-of-4-pairs}, for any edge $e=(L_1,L_2)$ in $\mathcal{C}$,
$c(e):=(|F\cap L_1|,|F\cap L_2|)$ is either $(0,2)$ or $(2,0)$.
Repeatedly applying this, we see that all edges $e$ in $\mathcal{C}$ have the same $c(e)$.
So for any line $F\in V(\mathcal{D})\cap \dF_\1$, exactly one of the following holds:
\begin{itemize}
\item[(A).] All type-\1 lines $L$ in $\mathcal{C}$ satisfy $|F\cap L|=2$. Since $|N(u_F)\cap N(u_L)|\leq 1$, we have $u_Lu_F\in E(G)$.
\item[(B).] All type-\2 lines $L$ in $\mathcal{C}$ satisfy $|F\cap L|=2$. Suppose $L$ is a type-\2 line for $v_L\in V_2$. Then $L=N(u_L)\cup \{v_L^\star\}$ and we have either $v_L^\star=u_F$, $v_L^\star u_F\in E(G)$, or $u_Lu_F\in E(G)$.
\end{itemize}

\noindent Suppose (A) holds for some $F\in V(\mathcal{D})\cap \dF_\1$.
Repeatedly applying the above conclusion, one would derive that in fact any $L\in V(\mathcal{C})\cap \dF_\1$ and any $R\in V(\mathcal{D})\cap \dF_\1$ satisfy $|L\cap R|=2$ and $u_Lu_R\in E(G)$.
Since all $u_L, u_R$ are distinct, it is easy to force a $C_4$ in $G$, a contradiction.

Hence (B) holds for all type-\1 lines $F\in V(\dD)$ and any heavy component $\dC$ which is not friendly with $\dD$.
For every such $F$, we can partition $V(\dC)\cap \dF_\2$ into three sets $X(F),Y(F)$ and $Z(F)$,
where $X(F)=\{L\in V(\dC)\cap \dF_\2: v_L^\star=u_F\}$, $Y(F)=\{L\in V(\dC)\cap \dF_\2: v_L^\star u_F\in E(G)\}$ and $Z(F)=\{L\in V(\dC)\cap \dF_\2: u_Lu_F\in E(G)\}$.
Let $F,F'\in V(\mathcal{D})\cap \dF_\1$ be distinct.
Then it is easy to see that $|X(F)\cap X(F')|=0$ and $|Z(F)\cap Z(F')|\leq 1$.
If there are $L_1,L_2\in Y(F)\cap Y(F')$, then both $v_{L_1}^\star$ and $v_{L_2}^\star$ are adjacent to $\{u_F,u_{F'}\}$.
This shows that $v_{L_1}^\star=v_{L_2}^\star$, as otherwise there is a $C_4$ in $G$.
That is, all lines $L\in Y(F)\cap Y(F')$ have the same $v_L^\star$.
By $f_\dC(w)$, we denote the number of all lines $L\in V(\dC)\cap \dF_\2$ with $v_L^\star=w$.

We first assert that there exists some vertex $w$ with $f_\dC(w)\geq (|V(\dC)\cap \dF_\2|-6)/6$.
To see this, let us take four lines $F_1,F_2,F_3,F_4\in V(\mathcal{D})\cap \dF_\1$.
We have $|V(\dC)\cap \dF_\2|\geq |\cup_{1\leq i\leq 4} X(F_i)|=\sum_{1\leq i\leq 4} |X(F_i)|,$
and by inclusion-exclusion,
$$|V(\dC)\cap \dF_\2|\geq |\cup_{1\leq i\leq 4} Y(F_i)|\geq \sum_{1\leq i\leq 4}|Y(F_i)|-\sum_{1\leq i< j\leq 4}|Y(F_i)\cap Y(F_j)|$$
and similarly, $|V(\dC)\cap \dF_\2|\geq \sum_{1\leq i\leq 4}|Z(F_i)|-6$.
Summing up the above three inequalities, using $|X(F_i)|+|Y(F_i)|+|Z(F_i)|=|V(\dC)\cap \dF_\2|$ for each $i\in [4]$, we obtain that
$$3|V(\dC)\cap \dF_\2|\geq 4|V(\dC)\cap \dF_\2|-\sum_{1\leq i< j\leq 4}|Y(F_i)\cap Y(F_j)|-6.$$
Therefore one of $Y(F_i)\cap Y(F_j)$ contains at least $(|V(\dC)\cap \dF_\2|-6)/6$ lines $L$, all of which have the same $v_L^\star$.
This proves our assertion.

We further assert that in fact for each heavy component $\dC$, there exists a unique vertex $w_\dC$ with
$f_\dC(w_\dC)\geq 0.6\cdot |V(\dC)\cap \dF_\2|$.
Let $c=|V(\dC)\cap \dF_\2|$, which is at least $\delta q$.
We choose $w$ such that $f_\dC(w)$ is maximum. So we have $f_\dC(w)\geq (c-6)/6$.
Take any $20$ lines say $F_1,...,F_{20}$ in $V(\dD)\cap \dF_\1$.
Then we have $20c=\sum_{1\leq i\leq 20} (|X(F_i)|+|Y(F_i)|+|Z(F_i)|)\leq \sum_{1\leq i\leq 20} |Y(F_i)| +2c+\binom{20}{2}$,
implying that $\sum_{1\leq i\leq 20} |Y(F_i)|\geq 18c-\binom{20}{2}.$
Without loss of generality we may assume $wu_{F_i}\in E(G)$ if and only if $i\in [s]$ for some $s\in [20]$.
Note that for $s<i\leq 20$, we have $|Y(F_i)|\leq c-f_c(w)$.
If $s\leq 10$, then $\sum_{1\leq i\leq 20} |Y(F_i)|\leq 20c-(20-s)\cdot f_\dC(w)\leq 20c-10(c-6)/6<18c-\binom{20}{2},$ a contradiction.
Hence $s\geq 11$. For $i\in [s]$, let $A_i=N_G(u_{F_i})\backslash \{w\}$ and let $f_i=\sum_{x\in A_i} f_\dC(x)$.
We claim that $A_i$'s are disjoint over $i\in [s]$;
as otherwise say $x\in A_1\cap A_2$, then both $u_{F_1},u_{F_2}$ are adjacent to $w,x$ in $G$, a contradiction.
So $\sum_{i\in [s]} f_i\leq c-f_\dC(w)$.
Then we can derive that $18c-\binom{20}{2}\leq \sum_{1\leq i\leq 20} |Y(F_i)|\leq s\cdot f_\dC(w)+\sum_{i\in [s]} f_i+(20-s)c\leq (s-1)f_\dC(w)+(21-s)c\leq 10f_\dC(w)+10c,$
implying that $f_\dC(w)\geq (8c-\binom{20}{2})/10\geq 0.6c$, as desired.
Clearly such $w$ is unique, denoted by $w_\dC$.

We say $w_\dC$ is the {\it associated} vertex of $\dC$.
We also point out that from the above proof, among any $20$ lines $F_1,...,F_{20}$ in $V(\dD)\cap \dF_\1$,
the vertex $w_\dC$ is adjacent to at least $11$ of $u_{F_i}$'s in $G$.
Consider any two incident edges in $H$, say $\dC\dC_1, \dC\dC_2\in E(H)$.
Let $w_1,w_2$ be the associated vertices of $\dC_1, \dC_2$, respectively.
Take any $20$ lines $F_1,...,F_{20}$ in $V(\dC)\cap \dF_\1$.
Then each of $w_1$ and $w_2$ is adjacent to at least $11$ of $u_{F_i}$'s in $G$.
So there are two vertices $u_{F_i}, u_{F_j}$ adjacent to both $w_1$ and $w_2$ in $G$.
If $w_1\neq w_2$, then this forces a $C_4$, a contradiction.
So we conclude that $w_1=w_2$.

Now suppose on the contrary that $H$ contains an odd cycle, say $\dC_1\dC_2...\dC_t\dC_1$, where $t$ is odd.
Let $w_i$ be the associated vertex of $\dC_i$ for each $i\in [t]$.
Applying the above conclusion, as $t$ is odd, we can derive that these $w_i$'s are the same vertex, say $w$.
Take any $L_1\in V(\dC_1)\cap \dF_\2$ and $L_2\in V(\dC_2)\cap \dF_\2$ with $v_{L_1}^\star=v_{L_2}^\star=w$.
Then $L_1=N(u_{L_1})\cup \{w\}$ and $L_2=N(u_{L_2})\cup \{w\}$, implying $1\leq |L_1\cap L_2|\leq 2$.
If $|L_1\cap L_2|=1$, then $\dC_1$ and $\dC_2$ are friendly, a contradiction.
Hence $|L_1\cap L_2|=2$.
Let $L_1R_1$ be an edge in $\dC_1$ and $L_2R_2$ be an edge in $\dC_2$, where $R_1,R_2$ are type-\1.
By Claim \ref{Cla:vector-of-4-pairs}, as $|L_1\cap L_2|=2$, we have $|R_1\cap R_2|=2$.
So $(A)$ holds for the line $R_1\in V(\dC_1)\cap \dF_\1$ and the heavy component $\dC_2$, a contradiction.
This proves that $H$ is bipartite, completing the proof of Claim \ref{Cla:half-of-heavy}.
\end{proof}

Finally, we show how to add certain good lines into $\dR$ to make a larger 1-intersecting hypergraph on the vertex set $V$.
By Claim \ref{Cla:half-of-heavy}, there exists a 1-intersecting $\dF_h'\subseteq \dF_h$ with $|\dF_h'|\geq |\dF_h|/2$.
By Claim \ref{Cla:rich-component}, we see that $\dR\cup \dF_r\cup \dF_h'$ is also 1-intersecting.
Since $L\in \dL$ satisfies Claim \ref{Cla:L-property}, by Lemma \ref{lem:enlarge R}, we see that $\dR\cup \dF_r\cup \dF_h'\cup \dL$ remains 1-intersecting.
Using Claim \ref{Cla:size-large-component}, we have
\begin{equation*}
\begin{split}
|\dR\cup \dF_r\cup \dF_h'\cup \dL|&=|\dR|+|\dF_r\cup \dF_h'\cup \dL|\geq |\dR|+\frac12|\dF_r\cup \dF_h\cup \dL|\\
&\geq (q^2-\alpha q+1-2/\delta)+\frac12(2\alpha q +(1-\epsilon)q/2)\geq q^2.
\end{split}
\end{equation*}
This finishes the proof of Lemma \ref{Lem:finding-1-intersecting}. \QED

\section{Finding a polarity}\label{sec:polarity}
In this section, we complete the proof of Theorem \ref{Thm:stability-max-degree}.
Fix $0<\epsilon<1$ and let $q$ be an even integer with $q\gg \epsilon$.
Let $G=(V,\dE)$ be a $C_4$-free graph on $q^2+q+1$ vertices with maximum degree $q+1$ and at least $\frac12 q(q+1)^2-\frac{\epsilon}{2}q$ edges.
We aim to show that there exists a unique polarity graph of order $q$ containing $G$ as a subgraph.

By Lemma \ref{Lem:finding-1-intersecting}, there exists a 1-intersecting $(q+1)$-hypergraph $\dR^*$ on the vertex set $V$ with $\dR\subseteq \dR^*$ and $|\dR^*|\geq q^2$.
By Theorem \ref{Thm:Embedding}, $\dR^*$ (and thus $\dR$) can be embedded into a projective plane $\dP$ of order $q$.
Since $|\dR|=|A|\geq q^2-\epsilon q-2/\delta> q^2-q+1$ (by Claim \ref{Cla:|A|-|B|}),
applying Theorem \ref{Thm:Embedding unique}, we see that such $\dP$ and the embedding of $\dR$ into $\dP$ both are unique.
Let $\dR^c=\dP\backslash \dR$.
Now let us recall some basic facts about $\dP$:
every two lines intersect with exactly one vertex, every two vertices are contained in exactly one line,
and every column or row of any incidence matrix of $\dP$ has $q+1$ 1's as entries.

We say $v\in V$ is {\it feasible}, if there exists a line $L\in \dP$ with $N(v)\subseteq L$; otherwise, we say $v$  is {\it non-feasible}.
For non-feasible $v$, we say it is {\it near-feasible}, if there exist a line $L\in \dR^c$ and a subset $K_v\subseteq N(v)$ such that $N(v)\backslash K_v \subseteq L$ and $|K_v|\leq 5\sqrt{q}/\delta$.
In both definitions, we say $v$ and $L$ are {\it associated} with each other. For feasible $v$, we let $K_v=\emptyset$.
By \eqref{equ:dx+dy} and since $G$ is $C_4$-free, for any two feasible or near-feasible vertices $u$ and $v$, we have
$$|(N(u)\backslash K_u)\cup (N(v)\backslash K_v)|\geq \big(d(u)-5\sqrt{q}/\delta\big)+\big(d(v)-5\sqrt{q}/\delta\big)-1\geq (2-\epsilon)q-10\sqrt{q}/\delta-1>q+1.$$
This implies that each line in $\dP$ is associated with at most one feasible or near-feasible vertex.
On the other hand, if there are two lines in $\dP$ associated with the same feasible or near-feasible vertex $v$,
as $d(v)\geq (1-\epsilon)q$ by \eqref{equ:dx+dy}, then it is easy to see that these two lines will intersect with more than two vertices, a contradiction.
So each feasible or near-feasible vertex is associated with a unique line in $\dP$.

Next we study some properties on non-feasible vertices $v\in V$.
Let $N(v)=\{v_1,...,v_d\}$.
Since $v$ is non-feasible, we see $N(v)\not\subseteq L$ for any $L\in \dP$ and thus $v\notin A$.
Then any pair $\{v_i,v_j\}$ for $i, j\in [d]$ is not contained in any line $N(u)\in \dR$.
This is because that otherwise, we see that $v_iuv_jvv_i$ forms a $C_4$ in $G$, a contradiction.
So every such pair $\{v_i,v_j\}$ is contained in a unique line $L\in \dR^c$.
Let $\dL_v$ be the family of lines $L\in \dP$ which contains at least two vertices of $N(v)$.
Then we have $\dL_v\subseteq \dR^c$ and thus
\begin{equation}\label{equ:dLv}
|\dL_v|\leq |\dR^c|=|\dP|-|\dR|\leq (1+\epsilon)q+2/\delta+1.
\end{equation}
We also point out that any vertex in $N(v)$ appears in at least two lines of $\dL_v$.

We process to show that all non-feasible vertices are near-feasible in the following claims.
First we show any vertex has a neighbor which belongs to many lines in $\dR$.

\begin{claim}\label{Cla:dR(vj)}
Any vertex $v\in V$ has a neighbor $v_j$ with $d_\dR(v_j)=|N(v_j)\cap A|\geq q-\frac{\epsilon}{1-\epsilon}-2/\delta-2$.
In addition, if $v\notin B$ has degree at least $\frac12(1+\epsilon+4\delta)q+6/\delta+1$, then $v$ has a neighbor $v_j$ with $d_{\dR}(v_j)\geq q-1$.
\end{claim}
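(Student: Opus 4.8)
The plan is to prove both statements by averaging $d_\dR(v_i)$ over (a suitable subset of) the neighbors $v_1,\dots,v_d$ of $v$, where $d=d(v)$. I will use throughout that $x\mapsto N(x)$ is injective on $A$ (in a $C_4$-free graph with minimum degree at least $3$ no two vertices share a neighborhood), so $d_\dR(v_j)=|N(v_j)\cap A|$ and $|\dR|=|A|\ge q^2-\epsilon q-2/\delta$ by Claim~\ref{Cla:|A|-|B|}; I also use $d(v_i)\ge(1-\epsilon)q$ from \eqref{equ:dx+dy}, the bounds $f(V)\le(1+\epsilon)q+1$ and $|S|\le(1+\epsilon)q+1$ from \eqref{equ:new fV}, and $|B|\le2/\delta$.

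For the first statement, count incidences between $N(v)$ and $A$ from the side of $A$: $\sum_{i}d_\dR(v_i)=\sum_{x\in A}|N(x)\cap N(v)|$, and since $G$ is $C_4$-free we have $|N(x)\cap N(v)|\le1$ for $x\ne v$, while the number of $x\ne v$ with $N(x)\cap N(v)=\emptyset$ is exactly $d_0(v)$. Hence $\sum_i d_\dR(v_i)\ge|A|-1-d_0(v)$. Substituting the elementary identity $d_0(v)=(n-1)-\sum_i(d(v_i)-1)=q(q+1-d)+f(N(v))$ and $f(N(v))\le f(V)$ yields $\sum_i d_\dR(v_i)\ge q d-2(1+\epsilon)q-2/\delta-2$, so some neighbor $v_j$ satisfies $d_\dR(v_j)\ge q-\frac{2(1+\epsilon)}{1-\epsilon}-\frac{2/\delta+2}{(1-\epsilon)q}$; under $\frac{1}{q}\ll\delta\ll1-\epsilon$ the discrepancy $\frac{3\epsilon}{1-\epsilon}+o(1)$ between this and $q-\frac{\epsilon}{1-\epsilon}-\frac{2}{\delta}-2$ is absorbed by the $\frac{2}{\delta}$ term.

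For the second statement, averaging over all neighbors is too lossy (it only forces $d_\dR(v_j)\ge q-O(1)$), so I would average over the neighbors of degree $q+1$ only. Since $v\notin B$ there are at least $d-\delta q$ such neighbors, say $v_1,\dots,v_{d'}$, and for each of them $f(v_i)=0$, so $q+1-d_\dR(v_i)=|N(v_i)\setminus A|$. The crucial point is that the sets $N(v_i)\setminus\{v\}$ are pairwise disjoint ($G$ is $C_4$-free), whence $\sum_{i\le d'}|(N(v_i)\setminus\{v\})\setminus A|\le|S|+|B|\le(1+\epsilon)q+1+2/\delta$ --- only a $q$-sized budget, half of what a blunt estimate passing through $f(N(v))$ would cost. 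Since $d'\ge\frac12(1+\epsilon+2\delta)q+6/\delta+1$, the average is $<2$, so by integrality some $v_j$ among these has $|(N(v_j)\setminus\{v\})\setminus A|\le1$; adding back the contribution of $v$ itself (at most $1$, and only when $v\in S$) gives $q+1-d_\dR(v_j)\le2$, i.e. $d_\dR(v_j)\ge q-1$.

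The only genuinely delicate point is the second part: one must notice that the averaging of the first part is off by a constant factor, and be disciplined about restricting to degree-$(q+1)$ neighbors so that the disjointness of their punctured neighborhoods replaces a $2q$-type bound by a $q$-type one. After that, the inequality $\frac{|S|+|B|}{d'}<2$ reduces to $2\delta q+10/\delta+1>0$, which holds for all large $q$ under the stated parameter hierarchy.
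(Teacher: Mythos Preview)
Your proof is correct but follows a genuinely different route from the paper's.

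For the first statement, the paper averages the deficiency $f(N_j\cup\{v_j\})$ (where $N_j=N(v_j)\setminus N[v]$) over all neighbors, finds a $v_j$ with $f(N_j\cup\{v_j\})\le\frac{\epsilon}{1-\epsilon}+1$, and then converts this into a lower bound on $d_\dR(v_j)$ via $|N_j\cap A|\ge|N_j|-|N_j\cap S|-|B|$. You instead double-count incidences between $N(v)$ and $A$ directly, passing through $d_0(v)$ and the global bound $|A|\ge q^2-\epsilon q-2/\delta$. Your bound $q-\tfrac{2(1+\epsilon)}{1-\epsilon}-o(1)$ is off from the stated constant by $\tfrac{3\epsilon}{1-\epsilon}+o(1)$, but as you observe, the slack $\tfrac{2}{\delta}$ in the target (with $\delta\ll1-\epsilon$) absorbs this; so you do establish the claim as stated, just not the sharper constant the paper's argument happens to give.

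For the second statement, the paper builds a refined set $T\subseteq N(v)$ by first excluding $B_v^*=(N(v)\cap(S\cup B))\cup(N(N(v)\cap(S\cup B))\cap N(v))$ and then also those $v_i$ with $N_i\cap B\ne\emptyset$; it then argues that $f(N_j)\ge2$ for all $v_j\in T$ would force $f(V)\ge2|T|>(1+\epsilon)q+1$, and finally uses the careful construction of $T$ to deduce $|N(v_j)\setminus A|\le1+f(N_j)$. Your argument is cleaner: restricting only to the degree-$(q+1)$ neighbors and exploiting the disjointness of the punctured neighborhoods $N(v_i)\setminus\{v\}$ to bound $\sum_i|(N(v_i)\setminus\{v\})\setminus A|$ by the global quantity $|V\setminus A|\le|S|+|B|$ in one stroke. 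This sidesteps the paper's layered exclusions entirely, and the arithmetic $2\delta q+10/\delta+1>0$ that drives the average below $2$ makes transparent why the threshold $\tfrac12(1+\epsilon+4\delta)q+6/\delta+1$ appears.

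Both approaches ultimately hinge on the same disjointness of the $N(v_i)\setminus\{v\}$; the paper channels it through deficiency sums, you through $|V\setminus A|$. Your version is shorter and requires less bookkeeping, at the cost of a slightly looser constant in the first part (which is harmless here).
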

\begin{proof}
Let $N(v)=\{v_1,...,v_d\}$. By \eqref{equ:dx+dy}, we have $d=d(v)\geq (1-\epsilon)q$.
Let $N_i=N(v_i)\backslash N[v]$ for $i\in [d]$.
Since the sets $N_i\cup \{v_i\}$ are disjoint over $i\in [d]$, we have
\begin{align*}
q+\epsilon q+1\geq f(V)\geq\sum_{i\in [d]}f(N_i\cup \{v_i\})+f(v)=\sum_{i\in [d]}f(N_i\cup \{v_i\})+(q+1-d).
\end{align*}
By averaging, there is some $j\in [d]$ with $f(N_j\cup \{v_j\})\leq \frac{\epsilon q}{d}+1\le \frac{\epsilon}{1-\epsilon}+1$.
Therefore,
\begin{align*}
d_{\dR}(v_j)&\geq |N_j\cap A|\geq |N_j|-|N_j\cap S|-|B|\geq \big(d(v_j)-2\big)-f(N_j)-2/\delta\\
&=\big(q-1-f(v_j)\big)-f(N_j)-2/\delta\geq q-\epsilon/(1-\epsilon)-2/\delta-2,
\end{align*}
as desired.
Next we consider vertices $v\notin B$ with $d=d(v)\geq \frac12(1+\epsilon+4\delta)q+6/\delta+1$.
Let $B_v=N(v)\cap (S\cup B)$ and $B_v^*=B_v\cup (N(B_v)\cap N(v))$.
Then we have $|B_v|\leq |N(v)\cap S|+ |B|\leq \delta q+2/\delta$.
Since $G$ is $C_4$-free, every vertex in $B_v$ has at most one neighbor in $N(v)$, implying that $|B_v^*|\leq 2|B_v|$.
Let $T=\{v_i\in N(v)\backslash B_v^*: N_i\cap B=\emptyset\}$.
Since $N_i$'s are disjoint and there are at most $|B|$ many $N_i$'s containing some vertex in $B$,
we get $|T|\geq |N(v)\backslash B_v^*|-|B|\geq d-2\delta q -6/\delta$.
If $f(N_j)\geq 2$ for all $v_j\in T$, then $q+\epsilon q+1\geq f(V)\geq 2|T|\ge 2(d-2\delta q-6/\delta)\geq q+\epsilon q+2$, a contradiction.
Therefore, there exists a vertex $v_j\in T$ such that $f(N_j)\leq 1$.
By the definition of $T$, we can see that  $d_{\dR}(v_j)=|N(v_j)\cap A|\geq d(v_j)-1-f(N_j)\geq q-1$.
This completes the proof.
\end{proof}

We partition $V$ into three disjoint sets $U_1\cup U_2\cup U_3$, where $U_1$ consists of all feasible vertices and $U_2$ consists of non-feasible vertices $v\notin B$ with $d(v)\geq \frac12(1+\epsilon+4\delta)q+6/\delta+1$.

\begin{claim} \label{Cla:U2}
There exists some $w\in V$ such that all $v\in U_2$ are near-feasible with $K_v=\{w\}$.
\end{claim}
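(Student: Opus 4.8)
The plan is to show first that every vertex in $U_2$ is near-feasible, and then that all of them fail to be feasible ``in the same direction'', i.e.\ the small exceptional sets $K_v$ all equal a single common vertex $w$. I would begin with an arbitrary $v\in U_2$, writing $N(v)=\{v_1,\dots,v_d\}$ with $d=d(v)\ge \frac12(1+\epsilon+4\delta)q+6/\delta+1$. By Claim \ref{Cla:dR(vj)} (the second part, which applies precisely because $v\notin B$ and $d(v)$ is this large), $v$ has a neighbor $v_j$ with $d_\dR(v_j)\ge q-1$; that is, all but at most one line through $v_j$ in $\dP$ lies in $\dR$, and $\dP$ has exactly $q+1$ lines through $v_j$. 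Each other neighbor $v_i$ ($i\neq j$) lies on the unique line $L_i\in\dP$ through $v_i$ and $v_j$; if $L_i\in\dR$ then $\{v_i,v_j\}\subseteq N(u)$ for some $u\in A$, which as noted before the claim would create a $C_4$ $v_i u v_j v v_i$ since $v\notin A$. Hence every such $L_i$ is the exceptional line $L\in\dR^c$ through $v_j$ — there is only one — except for those $v_i$ adjacent to $v_j$ itself. So $N(v)\setminus K_v\subseteq L$ where $K_v=(N(v)\cap N(v_j))\cup\{v_j\}$; since $G$ is $C_4$-free and $v_j\in N(v)$, $|N(v)\cap N(v_j)|\le 1$, giving $|K_v|\le 2\le 5\sqrt q/\delta$, so $v$ is near-feasible and we may even take $|K_v|\le 2$ with $L\in\dR^c$ its associated line.

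Next I would pin down $w$. The heart of the matter is: two distinct near-feasible vertices $u,v$ with associated lines $L_u,L_v\in\dR^c$ must, if $K_u\neq K_v$, have $L_u\neq L_v$ (by the earlier observation that each line is associated with at most one feasible/near-feasible vertex, using \eqref{equ:dx+dy}). So I want to argue that all the $U_2$-vertices share the same associated line $L^\star\in\dR^c$; then, since $|L^\star|=q+1$ and each $v\in U_2$ satisfies $N(v)\setminus K_v\subseteq L^\star$ with $d(v)\ge q+1-\text{(small)}$ roughly $q$, two such $v,v'$ with $N(v)\setminus K_v$ and $N(v')\setminus K_{v'}$ both inside the $(q+1)$-set $L^\star$ and $|K_v|,|K_{v'}|\le 2$ would force $|N(v)\cap N(v')|$ to be large — at least $d(v)+d(v')-(q+1)-|K_v|-|K_{v'}|\ge (2-\epsilon)q-(q+1)-4>1$ — contradicting $C_4$-freeness, \emph{unless} in fact this large overlap is exactly what forbids $L^\star$ from working for two vertices at all. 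Let me instead argue directly at the level of the single exceptional neighbor: for each $v\in U_2$, from the paragraph above $N(v)\subseteq L\cup\{v_j\}\cup(N(v)\cap N(v_j))$, and the ``bad'' part of $N(v)$ not on $L$ has size $\le 2$; I'd refine this to show the bad part is a single vertex $w_v$ and that $w_v$ is independent of $v$. To get $|K_v|=1$: if both $v_j\notin L$ and some $v_i\in N(v_j)\cap N(v)$, then $v_j,v_i,$ and the fact that $v_i$ lies on $L$ too, I examine the line of $\dP$ through $v$'s candidate; more cleanly, since $d(v)\ge$ roughly $q/2$ and $v$ is associated with $L$, the set $N(v)\cap L$ has size $\ge d(v)-2$, so $v$ is ``$L$ minus up to two points plus up to two points'', and the unique line through any two points of $N(v)\cap L$ is $L$ — this forces $K_v$ to be a single point $w_v$ once $d(v)\ge 4$, which holds. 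Finally, if $w_v\neq w_{v'}$ for $v,v'\in U_2$, both with the same associated line — which I establish by a counting/intersection argument using $|\dR^c|\le (1+\epsilon)q+O(1/\delta)$ from \eqref{equ:dLv}-type bounds and the fact that $|U_2|$ is large (on the order of $q^2$, since $U_1\cup B$ and low-degree vertices are few by \eqref{equ:new fV} and Claim \ref{Cla:|A|-|B|}) so by pigeonhole many $U_2$-vertices share a line in $\dR^c$ — then $N(v)\setminus\{w_v\}$ and $N(v')\setminus\{w_{v'}\}$ both lie in $L^\star$, forcing $|N(v)\cap N(v')\cap L^\star|\ge (2-\epsilon)q-(q+1)-2>1$, a $C_4$. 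Hence all $w_v$ with a common associated line agree, and then a short connectivity argument across the few lines of $\dR^c$ that are used forces a single global $w$.

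The step I expect to be the main obstacle is showing that the associated lines $L_v\in\dR^c$ coincide across all of $U_2$ (equivalently, that the exceptional vertex $w_v$ is globally constant), because a priori different $U_2$-vertices could point to different lines of $\dR^c$, and $\dR^c$ can have up to $(1+\epsilon)q+O(1/\delta)$ lines. The resolution I'd pursue is a double-counting argument: each $v\in U_2$ ``uses up'' one line of $\dR^c$ as its associate, distinct lines serve distinct vertices (by the first-paragraph uniqueness), so at most $|\dR^c|=O(q)$ vertices of $U_2$ are accounted for this way — but $|U_2|$ should be forced to be much larger than $O(q)$. Indeed, non-feasible high-degree non-$B$ vertices: the feasible vertices $U_1$ number at most $|\dP|=q^2+q+1$ but that's not small; the right way is that $|A|\ge q^2-\epsilon q-2/\delta$ vertices are in $A\subseteq U_1$ and contribute one line of $\dR$ each, while $\dR^c$ has only $\le(1+\epsilon)q+O(1/\delta)$ lines, so at most that many non-feasible vertices can be near-feasible — forcing $|U_2|\le|\dR^c|=O(q)$. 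That means the ``pigeonhole forces two $U_2$-vertices onto one line'' route fails, so instead I must directly compare $w_u$ and $w_v$ for $u,v\in U_2$ with \emph{possibly different} lines. Here I'd use that $L_u,L_v\in\dP$ meet in exactly one point $p$; if $w_u\notin\{w_v\}$ I analyze where $w_u,w_v,p$ sit relative to $N(u),N(v)$ and derive a $C_4$ unless $w_u=w_v=p$ or the lines coincide. Nailing down this local case analysis cleanly — handling the at-most-two-point slack in $K_v$ and the single-intersection-point $p$ simultaneously — is the delicate part, but it is finite casework of the kind the paper has been doing throughout, and \eqref{equ:dx+dy} ($d(x)+d(y)\ge(2-\epsilon)q$) together with $C_4$-freeness is the workhorse that closes each case.
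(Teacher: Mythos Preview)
Your first step contains a genuine error that undermines the rest. From $d_\dR(v_j)\ge q-1$ you conclude that ``there is only one'' line of $\dR^c$ through $v_j$. But $d_\dP(v_j)=q+1$, so the number of $\dR^c$-lines through $v_j$ is $q+1-d_\dR(v_j)\le 2$, not $\le 1$. In fact it must equal exactly $2$: since $v$ is non-feasible, $N(v)$ cannot lie on a single line, so (as noted just before the claim) $v_j$ lies on at least two lines of $\dL_v\subseteq\dR^c$; hence $d_\dR(v_j)=q-1$ exactly and there are precisely two $\dR^c$-lines $L_1,L_2$ through $v_j$, with $N(v)\subseteq L_1\cup L_2$. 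Your bound $|K_v|\le 2$ via $(N(v)\cap N(v_j))\cup\{v_j\}$ does not follow: a priori $N(v)\cap L_2$ could be large, so $N(v)\setminus L_1$ need not have size $\le 2$. (The clause ``except for those $v_i$ adjacent to $v_j$'' conflates $G$-adjacency with $\dP$-incidence and does not repair this.)

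The paper's proof supplies exactly the missing idea. Writing $N_i=L_i\cap N(v)$, every pair $\{x,y\}$ with $x\in N_1\setminus\{v_j\}$ and $y\in N_2\setminus\{v_j\}$ determines a distinct line of $\dL_v$, giving
\[
|\dL_v|=(|N_1|-1)(|N_2|-1)+2.
\]
Combining this with $|\dL_v|\le|\dR^c|\le(1+\epsilon)q+2/\delta+1$ from \eqref{equ:dLv} and with $|N_1|+|N_2|=d(v)+1\ge\frac12(1+\epsilon+4\delta)q+\cdots$ forces the smaller of $|N_1|,|N_2|$ to equal $2$; say $N_2=\{v_j,w\}$, so $N(v)\setminus\{w\}\subseteq L_1$ and $K_v=\{w\}$. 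This is where the definition of $U_2$ via the threshold $\frac12(1+\epsilon+4\delta)q+6/\delta+1$ is actually used, and your argument never invokes it.

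For the global constancy of $w$, your route (pigeonholing $U_2$-vertices onto a common associated line, then comparing intersections) runs into the obstacle you yourself noticed: $|U_2|\le|\dR^c|=O(q)$, so pigeonhole gives nothing. The paper instead observes that the single exceptional vertex $w$ of $v$ lies on \emph{every} line of $\dL_v$ except $L_1$, hence on $d(v)-1$ lines of $\dR^c$. If some $v'\in U_2$ had $K_{v'}=\{w'\}$ with $w'\neq w$, then $w$ and $w'$ together lie on $(d(v)-1)+(d(v')-1)>|\dR^c|$ lines of $\dR^c$, so by pigeonhole they lie on two common lines of $\dP$ --- impossible in a projective plane. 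This two-line argument is both simpler and tighter than the case analysis you propose.
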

\begin{proof}
For any $v\in U_2$, by Claim \ref{Cla:dR(vj)}, there is a neighbor $v_j$ of $v$ with $d_\dR(v_j)\geq q-1$.
By the above discussion, $v_j$ appears in at least two lines in $\dL_v\subseteq \dR^c$.
If $d_{\dR}(v_j)\geq q$, then $d_{\dP}(v_j)\geq q+2$, a contradiction.
So $d_{\dR}(v_j)=q-1$ and there are exactly two lines, say $L_1$ and $L_2$, in $\dL_v\subseteq \dR^c$ containing $v_j$.
Let $N_1=L_1\cap N(v)$ and $N_2=L_2\cap N(v)$.
Then we have $N_1\cap N_2=\{v_j\}$ and $N_1\cup N_2=N(v)$.
Consider any other line $L_i\in \dL_v\backslash \{L_1,L_2\}$ for $i\geq 3$.
Set $N_i=L_i\cap N(v)$.
We see that for any $i\geq 3$ and $j\in \{1,2\}$, $|N_i\cap N_j|\leq 1$ and $|N_i\cap N_1|+|N_i\cap N_2|\geq |N_i\cap (N_1\cup N_2)|=|N_i|\geq 2$.
This shows that for any $i\geq 3$, $N_i$ consists of two vertices, one from $N_1\backslash \{v_j\}$ and the other from $N_2\backslash \{v_j\}$.
Hence, $|\dL_v|=(|N_1|-1)(|N_2|-1)+2$.

Let $d=d(v)$. We may assume that $d-1\geq |N_1|\geq |N_2|\geq 2$.
If $|N_2|\geq 3$, then we have $|\dL_v|=(|N_1|-1)(|N_2|-1)+2\geq 2(d-3)+2=2d-4\geq (1+\epsilon+4\delta)q+12/\delta-2>(1+\epsilon)q+2/\delta+1\geq |\dL_v|$,
where the last inequality holds by \eqref{equ:dLv}, a contradiction.
Thus, $|N_1|=d-1$ and $|N_2|=2$, implying $|\dL_v|=d$.
Suppose that $N_2=\{v_j,w\}$.
Then every $N_i$ for $2\leq i\leq d$ contains the vertex $w$.
Also $N(v)\backslash \{w\}\subseteq L_1\in \dR^c$,
implying that $v\in U_2$ is near-feasible with $K_v=\{w\}$.

Assume there is another non-feasible vertex $v'\in U_2$ with $K_{v'}=\{w'\}$, where $w'\neq w$.
Let $d=d(v)$ and $d'=d(v')$.
By the above arguments, $w$ and $w'$ appear in $d-1$ and $d'-1$ lines in $\dR^c$, respectively.
By \eqref{equ:dLv},
$|\dR^c|+2\leq (1+\epsilon)q+2/\delta+3\leq (1+\epsilon+4\delta)q+12/\delta\leq (d-1)+(d'-1)$,
which shows that $w$ and $w'$ appear in at least two lines of $\dR^c$ in common.
This contradicts that $\dP$ is a projective plane.
\end{proof}

\begin{claim} \label{Cla:U_3}
All non-feasible vertices are near-feasible.
\end{claim}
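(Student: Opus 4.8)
The plan is to prove the uniform statement that \emph{every} non-feasible vertex $v$ is near-feasible; by Claim~\ref{Cla:U2} only the vertices of $U_3$ remain, but the argument below uses none of the defining hypotheses of $U_3$. First I would use the first half of Claim~\ref{Cla:dR(vj)} to fix a neighbour $v_j$ of $v$ with $d_\dR(v_j)\ge q-\frac{\epsilon}{1-\epsilon}-\frac{2}{\delta}-2$. Since $v_j$ lies on exactly $q+1$ lines of $\dP$, at least $d_\dR(v_j)$ of which lie in $\dR$, it lies on at most $s_0:=\frac{\epsilon}{1-\epsilon}+\frac{2}{\delta}+3$ lines of $\dR^c$.

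Next I would record the local structure of $N(v)$ around $v_j$. Because $v$ is non-feasible, every pair inside $N(v)$ lies on a unique line of $\dR^c$; in particular each pair $\{v_j,v_k\}$ with $v_k\in N(v)$ lies on one of the lines $L_1,\dots,L_s$ of $\dR^c$ through $v_j$, where $2\le s\le s_0$ (indeed $s\ge 2$, since $s=1$ would give $N(v)\subseteq L_1$ and hence feasibility). Setting $N_i:=L_i\cap N(v)$, this yields $N(v)=N_1\cup\cdots\cup N_s$ with $N_i\cap N_{i'}=\{v_j\}$ for $i\ne i'$ (two distinct lines of $\dP$ meet in exactly one point), each $L_i\in\dL_v$, and, by inclusion--exclusion, $\sum_{i}|N_i|=d(v)+s-1$.

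The crux is a double count forcing one of the $N_i$ to absorb almost all of $N(v)$. For $a\ne b$, call a pair $\{x,y\}$ with $x\in N_a\setminus\{v_j\}$ and $y\in N_b\setminus\{v_j\}$ a \emph{cross pair}. Such a pair lies on a unique line of $\dR^c$; this line lies in $\dL_v$, it does not contain $v_j$ (otherwise it would be some $L_i$, which is impossible since $L_i\cap N(v)=N_i$ meets $N_a\setminus\{v_j\}$ only when $i=a$), and it meets every $N_i$ in at most one point (else it would coincide with $L_i$). Hence each line of $\dL_v$ carries at most one cross pair between $N_a$ and $N_b$, so $(|N_a|-1)(|N_b|-1)\le|\dL_v|\le(1+\epsilon)q+\frac{2}{\delta}+1<2q$ by \eqref{equ:dLv}. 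Therefore at most one index $i$ can satisfy $|N_i|-1\ge\sqrt{2q}$; on the other hand not all $N_i$ can be this small, because $\sum_i|N_i|\ge d(v)\ge(1-\epsilon)q$ by \eqref{equ:dx+dy}, which is far larger than $s_0\sqrt{2q}$ for $q$ large. Reindexing, $|N_1|-1\ge\sqrt{2q}$ while $|N_i|-1<\sqrt{2q}$ for all $i\ge2$, so
$$|N_1|=d(v)-\sum_{i\ge2}\bigl(|N_i|-1\bigr)\ge d(v)-(s-1)\sqrt{2q}.$$

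Finally I would take $K_v:=N(v)\setminus N_1=\bigcup_{i\ge2}\bigl(N_i\setminus\{v_j\}\bigr)$, a disjoint union; then
$$|K_v|=\sum_{i\ge2}\bigl(|N_i|-1\bigr)<(s-1)\sqrt{2q}\le s_0\sqrt{2q}\le\frac{5\sqrt q}{\delta},$$
where the last step holds once $\delta$ is small enough (equivalently $q$ large enough), since $\delta\ll1-\epsilon$ makes $\frac{2}{\delta}$ dominate the constant part of $s_0$. As $N(v)\setminus K_v=N_1\subseteq L_1\in\dR^c$, the vertex $v$ is near-feasible with associated line $L_1$, and since $v$ was an arbitrary non-feasible vertex this proves Claim~\ref{Cla:U_3}. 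I expect the only genuinely delicate point to be the cross-pair count, namely the fact that $N(v)$ cannot be spread over two large pieces on distinct lines of $\dR^c$; everything else is bookkeeping with the already-recorded bounds $|\dR^c|\le(1+\epsilon)q+2/\delta+1$ and $d(x)\ge(1-\epsilon)q$.
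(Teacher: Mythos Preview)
Your proof is correct and follows essentially the same approach as the paper: pick a neighbour $v_j$ of $v$ with large $d_\dR(v_j)$ via Claim~\ref{Cla:dR(vj)}, cover $N(v)$ by the at most $s_0=\frac{\epsilon}{1-\epsilon}+\frac{2}{\delta}+3$ lines of $\dR^c$ through $v_j$, use the cross-pair count against $|\dL_v|$ to show at most one piece can exceed roughly $\sqrt{q}$, and let $K_v$ be the union of the small pieces. The only cosmetic differences are your threshold $\sqrt{2q}$ versus the paper's $2\sqrt{q}$ and your explicit justification that the cross-pair line avoids $v_j$; the arithmetic leading to $|K_v|\le 5\sqrt{q}/\delta$ goes through in both versions.
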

\begin{proof}
Let $v\in V$ be any non-feasible vertex. We have $d(v)\geq (1-\epsilon )q$.
By Claim \ref{Cla:dR(vj)}, $v$ has a neighbor $u$ with $d_\dR(u)=q+1-m$, where $m\leq \frac{\epsilon}{1-\epsilon}+2/\delta+3$.
Let $\mathcal{U}=\{L\in \dL_v: u\in L\cap N(v)\}$.
We have $|\mathcal{U}|\leq m$ and $\cup_{L\in \mathcal{U}} N_L=N(v)$, where $N_L:=L\cap N(v)$.
We assert that for all but at most one $L\in \mathcal{U}$, the size of $N_L$ is at most $2\sqrt{q}$.
Suppose on the contrary that there are $L_1, L_2\in \mathcal{U}$ with $|N_{L_1}|\geq 2\sqrt{q}+1$ and $|N_{L_2}|\geq 2\sqrt{q}+1$.
Then all pairs $(x,y)$ with $x\in N_{L_1}\backslash\{u\}$ and $y\in N_{L_2}\backslash\{u\}$ should appear in distinct lines in $\dL_v$.
By \eqref{equ:dLv}, this shows that $(1+\epsilon)q+2/\delta+1\geq |\dL_v|\geq (|N_{L_1}|-1)(|N_{L_2}|-1)\geq 4q$, a contradiction.

Let $L_1$ be the line in $\mathcal{U}$ with the maximum $N_{L_1}$ and let $K_v=\bigcup_{L\in \mathcal{U}\backslash \{L_1\}}(N_L\backslash \{u\})$.
Then $N(v)\backslash K_v\subseteq L_1\in \dR^c$ with
$|K_v|\leq \sum_{L\in \mathcal{U}\backslash \{L_1\}}(|N_L|-1)\leq (m-1)\cdot 2\sqrt{q}\leq 2\left(\frac{\epsilon}{1-\epsilon}+\frac2{\delta}+2\right)\sqrt{q}\leq \frac{5\sqrt{q}}{\delta}.$
Therefore, $v$ is near-feasible.
\end{proof}

We express $V=\{v_1,...,v_n\}$ such that $U_1=\{v_1,...,v_a\}$, $U_2=\{v_{a+1},...,v_b\}$ and $U_3=\{v_{b+1},....,v_n\}$ for $1\leq a<b\leq n$.
Since all vertices in $G$ are feasible or near-feasible, by the discussion before Claim \ref{Cla:dR(vj)},
we can conclude that each $v_i\in V$ is associated with a unique line denoted by $L_i$ in $\dP$.
Let $\pi: V\leftrightarrow \dP$ be a function which maps $v_i\leftrightarrow L_i$ for every $i\in [n]$.
Let $\dM=(m_{ij})$ be the incidence matrix of $\dP$ with respect to $\pi$.

Let $s:=|U_3|$. We point out that any $v\in U_3$ either is in $B$ or has $d(v)\leq (1+\epsilon+4\delta)q/2+6/\delta$.
In the latter case, we have the deficiency $f(v)=q+1-d(v)\geq (1-\epsilon-4\delta)q/2-6/\delta$.
So by \eqref{equ:new fV}, we see $$s\leq |B|+\frac{f(V)}{(1-\epsilon-4\delta)q/2-6/\delta}\leq \frac{2}{\delta}+\frac{q+\epsilon q+1}{(1-\epsilon-4\delta)q/2-6/\delta}\leq C^*,$$
where $C^*$ is a constant depending on $\epsilon$ and $\delta$ only.
Let $K$ be the union of $K_v$'s over all $v\in V$.
By Claims \ref{Cla:U2} and \ref{Cla:U_3}, we know that $K_v=\emptyset$ for $v\in U_1$, $K_v=\{w\}$ for $v\in U_2$ and $|K_v|\leq 5\sqrt{q}/\delta$ for $v\in U_3$.
Hence $|K|\leq 1+s\cdot 5\sqrt{q}/\delta=O(\sqrt{q})$.

\begin{claim}\label{final}
$\mathcal{M}$ is symmetric.
\end{claim}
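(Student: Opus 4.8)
The plan is to invoke F\"uredi's symmetry lemma (Lemma \ref{Lem:Furedi-symmetry}): it suffices to produce an ordering of the points (equivalently, of the rows/columns of $\dM$) so that $m_{ij}=m_{ji}$ holds whenever $i\le q^2-q+3$ or $j\le q^2-q+3$, and then the whole matrix $\dM$ is forced to be symmetric. Since $\pi$ maps $v_i\leftrightarrow L_i$, the entry $m_{ij}$ records whether $v_i\in L_j$, i.e.\ whether $v_i\in \pi(v_j)$; so the target condition $m_{ij}=m_{ji}$ says $v_i\in\pi(v_j)\Leftrightarrow v_j\in\pi(v_i)$. We already have a lot of control: for a feasible vertex $v_i$ we have $N(v_i)\subseteq L_i$, and for a near-feasible vertex $N(v_i)\backslash K_{v_i}\subseteq L_i$ with $K$ small ($|K|=O(\sqrt q)$), so outside the tiny exceptional set $K$, edges of $G$ already reflect into symmetric incidences.

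First I would quantify the ``almost symmetry'' we get for free. For any $i,j$ with $v_iv_j\in E(G)$ and $v_j\notin K_{v_i}$ we have $v_i\in N(v_j)\subseteq L_j\cup K_{v_j}$, hence (if also $v_i\notin K_{v_j}$) $v_i\in L_j$, i.e.\ $m_{ij}=1$; the same reasoning with $i,j$ swapped shows $m_{ij}=1\Leftrightarrow m_{ji}=1$ as long as neither of $v_i,v_j$ lies in $K$. The trouble can only come from non-edges of $G$ and from the $O(\sqrt q)$ vertices in $K$. The key point is that a non-edge $v_iv_j$ could a priori have $v_i\in L_j$ but $v_j\notin L_i$: this happens exactly when the ``extra'' incidences that each line $L_i$ has beyond $N(v_i)$ fail to match up. The strategy is therefore to (a) show that the number of indices $i$ for which $L_i$ carries a ``bad'' extra incidence is small — bounded by a constant times $\sqrt q$, using that $|\dR|\ge q^2-\epsilon q-2/\delta$, so $|\dR^c|=O(q)$ and only $O(q)$ lines are not neighborhoods of $A$-vertices, and more sharply using $|K|=O(\sqrt q)$ together with the near-feasibility structure — and then (b) reorder the points so that all these $O(\sqrt q)$ potentially-asymmetric indices are pushed to the tail, i.e.\ receive indices larger than $q^2-q+3$. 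Since $q^2-q+3$ leaves a block of about $2q-2$ tail positions and we only need to bury $O(\sqrt q)$ troublesome points, there is ample room; for every pair $(i,j)$ with $\min(i,j)\le q^2-q+3$, both $v_i$ and $v_j$ are then ``good'' and the free symmetry argument of the previous paragraph gives $m_{ij}=m_{ji}$. Then Lemma \ref{Lem:Furedi-symmetry} upgrades this to full symmetry of $\dM$.

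Carrying this out, the steps in order are: (1) record that $m_{ij}=m_{ji}$ automatically whenever $v_i,v_j\notin K$ and $\{v_i,v_j\}$ is either an edge or a non-edge whose associated lines contribute no spurious cross-incidence — more precisely, prove a clean lemma: if $v_i,v_j\notin K$ then $v_i\in L_j\Leftrightarrow v_iv_j\in E(G)\Leftrightarrow v_j\in L_i$, using that $N(v_i)\subseteq L_i$ (feasible) or $N(v_i)\backslash K \subseteq L_i$, together with the $C_4$-freeness bound $|(N(v_i)\backslash K_{v_i})\cup(N(v_j)\backslash K_{v_j})|>q+1$ established just before Claim \ref{Cla:dR(vj)}, which forces the $(q+1)$-set $L_j$ to contain $v_i$ precisely when $v_iv_j$ is an edge; (2) define the ``defect set'' $D\subseteq V$ to be $K$ together with all $v_i$ whose line $L_i$ or vertex $v_i$ could be implicated in an asymmetry, and bound $|D|=O(\sqrt q)$; (3) choose the labeling $v_1,\dots,v_n$ — which we are free to do, since $\pi$ is only defined once the labeling is fixed — so that $D\subseteq\{v_{q^2-q+4},\dots,v_n\}$, possible because $n-(q^2-q+3)=2q-2\gg |D|$; (4) conclude $m_{ij}=m_{ji}$ for all $(i,j)$ with $i\le q^2-q+3$ or $j\le q^2-q+3$ via step (1); (5) apply Lemma \ref{Lem:Furedi-symmetry}.

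The main obstacle I expect is step (2): pinning down exactly which vertices must go into the defect set $D$ and proving $|D|=O(\sqrt q)$. The subtlety is that a vertex $v_i$ with small degree or with $v_i\in B$ may have $N(v_i)$ strictly and unpredictably contained in $L_i$, so that $L_i$ has several ``free'' incidences not dictated by $G$; one must argue that there are only $O(\sqrt q)$ such vertices (this should follow from $|S|\le q+\epsilon q+1$, $|B|\le 2/\delta$, and the bound $s=|U_3|\le C^*$ recorded just before the claim, so the set of low-degree or $B$-vertices is $O(1)$, and $|K|=O(\sqrt q)$ from the near-feasible contributions) and that each remaining ``generic'' vertex has $N(v_i)$ filling up all but $O(1)$ of $L_i$, so that the few free incidences of $L_i$ are themselves confined to $D$. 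Once $|D|=O(\sqrt q)$ is in hand, the reordering and the invocation of Lemma \ref{Lem:Furedi-symmetry} are routine, and by \eqref{equ:polairty} the symmetry of $\dM$ means $\pi$ is a polarity, which is precisely what the rest of the argument needs to finish Step 3.
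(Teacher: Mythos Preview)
Your plan has the right endpoint (invoke Lemma \ref{Lem:Furedi-symmetry}) but the core step has a genuine gap. Two issues, the second being decisive.

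First, your step (1) biconditional ``$v_i\in L_j\Leftrightarrow v_iv_j\in E(G)$'' is simply false for $v_i,v_j\notin K$. Take any $v_j\in S_q\setminus K$ (there are $\Theta(q)$ of these): since $v_j$ is feasible we have $N(v_j)\subseteq L_j$, but $|N(v_j)|=q<q+1=|L_j|$, so $L_j$ contains one extra point $v_i\notin N(v_j)$; for this $v_i$ we get $v_i\in L_j$ with $v_iv_j\notin E(G)$. The inequality $|(N(v_i)\setminus K_{v_i})\cup(N(v_j)\setminus K_{v_j})|>q+1$ you cite has nothing to do with this direction --- it shows that two distinct vertices cannot be associated to the same line, not that $L_j$ has no slack over $N(v_j)$. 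Consequently your defect set $D$ must swallow all of $S$, so $|D|\ge |S|\ge q+1$; the claim $|D|=O(\sqrt q)$ is wrong (your parenthetical ``the set of low-degree or $B$-vertices is $O(1)$'' overlooks that $|S|$ is $\Theta(q)$, not $O(1)$). This is still within the budget $2q-2$ allowed by Lemma \ref{Lem:Furedi-symmetry}, so by itself it is not fatal.

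The fatal issue is step (4). Lemma \ref{Lem:Furedi-symmetry} requires $m_{ij}=m_{ji}$ whenever $i\le q^2-q+3$ \emph{or} $j\le q^2-q+3$, i.e.\ whenever \emph{at least one} index is good. Your step (1), however, can at best yield $m_{ij}=m_{ji}$ when \emph{both} $v_i,v_j\notin D$; it says nothing about mixed pairs where $v_i$ is good but $v_j\in D$. And such mixed pairs are exactly where the trouble lies: with $v_j\in S$ you cannot read $m_{ji}$ off from $E(G)$. The paper closes this gap with a clean counting trick you are missing: restrict to $v_i\in A\setminus K$ (so $L_i=N(v_i)$ \emph{exactly}), prove only the one-sided implication $m_{ij}=1\Rightarrow m_{ji}=1$ for \emph{all} $j$ (this holds because $m_{ij}=1$ means $v_j\in L_i=N(v_i)$, hence $v_i\in N(v_j)\setminus K\subseteq L_j$), and then observe that row $i$ and column $i$ of $\dM$ each have exactly $q+1$ ones --- so the one-sided implication forces them to coincide. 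Since $|A\setminus K|\ge q^2-q+3$, Lemma \ref{Lem:Furedi-symmetry} applies. Without this row/column count, your outline does not verify the hypothesis of Lemma \ref{Lem:Furedi-symmetry}.
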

\begin{proof}
We assert that if $v_i\in A\backslash K$, then $m_{ij}=m_{ji}$ for all $j\in[n]$.
If $m_{ij}=1$, then as $v_i\in A$, we have $v_j\in L_i=N(v_i)\in \dR$.
Since $v_i\notin K$, we see $v_i\in N(v_j)\backslash K\subseteq N(v_j)\backslash K_{v_j}\subseteq L_j$,
which shows that $m_{ji}=1=m_{ij}$.
Now we observe that as $v_i\in A$,
the $i$'{th} column and the $i$'{th} row of $\dM$ have exactly $q+1$ many 1-entries, and all these 1-entries are in the symmetric positions.
This shows that the $i$'{th} column and the $i$'{th} row are symmetric, proving the assertion.	
Since $|A\backslash K|\geq |A|-|K|\geq (q^2-\epsilon q-2/\delta)-O(\sqrt{q})\geq q^2-q+3$,
by Lemma \ref{Lem:Furedi-symmetry}, the whole matrix $\mathcal{M}$ is symmetric.
\end{proof}

Hence we see from \eqref{equ:polairty} that the above function $\pi: V\leftrightarrow \dP$ is a polarity of the projective plane $\dP$.
Let $H$ be the polarity graph of $\pi$.
For any $k\times\ell$ matrices $\mathcal{X}=(x_{ij})$ and $\mathcal{Y}=(y_{ij})$,
we say $\mathcal{X}$ {\it is at most} $\mathcal{Y}$ if $x_{ij}\leq y_{ij}$ for all $i,j$ and we express this by $\mathcal{X}\leq \mathcal{Y}$.

Now we are finishing the proof of Theorem \ref{Thm:stability-max-degree} by showing that $G$ is a subgraph of $H$.
Let $\mathcal{A}=(a_{ij})$ be the adjacent matrix of the graph $G$.
It suffices to shows that $\mathcal{A}\leq \dM$.
We call these $(i,j)$-entries with $a_{ij}=1$ and $m_{ij}=0$ {\it problematic}.
Since both $\mathcal{A}$ and $\dM$ are $0/1$ matrices, it is equivalent for us to show that there is no problematic entries.

For every $v_i\in U_1$, as it is feasible, we see that $N(v_i)\subseteq L_i$ and
thus the $i$'th row of $\mathcal{A}$ is at most the $i$'th row of $\dM$.
Since both $\mathcal{A}$ and $\dM$ are symmetric,
the $i$'th column of $\mathcal{A}$ is also at most the $i$'th column of $\dM$, whenever $v_i\in U_1$.
Now consider vertices $v_i\in U_2$.
By Claim \ref{Cla:U2}, $N(v_i)\backslash \{w\}\subseteq L_i$, where $w=v_{\ell}$ is fixed.
Consider $a_{ij}=1$ for possible $j$ which is not $\ell$.
Then we have $v_j\in N(v_i)\backslash \{w\}\subseteq L_i$.
This shows that the $i$'th row of $\mathcal{A}$ is at most the $i$'th row of $\dM$, except the $(i,\ell)$-entry.
By symmetry, we see that for all $v_i\in U_2$, the $i$'th column of $\mathcal{A}$ is at most the $i$'th column of $\dM$,
except the possible $(\ell,i)$-entry.
We also know $w$ is feasible or near-feasible.
So $|K_w|\leq 5\sqrt{q}/\delta$ and the number of problematic $(\ell,i)$-entries is clearly at most $|K_w|\leq 5\sqrt{q}/\delta$.
This further shows that the number of problematic $(i,j)$- or $(j,i)$-entries for all $v_i\in U_2$ is at most $10\sqrt{q}/\delta$.
Note that $|U_3|=s$ is at most a constant $C^*$ depending on only $\epsilon$ and $\delta$. Putting all the above together,
we see that the number of problematic $(i,j)$-entries for $i,j\in [n]$ is at most $10 \sqrt{q}/\delta+s^2=O(\sqrt{q})$.

Let $E_0$ be the set of $v_iv_j$ for all problematic $(i,j)$-entries.
It is easy to see that $E_0=E(G)\backslash E(H)$ and $|E_0|=O(\sqrt{q})$.
Suppose that there is some edge say $e=v_iv_j\in E_0$.
By Lemma \ref{Lem:polarity+plus}, $H\cup \{e\}$ contains at least $q-1$ copies of $C_4$,
all of which contain the edge $e$ and are edge-disjoint otherwise.
Hence in order to turn $H\cup \{e\}$ into a subgraph of $G$ containing $e$ (which is $C_4$-free),
one needs to delete at least $q-1$ edges in $H\cup \{e\}$.
On the other hand, since $H$ is a polarity graph, we have $e(H)\leq \frac12q(q+1)^2$ and $|E(H)\backslash E(G)|-|E_0|=e(H)-e(G)\leq \frac12\epsilon q$.
So one can delete $|E(H)\backslash E(G)|\leq \frac12\epsilon q+|E_0|\leq \frac12\epsilon q+ O(\sqrt{q})< q-1$ edges to turn $H\cup \{e\}$ into a subgraph of $G$ while preserving the edge $e$.
This is a contradiction.
Therefore, $E_0=\emptyset$ and $G$ is a subgraph of $H$.

It only remains to show that the polarity graph $H$ is unique.
Recall that the projective plane $\dP$ containing $\dR$ has been shown to be unique.
So it is equivalent to show that the polarity $\pi$ is unique.
Suppose for a contradiction that there exists another polarity $\pi': V\leftrightarrow \dP$,
where $\pi': v_i\leftrightarrow L_{\sigma(i)}$ for some permutation $\sigma$ on $[n]$.
Let $\dM'=(m'_{ij})$ be the incidence matrix of $\dP$ with respect to $\pi'$.
By the same proof as above, we can deduce that $\dA\leq \dM'$.
By \eqref{equ:dx+dy}, we see that any vertex $v_i\in V$ has degree at least $(1-\epsilon)q\geq 2$.
Choose any pair $\{x_i,y_i\}\subseteq N(v_i)$.
Since the $i$'th row of $\dA$ is at most the $i$'th row of $\dM'$,
we see $\{x_i,y_i\}\subseteq N(v_i)\subseteq L_{\sigma(i)}\in \dP$.
Also we have $\{x_i,y_i\}\subseteq N(v_i)\subseteq L_i\in \dP$.
Since $\dP$ is a projective plane, it is clear that $L_{\sigma(i)}=L_i$ for all $i\in [n]$.
This shows that $\pi=\pi'$ and indeed the polarity graph $H$ is unique.
The proof of Theorem \ref{Thm:stability-max-degree} (and thus Theorem \ref{Thm:stability}) is completed.
\QED

\medskip

We remark that it would suffice to choose $q_\epsilon =\frac{10^{10}}{(1-\epsilon)^6}$ in the statement of Theorem \ref{Thm:stability-max-degree}.

\section{Tur\'an numbers}\label{sec:Turan}
In this section, we discuss the consequences of Theorem \ref{Thm:stability} on Tur\'an numbers.
First, let us restate and prove Corollary \ref{coro:Turan1}. Recall the definition of $\lambda(q)$.

\begin{corollary}
Let $q$ be even. If $\lambda(q)\geq \frac12 q(q+1)^2-\frac{1}{2}q+o(q)$, then $\ex(q^2+q+1,C_4)=\lambda(q)$,
where the equality holds only for polarity graphs of order $q$ with $\lambda(q)$ edges;
otherwise, $\ex(q^2+q+1,C_4)< \frac12 q(q+1)^2-\frac{1}{2}q+o(q)$.
In particular, $\ex(q^2+q+1,C_4)\leq \max \left\{\lambda(q), \frac12 q(q+1)^2-\frac12 q+o(q)\right\}$.
\end{corollary}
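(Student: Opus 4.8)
The plan is to deduce the corollary directly from Theorem~\ref{Thm:stability} (in the precise form of Theorem~\ref{Thm:stability-2}) together with Proposition~\ref{prop:G(pi)}, which says that every polarity graph of order $q$ is a $C_4$-free graph on $q^2+q+1$ vertices; recall also that $\lambda(q)$ is by definition the largest number of edges over all such graphs (and $\lambda(q)=0$ if none exists). Write $\theta(q)=\frac12 q(q+1)^2-\frac12 q+o(q)$ for the edge threshold of Theorem~\ref{Thm:stability}, and let the $o(q)$ appearing in the corollary denote the same function, so that $e(G)\ge\lambda(q)\ge\theta(q)$ is exactly strong enough to invoke Theorem~\ref{Thm:stability}.

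First I would isolate two facts used repeatedly. (i) If a polarity graph $H$ of order $q$ has $\lambda(q)$ edges (in particular $\lambda(q)>0$), then since $H$ is $C_4$-free we get $\ex(q^2+q+1,C_4)\ge\lambda(q)$. (ii) If $G$ is any $C_4$-free graph on $q^2+q+1$ vertices with $e(G)\ge\theta(q)$, then Theorem~\ref{Thm:stability} yields a (unique) polarity graph $H'$ of order $q$ with $G\subseteq H'$, so $e(G)\le e(H')\le\lambda(q)$; moreover if $e(G)=\lambda(q)$ then $e(G)=e(H')$, which forces $G=H'$ because they have the same vertex set and the same number of edges. Hence in the equality case $G$ is itself a polarity graph of order $q$ with $\lambda(q)$ edges.

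Now split on the hypothesis. If $\lambda(q)\ge\theta(q)$: by (i) we have $\ex\ge\lambda(q)$, and applying (ii) to an extremal $C_4$-free graph $G$ (which satisfies $e(G)=\ex\ge\lambda(q)\ge\theta(q)$) gives $\ex=e(G)\le\lambda(q)$, so $\ex=\lambda(q)$; the equality clause of (ii) shows the extremal graphs are precisely the polarity graphs of order $q$ with $\lambda(q)$ edges, and conversely each such graph is extremal by (i). If instead $\lambda(q)<\theta(q)$: were $\ex\ge\theta(q)$, then (ii) applied to an extremal graph would give $\ex\le\lambda(q)<\theta(q)$, a contradiction; hence $\ex<\theta(q)$. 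The final ``in particular'' inequality $\ex\le\max\{\lambda(q),\theta(q)\}$ is then immediate, being an equality to $\lambda(q)$ in the first case and a strict inequality below $\theta(q)$ in the second.

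There is essentially no serious obstacle here: the corollary is a formal consequence of the stability theorem and the $C_4$-freeness of polarity graphs. The only point needing attention is the bookkeeping with the $o(q)$ error term — one must ensure the hypothesis on $\lambda(q)$ uses the \emph{same} error function as the one for which Theorem~\ref{Thm:stability-2} is valid (with $c$ chosen arbitrarily close to $1$ and $q\ge q_c$), which is why it is convenient to fix the single symbol $\theta(q)$ throughout the argument.
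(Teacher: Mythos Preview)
Your proposal is correct and follows essentially the same approach as the paper: both arguments case-split on whether $\lambda(q)$ meets the stability threshold, apply Theorem~\ref{Thm:stability} to an extremal graph $G$ to sandwich $\lambda(q)\le e(G)\le e(H)\le\lambda(q)$ in the first case, and derive a contradiction in the second. Your version is slightly more explicit in isolating facts (i) and (ii) and in remarking on the $o(q)$ bookkeeping, but the logical structure is identical.
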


\begin{proof}
Let $q$ be even and $G$ be an extremal graph for $\ex(q^2+q+1,C_4)$.
First suppose that $\lambda(q)\geq \frac12 q(q+1)^2-\frac{1}{2}q+o(q)$.
As $e(G)\geq \lambda(q)\geq \frac12 q(q+1)^2-\frac{1}{2}q+o(q)$, by Theorem \ref{Thm:stability},
there exists a polarity graph $H$ of order $q$ containing $G$ as a subgraph.
Then we have $\lambda(q)\leq e(G)\leq e(H)\leq \lambda(q)$,
which implies that $G=H$ must be a polarity graph of order $q$ with $\lambda(q)$ edges.
Now assume $\lambda(q)<\frac12 q(q+1)^2-\frac{1}{2}q+o(q)$.
By Theorem \ref{Thm:stability}, it is easy to conclude that $e(G)<\frac12 q(q+1)^2-\frac{1}{2}q+o(q)$.
\end{proof}

A quick inference of this corollary is that:
For all even integers $q$ such that there is no projective planes of order $q$, it holds that
$$\ex(q^2+q+1,C_4)\leq \frac12 q(q+1)^2-\frac12 q+o(q).$$
We point out that by Theorem \ref{Thm:BR}, there are infinitely many such integers $q$,
including all integers $q\equiv 2\mod 4$ which cannot be expressed as a sum of two square numbers.

Another inference can be stated related to the existence of orthogonal polarity graphs of order $q$.

\begin{corollary}\label{coro:Turan2}
Let $q$ be a large even integer. If there exists an orthogonal polarity graph of order $q$, then $\ex(q^2+q+1,C_4)=\frac12 q(q+1)^2$;
otherwise, we have $\ex(q^2+q+1,C_4)\leq \frac12 q(q+1)^2-\frac12\sqrt{q}$ and in addition if $q$ is not a square number, then $\ex(q^2+q+1,C_4)\leq \frac12 q(q+1)^2-\frac{1}{2}q+o(q)$.
\end{corollary}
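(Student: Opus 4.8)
The plan is to deduce Corollary \ref{coro:Turan2} from Corollary \ref{coro:Turan1}, using the exact edge count of a polarity graph in Proposition \ref{prop:G(pi)} together with Baer's identity \eqref{equ:a(pi)}. Suppose first that an orthogonal polarity graph $H$ of order $q$ exists. Then $m_\pi=0$, so $e(H)=\frac12 q(q+1)^2$ by Proposition \ref{prop:G(pi)}, giving $\lambda(q)\ge \frac12 q(q+1)^2$; conversely $m_\pi\ge 0$ forces every polarity graph of order $q$ to have at most $\frac12 q(q+1)^2$ edges, so $\lambda(q)=\frac12 q(q+1)^2$. Since $\lambda(q)\ge \frac12 q(q+1)^2-\frac12 q+o(q)$, Corollary \ref{coro:Turan1} then yields $\ex(q^2+q+1,C_4)=\lambda(q)=\frac12 q(q+1)^2$.

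Now suppose no orthogonal polarity graph of order $q$ exists. Then every polarity $\pi$ of order $q$ is non-orthogonal, i.e. $m_\pi\ge 1$, so by Proposition \ref{prop:G(pi)} every polarity graph of order $q$ has at most $\frac12 q(q+1)^2-\frac12\sqrt q$ edges, whence $\lambda(q)\le \frac12 q(q+1)^2-\frac12\sqrt q$. Since for large $q$ we also have $\frac12 q(q+1)^2-\frac12 q+o(q)\le \frac12 q(q+1)^2-\frac12\sqrt q$, the estimate $\ex(q^2+q+1,C_4)\le \max\{\lambda(q),\,\frac12 q(q+1)^2-\frac12 q+o(q)\}$ of Corollary \ref{coro:Turan1} gives the first claimed bound. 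If moreover $q$ is not a square, then $\sqrt q$ is irrational, so $m_\pi\sqrt q$ can be an integer only when $m_\pi=0$; hence every polarity of order $q$ is orthogonal, and by assumption there is then no polarity of order $q$ at all, i.e. $\lambda(q)=0$. As $\lambda(q)<\frac12 q(q+1)^2-\frac12 q+o(q)$, the second clause of Corollary \ref{coro:Turan1} gives $\ex(q^2+q+1,C_4)<\frac12 q(q+1)^2-\frac12 q+o(q)$, which is even stronger than the stated bound.

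I do not expect a genuine obstacle here; the argument is a short chain of deductions. The only points requiring some care are the bookkeeping of the $o(q)$-terms inherited from Corollary \ref{coro:Turan1} (which should be read as: fix any $c\in(0,1)$, invoke Theorem \ref{Thm:stability-2} for all $q$ sufficiently large in terms of $c$, and then let $c\to 1$), and the elementary number-theoretic observation that a non-square $q$ forces the Baer parameter $m_\pi$ to vanish -- this last point being exactly what upgrades the $\frac12\sqrt q$ gap to a $\frac12 q$ gap when $q$ is not a square.
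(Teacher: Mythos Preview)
Your proof is correct and follows essentially the same route as the paper: bound $\lambda(q)$ via Proposition~\ref{prop:G(pi)} and Baer's identity, then feed this into Corollary~\ref{coro:Turan1}. The only minor difference is that for the first assertion (when an orthogonal polarity graph exists) the paper simply cites F\"uredi's Theorem~\ref{Thm:Furedi} directly, whereas you go through Corollary~\ref{coro:Turan1}; both are valid, the paper's citation being slightly more elementary.
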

\begin{proof}
By Proposition \ref{prop:G(pi)}, any polarity graph of order $q$ has $\frac12 q(q+1)^2-\frac12 m\sqrt{q}$ edges for some integer $m\geq 0$.
The first assertion follows by Theorem \ref{Thm:Furedi}.
Now we may assume that $m\geq 1$ for any polarity graph of order $q$ and thus $\lambda(q)\leq\frac12 q(q+1)^2-\frac12\sqrt{q}$.
By Corollary \ref{coro:Turan1}, $\ex(q^2+q+1,C_4)\leq \max \left\{\lambda(q), \frac12 q(q+1)^2-\frac12 q+o(q)\right\}\leq \frac12 q(q+1)^2-\frac12\sqrt{q}$.
In addition, if $q$ is not a square number, then this implies that there is no polarity graphs of order $q$ and the conclusion follows easily.
\end{proof}

We conclude this section with an explicit lower bound of $\ex(n,C_4)$ for later use.

\begin{prop}\label{prop:lower-Turan}
For any sufficiently large $n$, there exists some prime number $p$ with $\sqrt{n}-n^{0.2625}-1\leq p\leq \frac12(-1+\sqrt{4n-3})$ such that
$\ex(n,C_4)\geq \ex(p^2+p+1,C_4)\geq \frac{1}{2}(n^{1.5}-3n^{1.2625}+n)$.
\end{prop}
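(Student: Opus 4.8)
The plan is to locate a suitable prime $p$ just below $\sqrt{n}$ using the prime-gap bound of Theorem \ref{Thm:prime exists}, then combine the trivial monotonicity of $\ex(\cdot,C_4)$ in the number of vertices with the Erd\H{o}s--R\'enyi--S\'os--Brown lower bound \eqref{equ:lower}, and finally estimate the resulting quantity $\frac12 p(p+1)^2$ in terms of $n$.

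First I would set $x=\frac12(-1+\sqrt{4n-3})$, which is exactly the positive real root of $t^2+t+1=n$; a one-line computation (evaluating $t^2+t+1$ at $t=\sqrt n-1$ and at $t=\sqrt n$) gives $\sqrt{n}-1<x<\sqrt{n}$ for large $n$. Applying Theorem \ref{Thm:prime exists} to this (large) value $x$, the interval $[x-x^{0.525},x]$ contains a prime $p$. Since $x<\sqrt{n}$ we have $x^{0.525}<n^{0.2625}$, so
$$\sqrt{n}-n^{0.2625}-1< x-x^{0.525}\le p\le x=\tfrac12(-1+\sqrt{4n-3}),$$
which places $p$ in the required range; in particular $p^2+p+1\le n$.

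Next, because $p^2+p+1\le n$, adjoining $n-(p^2+p+1)$ isolated vertices to an extremal $C_4$-free graph on $p^2+p+1$ vertices shows $\ex(n,C_4)\ge\ex(p^2+p+1,C_4)$, and \eqref{equ:lower} gives $\ex(p^2+p+1,C_4)\ge\frac12 p(p+1)^2\ge\frac12 p^3$. It then remains to verify the elementary inequality $p^3\ge n^{1.5}-3n^{1.2625}+n$ for the chosen $p$. Writing $m:=n^{0.2625}+1$ and using $p\ge\sqrt{n}-m$, I would expand
$$(\sqrt n-m)^3=n^{3/2}-3nm+3n^{1/2}m^2-m^3;$$
here $3nm=3n^{1.2625}+3n$, while $3n^{1/2}m^2\ge 3n^{1.025}$ and $m^3\le 8n^{0.7875}$, so $(\sqrt n-m)^3\ge n^{3/2}-3n^{1.2625}-3n+3n^{1.025}-8n^{0.7875}\ge n^{3/2}-3n^{1.2625}+n$ once $n$ is large enough that $3n^{1.025}\ge 4n+8n^{0.7875}$. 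This yields $\ex(n,C_4)\ge\ex(p^2+p+1,C_4)\ge\frac12 p^3\ge\frac12(n^{1.5}-3n^{1.2625}+n)$, as desired.

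There is no serious obstacle here; the only point requiring care is bookkeeping the exponents, namely that the $0.525$ exponent in Theorem \ref{Thm:prime exists} yields exactly the $0.2625$ exponent after substituting $x\approx\sqrt n$, and that the positive term $3n^{1/2}m^2$ in the cubic expansion is large enough to absorb the stray $-3n$ and still leave a surplus $+n$. Everything else is monotonicity of the Tur\'an function and the known construction behind \eqref{equ:lower}.
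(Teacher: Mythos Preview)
Your proposal is correct and follows essentially the same route as the paper: set $x=\tfrac12(-1+\sqrt{4n-3})$, invoke Theorem~\ref{Thm:prime exists} to get a prime $p\in[x-x^{0.525},x]$, and then use $p^2+p+1\le n$ together with the polarity-graph lower bound to obtain $\ex(n,C_4)\ge\ex(p^2+p+1,C_4)\ge\tfrac12 p(p+1)^2$. The paper cites Theorem~\ref{Thm:Furedi} (which gives equality) rather than \eqref{equ:lower}, and simply asserts the final numerical estimate, whereas you supply the cubic expansion explicitly; these are cosmetic differences only.
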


\begin{proof}
Let $x=\frac12(-1+\sqrt{4n-3})$.
As $n$ is sufficiently large, by Theorem \ref{Thm:prime exists}, there exists a prime number $p\in [x-x^{0.525},x]$.
So $n=x^2+x+1\geq p^2+p+1$, where $p\geq x-x^{0.525}\geq \sqrt{n}-n^{0.2625}-1$.
By Theorem \ref{Thm:Furedi}, we derive
$\ex(n,C_4)\geq \ex(p^2+p+1,C_4)= \frac{1}{2}p(p+1)^2\geq \frac{1}{2}(n^{1.5}-3n^{1.2625}+n).$
\end{proof}

\section{Supersaturation: one additional edge}\label{sec:supert=1}
Here, we extend Theorem \ref{Thm:supersaturation-t=1} to Theorem \ref{Thm:supersaturation-t=1 general}, which implies Theorem \ref{Thm:super-t=1-classify} clearly.

\begin{thm}\label{Thm:supersaturation-t=1 general}
Let $q$ be a large even integer and $G$ be a graph on $q^2+q+1$ vertices with $\frac{1}{2}q(q+1)^2+1$ edges.
Then either $G$ has at least $q^{9/8}/30 $ copies of $C_4$,
or there exists an orthogonal polarity graph $H$ of order $q$ such that $|E(G)\backslash E(H)|=s$ and $|E(H)\backslash E(G)|=s-1$ for some $1\leq s\leq q^{1/8}/30$.
In the latter case, the number of copies of $C_4$ in $G$ is between $sq-s^2$ and $sq+s^2$.
\end{thm}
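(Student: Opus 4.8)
The plan is to organise the argument around the quantity $s$, defined as the least number of edges whose removal from $G$ destroys every copy of $C_4$. Fix a minimal such set $R=\{f_1,\dots,f_s\}$ and put $G'=G-R$; then $G'$ is $C_4$-free with $e(G')=\tfrac12q(q+1)^2+1-s$, and by minimality of $R$ it is in fact an \emph{edge-maximal} $C_4$-free subgraph of $G$ (adding any $f_i$ back creates a $C_4$). Since each $f_i$ lies in a copy of $C_4$ contained in $G'+f_i$, and these copies are pairwise distinct (each contains exactly one edge of $R$), we get $N\ge s$, where $N$ is the number of copies of $C_4$ in $G$. In particular, if $N<q^{9/8}/30$ then $s<q^{9/8}/30$, and the task reduces to showing that in this regime $s\le q^{1/8}/30$, that $G'$ embeds into an \emph{orthogonal} polarity graph, and that $N=sq\pm s^2$.

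The core case is $s\le\tfrac12 q$, equivalently $e(G')\ge\tfrac12q(q+1)^2-\tfrac12q+o(q)$, so that Theorem~\ref{Thm:stability} applies to $G'$ and yields a unique polarity graph $H$ of order $q$ with $G'\subseteq H$. Writing $e(H)=\tfrac12q(q+1)^2-\tfrac m2\sqrt q$, we have $|E(H)\setminus E(G')|=e(H)-e(G')=s-1-\tfrac m2\sqrt q\ge0$. For each $f_i\in R$, Lemma~\ref{Lem:polarity+plus} gives at least $q-1$ copies of $C_4$ in $H+f_i$ which pairwise meet only in $f_i$; at most $|E(H)\setminus E(G')|$ of them use an edge of $H$ outside $G'$, so at least $q-1-|E(H)\setminus E(G')|=q-s+\tfrac m2\sqrt q$ survive inside $G'+f_i\subseteq G$ and use no other edge of $R$. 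These are distinct over $i$, so $N\ge s\bigl(q-s+\tfrac m2\sqrt q\bigr)$; if $m\ge1$ this already forces $N\gg q^{9/8}/30$, a contradiction, so $m=0$, $H$ is orthogonal, and $N\ge s(q-s)$. If moreover $s>q^{1/8}/30$ then $s(q-s)$ exceeds $q^{9/8}/30$ (this is exactly the arithmetic behind the clean threshold $(\ell+1)q-(\ell+1)^2$ of Theorem~\ref{Thm:super-t=1-classify} at $s=\ell+1$), again a contradiction. Hence $s\le q^{1/8}/30$ and $|E(H)\setminus E(G')|=s-1$. Edge-maximality of $G'$ forces $R\cap E(H)=\emptyset$ (otherwise $G'+f_i\subseteq H$ would be a larger $C_4$-free subgraph of $G$), so $E(G)\setminus E(H)=R$ has size $s$ while $E(H)\setminus E(G)=E(H)\setminus E(G')$ has size $s-1$; that is, $G$ arises from $H$ by deleting/adding a total of $2s-1$ edges. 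For the count: the copies of $C_4$ meeting $R$ in exactly one edge number $\sum_i\#\{C_4\text{'s in }G'+f_i\text{ through }f_i\}$, which lies in $[\,s(q-s),\,s(q+1)\,]$ by the above and Lemma~\ref{Lem:polarity+plus}; using that any two vertices have at most one common neighbour in $H$ (hence in $G'$), the copies meeting $R$ in at least two edges contribute $O(s^2)$, and a short accounting sharpens the range to $[\,sq-s^2,\,sq+s^2\,]$.

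It remains to dispose of the case $s>\tfrac12 q$, where $e(G')<\tfrac12q(q+1)^2-\tfrac12q+o(q)$ and Theorem~\ref{Thm:stability} is no longer available for $G'$; here one must show directly that $N\ge q^{9/8}/30$, and I expect this to be the main obstacle. The plan is to run a Füredi-type analysis on $G$ itself, parallel to the reduction of Section~\ref{sec:Delta=q+1}: since $N$ is small, deleting the at most $4N$ edges that lie in some copy of $C_4$ already produces a $C_4$-free graph, so a codegree/degree computation carried out for $G$ rather than for a $C_4$-free host pins $\Delta(G)$ down to essentially $q+1$ and forces the degree sequence of $G$ to be essentially that of a polarity graph. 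Feeding this back shows that all copies of $C_4$ can in fact be destroyed by deleting at most $\tfrac12q+o(q)$ edges — unless $N$ was already $\ge q^{9/8}/30$ — so this case collapses to the previous one. Carrying out this structural step without the convenience of working inside a $C_4$-free graph (essentially redoing the Section~\ref{sec:Delta=q+1} reduction in the supersaturated setting) is where the real technical work lies; everything else is bookkeeping with Lemma~\ref{Lem:polarity+plus} and the uniqueness statements for polarity graphs.
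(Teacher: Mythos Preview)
Your core case $s\le\tfrac12 q$ is correct and in fact cleaner than the paper's route: once $G'=G-R$ is $C_4$-free with $e(G')\ge\tfrac12q(q+1)^2-\tfrac12q+o(q)$, Theorem~\ref{Thm:stability} gives $G'\subseteq H$, the observation $R\cap E(H)=\emptyset$ identifies $E(G)\setminus E(H)=R$, and the Lemma~\ref{Lem:polarity+plus} count $N\ge s(q-s+\tfrac m2\sqrt q)$ forces $m=0$ and then $s\le q^{1/8}/30$. The paper does \emph{not} invoke Theorem~\ref{Thm:stability} here; instead it reruns the supersaturation analysis of \cite{HMY20} (degree bounds, deficiency estimates, the ``opposite pair'' machinery of Claims~\ref{Clm:A}--\ref{Clm:B}) directly on $G$ under the assumption $\#C_4<tq$, and only at the end extracts a small edge set $E^*$ with $|E^*|\le 45t^2$ whose removal leaves a graph with $\le 0.1q$ copies of $C_4$ that embeds in an orthogonal polarity graph. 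So your shortcut via the stability theorem genuinely bypasses a chunk of work in the easy regime.

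The case $s>\tfrac12 q$, however, is a real gap, and you have correctly located it. The naive bounds $N\ge s$ and $s\le 4N$ only give $s<2q^{9/8}/15$, far from the $s\le\tfrac12 q$ you need to feed back into Theorem~\ref{Thm:stability}. Your proposed fix---carry out a F\"uredi-type degree/codegree analysis on $G$ itself to force $\Delta(G)\le q+O(t)$ and then squeeze the degree sequence until one can destroy all $C_4$'s with $o(q)$ deletions---is exactly what the paper does in Claims~\ref{Clm:A}--\ref{Clm:B} (and the surrounding paragraphs), and it is not short: one needs the bound $\Delta(G)\le q+2+t$, then $\sum_k(k+1)|S_{q+2+k}|\le 2t+2$, then the opposite-pair lemma to isolate a bounded set $W$ so that the remaining $C_4$'s are hit by $O(t^2)$ edges. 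In other words, the ``hard half'' of your plan \emph{is} the paper's proof; without it you do not yet have a proof. Note also that the paper's route avoids your case split entirely: by doing the structural analysis first it obtains $|E^*|\le 45t^2\ll q$ outright, so the analogue of ``$s>\tfrac12 q$'' never arises.

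Two minor points on the part you did write out. First, the boundary $s=\tfrac12 q$ is slightly delicate since Theorem~\ref{Thm:stability-2} is stated with a strict constant $c<1$; you need a hair of slack, e.g.\ $s\le\tfrac{c}{2}q$ for some fixed $c<1$, which is harmless once the hard case gives $s=O(t^2)$. Second, your upper bound ``$O(s^2)$'' for $C_4$'s meeting $R$ in at least two edges needs the observation the paper makes explicit: assign to each such $C_4$ a pair of \emph{non-adjacent} edges from $R$ and check that any such pair lies in at most two $C_4$'s of $G$, giving $\le 2\binom{s}{2}$ and hence the sharp $sq+s^2$.
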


\noindent{\it Proof.}
We will prove this by following the proof of Theorem 1.3 in \cite{HMY20} closely.
It should be mentioned that most claims there can be generalized in this setting easily and thus, in these cases,
we often only mention the modified statements without providing many details.
For other cases where extra arguments are needed, we give self-contained proofs.

By $\# C_4$, we denote the number of copies of $C_4$ in $G$.
For $v\in V:=V(G)$, let $c(v)$ be the number of copies of $C_4$ containing $v$.
Throughout this proof, let $t=q^{1/8}/30$, we may assume that
\begin{equation}\label{equ:new C4}
\# C_4<tq \mbox{ ~and~ } \sum_{v\in V}c(v)=4\cdot \# C_4<4tq.
\end{equation}

We say a pair $\{u,v\}\subseteq V$ is {\it opposite} if $d(u,v)\geq 2$.
The deficiency $f(v)$ is defined by $f(v)=\max\{q+1-d(v),0\}.$
So $f(v)+d(v)\geq q+1$ holds for every $v\in V$.

Recall the definitions of sets $UP$ and $P_2$.
For $A\subseteq V$, let $UP\cap A$ be the set of uncovered pairs $\{u,v\}\subseteq A$ of $G$
and let $P_2\cap A$ be the set of $2$-paths of $G$ with both endpoints in $A$.

First, we can get the following claim about the lower bound of $\# C_4$ (see Proposition 2.5 in \cite{HMY20}).

\begin{claim}[\cite{HMY20}]\label{Claim:C4}
	Let $G$ be a graph with $A\subseteq V(G)$. Then $2\# C_4\geq |P_2\cap A|+|UP\cap A|-\binom{|A|}{2}$.
\end{claim}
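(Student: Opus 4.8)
The plan is to derive the inequality from the standard codegree expression for the number of $4$-cycles, and then to manipulate it using the polynomial convention $\binom{x}{2}=x(x-1)/2$ fixed in Section~\ref{sec:preli}. For distinct $u,v\in V(G)$ recall $d(u,v)=|N_G(u)\cap N_G(v)|$. The starting point is the well-known identity
$$2\,\#C_4=\sum_{\{u,v\}\subseteq V(G)}\binom{d(u,v)}{2},$$
which holds because a $4$-cycle $u$--$x$--$v$--$y$--$u$ is counted once through its diagonal pair $\{u,v\}$ (as the sub-pair $\{x,y\}$ of common neighbours) and once through its diagonal pair $\{x,y\}$; pairs with $d(u,v)\le 1$ contribute $0$, consistently with the convention. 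Restricting the sum to pairs lying inside $A$ only removes non-negative terms, so $2\,\#C_4\ge\sum_{\{u,v\}\subseteq A}\binom{d(u,v)}{2}$.

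Next I would use the elementary identity $\binom{d}{2}=\binom{d-1}{2}+(d-1)$, valid for all reals $d$, to split the restricted sum:
$$\sum_{\{u,v\}\subseteq A}\binom{d(u,v)}{2}=\sum_{\{u,v\}\subseteq A}\binom{d(u,v)-1}{2}+\sum_{\{u,v\}\subseteq A}\bigl(d(u,v)-1\bigr).$$
The second sum equals $|P_2\cap A|-\binom{|A|}{2}$, since $\sum_{\{u,v\}\subseteq A}d(u,v)$ counts precisely the $2$-paths with both endpoints in $A$ (one for each common neighbour) and there are $\binom{|A|}{2}$ pairs in $A$. For the first sum, note that $\binom{d-1}{2}\ge 0$ whenever the integer $d\ge 1$ (then $d-1$ is a non-negative integer), while for an uncovered pair, where $d(u,v)=0$, the convention gives $\binom{-1}{2}=(-1)(-2)/2=1$; hence $\sum_{\{u,v\}\subseteq A}\binom{d(u,v)-1}{2}\ge|UP\cap A|$. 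Combining the three displays yields $2\,\#C_4\ge |P_2\cap A|+|UP\cap A|-\binom{|A|}{2}$, as claimed.

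There is no genuine obstacle here: the argument is a two-line manipulation that discards non-negative contributions (pairs outside $A$, and covered pairs inside $A$ with large codegree). The only point requiring care is that the convention $\binom{x}{2}=x(x-1)/2$ is being invoked at $x=-1$, so that each uncovered pair contributes exactly $+1$ rather than $0$ to $\sum_{\{u,v\}\subseteq A}\binom{d(u,v)-1}{2}$; and that the codegree identity for $\#C_4$ must be applied with each $4$-cycle weighted by both of its pairs of opposite vertices. Note also that $C_4$-freeness of $G$ is not needed for this claim.
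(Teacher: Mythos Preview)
Your proof is correct. The paper does not supply its own proof of this claim but cites it as Proposition~2.5 of \cite{HMY20}; your argument via the codegree identity $2\,\#C_4=\sum_{\{u,v\}}\binom{d(u,v)}{2}$, the split $\binom{d}{2}=\binom{d-1}{2}+(d-1)$, and the observation $\binom{-1}{2}=1$ under the convention $\binom{x}{2}=x(x-1)/2$ is the natural route and almost certainly matches the original.
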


Similarly as in earlier sections, we let $S_i$ be the set of all vertices of degree $i$ in $G$ and let $S=\cup_{i=0}^q S_i$.
For $v\in V$, let $d_0(v)$ be the number of vertices $u\in V$ with $d(u,v)=0$. We retain Claim 4.1 in \cite{HMY20}.

\begin{claim}[\cite{HMY20}]\label{Claim:c(v)}
	Any vertex $v$ satisfies $c(v)\geq (d(v)-q-1)q- f(N(v))+d_0(v)$, and if $d(v)\geq q+1$ and $N(v)\cap S= \emptyset$ then $c(v)\geq 1$.
\end{claim}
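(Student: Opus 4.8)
The plan is to prove \textbf{Claim \ref{Claim:c(v)}} by a direct counting argument on the local structure at $v$, mirroring (and lightly extending) the corresponding step in \cite{HMY20}. First I would set up the bookkeeping: write $N(v)=\{v_1,\dots,v_d\}$ with $d=d(v)$, and for each $i\in[d]$ put $N_i=N(v_i)\setminus N[v]$. Since $G$ is not assumed $C_4$-free here, the sets $N_i$ need not be disjoint, but each vertex $u\notin N[v]$ with $d(u,v)=j$ lies in exactly $j$ of the $N_i$'s and contributes $\binom{j}{2}$ copies of $C_4$ through $v$ (one for each pair of common neighbours of $u$ and $v$), plus each pair $v_i,v_k$ that is adjacent also closes a $C_4$ with $v$ only if they have another common neighbour — so the cleanest route is to count copies of $C_4$ through $v$ as $c(v)=\sum_{u\notin N[v]}\binom{d(u,v)}{2}+(\text{contributions from edges inside }N(v))$, and simply discard the nonnegative edge-contribution term to get a lower bound.

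Next I would lower-bound $\sum_{u\notin N[v]}\binom{d(u,v)}{2}$. The key identities are $\sum_{u\notin N[v]}d(u,v)=\sum_{i\in[d]}|N_i|$ and $\sum_{i\in[d]}|N_i|\ge\sum_{i\in[d]}(d(v_i)-1)-(\text{overcount from edges inside }N(v))$; more robustly, $\sum_{i}|N_i|\ge\sum_i(d(v_i)-1)-2e(G[N(v)])$ is not needed if instead I count via deficiencies. Writing $d(v_i)=q+1-f(v_i)+ (\text{excess if }d(v_i)>q+1)$ and using $\sum_{u\notin N[v]} 1 = n-1-d = q^2+q-d$ together with $d_0(v)=$ number of $u$ with $d(u,v)=0$, a convexity/averaging step (Lemma \ref{Lem:CS-inquality} applied to the multiset $\{d(u,v):u\notin N[v]\}$, whose size is $q^2+q-d$ and whose sum is at least $dq - f(N(v))$ after accounting for the $q+1$-regularity deficit) yields $\sum_u\binom{d(u,v)}{2}\ge (q^2+q-d)\binom{k}{2}+rk$ for the appropriate $k,r$, and after simplification this collapses to the stated bound $c(v)\ge (d(v)-q-1)q-f(N(v))+d_0(v)$. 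I would double-check the edge cases where $d(v)\le q+1$ (then $(d(v)-q-1)q\le 0$ and the inequality is weakest, so it still holds) and where the averaging parameter $r$ could be negative (the hypothesis $r\ge -m$ in Lemma \ref{Lem:CS-inquality} must be verified, which follows from $d(v_i)\ge 1$).

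For the second assertion, suppose $d(v)\ge q+1$ and $N(v)\cap S=\emptyset$, i.e.\ every neighbour of $v$ has degree $\ge q+1$. Then $\sum_{i\in[d]}|N_i|\ge\sum_{i\in[d]}(d(v_i)-1)\ge d\cdot q\ge (q+1)q=q^2+q$, while the number of vertices outside $N[v]$ is $q^2+q-d\le q^2-1<q^2+q$; by pigeonhole some vertex $u\notin N[v]$ lies in at least two of the $N_i$'s, giving $d(u,v)\ge 2$ and hence at least one $C_4$ through $v$, so $c(v)\ge 1$. (If $d(v)\ge q+1$ forces $d\le q^2+q$ so that $N[v]\ne V$, one should note $V\setminus N[v]\ne\emptyset$; this is automatic since $d(v)\le q^2+q-1 < n-1$ whenever $G$ has a vertex of degree $<n-1$, and if $v$ were adjacent to everything then trivially $c(v)\ge\binom{d(v)-1}{2}\gg 1$.)

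The main obstacle I anticipate is handling the overcounting caused by edges inside $N(v)$ and by the graph not being $C_4$-free: every bound must be a genuine \emph{lower} bound on $c(v)$, so I must be careful that discarding the edge-in-$N(v)$ contributions and applying convexity does not accidentally inflate the estimate. The cleanest safeguard is to count $c(v)$ strictly from pairs of common neighbours of $v$ and an outside vertex $u$ (which never double-counts a given $C_4$ through $v$ because such a $C_4$ has a unique vertex antipodal to $v$), and to feed Lemma \ref{Lem:CS-inquality} only the crude bound $\sum_{i}|N_i|\ge d\cdot q - f(N(v))$ — valid since each $v_i$ has $d(v_i)-1\ge q-f(v_i)$ neighbours other than $v$, and at most all of them could coincide with other $v_j$'s or with $v$, but the deficiency term $f(N(v))=\sum_i f(v_i)$ already absorbs the shortfall. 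Verifying that this single inequality is exactly what is needed, with the right constant $d_0(v)$ emerging from the vertices of $d(u,v)=0$, is the delicate arithmetic I would reserve for the full write-up.
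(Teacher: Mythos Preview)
Your approach to the first assertion is headed in the right direction but is unnecessarily complicated and introduces an avoidable slip. The actual argument (which is ``inequality (9)'' in \cite{HMY20}) needs no convexity lemma at all: sum over \emph{all} $u\neq v$, not just $u\notin N[v]$. From the double-counting identity $\sum_{u\neq v}d(u,v)=\sum_{w\in N(v)}(d(w)-1)\ge d(v)\cdot q-f(N(v))$ and the pointwise bound $\binom{x}{2}\ge x-1$ for integers $x\ge 1$ (with $\binom{0}{2}=0$), one gets
\[
c(v)=\sum_{u\neq v}\binom{d(u,v)}{2}\ \ge\ \sum_{u\neq v}(d(u,v)-1)+d_0(v)\ \ge\ (d(v)-q-1)q-f(N(v))+d_0(v).
\]
By restricting to $u\notin N[v]$ you lose the clean $d_0(v)$ term (uncovered pairs with $u\in N(v)$ are also counted in $d_0(v)$) and have to drag along the $2e(G[N(v)])$ correction; neither is needed.

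The second assertion has a genuine gap. With your definition $N_i=N(v_i)\setminus N[v]$ one has $|N_i|=d(v_i)-1-d(v_i,v)\le d(v_i)-1$, so your inequality $\sum_i|N_i|\ge\sum_i(d(v_i)-1)$ points the wrong way and the pigeonhole does not close. In fact the statement is \emph{false} for odd $q$ (a vertex of degree $q+1$ with all neighbours of degree $q+1$ exists in an orthogonal polarity graph of odd order, and there $c(v)=0$), so parity must enter. The correct argument: if $d(v)\ge q+2$ the first bound already gives $c(v)\ge q$. If $d(v)=q+1$ and $c(v)=0$, then every $u\neq v$ has $d(u,v)\le 1$, and combining $\sum_{u\neq v}d(u,v)\le n-1=q^2+q$ with $\sum_{w\in N(v)}(d(w)-1)\ge (q+1)q$ forces equality everywhere; in particular each $w\in N(v)$ has exactly one neighbour inside $N(v)$, so $G[N(v)]$ is a perfect matching on $|N(v)|=q+1$ vertices. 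Since $q$ is even this is impossible.
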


By adjusting the proof of Claim 4.2 in \cite{HMY20}, one can derive $\Delta(G)\le q+2+t$.
Indeed, suppose on the contrary that there is a vertex $v_1\in V$ with $d(v_1)=q+k$ for some $t+3\leq k\leq q^2$.
Following the proof therein, we can get $\sum_{i=2}^{n} a_i\leq  q^2+(t+1)q$ and
\begin{equation*}
2\cdot \# C_4\geq  q^3-2q^2+2q-(q-1)\sum_{i=2}^{ n} a_i-0.5k^2+k(q^2-q+1.5)-2.
\end{equation*}
So we see that when $k=t+3$ and $\sum_{i=2}^{n} a_i=q^2+(t+1)q$,
the above inequality achieves its minimum $  q^2-0.5t^2-1.5t-2$,
which is a contradiction to \eqref{equ:new C4} for large $q$.
This proves $\Delta(G)\le q+2+t$.
We then see that the deficiency of $V$ is
\begin{equation}\label{equ:new Super-f(V)}
f(V)=(q+1)n+\sum_{k=0}^{t}(k+1)|S_{q+2+k}|-2e(G)=q-1+\sum_{k=0}^{t}(k+1)|S_{q+2+k}|.
\end{equation}
Let $S'=S_{q+2}\cup\cdots\cup S_{q+2+t}$.
By a similar proof as Claim 4.3 in \cite{HMY20}, one can show $|S'|\le 3\sqrt{tq}$.
Following the deductions of Claim 4.3 in \cite{HMY20},
one can conclude $q-4t\leq |S|\leq q-1+3(t+1)\sqrt{tq}$ and
\begin{equation}\label{equ:new Super-f(T)}
f(T)\le|S\cap T|+3(t+1)\sqrt{tq}+4t-1 \mbox{~~ for any ~} T\subseteq V.
\end{equation}

We also can derive the following analogue of Claim 4.3 in \cite{HMY20}.

\begin{claim}\label{Clm:A}
	$\sum_{k=0}^{t}(k+1)|S_{q+2+k}|\le 2t+2$.
\end{claim}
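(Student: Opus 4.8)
The plan is to charge copies of $C_4$ to the vertices of excess degree and play this off against the hypothesis $\# C_4<tq$ recorded in \eqref{equ:new C4}. Write $\Sigma=\sum_{k=0}^{t}(k+1)|S_{q+2+k}|$; by \eqref{equ:new Super-f(V)} this equals $f(V)-(q-1)$, and also $\Sigma=\sum_{v\in S'}\bigl(d(v)-q-1\bigr)$. Suppose for contradiction that $\Sigma\ge 2t+3$. By Claim \ref{Claim:c(v)}, every $v\in S_{q+2+k}$ satisfies $c(v)\ge (k+1)q-f(N(v))$ (the nonnegative term $d_0(v)$ is discarded). Summing over $v\in S'$ and using $\sum_{v\in V}c(v)=4\,\# C_4<4tq$ gives
$$4tq\ >\ \sum_{v\in S'}c(v)\ \ge\ q\,\Sigma-\sum_{v\in S'}f(N(v)).$$
So the whole claim reduces to showing that $\sum_{v\in S'}f(N(v))$ is small: since $\Sigma\ge|S'|$, a pointwise bound $f(N(v))=o(q)$ for $v\in S'$ already yields $\Sigma=O(t)$, and tracking the multiplicity with which a single $4$-cycle is recounted in $\sum_{v\in S'}c(v)$ (under the structural control below, a copy of $C_4$ contains at most two vertices of $S'$, so $\sum_{v\in S'}c(v)\le 2\,\# C_4$), together with the integrality of $\Sigma$, is what should sharpen $O(t)$ to the claimed $2t+2$.

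To bound $f(N(v))=\sum_{u\in N(v)\cap S}f(u)$ for $v\in S'$ I would first use \eqref{equ:new Super-f(T)} together with the imported estimates $|S|\ge q-4t$ and $|S'|\le 3\sqrt{tq}$ to see that almost all of $S$ has deficiency exactly $1$: the set $S_2=\{u\in S:f(u)\ge 2\}$ satisfies $|S_2|=O(\Sigma+t)$ and $f(S_2)=O(\Sigma+t)$, so that $f(N(v))\le |N(v)\cap S|+O(\sqrt{tq})$ and hence $\sum_{v\in S'}f(N(v))\le e(S,S')+|S'|\cdot O(\sqrt{tq})=e(S,S')+O(tq)$. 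It then remains to show that a vertex of degree $\ge q+2$ has only few neighbours of degree $\le q$, i.e. $e(S,S')=o(q)$. For this I would run the neighbourhood analysis of Section \ref{sec:1-inters} around each $v\in S'$: writing $N(v)=\{v_1,\dots,v_{d(v)}\}$ and $N_i=N(v_i)\setminus N[v]$, the sets $N_i$ are almost disjoint ($\sum_{i<j}|N_i\cap N_j|\le c(v)$) and every $2$-path inside $G[N(v)]$ produces a distinct copy of $C_4$ through $v$; combining this with the identity $f(N(v))=(q+1)d(v)-\sum_i d(v_i)+\sum_{v_i\in S'}(d(v_i)-q-1)$ and with the structural information available at this point of the argument (that $G$ is already close to a polarity graph) forces $\sum_i d(v_i)$ to be within $o(q)$ of its extremal value $(q+1)d(v)$, i.e. $f(N(v))=o(q)$ — except for $O(1)$ vertices $v\in S'$ whose large value of $f(N(v))$ is paid for out of the total deficiency budget $f(V)=q-1+\Sigma$ and each of which contributes at most $t+1$ to $\Sigma$.

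The main obstacle is exactly this control of $\sum_{v\in S'}f(N(v))$, equivalently of $e(S,S')$: the crude estimates lose a factor of order $\sqrt{q/t}$ and on their own only give $\Sigma=O(\sqrt{tq})$, far from the asserted $2t+2$. The cleanest route to close the gap seems to be a bootstrap — first deduce $\Sigma=O(\sqrt{tq})$, hence $f(V)=q+o(q)$ and (following the strategy of \cite{HMY20} and Theorem \ref{Thm:supersaturation-t=1}) $G$ differs from an orthogonal polarity graph in only $o(q)$ edges — and then re-run the charging with the resulting structural control, so that $f(N(v))=o(q)$ for all but boundedly many $v\in S'$; combined with $\sum_{v\in S'}c(v)\le 2\,\# C_4<2tq$ this gives $q\,\Sigma<2tq+o(q)$, hence $\Sigma\le 2t+2$, contradicting $\Sigma\ge 2t+3$ and proving the claim.
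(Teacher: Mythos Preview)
Your proposal has a genuine circularity. The bootstrap step ``$G$ differs from an orthogonal polarity graph in only $o(q)$ edges'' is precisely the conclusion of Theorem~\ref{Thm:supersaturation-t=1 general}, and in the paper's argument that conclusion is reached \emph{after} Claim~\ref{Clm:A}: the sharp bound $f(V)\le q+2t+1$ obtained from Claim~\ref{Clm:A} is what drives the subsequent analysis (the dichotomy on $|N(v)\cap S|$, Claim~\ref{Clm:B}, and the passage to a polarity graph). You cannot invoke that structural control here. Relatedly, your assertion that ``a copy of $C_4$ contains at most two vertices of $S'$'' is not justified and does not follow from closeness to a polarity graph: every vertex of $S'$ carries an edge outside the polarity graph, and nothing prevents three or four such vertices from sitting on a single $4$-cycle. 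Without this, you are left with $\sum_{v\in S'}c(v)\le 4\,\#C_4$, and as you yourself note the crude control of $\sum_{v\in S'}f(N(v))$ only gives $\Sigma=O(\sqrt{tq})$, far from $2t+2$.

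The device you are missing is a direct, non-circular bound on $\sum_{v}|N(v)\cap S|$ over a \emph{bounded} number of high-degree vertices. First observe $d(u,v)\le 2\sqrt{tq}$ for all $u,v$ (else $\binom{d(u,v)}{2}\ge tq$ copies of $C_4$). Now pick $s=\min\{4t,|S'|\}$ vertices $v_1,\dots,v_s\in S'$ and use inclusion--exclusion:
\[
\sum_{j=1}^{s}|N(v_j)\cap S|\ \le\ |S|+2\binom{s}{2}\sqrt{tq}\ =\ q+O\!\bigl(t^2\sqrt{tq}\bigr)\ =\ q+o(q),
\]
which via \eqref{equ:new Super-f(T)} controls $\sum_j f(N(v_j))$. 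The factor $2$ is obtained not from a structural claim about $S'$ but by counting through \emph{opposite pairs}: each $C_4$ has exactly two opposite pairs, so $2\,\#C_4\ge \sum_{j\le s}\sum_{i>s}\binom{d(v_j,v_i)}{2}\ge \sum_{j\le s}\sum_{i>s}(d(v_j,v_i)-1)$, and the inner sum is bounded below (via Claim~\ref{Claim:c(v)}) by $(d(v_j)-q-1)q-|N(v_j)\cap S|-O(t\sqrt{tq})$. Summing over $j$ gives $2tq>q\sum_{j\le s}(d(v_j)-q-1)-q-o(q)$, hence $\sum_{j\le s}(d(v_j)-q-1)\le 2t+2$. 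Since each summand is at least $1$, this forces $s\le 2t+2<4t$, so in fact $S'=\{v_1,\dots,v_s\}$ and the bound is exactly $\Sigma\le 2t+2$.
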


\begin{proof}
For any $u,v\in V$, we have $d(u,v)\leq  2\sqrt{ tq}$;
as otherwise there are at least $2tq$ copies of $C_4$, a contradiction to \eqref{equ:new C4}.
Choose $v_1,...,v_s\in S'$ for $s=\min\{4t,|S'|\}$. By the inclusion-exclusion principle,
\begin{align*}
|S|\geq |\bigcup_{j=1}^s(N(v_j)\cap S)|\geq \sum_{1\leq j\leq s} |N(v_j)\cap S|-\sum_{1\le i< j \le s}|N(v_i)\cap N(v_j)|
\ge  \sum_{1\leq j\leq s} |N(v_j)\cap S|-2\binom{s}{2}\sqrt{ tq}
\end{align*}
which implies that $\sum_{1\leq j\leq s} |N(v_j)\cap S|\le q+(3t+s^2+3)\sqrt{ tq}-1$.
Since each $C_4$ has two opposite pairs,
we see there are at least $\frac12\sum_{j=1}^s\sum_{i=s+1}^n \binom{d(v_j,v_i)}{2}$ copies of $C_4$ containing opposite pairs $\{v_j,v_i\}$ for $1\leq j\leq s$ and $s+1\leq i\leq n$.
For any $j\in [s]$, by the inequality (9) of Claim 4.1 in \cite{HMY20} and \eqref{equ:new Super-f(T)} we derive
\begin{align*}
& \sum_{i=s+1}^n\binom{d(v_j,v_i)}{2}\geq \sum_{i=s+1}^n (d(v_j,v_i)-1)\geq \sum_{i\in [n]\backslash \{j\}} (d(v_j,v_i)-1)- 2(s-1)\sqrt{ tq}\\
\geq & (d(v_j)-q-1)q -f(N(v_j))- 2(s-1)\sqrt{tq}\geq (d(v_j)-q-1)q-|N(v_j)\cap S|-X',
\end{align*}
where $X'=(3t+2s+1)\sqrt{tq}+4t-1$.
Putting the above together, we have that
\begin{align*}
2t q > & 2\#C_4\geq \sum_{j=1}^s\sum_{i=s+1}^n \binom{d(v_j,v_i)}{2}
\ge\sum_{j=1}^s(d(v_j)-q-1)q-\sum_{j=1}^s|N(v_j)\cap S|-sX'\\
\ge& q\sum_{k=0}^t(k+1)|S_{q+2+k}\cap \{v_1,...,v_s\}|-\big(q+(3t+s^2+3)\sqrt{ tq}-1\big)-sX'.
\end{align*}
Since $s/4\leq t\le  q^{1/8}/30$ and $q$ is large, we can derive $\sum_{k=0}^t(k+1)|S_{q+2+k}\cap \{v_1,...,v_s\}|\leq 2t+2$.
This shows that $s\leq 2t+2$ and by the choice of $s$, we have $S'=\{v_1,...,v_s\}$.
Therefore $\sum_{k=0}^{t}(k+1)|S_{q+2+k}|\le 2t+2$, completing the proof of Claim \ref{Clm:A}.
\end{proof}

Using Claim \ref{Clm:A} and \eqref{equ:new Super-f(V)}, we can easily deduce that $q-4t\leq |S|\leq f(V)\leq q+2t+1$ and for any $T\subseteq V$, $f(T)\leq |S\cap T|+6t+1$.
In particular, for any $v\in V$, we have $f(v)\leq 6t+2$ and $d(v)\geq q+1-f(v)\geq q-6t-1$.
Following the arguments in the proof of Claim 4.4 in \cite{HMY20}, we can show that for any $v\in V$, either $|N(v)\cap S|\leq 7t+2$ or $|N(v)\cap S|\geq q-11t-2$.
Again we note that there is at most one vertex (denoted by $z$ if it exists) in $V$ with $|N(z)\cap S|\geq q-11t-2$.
Let $W=S'\cup \{z\}.$
Claim \ref{Clm:A} shows $|W|\leq 2t+3$.

Let $c'(v)$ denote the number of vertices $x\in N(v)$ with $c(x)\geq 1$.
Then one can generalize the proof of Claim 4.5 in \cite{HMY20} and get the following:
if $\{u,v\}$ is an opposite pair with $u\in V\backslash W$ and $v\in V\backslash \{z\}$, then $c(u)+(6t+3)\cdot c'(v)\ge q-(13t+3)(6t+4)> q-200t^2.$
Choose $\ell= q^{1/4}/2$ such that $\ell q/(100t)-\binom{\ell}{2}\cdot2\sqrt{tq}>4tq$ holds for large $q$.
Then Claim 4.5 in \cite{HMY20} can be extended as following.

\begin{claim}\label{Clm:B}
    If there are $\ell$ opposite pairs $\{u_i,v_i\}$ for $i\in [\ell]$ such that $u_i, v_i\in V\backslash W$ and all $v_i$ are distinct,
    then there is some $u_i$ with $c(u_i)> 0.8q$.
\end{claim}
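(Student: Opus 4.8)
The plan is to argue by contradiction: assume $c(u_i)\le 0.8q$ for every $i\in[\ell]$, and derive a violation of the defining inequality $\frac{\ell q}{100t}-\binom{\ell}{2}\cdot 2\sqrt{tq}>4tq$ satisfied by $\ell=q^{1/4}/2$.

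First I would feed this assumption into the inequality established just above the claim. For each $i$, the pair $\{u_i,v_i\}$ is opposite with $u_i\in V\backslash W$ and $v_i\in V\backslash W\subseteq V\backslash\{z\}$; hence $c(u_i)+(6t+3)\,c'(v_i)\ge q-200t^2$, which together with $c(u_i)\le 0.8q$ yields $c'(v_i)\ge\frac{0.2q-200t^2}{6t+3}$. Since $t=q^{1/8}/30$, for large $q$ the right-hand side exceeds $\frac{q}{100t}$, because the error term $200t^2$ and the denominator $6t+3$ are both of lower order against the linear-in-$q$ numerator. Summing over $i\in[\ell]$ gives $\sum_{i\in[\ell]}c'(v_i)\ge\frac{\ell q}{100t}$.

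The second step is to bound the same sum from above by double counting. Let $Z=\{x\in V:c(x)\ge 1\}$, so that $c'(v_i)=|N(v_i)\cap Z|$ and $\sum_{i\in[\ell]}c'(v_i)=\sum_{x\in Z}|N(x)\cap\{v_1,\dots,v_\ell\}|$. Writing $m_x=|N(x)\cap\{v_1,\dots,v_\ell\}|$ and splitting $m_x=1+(m_x-1)$ for those $x$ with $m_x\ge1$, the constant parts contribute at most $|Z|$; since every $x\in Z$ has $c(x)\ge1$ while $\sum_{x}c(x)=4\,\#C_4<4tq$, we have $|Z|<4tq$. The remaining parts contribute at most $\sum_{x}\binom{m_x}{2}=\sum_{1\le i<j\le\ell}d(v_i,v_j)$, which by the earlier uniform bound $d(u,v)\le 2\sqrt{tq}$ is at most $\binom{\ell}{2}\cdot 2\sqrt{tq}$. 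Hence $\sum_{i\in[\ell]}c'(v_i)\le 4tq+\binom{\ell}{2}\cdot 2\sqrt{tq}$.

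Comparing the two estimates gives $\frac{\ell q}{100t}\le 4tq+\binom{\ell}{2}\cdot 2\sqrt{tq}$, contradicting the choice of $\ell$; therefore some $c(u_i)>0.8q$. The only genuine work is the arithmetic bookkeeping — verifying $\frac{0.2q-200t^2}{6t+3}\ge\frac{q}{100t}$ and that the upper-bound split stays below $\frac{\ell q}{100t}$ for the precise value $\ell=q^{1/4}/2$ — since the structural ingredients (the inequality $c(u)+(6t+3)c'(v)\ge q-200t^2$ for opposite pairs outside $W$, and the codegree bound $d(u,v)\le 2\sqrt{tq}$) are already in hand.
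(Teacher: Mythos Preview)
Your argument is correct and follows essentially the same contradiction/double-counting route that the paper intends (it defers to the proof of Claim~4.5 in \cite{HMY20}, and the choice of $\ell$ is set up precisely so that the inequality $\frac{\ell q}{100t}-\binom{\ell}{2}\cdot 2\sqrt{tq}>4tq$ yields the contradiction you reach). The only remark is that your upper-bound step uses $m_x-1\le\binom{m_x}{2}$, which is fine for $m_x\ge1$; everything else---the bound $|Z|<4tq$ from \eqref{equ:new C4}, the codegree bound $d(u,v)\le 2\sqrt{tq}$, and the asymptotic check $\frac{0.2q-200t^2}{6t+3}\ge\frac{q}{100t}$---is exactly as set up in the surrounding text.
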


Let $A=\{v\in V: c(v)>0.8q \}$ and $X=A\cup W$.
So $|X|< {4tq}/{(0.8q)}+2t+3=7t+3$.
According to the arguments after Claim 4.5 in \cite{HMY20},
one can obtain that there exists an edge set $E^*\subseteq E(G)$ with $|E^*|\leq 45t^2$ such that $G'=G-E^*$ has at most $0.1q$ copies of $C_4$.

Having the above, the arguments after Claim 4.5 in \cite{HMY20} can be directly converted to show that there exists an orthogonal polarity graph $H$ of order $q$ such that $G'\subseteq H.$
Let $|E(G)\backslash E(H)|=s$. Then $|E(H)\backslash E(G)|=s-1$ and $1\leq s\leq |E(G)\backslash E(G')|=|E^*|\leq 45t^2$.
By Lemma \ref{Lem:polarity+plus}, there are at least $s(q-1)$ copies of $C_4$ in $H\cup (E(G)\backslash E(H))$ such that each edge in $E(H)\backslash E(G)$ can appear in at most $s$ of them.
This shows that $G$ has at least $s(q-1)-(s-1)s=sq-s^2$ copies of $C_4$.
If $s\geq t+1$, then since $s\leq 45t^2$, $G$ has at least $tq$ copies of $C_4$, contradicting our assumption.
Hence we have $1\leq s\leq t$.

It remains to show that the number $L$ of copies of $C_4$ in $G$ is at most $sq+s^2$.
Let $\mathcal{C}_0$ be the collection of all $C_4$'s in $G$ using exactly one edge in $E(G)\backslash E(H)$
and $\mathcal{C}_1$ be the collection of the remaining $C_4$'s in $G$.
By Lemma \ref{Lem:polarity+plus}, we have $|\mathcal{C}_0|\leq s(q+1)$.
There are three types of $C_4$'s in $\mathcal{C}_1$, namely using two edges, three edges or four edges in $E(G)\backslash E(H)$.
For each 4-cycle $C$ in $\mathcal{C}_1$, we define one or two pairs of edges in $(E(G)\cap E(C))\backslash E(H)$ as following.
If $C$ has exactly two edges in $E(G)\backslash E(H)$, then we take these two edges to form a pair.
If $C$ has three edges in $E(G)\backslash E(H)$ which form a path of length three, then we take the two non-incident edges to form a pair.
Otherwise all four edges in $C$ are from $E(G)\backslash E(H)$, then we take two pairs, each of which is formed by two non-incident edges of $C$.
Let us call all such pairs {\it feasible}.
It is easy to verify that each feasible pair can be contained in at most two 4-cycles in $\mathcal{C}_1$.
Thus we have $|\mathcal{C}_1|\leq 2\binom{s}{2}$, implying that $L=|\mathcal{C}_0|+|\mathcal{C}_1|\leq s(q+1)+s(s-1)=sq+s^2$.
This completes the proof of Theorem \ref{Thm:supersaturation-t=1 general}.
\QED

\section{Supersaturation: the general case}\label{sec:Super-general}
In this section, we establish two supersaturation results - Theorems \ref{Thm:supersaturation-general-t} and \ref{Thm:supersaturation-general-general-t},
which together imply Theorem \ref{Thm:supersaturation}.
We also give a proof of Proposition \ref{prop:1.2625}.

\subsection{A generalization of Theorem \ref{Thm:supersaturation-t=1}}
We now deduce a supersaturation result for a wider range on the number of edges from Theorem \ref{Thm:supersaturation-t=1 general}.
This is also optimal for infinitely many values $q$ (as powers of two).

\begin{thm}\label{Thm:supersaturation-general-t}
Let $q$ be a large even integer and $t$ be any integer such that $1\le t< q^{1/8}/30$.
Let $G$ be a graph on $q^2+q+1$ vertices with $\frac{1}{2}q(q+1)^2+t$ edges.
Then either $G$ has at least $(t+1)q-(t+1)^2$ copies of $C_4$,
or $G$ is obtained from an orthogonal polarity graph of order $q$ by adding $t$ new edges.
In particular, $G$ has at least $t(q-1)$ copies of $C_4$.
\end{thm}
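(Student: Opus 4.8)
\textbf{Proof plan for Theorem \ref{Thm:supersaturation-general-t}.}
The plan is to deduce the general-$t$ statement from the case $t=1$ (Theorem \ref{Thm:supersaturation-t=1 general}) by an iterative edge-deletion argument, treating a graph with $\frac12 q(q+1)^2+t$ edges as having $t$ ``excess'' edges over the Tur\'an number. First I would argue that we may assume $G$ has fewer than $(t+1)q-(t+1)^2$ copies of $C_4$, as otherwise we are done; in particular $\#C_4(G)<(t+1)q$. The key observation is that since $G$ is not $C_4$-free, there is a $4$-cycle $C$ in $G$, and deleting a suitable edge $e$ of $C$ destroys at least one $C_4$; more carefully, one wants to pick $e$ lying in \emph{many} copies of $C_4$ when $G$ is far from extremal, but in any case deleting $e$ gives $G-e$ with $\frac12 q(q+1)^2+(t-1)$ edges. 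Iterating, I obtain a nested sequence $G=G_t \supseteq G_{t-1}\supseteq \cdots \supseteq G_1 \supseteq G_0$ where $G_i$ has $\frac12 q(q+1)^2+i$ edges, each $G_i$ for $i\ge 1$ containing a $C_4$, and $G_0$ being $C_4$-free (hence by Theorem \ref{Thm:Furedi}, or rather by F\"uredi's uniqueness result together with the edge count, an extremal graph). Applying Theorem \ref{Thm:supersaturation-t=1 general} to $G_1$: either $G_1$ has at least $q^{9/8}/30$ copies of $C_4$, which already forces $\#C_4(G)\ge \#C_4(G_1)\ge q^{9/8}/30 \ge (t+1)q$ for $t<q^{1/8}/30$ and we are done, or $G_1$ is obtained from an orthogonal polarity graph $H$ of order $q$ by one edge modification with $1\le s\le q^{1/8}/30$; but since $e(G_1)=\frac12 q(q+1)^2+1=e(H)+1$, the modification must be exactly the addition of a single edge to $H$ (here I use $e(H)=\frac12 q(q+1)^2$ for an orthogonal polarity graph, from Proposition \ref{prop:G(pi)}), so $G_1=H\cup\{f_1\}$ for some non-edge $f_1$ of $H$.

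Next I would climb back up: having identified $G_1 = H \cup \{f_1\}$ with $H$ an orthogonal polarity graph, I claim each $G_i$ for $i\le t$ is obtained from $H$ by adding $i$ edges. This requires showing that in the deletion process the edges removed were precisely the ``added'' edges, i.e., $E(G_i)\backslash E(H)$ has size exactly $i$ and $E(H)\backslash E(G_i)=\emptyset$. The cleanest way is to run the argument forward rather than backward: since $e(G)=e(H)+t$ and $E(G_0)$ (an extremal $C_4$-free subgraph on the same vertex set obtained by deleting $t$ edges) is forced to equal some orthogonal polarity graph, and the deletions only ever removed edges, we get $G_0 \subseteq G$, $e(G)-e(G_0)=t$. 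The subtle point is that the extremal graph $G_0$ reached need not \emph{a priori} be the same as $H$; but uniqueness considerations (two orthogonal polarity graphs of order $q$ containing a common $C_4$-free graph with $\frac12 q(q+1)^2-t$ edges must coincide when $t$ is small, since they then agree on all but $O(t)$ edges and F\"uredi's uniqueness / Theorem \ref{Thm:Embedding unique} applies) pin down $G_0=H$. Then $G=H\cup \{f_1,\dots,f_t\}$ where $f_1,\dots,f_t$ are $t$ distinct non-edges of $H$, which is the stated conclusion.

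Finally, the lower bound ``$G$ has at least $t(q-1)$ copies of $C_4$'' follows from Lemma \ref{Lem:polarity+plus} applied repeatedly: in $H\cup \{f_j\}$ there are at least $q-1$ copies of $C_4$, any two of which share only the edge $f_j$; since the $f_j$ are distinct, summing over $j\in[t]$ and noting that any $C_4$ using at least two of the $f_j$'s could be double-counted, one still gets the bound $t(q-1)$ once one checks (as in the $t=1$ proof's handling of $\mathcal{C}_0$ versus $\mathcal{C}_1$) that the $C_4$'s counted for distinct $j$ via the ``exactly one new edge'' type are genuinely distinct — each such $C_4$ uses exactly one edge of $E(G)\backslash E(H)$, hence is attributed to a unique $f_j$. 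The main obstacle I anticipate is the backward-climbing step: ensuring that the edge deleted at each stage can be chosen so that the resulting graph still has enough excess structure to invoke the $t=1$ result cleanly, and more importantly ruling out the possibility that the orthogonal polarity graph changes along the chain — i.e., establishing that all the $G_i$ sit inside one fixed $H$. This is where I would lean hardest on the uniqueness statements (Theorems \ref{Thm:Embedding} and \ref{Thm:Embedding unique}, and F\"uredi's uniqueness of extremal graphs), applied to the fact that $G_0$, $G_1$, \dots all contain the large $C_4$-free core $G_0$ with more than $q^2-q+1$ ``line-like'' neighborhoods.
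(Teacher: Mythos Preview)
Your proposal has a genuine gap at the step where you conclude $G_1 = H \cup \{f_1\}$. Theorem \ref{Thm:supersaturation-t=1 general} tells you that $|E(G_1)\backslash E(H)| = s$ and $|E(H)\backslash E(G_1)| = s-1$ for some $1 \le s \le q^{1/8}/30$, which is automatically consistent with $e(G_1) = e(H) + 1$ for \emph{every} such $s$; the edge count gives you no information whatsoever about $s$, and in particular does not force $s = 1$. So you cannot conclude that $G_1$ is $H$ plus a single edge, and the ``climb back up'' argument collapses. A secondary issue is that your iterative deletion of $t$ edges, each lying in some $C_4$, need not leave a $C_4$-free graph $G_0$: you could delete $t$ edges and still have many $C_4$'s left, since you have only arranged that each deletion destroys at least one copy.

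The paper's proof is much simpler and sidesteps both issues. It does not iterate: it takes \emph{any} spanning subgraph $G' \subseteq G$ with $\frac12 q(q+1)^2 + 1$ edges, applies Theorem \ref{Thm:supersaturation-t=1 general} to $G'$ to obtain $H$ with parameter $s$, and uses the $C_4$-count bound $sq - s^2 \le \#C_4(G') < (t+1)q - (t+1)^2$ to deduce only $s \le t$ (not $s=1$). Then it works directly with $j := |E(G)\backslash E(H)|$, noting $|E(H)\backslash E(G)| = j - t \le |E(H)\backslash E(G')| = s - 1 \le t-1$, and counts $C_4$'s in $G$ as at least $j(q-1) - (j-t)j$ via Lemma \ref{Lem:polarity+plus}; this forces $j = t$ under the standing assumption $\#C_4(G) < (t+1)q - (t+1)^2$, giving $E(H) \subseteq E(G)$ immediately. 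No chain of graphs, no uniqueness-of-$H$-along-a-chain argument, and no need to reach a $C_4$-free subgraph are required.
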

\begin{proof}
Suppose on the contrary that $G$ has less than $(t+1)q-(t+1)^2$ copies of $C_4$.
We denote $G'$ to be any spanning subgraph of $G$ with $\frac{1}{2}q(q+1)^2+1$ edges.
Thus $G'$ has less than $(t+1)q-(t+1)^2< q^{9/8}/30$ copies of $C_4$.
By Theorem \ref{Thm:supersaturation-t=1 general}, there exists an orthogonal polarity graph $H$ of order $q$ such that $|E(G')\backslash E(H)|=s$ and $|E(H)\backslash E(G')|=s-1$ for some $1\leq s\leq t$.

Let $|E(G)\backslash E(H)|=j$.
Then $j-t=|E(H)\backslash E(G)|\leq |E(H)\backslash E(G')|=s-1\leq t$, implying that $t\leq j\leq 2t$.
So $G$ has at least $j(q-1)-(j-t)j$ copies of $C_4$.
If $j\geq t+1$, then $G$ has at least $(t+1)(q-1)-(t+1)$ copies of $C_4$, a contradiction.
So $j=t$ and $G$ is obtained from $H$ by adding $t$ new edges.
By Lemma \ref{Lem:polarity+plus}, $G$ has at least $t(q-1)$ edges.
This finishes the proof of Theorem \ref{Thm:supersaturation-general-t}.
\end{proof}

\subsection{A half-way bound}

\begin{thm}\label{Thm:supersaturation-general-general-t}
Let $q$ be a positive even integer.
If $G$ is a graph on $q^2+q+1$ vertices with $\frac{1}{2}q(q+1)^2+t$ edges for $t\geq 1$,
then $G$ contains at least $\frac12(tq-2.5q-t)$ copies of $C_4$.
\end{thm}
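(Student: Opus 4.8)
The plan is to combine the standard convexity estimates on $2$-paths with a lower bound on the number $|UP|$ of uncovered pairs. Write $n=q^2+q+1$. Since every covered pair $\{u,w\}$ contributes $\binom{c_{uw}}{2}\ge c_{uw}-1$ copies of $C_4$ while every uncovered pair contributes $0$, one has $\#C_4\ge |P_2|-\big(\binom{n}{2}-|UP|\big)$ (this is also Claim \ref{Claim:C4} with $A=V$). As $\sum_{v}d(v)=q(q+1)^2+2t=(q+1)n+(2t-q-1)$, Lemma \ref{Lem:CS-inquality} gives $|P_2|=\sum_v\binom{d(v)}{2}\ge n\binom{q+1}{2}+(2t-q-1)(q+1)=\binom{n}{2}+(2t-q-1)(q+1)$, and therefore $\#C_4\ge (2t-q-1)(q+1)+|UP|$. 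When $t\ge \tfrac23 q$ the term $(2t-q-1)(q+1)$ already exceeds $\tfrac12(tq-2.5q-t)$, and when $t\le 2$ the claimed bound is nonpositive; so I would from now on assume $3\le t<\tfrac23 q$, and the whole content becomes the estimate $|UP|\ge q^2-\tfrac32 tq+O(q)$ — morally, that almost all of the $\binom{n}{2}$ pairs behave as in a projective plane and stay uncovered.

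To get such a bound I would first control the degree sequence: as in the proof of Claim \ref{Clm:A} and of Theorem \ref{Thm:supersaturation-t=1 general} (equivalently Claim~4.2 of \cite{HMY20}), a vertex of degree $q+1+k$ forces $\#C_4$ to grow with $k$ — use $c(v)\ge(d(v)-q-1)q-f(N(v))+d_0(v)$ from Claim \ref{Claim:c(v)} together with a Reiman-type count of the $2$-paths avoiding $N[v]$ — so the assumption $\#C_4<\tfrac12 tq$ bounds $\sum_v(d(v)-q-1)^+$, hence $\Delta(G)=q+O(\sqrt{tq})$, $f(V)=q+1-2t+O(t)$, and (in the main case $\Delta(G)=q+1$, via Lemma \ref{Lem:neigbor-of-q-vertex}) $|S|\ge q+1$, so $G$ has only $O(q)$ vertices of degree below $q-O(t)$. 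I would then pass to a spanning $C_4$-free subgraph $G_0=G-D$, where $D$ is a minimum set of edges meeting every $C_4$; thus $\tau:=|D|\le\#C_4$ and $e(G_0)=e(G)-\tau$, and (after deleting a few more edges if necessary) $\Delta(G_0)\le q+1$, so $|UP(G_0)|=\binom{n}{2}-|P_2(G_0)|=q\,f_{G_0}(V)-\sum_v\binom{f_{G_0}(v)}{2}$. The bound $\ex(q^2+q,C_4)\le\tfrac12 q(q+1)^2-q$ of Firke--Kosek--Nash--Williford, applied to each $G_0-v$, gives $d_{G_0}(v)\ge e(G_0)-\tfrac12 q(q+1)^2+q$ for all $v$, which keeps the $G_0$-degrees clustered near $q+1$ unless $\tau$ (hence $\#C_4$) is already large; one deduces $|UP(G_0)|\ge q(q+1)-o(tq)$. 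Finally, rebuilding $G$ from $G_0$ by re-inserting the edges of $D$ only decreases $|UP|$, and the total decrease is $O(tq)$: an uncovered pair of $G_0$ that becomes covered in $G$ acquires a new common neighbour through an edge of $D$; pairs acquiring two or more such neighbours inject into the set of copies of $C_4$ of $G$, so number at most $\#C_4<\tfrac12 tq$, while pairs acquiring exactly one are charged through that neighbour to the $O(q)$ low-degree vertices (using that a $D$-edge is needed at a vertex only if a $C_4$ of $G$ runs through it, $\deg_D(v)\le 2c(v)$). Combining with the first paragraph yields $\#C_4\ge\tfrac12(tq-2.5q-t)$.

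The hard part is precisely this last bookkeeping step. The naive estimate ``total decrease $\le \tau\cdot(2\Delta(G)+1)$'' is of order $\tau q\approx tq^2$, which exactly cancels the favourable $2q\tau$ one gains from $e(G_0)=e(G)-\tau$ and $\tau\ge t$; so the required savings must come entirely from the dichotomy above — that a second new common neighbour manufactures a genuine $C_4$, and that a single one is pinned to the small set of low-degree vertices — and these must be made to fit with explicit constants, which is what dictates the particular lower-order losses $-\tfrac52 q-t$ in the statement. Everything else — the convexity estimates, the degree-sequence reduction via Claims \ref{Claim:c(v)} and \ref{Clm:A} and Lemma \ref{Lem:polarity+plus}, and the Tur\'an bound for $q^2+q$ vertices — is routine, and, as with Theorem \ref{Thm:supersaturation-t=1 general}, no largeness of $q$ is needed since all estimates are explicit.
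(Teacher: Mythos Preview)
Your approach diverges from the paper's, and the step you yourself flag as ``the hard part'' contains a genuine gap. The paper never passes to a $C_4$-free subgraph $G_0$; it estimates $|UP|$ \emph{directly in $G$} via the elementary inequality
\[
|UP|\;\ge\;\tfrac12\Big(|S_{q+1}|+\sum_{k\ge1}kq\,|S_{q+1-k}|-\sum_{k\ge1}k(q+1+k)\,|S_{q+1+k}|\Big),
\]
obtained by lower-bounding $d_0(v)$ for each $v$ in terms of $d(v)$ and the degrees in $N(v)$ (the parity of $q$ enters here, to force an uncovered pair at each $v\in S_{q+1}$). Combined with a matching estimate for $|P_2|-\binom{n}{2}$ over $V\setminus S'$ (Lemma~\ref{Lem:CS-inquality}), one arrives at the clean inequality $2\#C_4\ge tq-\tfrac12(q^2+q-|S_{q+1}|)$, and it remains only to show $q^2+q-|S_{q+1}|\le 5q+2t$. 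The latter follows from bounds $|S_{q+2}|<2q$ and $\sum_{k\ge1}k|S_{q+2+k}|<2t$, proved by the same $c(v)$-versus-degree argument you sketch (under the hypothesis $\#C_4\le\tfrac12 tq-\tfrac12 q$), with no appeal to FKNW or any subgraph.

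Your route via $G_0=G-D$ does not close. Each re-inserted edge of $D$ can cover up to $2\Delta(G)\approx 2q$ previously uncovered pairs, so the drop $|UP(G_0)|-|UP(G)|$ is a priori of order $\tau q$; under the contradiction hypothesis $\tau\le\#C_4<\tfrac12 tq$, this is $O(tq^2)$, which swamps the target $q^2-O(tq)$. Your dichotomy does not save this: the bound $\deg_D(v)\le 2c(v)$ is correct but does not help, and the assertion that a pair acquiring a \emph{single} new common neighbour is ``charged through that neighbour to the $O(q)$ low-degree vertices'' is unjustified --- neither the common neighbour nor the endpoints need have low degree. (A minor separate point: Claim~\ref{Claim:C4} gives $2\#C_4\ge|P_2|+|UP|-\binom{n}{2}$, not $\#C_4\ge\cdots$; this moves your large-$t$ cutoff to roughly $t\ge q$ rather than $t\ge\tfrac23 q$.)
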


\noindent{\it Proof.}
As earlier, we let $\#C_4$ be the number of copies of $C_4$ in $G$,
$c(v)$ be the number of $C_4$ containing the vertex $v$, and $S_i$ be the set of vertices of degree $i$ in $G$.
We may assume that $\#C_4\leq \frac{1}{2}tq-\frac{1}{2}q$ (as otherwise we have the desired number of $C_4$'s).
Let $V=V(G)=\{v_1,...,v_{n}\}$, where $n=q^2+q+1$.

\begin{claim}\label{ClaimA}
Let $v_i\in V$ be any vertex with $d(v_i)=q+2+k$ for some $k\geq 0$.
Then $k\leq \frac{q}2$. Moreover, if $k=0$ then $c(v_i)>t$, and otherwise $c(v_i)>kq$.
\end{claim}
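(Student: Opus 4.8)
The plan is to establish Claim~\ref{ClaimA} by a local double-counting argument around a single high-degree vertex $v_i$, exactly in the spirit of Claim~\ref{Claim:c(v)} and the degree bound $\Delta(G)\le q+2+t$ obtained in Section~\ref{sec:supert=1}. Set $d(v_i)=q+2+k$ with $k\ge 0$ and write $N(v_i)=\{u_1,\dots,u_{q+2+k}\}$. First I would observe that since $G$ is $C_4$-free would be false here --- $G$ is \emph{not} $C_4$-free, it has $\ex(n,C_4)+t$ edges --- so instead the key point is that for each pair $u_a,u_b\in N(v_i)$, any common neighbour $w\notin\{v_i\}$ of $u_a$ and $u_b$ yields a $C_4$ through $v_i$ (namely $v_iu_awu_bv_i$), and these are distinct for distinct choices. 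Thus $c(v_i)\ge \sum_{a<b}\big(d(u_a,u_b)-1\big)\ge \sum_{a<b}\big(|N(u_a)\cap N(u_b)\setminus\{v_i\}|\big)$, where the $-1$ accounts for the common neighbour $v_i$ itself. Using $\sum_{a<b} d(u_a,u_b)\ge$ (number of $2$-paths with midpoint outside $N[v_i]$ landing twice in $N(v_i)$) $\ge \sum_{w\notin N[v_i]}\binom{|N(w)\cap N(v_i)|}{2}$, together with $\sum_{w\notin N[v_i]}|N(w)\cap N(v_i)| \ge \sum_{a}(d(u_a)-1) - $ (edges inside $N(v_i)$) $\ge (q+2+k)(q-6t-1) - O(\,\cdot\,)$ from the minimum degree bound $d(v)\ge q-6t-1$ established earlier, I would apply Lemma~\ref{Lem:CS-inquality} (the convexity estimate) to get a clean lower bound on $c(v_i)$ that is roughly $kq + \tfrac12 q + o(q)$ when $k\ge 1$, and roughly $\tfrac12 q$ when $k=0$.

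Concretely, for the case $k=0$: I expect the count to give $c(v_i)\ge \binom{q+2}{2}-\binom{n-(q+2)}{2}+\ldots$ after expanding, which simplifies to something comfortably exceeding $t$ since $t<q^{1/8}/30$ and the leading terms produce order $q$. So the bound $c(v_i)>t$ is very loose and follows from any estimate showing $c(v_i)=\Omega(q)$ minus lower-order corrections; I would just carry the arithmetic carefully enough to land above $t$. For the case $k\ge 1$, the target $c(v_i)>kq$ is tighter: here I would be careful to track the coefficient of $k$. The point is that each extra unit of degree above $q+1$ forces, via the pigeonhole/convexity bound, roughly one extra common-neighbour-pair per other vertex, i.e. $\sim kq$ additional $C_4$'s through $v_i$, and the $\tfrac12 q$ slack absorbs all the lower-order error terms (the $-\binom{|A|}{2}$ corrections, the $O(t)$ deficiency terms, the edges inside $N(v_i)$, etc.). To finish the ``moreover'' about $k\le q/2$: if $k>q/2$ then $c(v_i)>kq>\tfrac12 q^2 \gg \tfrac12 tq$, directly contradicting the standing assumption $\#C_4\le \tfrac12 tq-\tfrac12 q$, so that half follows for free once the $c(v_i)>kq$ estimate is in hand (and the estimate itself only needs $k$ up to $q/2$, avoiding circularity --- or one first proves $k\le q/2$ from $\Delta\le q+2+t$ and $t$ small, then runs the count).

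The main obstacle I anticipate is bookkeeping the error terms precisely enough so that the clean inequalities $c(v_i)>t$ and $c(v_i)>kq$ come out, rather than $c(v_i)>kq - Ct$ for some constant $C$. The fix is to use the strengthened deficiency bounds from Section~\ref{sec:supert=1} --- namely $f(V)\le q+2t+1$, $f(T)\le |S\cap T|+6t+1$ for all $T\subseteq V$, and hence $d(v)\ge q-6t-1$ --- to control $\sum_{w}|N(w)\cap N(v_i)|$ from below with an explicit $q^2$-order main term and only $O(tq)$-order corrections, and then note $O(tq)$ with $t<q^{1/8}/30$ is $o(q^{9/8})$ but more importantly is dominated within the $\tfrac12 q$ slack once multiplied out correctly; if even that is too tight, one replaces the bound $\#C_4\le\tfrac12 tq-\tfrac12 q$ by the weaker $\#C_4\le\tfrac12(tq-2.5q-t)$ stated in the theorem, whose larger slack ($2.5q$ instead of $q$) is presumably chosen exactly to make these estimates go through. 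So I would structure the proof as: (i) fix $v_i$, $d(v_i)=q+2+k$; (ii) bound $\sum_{a<b}d(u_a,u_b)$ below via convexity and the degree bound; (iii) convert to a lower bound on $c(v_i)$; (iv) read off $c(v_i)>t$ when $k=0$ and $c(v_i)>kq$ when $k\ge1$; (v) deduce $k\le q/2$ from the global cap on $\#C_4$.
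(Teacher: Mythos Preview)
Your proposal rests on a misreading of the context. Claim~\ref{ClaimA} lives inside the proof of Theorem~\ref{Thm:supersaturation-general-general-t}, not inside Section~\ref{sec:supert=1}. In Theorem~\ref{Thm:supersaturation-general-general-t}, the integer $t\ge 1$ is \emph{arbitrary}; there is no hypothesis $t<q^{1/8}/30$, no bound $\Delta(G)\le q+2+t$, and no minimum-degree estimate $d(v)\ge q-6t-1$. All of those facts belong to the proof of Theorem~\ref{Thm:supersaturation-t=1 general} and are unavailable here. The only standing assumption is $\#C_4\le \tfrac12 tq-\tfrac12 q$, and the paper's very first step is to reduce to $t\le q$ by a direct path-count; but even after that reduction $t$ can be as large as $q$, not $q^{1/8}$.

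This breaks your argument at every step where you invoke those imported bounds. For $k=0$ you say ``$c(v_i)=\Omega(q)$ is comfortably above $t$ since $t<q^{1/8}/30$'': but with $t$ up to $q$, the target $c(v_i)>t$ is a genuine constraint of order $q$, and a bound of the form $c(v_i)\ge \tfrac12 q - O(t)$ would be useless. For $k\ge 1$ you rely on $d(u_a)\ge q-6t-1$ to bound $\sum_a(d(u_a)-1)$ from below, but no such lower bound exists in this setting; vertices can have small degree. The paper's proof avoids minimum-degree assumptions entirely by counting $2$-paths \emph{outside} $N(v_1)$ (not inside it), applying Lemma~\ref{Lem:CS-inquality} to $\sum_{i\ge 2}\binom{d(v_i)-a_i-1}{2}$ with $a_i=|N(v_i)\cap N(v_1)|-1$, and crucially using the global edge count $2e(G)=q(q+1)^2+2t$ together with $\sum_i a_i\le c(v_1)$ to feed $t$ back into the estimate. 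This yields the explicit inequality $c(v_1)\ge \frac{1}{q-2}\big(k(q^2-q)-\tfrac12(k^2+k)+t(q-2)+1\big)$, from which both $c(v_1)>t$ (for $k=0$) and $c(v_1)>kq$ (for $1\le k\le q/2$) follow by direct calculus, and $k>q/2$ is ruled out by comparing against $\#C_4\le \tfrac12 tq$. Your local approach could in principle be repaired, but not with the inputs you cite; you would need to redo the count using only the total edge number and the assumed cap on $\#C_4$.
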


\begin{proof}
Without loss of generality, we consider $v_1$ with $d(v_1)=q+2+k$ for $k\geq 0$.
First suppose that $t\ge q+1$.
We have $\sum_{v\in V}d(v)=2e(G)=(q^2+q+1)q+(q^2+2t)$, where $q^2+2t\ge-(q^2+q+1)$.
By Claim \ref{Claim:C4} and Lemma \ref{Lem:CS-inquality}, we see $2\cdot\#C_4$ is at least
\begin{equation*}\label{dat}
\sum_{v\in V}\binom{d(v)}{2}-\binom{n}{2}\ge (q^2+q+1)\binom{q}{2}+q(q^2+2t)-\binom{q^2+q+1}{2}=2tq-q^2-q\ge tq.
\end{equation*}
This implies $\#C_4\ge \frac{1}{2}tq$, contradicting our assumption.
Hence we may assume that $t\leq q$.

Let $P_2'$ be the set of 2-paths in $G$ with none of its endpoints in $N(v_1)$.
Since each $C_4$ contains two covered pairs, we see that $2\cdot \# C_4\geq \sum_{\{u,v\}\subseteq V\backslash N(v_1)}\binom{d(u,v)}{2}+c(v_1)$ is at least
\begin{align*}
\sum\limits_{\{u,v\}\subseteq V\backslash N(v_1)}\big(d(u,v)-1\big)+c(v_1)\geq |P_2'|-\binom{n-d(v_1)}{2}+c(v_1).
\end{align*}
Let $a_i=|N(v_i)\cap N(v_1)|-1$ for $2\le i\le n$.
Then we have
\begin{equation*}\label{equ 1}
|P'_2|=\sum_{i=2}^{n}\binom{d(v_i)-a_i-1}{2} \mbox{ ~and~ } \sum_{i=2}^{ n} a_i\leq \sum_{i=2}^{ n}\binom{a_i+1}{2}\leq c(v_1)\leq tq/2\leq q^2/2.
\end{equation*}
On the other hand, we can derive that
\begin{equation*}
	\begin{split}
\sum\limits_{i=2}^{n}(d(v_i)-a_i-1)=&\big(2e(G)-d(v_1)\big)-\sum_{i=2}^{n}(a_i+1)=(q^2+q)(q-1)+X.
 \end{split}
	\end{equation*}
Here $X=q^2+2t-2-k-\sum_{i=2}^n a_i\geq q^2+2t-2-k-c(v_1)\geq-(q^2+q)$, where the inequalities hold because $1\leq t\leq q$, $k\le q^2-2$ and $\sum_{i=2}^{ n} a_i\leq c(v_1)\leq q^2/2$.
Putting the above all together, by Lemma \ref{Lem:CS-inquality}, we infer that $tq-q\geq 2\cdot\#C_4$ is at least
\begin{equation*}
\begin{split}
|P'_2|-\binom{n-d(v_1)}{2}+c(v_1)\geq &(q^2+q)\binom{q-1}{2}+(q-1)X-\binom{q^2-k-1}{2}+c(v_1)\\
\geq & k (q^2-q)-\frac{1}{2}(k^2+k)-c(v_1)(q-2) +2tq-q-2t+1.
\end{split}
\end{equation*}
Simplifying the above one can derive $c(v_1)\geq\frac{1}{q-2}\big(k(q^2-q)-\frac{1}{2}(k^2+k)+t(q-2)+1\big)$ and
$$c(v_1)-\frac{1}{2}tq\ge\frac{1}{q-2}\Big(k(q^2-q)-\frac{1}{2}(k^2+k)-\frac{1}{2}tq^2+2t(q-1)+1\Big):=\frac{g(k,t)}{q-2}.$$
Suppose $\frac{q}{2}< k\leq q^2-2$. Then $\frac{\partial g}{\partial t}(k,t)=-\frac{1}{2}(q-2)^2<0$ and $\frac{\partial^2 g}{\partial k^2}(k,t)=-1<0$.
This implies $\#C_4-\frac{1}{2}tq\ge c(v_1)-\frac{1}{2}tq\ge\min\{g(\frac{q}{2},q)/(q-2),g(q^2-2,q)/(q-2)\}>0$, a contradiction.
Therefore we obtain that $k\le\frac{q}{2}$.

If $k=0$, then $c(v_1)\ge(t(q-2)+1)/(q-2)>t$, as desired.
We also have
$$c(v_1)-kq\ge\frac{1}{q-2}\Big(kq-\frac{1}{2}(k^2+k)+t(q-2)+1\Big):=\frac{h(k,t)}{q-2}.$$
Consider $1\leq k\leq \frac{q}{2}$.
Since $\frac{\partial h}{\partial t}(k,t)=q-2>0$ and $\frac{\partial h}{\partial k}(k,t)=q-k-\frac{1}{2}>0$,
we have $c(v_1)-kq\geq h(1,1)/(q-2)>0$. This proves the claim.
\end{proof}

Let $\Delta(G)=q+2+m$. By Claim \ref{ClaimA}, we see that $m\leq\frac{q}{2}$.
	
\begin{claim}\label{claimB}
$|S_{q+2}|<2q$ and $\sum_{k=1}^{m}k|S_{q+2+k}|<2t$.
\end{claim}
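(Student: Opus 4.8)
\textbf{Proof plan for Claim \ref{claimB}.}

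The plan is to bound the two quantities by a double-counting of $C_4$'s through high-degree vertices, exactly in the spirit of Claim \ref{ClaimA}, and then to bootstrap using the inclusion-exclusion argument already seen in Claim \ref{Clm:A}. First I would record that, by Claim \ref{ClaimA}, every vertex of degree $q+2+k$ with $k\geq 1$ contributes more than $kq$ copies of $C_4$ to $\sum_{v}c(v)$, while every vertex of degree $q+2$ contributes more than $t$. Since $\sum_{v\in V}c(v)=4\cdot\#C_4\leq 2tq-2q$ by our standing assumption, a crude first estimate gives $q\cdot\sum_{k\geq 1}k|S_{q+2+k}|+t|S_{q+2}|<2tq-2q$, so immediately $\sum_{k\geq 1}k|S_{q+2+k}|<2t$ (using $t\geq 1$). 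This already yields the second inequality of the claim; the subtle point is only that I must make sure the counting $\sum_v c(v)\geq q\sum_k k|S_{q+2+k}|+t|S_{q+2}|$ does not double-count, which it does not because it is a sum over $v$, each term bounded below separately.

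For the bound $|S_{q+2}|<2q$ the crude argument above only gives $|S_{q+2}|<2q$ when $t=1$; for larger $t$ (up to the range where Theorem \ref{Thm:supersaturation-general-general-t} is stated, namely all $t\geq 1$) I need the sharper geometric argument. Here I would mimic Claim \ref{Clm:A}: if $|S_{q+2}|\geq 2q$, choose a set of roughly $s$ vertices $v_1,\dots,v_s\in S_{q+2}$ with $s$ a suitable constant multiple (I would take $s$ proportional to $q/t$ or simply $s$ as large as the budget allows), note that any two vertices satisfy $d(v_i,v_j)\leq O(\sqrt{tq})$ since otherwise $\binom{d(v_i,v_j)}{2}$ alone exceeds the allowed number of $C_4$'s, and then count 2-paths with endpoints outside $N(v_1)\cup\cdots\cup N(v_s)$. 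Using Claim \ref{Claim:C4} applied to $A=V\setminus\bigcup_j N(v_j)$ together with the degree-sum lower bound $\sum_v d(v)=(q^2+q+1)q+q^2+2t$ and Lemma \ref{Lem:CS-inquality}, I would extract a lower bound on $\#C_4$ that is linear in $s$, and choosing $s$ large enough (but still $O(q)$) this beats $\frac12 tq-\frac12 q$, a contradiction. One must feed in $f(V)=(q+1)n-2e(G)+\sum_k(k+1)|S_{q+2+k}|$ and the already-established bound $\sum_{k\geq 1}k|S_{q+2+k}|<2t$ to control the deficiency terms appearing in the 2-path count.

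The main obstacle I anticipate is getting the constants to work out cleanly in the inclusion-exclusion step: the error term from $\sum_{i<j\leq s}|N(v_i)\cap N(v_j)|$ is of order $s^2\sqrt{tq}$, and I need this to be dominated by the main term of order $s\cdot q$ (coming from the high common degrees forced by $v_i\in S_{q+2}$), which requires $s\ll \sqrt{q/t}$; simultaneously $s$ must be at least a constant so that the contradiction is nontrivial, and the cross-term $-\binom{|A|}{2}$ in Claim \ref{Claim:C4} must be matched against $|P_2\cap A|$ precisely, so I would carry the algebra carefully rather than with crude bounds. I expect that, just as in Claim \ref{Clm:A}, the right move is to pick $s$ as large as the $C_4$-budget permits and then read off $|S_{q+2}|\leq s$; since the budget here is $\Theta(tq)$ and each selected vertex ``costs'' $\Theta(q)$ copies of $C_4$ via its contribution to the 2-path count, this gives $|S_{q+2}|=O(t)$, comfortably less than $2q$. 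A final sanity check: the two inequalities together, combined with \eqref{equ:new Super-f(V)}-type identities, must be consistent with $|S|\approx q$, which they are since $\sum_k(k+1)|S_{q+2+k}|\leq |S_{q+2}|+\sum_{k\geq1}k|S_{q+2+k}|<2q+2t$.
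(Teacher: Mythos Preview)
Your handling of the second inequality $\sum_{k\geq 1}k|S_{q+2+k}|<2t$ is correct and matches the paper exactly: Claim~\ref{ClaimA} gives $c(v)>kq$ for each $v\in S_{q+2+k}$ with $k\geq 1$, and summing over these vertices yields $q\sum_{k\geq 1}k|S_{q+2+k}|<\sum_v c(v)=4\#C_4<2tq$.

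For the first inequality $|S_{q+2}|<2q$, however, you have overcomplicated matters. You write that ``the crude argument above only gives $|S_{q+2}|<2q$ when $t=1$'', and then propose an elaborate inclusion-exclusion scheme patterned on Claim~\ref{Clm:A}. But the crude argument already suffices for \emph{every} $t\geq 1$: by Claim~\ref{ClaimA}, each $v\in S_{q+2}$ satisfies $c(v)>t$, so
\[
t\,|S_{q+2}|<\sum_{v\in S_{q+2}}c(v)\leq \sum_{v\in V}c(v)=4\#C_4<2tq,
\]
and dividing by $t$ gives $|S_{q+2}|<2q$ directly. This is precisely the paper's two-line proof. Your proposed detour through 2-path counts, deficiency bounds, and an $s$-vertex selection is not needed (and, as stated, is imprecise: the conclusion ``$|S_{q+2}|=O(t)$'' would not beat $2q$ for $t$ near $q$ without a careful constant, whereas the direct division by $t$ gives the sharp bound with no effort).

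In short: your instinct to use $\sum_v c(v)=4\#C_4$ together with the per-vertex bounds from Claim~\ref{ClaimA} is exactly right; you simply failed to notice that it settles both parts of the claim in one stroke.
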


\begin{proof}
By Claim \ref{ClaimA}, we see that if $v\in S_{q+2}$, then $c(v)>t$.
So $2tq> 4\#C_4>t|S_{q+2}|$, implying that $|S_{q+2}|<2q$.
For $v\in S_{q+2+k}$ with $1\leq k\leq m$, by Claim \ref{ClaimA} we have $c(v)>kq$.
Then $2tq> 4\#C_4>\sum_{k=1}^{m}kq\cdot|S_{q+2+k}|$, which implies that $\sum\limits_{k=1}^{m}k|S_{q+2+k}|<2t$.
\end{proof}

Now we are ready to complete the proof of Theorem \ref{Thm:supersaturation-general-general-t}.
First let us estimate $|UP|$, i.e., the number of uncovered pairs in $G$.
Consider a vertex $v\in S_{q+1}$, where the maximum degree of the vertices in $N(v)$ is at most $q+1$.
We assert that $v$ is contained in at least one uncovered pair.
Otherwise, as $q$ is even, the number $e_v$ of edges in $G[N(v)]$ is at least $q/2+1$ and
thus there are at most $(q+1)q-2e_v\leq q^2-2$ vertices adjacent to $N[v]$, which again forces an uncovered pair containing $v$.
For $1\le k\le q+1$, if $v\in S_{q+1-k}$ and the maximum degree of the vertices in $N(v)$ is at most $q+1$,
we can get at least
$$(n-1)-\sum_{u\in N(v)}(d(u)-1)=(q^2+2q+1-k)-\sum_{u\in N(v)}d(u)\ge kq$$
uncovered pairs containing $v$.
If some vertex in $N(v)$ has degree $q+1+\ell$ for $\ell\ge 1$,
then the above number of uncovered pairs containing $v$ will decrease by $\ell$. Thus, by double-counting, we can get
\begin{equation}\label{5}
	\begin{split}
	|UP|&\ge\frac{1}{2}\Big(|S_{q+1}|+\sum\limits_{k=1}^{q+1}kq|S_{q+1-k}|-\sum\limits_{u\in V}\sum\limits_{k=1}^{m+1}k|N(u)\cap S_{q+1+k}|\Big)\\
        &=\frac{1}{2}\Big(|S_{q+1}|+\sum\limits_{k=1}^{q+1}kq|S_{q+1-k}|-\sum\limits_{k=1}^{m+1}k(q+1+k)|S_{q+1+k}|\Big).
	\end{split}
   \end{equation}

Next, we give a lower bound on $|P_2|-\binom{n}{2}$. Let $S'=S_{q+2}\cup\cdots\cup S_{q+2+m}$.
We have
\begin{equation*}
\begin{split}
\sum_{v\in V\backslash S'}d(v)&=2e(G)-\sum\limits_{k=1}^{m+1}(q+1+k)|S_{q+1+k}|=\Big(n-\sum\limits_{k=1}^{m+1}|S_{q+1+k}|\Big)q+Y.
\end{split}
\end{equation*}
Here $Y=2t+q^2-\sum\limits_{k=1}^{m+1}(1+k)|S_{q+1+k}|\ge 2t+q^2-2|S_{q+2}|-3\sum\limits_{k=1}^{m}k|S_{q+2+k}|\geq q^2-4q-6t>-(n-|S'|)$,
where the second inequality holds because of Claim \ref{claimB}.
By Lemma \ref{Lem:CS-inquality},
\begin{equation*}
	\begin{split}
\sum_{v\in V\backslash S'}\binom{d(v)}{2}-\binom{n}{2}\ge&\Big(q^2+q+1-\sum\limits_{k=1}^{m+1}|S_{q+1+k}|\Big)\binom{q}{2}+qY-\binom{q^2+q+1}{2}\\
=&2tq-q^2-q-\sum\limits_{k=1}^{m+1}\Big(\binom{q}{2}+qk+q\Big)|S_{q+1+k}|.
\end{split}
   \end{equation*}
Recall that $P_2$ is the set of all 2-paths in $G$. We have
\begin{equation}\label{Equ:P2-n2}
	\begin{split}
	|P_2|-\binom{n}{2}&=\sum\limits_{k=1}^{m+1}|S_{q+1+k}|\binom{q+1+k}{2}+\sum_{v\in V\backslash S'}\binom{d(v)}{2}-\binom{n}{2}\\
        &\ge2tq-q^2-q+\frac{1}{2}\sum_{k=1}^{m+1}k(k+1)|S_{q+1+k}|.
	\end{split}
   \end{equation}
Let $M=2tq-q^2-q+\frac{1}{2}|S_{q+1}|$.
Combining \eqref{5} and \eqref{Equ:P2-n2}, by Claim \ref{Claim:C4} we derive that
  \begin{equation}\label{7}
	\begin{split}
     2\#C_4\ge&|P_2|+|UP|-\binom{n}{2}\geq M-\frac{q}{2}\Big(\sum\limits_{k=1}^{m+1}k|S_{q+1+k}|-\sum\limits_{k=1}^{q+1}k|S_{q+1-k}|\Big)\\
             =&M-\frac{q}{2}\big(2e(G)-(q+1)n\big)=tq-\frac{1}{2}(q^2+q-|S_{q+1}|).
      \end{split}
   \end{equation}
Finally, by Claim \ref{claimB} we have
\begin{equation*}
\begin{split}
q^2+q-|S_{q+1}|&=\sum\limits_{k=1}^{q+1}|S_{q+1-k}|+|S_{q+2}|+\sum\limits_{k=1}^{m}|S_{q+2+k}|-1\\
&\leq\sum\limits_{k=-(q+1)}^{m+1}\Big((q+1)-(q+1+k)\Big)\cdot|S_{q+1+k}|+2|S_{q+2}|+2\sum\limits_{k=1}^{m}k|S_{q+2+k}|-1\\
&\leq (q+1)n-2e(G)+4q+4t-1=5q+2t.
\end{split}
\end{equation*}
This together with \eqref{7} show that $\#C_4\geq \frac12(tq-2.5q-t)$.
The proof of Theorem \ref{Thm:supersaturation-general-general-t} is finished.
\QED

\medskip

We point out that Theorem \ref{Thm:supersaturation-general-general-t} only works for $t\geq 3$ and becomes invalid when $t\in \{1,2\}$.

\subsection{Proofs of Theorem \ref{Thm:supersaturation} and Proposition \ref{prop:1.2625}}

Before presenting the proofs, we show a upper bound on $h(q^2+q+1,t)$ for any prime power $q$ and $t\geq 1$,
using a random construction based on polarity graphs.

\begin{lem}\label{Lem:random-polarity}
Let $q$ be a prime power and $t$ be an integer such that $4t\leq q^3(q+1)$.
Then there exists a graph on $q^2+q+1$ vertices,
which contains at least $\frac12 q(q+1)^2+t$ edges and at most $500(tq+t^4/q^8)$ copies of $C_4$.
\end{lem}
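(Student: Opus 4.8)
\textbf{Proof proposal for Lemma \ref{Lem:random-polarity}.}
The plan is to start from an orthogonal polarity graph $H_0$ of order $q$ (which exists for every prime power $q$) and to \emph{sparsify it randomly} before adding back some edges, so that the resulting graph has the right edge count while the number of $4$-cycles created by the re-added edges stays controlled. Recall from Proposition \ref{prop:G(pi)} that $e(H_0)=\frac12 q(q+1)^2$, so we are short exactly $t$ edges; the difficulty is that simply adding $t$ arbitrary new edges to $H_0$ could create roughly $tq$ copies of $C_4$ (by Lemma \ref{Lem:polarity+plus}), which only matches the trivial bound. To do better we first delete a random subset of vertices (equivalently, localize the added edges inside a small randomly chosen piece) so that the ``back-bone'' $H$ is already $C_4$-free and sparse enough that each of the $t$ added edges lies in only $O(1+t/q^2)$ short cycles on average.

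Concretely, first I would handle the easy regime $t\le q^2$ (say): here take $H=H_0$ and add a matching-like set of $t$ new edges chosen so as to minimize overlaps. Choosing the $t$ new edges between vertices of degree $q$ when possible, and otherwise greedily keeping the ``link'' sets $N(u)\cap N(v)$ disjoint, each new edge is in at most $q+1$ four-cycles and two new edges share at most $O(1)$ four-cycles, giving at most $t(q+1)+\binom{t}{2}\cdot O(1)=O(tq+t^2)$ copies of $C_4$; since $t\le q^2$ this is $O(tq)$, well inside $500(tq+t^4/q^8)$. For the range $q^2\le t\le \frac14 q^3(q+1)$ the count $t^4/q^8$ becomes the dominant target, so here I would instead take a random spanning subgraph: independently retain each vertex of $H_0$ with probability $p$ (tuned so that the retained graph has about $\binom{n}{2}-$(a controlled deficit) edges), observe the retained graph is still $C_4$-free, and then add edges back uniformly at random among non-adjacent retained pairs until the edge count is exactly $\frac12 q(q+1)^2+t$. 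The expected number of $C_4$'s through a random added edge is, up to constants, (number of $2$-paths in $H_0$ between its endpoints) $\times$ (density of added edges)$^2 \approx q \cdot (t/q^3)^2 = t^2/q^5$ per added edge, hence $\le O(t^3/q^5)$ over all $O(t)$ added edges plus lower-order terms; I would verify this totals $O(tq+t^4/q^8)$ and then pick a point realization below the expectation.

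The key computational step — and the main obstacle — is the second-moment / expectation bookkeeping for the random-addition model: one must show that the expected number of $4$-cycles using at least one added edge is $O(tq+t^4/q^8)$ uniformly over the whole range $4t\le q^3(q+1)$, where a $4$-cycle may use $1,2,3$ or $4$ added edges and the pairs available for addition are constrained to non-edges of a fixed $C_4$-free graph. I expect to split the count by the number $j\in\{1,2,3,4\}$ of added edges in the cycle: for $j=1$ use that $H_0$ has exactly $\binom{n}{2}-e(H_0)$ uncovered pairs and $|P_2|$ paths of length two, bounding the per-edge contribution by (max codegree in $H_0$) $\le q+1$; for $j\ge 2$ the count is dominated by $\binom{m}{2}$- or $m^3$- or $m^4$-type terms where $m$ is the number of added edges, each multiplied by the probability that the remaining $4-j$ positions are simultaneously present, giving geometrically the stated $t^4/q^8$ term when $j=4$. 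Once these four bounds are summed and checked against $500(tq+t^4/q^8)$ with room to spare, a union bound (or just Markov's inequality applied to the total $C_4$-count) produces the desired graph, completing the proof.
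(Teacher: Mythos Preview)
Your proposal has a genuine arithmetic gap in the first regime, and the second regime is not well-defined; the last paragraph, however, is essentially the whole (correct) argument, and the paper carries it out directly without any regime split or preliminary sparsification.

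\textbf{The gap in the first regime.} You claim that for $t\le q^2$ one has $O(tq+t^2)=O(tq)$. This is false: $t^2\le Ctq$ forces $t\le Cq$, not $t\le q^2$. Concretely, at $t=q^2$ your deterministic construction produces on the order of $t^2=q^4$ copies of $C_4$, whereas the target $500(tq+t^4/q^8)$ is only of order $q^3$. More generally, for every $q\ll t\le q^2$ (and indeed for all $q\ll t\ll q^4$) the deterministic bound $t^2$ coming from the $j=2$ term exceeds $tq+t^4/q^8$. The point is that without randomness the pairs-of-added-edges contribution is $\Theta(t^2)$, and this is exactly what kills the approach in the intermediate range. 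Your greedy ``disjoint link sets'' device does not help here, since two generic added edges still sit in $\Theta(1)$ common $4$-cycles regardless of link-set disjointness.

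\textbf{The second regime is ill-posed.} Retaining each vertex of $H_0$ with probability $p$ does not give a graph on $q^2+q+1$ vertices, so the resulting object is not a candidate for the lemma. If instead you meant to retain each \emph{edge} with probability $p$, then the retained graph has fewer edges than $H_0$ and you must add back even more random edges, which makes the situation worse, not better.

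\textbf{What the paper does.} The paper runs, uniformly for all $t$ with $4t\le q^3(q+1)$, the random-addition model you sketch in your final paragraph, with no sparsification: take $H=H_0$ and add each non-edge independently with probability $\alpha=4t/(q^3(q+1))$. Then $E[X]=2t$ and a Chernoff bound gives $P(X\ge t)\ge 0.22$. Splitting the $C_4$-count $Y$ by the number $j$ of added edges yields
\[
E[Y_1]\le 3tq,\quad E[Y_2]\le 20\,t^2/q^2,\quad E[Y_3]\le 60\,t^3/q^5,\quad E[Y_4]\le 50\,t^4/q^8,
\]
so $E[Y]\le 110(tq+t^4/q^8)$ (the middle terms are absorbed by the two ends via $ab\le a^2+b^2$). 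The key feature you were missing is that the randomness buys the extra $q^{-2}$ and $q^{-5}$ factors in the $j=2,3$ terms; these are precisely what your deterministic first regime lacks. Finally, $E[Y\mid X\ge t]\le E[Y]/P(X\ge t)\le 500(tq+t^4/q^8)$, and any realization achieving at most this conditional expectation with $X\ge t$ gives the desired graph.
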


\begin{proof}
We may assume $q\geq 3$ and $t\geq 1$. Let $H$ be an orthogonal polarity graph on $n=q^2+q+1$ vertices.
Let $\alpha=\frac{4t}{q^3(q+1)}\in [0,1]$ and let $G$ be obtained from $H$
by adding an edge for each non-adjacent pair of vertices independently and randomly with probability $\alpha$.
Denote by $X$ the number of new edges added to $H$.
Since the number of non-adjacent pairs in $H$ is $N=\binom{n}{2}-e(H)=\frac{q^3(q+1)}{2}$,
we have $E[X]=N\alpha=2t.$
Here, $X$ is a binomial random variable $X\sim Bin(N,\alpha)$.
Then the Chernoff bound states that $P(X<(1-\epsilon)N\alpha)\leq e^{-\epsilon^2 N\alpha/2}$.
Choosing $\epsilon=1/2$, we can get that $P(X<t)\leq e^{-\frac{t}{4}}\leq 0.78.$

Let $Y$ be the number of copies of $C_4$'s in $G$.
For $1\leq i\leq 4$, let $Y_i$ be the number of copies of $C_4$'s in $G$ consisting of exactly $i$ new edges.
We estimate $E[Y]=\sum_{i=1}^4 E[Y_i]$ as follows.
Note that every vertex in $H$ has degree $q$ or $q+1$.
For $E[Y_1]$, each of these $C_4$'s corresponds to a unique path of length three in $H$.
Thus we have $E[Y_1]\leq \frac12n(q+1) q^2\cdot \alpha\leq 3tq$.
For $E[Y_2]$, each of these $C_4$'s contains two edges in $H$ which are incident or not.
Thus we have $E[Y_2]\leq \frac12n^2(q+1)q\cdot \alpha^2+\binom{e(H)}{2}\cdot 2\alpha^2\leq 20t^2/q^2.$
Similarly, we can get that $E[Y_3]\leq e(H)\binom{n}{2}\cdot 2\alpha^3\leq 60t^3/q^5$
and $E[Y_4]\leq n^4\alpha^4/8\leq 50t^4/q^8.$
Since $t^2/q^2+t^3/q^5\leq tq+t^4/q^8,$
we can get that $E[Y]=\sum_{i=1}^4 E[Y_i]\leq 110(tq+t^4/q^8).$

Since $P(X\geq t)\geq 0.22$, we have $E[Y]=P(X\geq t)\cdot E[Y|X\geq t]+P(X<t)\cdot E[Y|X<t]\geq 0.22\cdot E[Y|X\geq t].$
So $E[Y|X\geq t]\leq E[Y]/0.22\leq 500(tq+t^4/q^8).$
This shows that there exists an $n$-vertex graph with at least $\frac12 q(q+1)^2+t$ edges
and at most $500(t\sqrt{n}+t^4/n^{4})$ copies of $C_4$'s.
\end{proof}

In aid of Lemma \ref{Lem:random-polarity}, we are ready to derive Theorem \ref{Thm:supersaturation} from Theorems \ref{Thm:supersaturation-general-t} and \ref{Thm:supersaturation-general-general-t}.

\medskip

\noindent {\bf Proof of Theorem \ref{Thm:supersaturation}.}
Let $q=2^k$ be sufficiently large.
So $\ex(q^2+q+1,C_4)=\frac12 q(q+1)^2$.

First we consider (A).
Suppose that $1\leq t< q^{1/8}/30$.
Let $G$ be a $(q^2+q+1)$-vertex graph with $\ex(q^2+q+1,C_4)+t$ edges.
By Theorem \ref{Thm:supersaturation-general-t} and Lemma \ref{Lem:polarity+plus}, $G$ has at least $t(q-1)$ copies of $C_4$,
with equality only if $G$ is obtained from an orthogonal polarity graph of order $q$ by adding $t$ edges between vertices of degree $q$.
As any two vertices of degree $q$ in a polarity graph has a common neighbor,
it is straightforward to see that when $G$ has exactly $t(q-1)$ copies of $C_4$,
these aforementioned $t$ new edges must form a matching (in fact it is also an induced matching by Lemma \ref{lem:Baer}).
This proves (A).

For the first assertion of (B),
it suffices to consider when $t\geq q^{1/8}/30$ and this follows from Theorem \ref{Thm:supersaturation-general-general-t} that $h(q^2+q+1,t)\geq \frac12(tq-2.5q-t)=\big(\frac12+o(1)\big)tq$, where $o(1)\to 0$ as $q\to \infty$.

Finally we prove the second assertion of (B) that $h(q^2+q+1,t)=\Theta(tq+t^4/q^8)$.
It is well known (see \cite{ES84}) that for any $c>0$ there exists some $c'>0$ such that $h(q^2+q+1,t)\geq c'\cdot t^4/q^8$ for any $t\geq cq^3$.
Also as $q$ is large, from the above proof, we have $h(q^2+q+1,t)\geq \big(\frac12+o(1)\big)tq\geq tq/3$ for any $t\geq 1$.
Note that $tq\geq t^4/q^8$ if and only if $t\leq q^3$.
Combining the above all together, we see that there exists some absolute constant $d>0$ such that for any $t\geq 1$,
$h(q^2+q+1,t)\geq d\cdot(tq+t^4/q^8)$.
The upper bound easily follows from Lemma \ref{Lem:random-polarity}.
The proof of Theorem \ref{Thm:supersaturation} is completed.
\QED

\medskip

We now complete the proof of Proposition \ref{prop:1.2625}.

\medskip

\noindent {\bf Proof of Proposition \ref{prop:1.2625}.}
Let $n$ be sufficiently large and $t\geq 3n^{1.2625}$.
By Proposition \ref{prop:lower-Turan}, there exists some prime $p$ with $\sqrt{n}-n^{0.2625}-1\leq p\leq \frac12(-1+\sqrt{4n-3})$ such that
$\ex(n,C_4)\geq \ex(p^2+p+1,C_4)\geq (n^{1.5}-3n^{1.2625}+n)/2.$
We first consider the lower bound of $h(n,t)$.
Consider any $n$-vertex graph $G$ with $\ex(n,C_4)+t$ edges.
Let $s$ be such that $\ex(n,C_4)+t=\frac{1}{2}(n^{1.5}+n)+s.$
So $s\geq t-\frac32n^{1.2625}\geq 0.5t.$
By Jensen's inequality, we see that the number $M$ of 2-paths in $G$ is at least
\begin{align*}
\sum_{v\in V(G)}\binom{d(v)}{2}
&\ge n\binom{ \frac{\sum d(v)}{n}}{2}=n\binom{ \frac{n^{1.5}+n+2s}{n}}{2}\geq \binom{n}{2}+2s\sqrt{n}+\frac{2s^2}{n}.
\end{align*}
As $s\geq 0.5t$, this implies that
$h(n,t)\geq \#C_4(G)\geq \frac12\Big(M-\binom{n}{2}\Big)\geq \frac{1}{2}\Big(\sum_{v\in V(G)}\binom{d(v)}{2}-\binom{n}{2}\Big)\geq \frac{t\sqrt{n}}{2}.$
It is known that if $t\geq n^{3/2}$, then $h(n,t)\geq c\cdot t^4/n^4$ holds for some $c>0$.
Combining these facts, we infer $h(n,t)=\Omega(t\sqrt{n}+t^4/n^4)$ for $t\geq 3n^{1.2625}$.
For the upper bound of $h(n,t)$, let $r$ be the integer such that $\ex(n,C_4)+t=\frac{1}{2}p(p+1)^2+r$.
By our choice of $p$, we can derive that $r\leq t+\frac32 n^{1.2625}\leq 2t$.
Using Lemma \ref{Lem:random-polarity}, there exists a graph on $p^2+p+1$ vertices with $p(p+1)^2/2+r=\ex(n,C_4)+t$ edges and at most $O(rp+r^4/p^8)=O(t\sqrt{n}+t^4/n^4)$ copies of $C_4$'s.
Since $p^2+p+1\leq n$, this also shows that $h(n,t)=O(t\sqrt{n}+t^4/n^4)$,
finishing the proof of Proposition \ref{prop:1.2625}. \QED

\section{Concluding remarks}\label{sec:concluding}

In this paper, we focus on extremal problems of $4$-cycles and prove several stability and supersaturation theorems.
These imply some exact or near-optimal extremal results on $C_4$ for infinite instances.
In what follows we discuss related problems, some of which in fact partially motivate the results here.

Theorem \ref{Thm:stability} provides a stability type result for dense $C_4$-free graphs $G$ on $q^2+q+1$ vertices where $q$ is even.
It states that if $e(G)\geq \frac12 q(q+1)^2-\frac12 q+o(q)$, then $G$ is contained in some (unique) polarity graph of order $q$.
We wonder if some other form of stability (e.g., in the sense of ``edit distance", which counts edges adding and deleting between $G$ and the extremal configuration)
can hold for a much weaker condition on the number of edges.
This stability also indicates that there exists some hierarchy on the number of edges for {\it maximal} $C_4$-free graphs $G$, in the interval starting from $\frac12 q(q+1)^2-\frac12 q+o(q)$.
This is because any of these graphs $G$ must be some polarity graph and according to Proposition \ref{prop:G(pi)}, $e(G)=\frac12 q(q+1)^2-\frac12 m\sqrt{q}$ holds for some integer $m\geq 0$.
We remark that using the result of Metsch \cite{Met92} and similar arguments here,
one also can establish an analogous stability result for $C_4$-free balanced bipartite graphs
where the size of two parts equals $q^2+q+1$ for any (even or odd) large integer $q$.
(As a side note here, we would like to mention that recently Nagy \cite{N19} proved some supersaturation results on 4-cycles in the bipartite setting.)

Arguably, Theorem \ref{Thm:stability} provides some (very weak) evidence to the following conjecture of McCuaig.

\begin{conjecture}[McCuaig, 1985; see \cite{Fur88,FS}]\label{Conj:McC}
Each extremal graph which achieves the maximum number $\ex(n,C_4)$ is a subgraph of some polarity graph.
\end{conjecture}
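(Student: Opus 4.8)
The plan is to deduce McCuaig's conjecture from a sufficiently robust stability theorem, in the spirit of Theorem \ref{Thm:stability} but pushed to every $n$. First I would note that the conjecture is essentially immediate when $n=q^2+q+1$ for an even prime power $q$: here $\ex(n,C_4)=\frac12 q(q+1)^2$ by Theorem \ref{Thm:Furedi}, an extremal graph has exactly this many edges, and $\frac12 q(q+1)^2>\frac12 q(q+1)^2-\frac12 q+o(q)$, so Theorem \ref{Thm:stability} forces every extremal graph to be a subgraph of some polarity graph of order $q$; comparing edge counts via Proposition \ref{prop:G(pi)} then shows it must in fact be an orthogonal polarity graph. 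So the first real task is to drop the parity hypothesis.

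Second, I would handle $n=q^2+q+1$ for odd prime powers $q$ by establishing an odd-$q$ analogue of Theorem \ref{Thm:stability-max-degree}. The reduction to $\Delta(G)=q+1$ in Section \ref{sec:Delta=q+1} is already observed to work for all $q$, so the work lies in Sections \ref{sec:1-inters} and \ref{sec:polarity}, where evenness is used to split $G[N(v)]$ into a perfect matching plus an isolated vertex (Claims \ref{Cla:V1-structure} and \ref{Cla:V2-structure}) and where Proposition \ref{prop:even-q-w} pins down the degree-$q$ vertices. For odd $q$ one would instead track the near-perfect matching structure of $G[N(v)]$, carry through the analogous families $\dR$ and $\dL$ of neighbourhoods, build a $1$-intersecting $(q+1)$-hypergraph with at least $q^2$ edges as in Lemma \ref{Lem:finding-1-intersecting}, and finish essentially verbatim using Theorems \ref{Thm:Embedding} and \ref{Thm:Embedding unique} and Lemma \ref{Lem:Furedi-symmetry}. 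Together with F\"uredi's upper bound this would settle the conjecture for all $n$ of the form $q^2+q+1$.

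Third, and this is the crux, one must treat $n$ not of this form. The guiding principle is that for such $n$ the extremal graph should arise from a polarity graph of order $q$, with $q$ a prime power near $\sqrt{n}$, by deleting or adding a bounded number of vertices; the theorem of Firke, Kosek, Nash and Williford \cite{FKNW} computing $\ex(q^2+q,C_4)$ for even prime powers $q$ is the first instance of this pattern. The template would again be: bound $\Delta(G)$ by a degree-counting argument as in Section \ref{sec:Delta=q+1}, bound the number of low-degree vertices, assemble a large partial linear space from neighbourhoods of high-degree vertices, and recognise it inside a projective plane. The main obstacle is exactly this last step: when $|V(G)|\ne q^2+q+1$ the resulting $1$-intersecting $(q+1)$-hypergraph does not live on $q^2+q+1$ points, so Theorems \ref{Thm:Embedding} and \ref{Thm:Embedding unique} do not apply, and one would need an embedding theorem for $(q+1)$-uniform $1$-intersecting hypergraphs with a controlled deficit or surplus of points into $PG(2,q)$, together with a form of Lemma \ref{Lem:Furedi-symmetry} stable under missing rows and columns. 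In the absence of any exact formula for $\ex(n,C_4)$ for general $n$, a full proof of McCuaig's conjecture appears to be beyond current methods; the realistic target of this program is the case $n=q^2+q+1$ for all large $q$ of either parity.
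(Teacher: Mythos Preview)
The statement is Conjecture~\ref{Conj:McC}, an \emph{open problem}; the paper does not prove it and explicitly says only that Theorem~\ref{Thm:stability} ``provides some (very weak) evidence'' for it. There is therefore no proof in the paper to compare against, and your proposal is not a proof either---as you yourself concede in the final sentence (``a full proof of McCuaig's conjecture appears to be beyond current methods'').

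Your first paragraph is correct as far as it goes: for $n=q^2+q+1$ with $q$ a large even prime power, Theorem~\ref{Thm:stability} (or more directly Corollary~\ref{coro:Turan1}) does show every extremal graph is a polarity graph. But this covers a density-zero set of integers $n$. The second paragraph sketches a plausible but unexecuted extension to odd $q$; the parity of $q$ is genuinely used in Claims~\ref{Cla:V1-structure} and~\ref{Cla:V2-structure} and in Lemma~\ref{Lem:neigbor-of-q-vertex}, and you have not supplied the replacement arguments. The third paragraph correctly identifies the real obstruction---no analogue of Theorems~\ref{Thm:Embedding} and~\ref{Thm:Embedding unique} is available when $|V(G)|\neq q^2+q+1$---and then stops. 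A proof proposal that ends by declaring the problem out of reach is an honest assessment of the difficulty, but it is not a proof.
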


Suggested by orthogonal polarity graphs, the following supersaturation problem seems plausible:
For large $q=2^k$, is it true that $h(q^2+q+1,t)=t(q-1)$ holds for every $1\leq t\leq \frac{q}2$?
Theorem \ref{Thm:supersaturation} confirms this for $1\leq t\leq O(q^{1/8})$, while its proof perhaps can be generalized further.
Another problem is to determine all integers $t\geq 1$ such that every graph achieving the maximum $h(q^2+q+1,t)$ contains an orthogonal polarity graph of order $q$.
Regarding to Proposition \ref{prop:1.2625}, a similar supersaturation result for $C_4$ perhaps can hold under a more general condition:
Is there a constant $t_0$ such that $h(n,t)=\Theta(t\sqrt{n}+t^4/n^4)$ holds whenever $t\geq t_0$?

\bigskip

\noindent {\bf Acknowledgements.}
The authors would like to thank Prof. Klaus Metsch for sending a copy of \cite{Met92} and Prof. Lilu Zhao for sharing the result of \cite{BHP}, which are helpful for the study here.
The authors also would like to thank Prof. Zoltan F\"uredi for instructive comments.

\bibliographystyle{unsrt}

\end{document}